\DeclareFontFamily{U}{rsfs}{} \DeclareFontShape{U}{rsfs}{n}{it}{<->
rsfs10}{} \DeclareSymbolFont{mscr}{U}{rsfs}{n}{it}
\DeclareSymbolFontAlphabet{\scr}{mscr}
\def\mathscr{\scr}
\begin{document}
%%%%%%%%%%%%%%%%%%%%%%%%%%%%%%%%%%%%%%%%%%%%%%%%%%%%%%%%%%%%%%%%%%%%%%%%
%%%%%%%%%%%%%%%%%%%%%%%%%%     Macros      %%%%%%%%%%%%%%%%%%%%%%%%%%%%%
%%%%%%%%%%%%%%%%%%%%%%%%%%%%%%%%%%%%%%%%%%%%%%%%%%%%%%%%%%%%%%%%%%%%%%%%
\def\e#1\e{\begin{equation}#1\end{equation}}
\def\ea#1\ea{\begin{align}#1\end{align}}
\def\eq#1{{\rm(\ref{#1})}}
\theoremstyle{plain}% default
\newtheorem{thm}{Theorem}[section]
\newtheorem{prop}[thm]{Proposition}
\newtheorem{lem}[thm]{Lemma}
\newtheorem{conj}[thm]{Conjecture}
\theoremstyle{definition}
\newtheorem{dfn}[thm]{Definition}
\newtheorem{ex}[thm]{Example}
\newtheorem{rem}[thm]{Remark}
\numberwithin{equation}{section}
\def\dim{\mathop{\rm dim}\nolimits}
\def\vdim{\mathop{\rm vdim}\nolimits}
\def\supp{\mathop{\rm supp}\nolimits}
\def\id{\mathop{\rm id}\nolimits}
\def\Sym{\mathop{\rm Sym}\nolimits}
\def\Ker{\mathop{\rm Ker}}
\def\Coker{\mathop{\rm Coker}}
\def\HP{\mathop{\rm HP}\nolimits}
\def\HN{\mathop{\rm HN}\nolimits}
\def\HC{\mathop{\rm HC}\nolimits}
\def\NC{\mathop{\rm NC}\nolimits}
\def\PC{\mathop{\rm PC}\nolimits}
\def\Crit{\mathop{\rm Crit}}
\def\inf{{\rm inf}}
\def\Re{\mathop{\rm Re}\nolimits}
\def\Im{\mathop{\rm Im}\nolimits}
\def\Hom{\mathop{\rm Hom}\nolimits}
\def\Ho{\mathop{\rm Ho}\nolimits}
\def\Sch{\mathop{\bf Sch}\nolimits}
\def\dSt{\mathop{\bf dSt}\nolimits}
\def\dArt{\mathop{\bf dArt}\nolimits}
\def\dSch{\mathop{\bf dSch}\nolimits}
\def\red{{\rm red}}
\def\cl{{\rm cl}}
\def\Spec{\mathop{\rm Spec}\nolimits}
\def\bSpec{\mathop{\bs{\rm Spec}}\nolimits}
\def\qcoh{{\mathop{\rm qcoh}}}
\def\Pol{{\mathop{\rm Pol}}}
\def\bs{\boldsymbol}
\def\ge{\geqslant}
\def\le{\leqslant\nobreak}
\def\bA{{\mathbin{\mathbb A}}}
\def\bL{{\mathbin{\mathbb L}}}
\def\bP{{\mathbin{\mathbb P}}}
\def\bT{{\mathbin{\mathbb T}}}
\def\bU{{\bs U}}
\def\bV{{\bs V}}
\def\bW{{\bs W}}
\def\bX{{\bs X}}
\def\bY{{\bs Y}}
\def\bZ{{\bs Z}}
\def\cA{{\mathbin{\cal A}}}
\def\cF{{\mathbin{\cal F}}}
\def\cL{{\mathbin{\cal L}}}
\def\cM{{\mathbin{\cal M}}}
\def\O{{\mathbin{\cal O}}}
\def\cP{{\mathbin{\cal P}}}
\def\PV{{\mathbin{\cal{PV}}}}
\def\bH{{\mathbin{\mathbb H}}}
\def\C{{\mathbin{\mathbb C}}}
\def\K{{\mathbin{\mathbb K}}}
\def\N{{\mathbin{\mathbb N}}}
\def\Z{{\mathbin{\mathbb Z}}}
\def\al{\alpha}
\def\be{\beta}
\def\ga{\gamma}
\def\de{\delta}
\def\io{\iota}
\def\ep{\epsilon}
\def\la{\lambda}
\def\ka{\kappa}
\def\th{\theta}
\def\ze{\zeta}
\def\up{\upsilon}
\def\vp{\varphi}
\def\si{\sigma}
\def\om{\omega}
\def\De{\Delta}
\def\La{\Lambda}
\def\Th{\Theta}
\def\Om{\Omega}
\def\Ga{\Gamma}
\def\Si{\Sigma}
\def\Up{\Upsilon}
\def\pd{\partial}
\def\ts{\textstyle}
\def\st{\scriptstyle}
\def\sm{\setminus}
\def\bu{\bullet}
\def\op{\oplus}
\def\ot{\otimes}
\def\boxt{\boxtimes}
\def\ov{\overline}
\def\ul{\underline}
\def\bigop{\bigoplus}
\def\iy{\infty}
\def\es{\emptyset}
\def\ra{\rightarrow}
\def\ab{\allowbreak}
\def\longra{\longrightarrow}
\def\hookra{\hookrightarrow}
\def\bs{\boldsymbol}
\def\t{\times}
\def\w{\wedge}
\def\ci{\circ}
\def\ti{\tilde}
\def\d{{\rm d}}
\def\ha{{\ts\frac{1}{2}}}
\def\an#1{\langle #1 \rangle}
\def\dd{{\rm d}_{\rm dR}}
\def\cdga{\mathop{\bf cdga}\nolimits}
%%%%%%%%%%%%%%%%%%%%%%%%%%%%%%%%%%%%%%%%%%%%%%%%%%%%%%%%%%%%%%%%%%%%%%%%
%%%%%%%%%%%%%%%%%%%%%% Text of paper %%%%%%%%%%%%%%%%%%%%%%%%%%%%%%%%%%%
%%%%%%%%%%%%%%%%%%%%%%%%%%%%%%%%%%%%%%%%%%%%%%%%%%%%%%%%%%%%%%%%%%%%%%%%
\title{A Lagrangian Neighbourhood Theorem for shifted symplectic derived schemes}
\author{Dominic Joyce and Pavel Safronov}
\date{}
\maketitle
\begin{abstract} Pantev, To\"en, Vaqui\'e and Vezzosi \cite{PTVV} defined $k$-shifted symplectic derived schemes and stacks $\bX$ for $k\in\Z$, and Lagrangians $\bs f:\bs L\ra\bX$ in them. They have important applications to Calabi--Yau geometry and quantization. Bussi, Brav and Joyce \cite{BBJ} and Bouaziz and Grojnowski \cite{BoGr} proved `Darboux Theorems' giving explicit Zariski or \'etale local models for $k$-shifted symplectic derived schemes $\bX$ for $k<0$ presenting them as twisted shifted cotangent bundles.

We prove a `Lagrangian Neighbourhood Theorem' which gives explicit Zariski or \'etale local models for Lagrangians $\bs f:\bs L\ra\bX$ in $k$-shifted symplectic derived schemes $\bX$ for $k<0$, relative to the `Darboux form' local models of \cite{BBJ} for $\bX$. That is, locally such Lagrangians can be presented as twisted shifted conormal bundles. We also give a partial result when~$k=0$.

We expect our results will have future applications to shifted Poisson geometry \cite{CPTVV}, and to defining `Fukaya categories' of complex or algebraic symplectic manifolds, and to the categorification of Donaldson--Thomas theory of Calabi--Yau 3-folds and `Cohomological Hall Algebras'.
\end{abstract}

\setcounter{tocdepth}{2}
\tableofcontents

\section{Introduction}
\label{ln1}

Using To\"en and Vezzosi's theory of Derived Algebraic Geometry \cite{Toen1,Toen2,Toen3,ToVe1,ToVe2}, Pantev, To\"en, Vaqui\'e and Vezzosi \cite{PTVV} defined $k$-{\it shifted symplectic structures\/} $\om_\bX$ on a derived scheme or stack $\bX$, for $k\in\Z$. If $\bX$ is a derived scheme and $\om_\bX$ a 0-shifted symplectic structure, then $\bX=X$ is a smooth classical scheme and $\om_X\in H^0(\La^2T^*X)$ a classical symplectic structure on $X$. They proved that if $Y$ is a Calabi--Yau $m$-fold, then derived moduli stacks $\bs\cM$ of (complexes of) coherent sheaves on $Y$ have natural $(2-m)$-shifted symplectic structures~$\om_{\bs\cM}$.

Pantev et al.\ \cite{PTVV} also defined {\it Lagrangians\/} $\bs f:\bs L\ra\bX$ in a $k$-shifted symplectic derived stack $(\bX,\om_\bX)$, and showed that fibre products $\bs L\t_\bX\bs M$ of Lagrangians $\bs f:\bs L\ra\bX$, $\bs g:\bs M\ra\bX$ are $(k-1)$-shifted symplectic. Calaque \cite{Cala} proved that if $X$ is a Fano $(m+1)$-fold and $Y\subseteq X$ a smooth anticanonical divisor, so that $Y$ is a Calabi--Yau $m$-fold, and $\bs\cL,\bs\cM$ are derived moduli stacks of (complexes of) coherent sheaves on $X,Y$ with derived restriction morphism $\bs f:\bs\cL\ra\bs\cM$, then $\bs\cL$ is Lagrangian in the $(2-m)$-shifted symplectic $(\bs\cM,\om_{\bs\cM})$. 

Recently, Calaque, Pantev, To\"en, Vaqui\'e and Vezzosi \cite{CPTVV} have also developed a related theory of $k$-{\it shifted Poisson structures\/} $\pi_\bX$ on a derived scheme or stack $\bX$, for $k\in\Z$, and {\it coisotropics\/} $\bs f:\bs C\ra\bX$ in $(\bX,\pi_\bX)$. They prove \cite[Th.~3.2.4]{CPTVV} that the spaces of $k$-shifted symplectic structures $\om_\bX$ and nondegenerate $k$-shifted Poisson structures $\pi_\bX$ on $\bX$ are equivalent. Costello--Rozenblyum and Pridham \cite{Prid} have also announced similar results.

Given a symplectic manifold $(X,\om)$, the classical Darboux Theorem chooses local coordinates $(x_1,\ldots,x_n,y_1,\ldots,y_n)$ on $X$ with $\om=\sum_{j=1}^n\dd x_j\dd y_j$. Bussi, Brav and Joyce \cite[Th.~5.18]{BBJ} proved a `$k$-shifted symplectic Darboux Theorem', which for a $k$-shifted symplectic derived $\K$-scheme $(\bX,\om_\bX)$ with $k<0$ chooses a cdga $A^\bu$, a Zariski open inclusion $\bs i:\bSpec A^\bu\hookra\bX$, and coordinates $x_j^i,y_j^{k-i}\in A^\bu$ with $\bs i^*(\om_\bX)\simeq(\om^0,0,0,\ldots)$ for $\om^0=\sum_{i,j}\dd x_j^i\dd y_j^{k-i}$. (Actually, all this only holds for $k\not\equiv 2\mod 4$, and for $k\equiv 2\mod 4$ there is a more complicated expression also involving coordinates~$z_j^{k/2}$.)

This was the foundation for a series of papers \cite{BeBa,BBBJ,BoJo,Buss1,Buss2,BBDJS,BBJ,BJM,Joyc} concerning generalizations of Donaldson--Thomas theory for Calabi--Yau 3- and 4-folds, involving perverse sheaves, motives, and new enumerative invariants. It can also be used as part of a proof that $k$-shifted symplectic derived schemes carry nondegenerate $k$-shifted Poisson structures, though this was not used in~\cite{CPTVV,Prid}.

Given a Lagrangian $L\hookra X$ in a symplectic manifold $(X,\om)$, the classical Lagrangian Neighbourhood Theorem describes $L,X,\om$ in local coordinates.

The purpose of this paper is to prove a `$k$-shifted symplectic Lagrangian Neighbourhood Theorem', Theorem \ref{ln3thm2} below, which given a Lagrangian $\bs f:\bs L\ra\bX$ in a $k$-shifted symplectic derived $\K$-scheme $(\bX,\om_\bX)$ for $k<0$, and a `Darboux form' local description $\bs i:\bSpec A^\bu\hookra\bX$, $x_j^i,y_j^{k-i}\in A^\bu$, $\om^0=\sum_{i,j}\dd x_j^i\dd y_j^{k-i}$ for $(\bX,\om_\bX)$ as in \cite{BBJ}, chooses a cdga $B^\bu$, coordinates $\ti x_j^i,\ab u_j^i,\ab v_j^{k-1-i}\in B^\bu$, a Zariski open inclusion $\bs j:\bSpec B^\bu\hookra\bX$, and a cdga morphism $\al:A^\bu\ra B^\bu$ with $\ti x^i_j=\al(x^i_j)$ in a homotopy commutative diagram
\e
\begin{gathered}
\xymatrix@C=90pt@R=15pt{ *+[r]{\bSpec B^\bu\,} \ar[d]^{\bSpec\al} \ar[r]_{\bs j} & *+[l]{\bs L} \ar[d]_{\bs f}  \\ *+[r]{\bSpec A^\bu\,} \ar[r]^{\bs i}  & *+[l]{\bX,\!} }
\end{gathered}
\label{ln1eq1}
\e
such that the pullback $\bs j^*(h_{\bs L})$ of the Lagrangian structure $h_{\bs L}$ on $\bs f:\bs L\ra\bX$ to $\bSpec\al:\bSpec B^\bu\ra\bSpec A^\bu$ using \eq{ln1eq1} has $\bs j^*(h_{\bs L})\simeq(h^0,0,0,\ldots)$ with $h^0=\sum_{i,j}\dd u_j^i\dd v_j^{k-1-i}$. (Actually, all this only holds for $k\not\equiv 3\mod 4$, and for $k\equiv 3\mod 4$ there is a more complicated expression also involving coordinates $w_j^{(k-1)/2}$.) Theorem \ref{ln3thm3} also gives a partial result for~$k=0$.

Bouaziz and Grojnowski \cite{BoGr} proved their own $k$-shifted symplectic Darboux Theorem independently of \cite{BBJ}, showing that a $k$-shifted symplectic derived $\K$-scheme $(\bX,\om_\bX)$ for $k<0$ is (at least for $k\not\equiv 2\mod 4$) \'etale locally equivalent to a {\it twisted\/ $k$-shifted cotangent bundle\/} $T^*_t[k]\bY$, where $\bY$ is an affine derived $\K$-scheme, and $t\in \O_\bY^{k+1}$ with $\d t=0$ is used to `twist' the $k$-shifted cotangent bundle $T^*[k]\bY$. Remark \ref{ln2rem3} below relates their picture to that of~\cite{BBJ}.

In Remark \ref{ln3rem1} we interpret our `$k$-shifted Lagrangian Neighbourhood Theorem' in the style of Bouaziz and Grojnowski \cite{BoGr}, by saying that if $\bs f:\bs L\ra\bX$ is Lagrangian in a $k$-shifted symplectic $(\bX,\om_\bX)$ for $k<0$, and $\bX$ is locally modelled on $T^*_t[k]\bY$, then $\bs f:\bs L\ra\bX$ is (at least for $k\not\equiv 3\mod 4$) locally modelled on the inclusion morphism $N^*_{u/t}[k](\bZ/\bY)\ra T^*_t[k]\bY$, where $N^*_{u/t}[k](\bZ/\bY)$ is the {\it twisted\/ $k$-shifted conormal bundle\/} of a morphism of affine derived $\K$-schemes $\bs g:\bZ\ra\bY$, and $u\in\O_\bZ^k$ with $\d u=-\bs g^*(t)$ is used to `twist'~$N^*[k](\bZ/\bY)$.

If the $k$-shifted symplectic derived $\K$-scheme $(\bX,\om_\bX)$ is a point $(\Spec\K,0)$ then Lagrangians $\bs f:\bs L\ra\bX$ are just $(k-1)$-shifted symplectic derived $\K$-schemes $(\bs L,\om_{\bs L})$. In this case, our Lagrangian Neighbourhood Theorem reduces to the Darboux Theorem of \cite{BBJ}. So the proof in \S\ref{ln4} is a generalization of that in \cite[\S 5.6]{BBJ}, and runs parallel to \cite{BBJ} at several points.

Like the Darboux Theorem of \cite{BBJ,BoGr}, our Lagrangian Neighbourhood Theorem should have important applications. For example, it gives local models for moduli schemes of coherent sheaves on Fano $(m+1)$-folds $X$ with restriction morphisms to moduli schemes of coherent sheaves on a Calabi--Yau anticanonical hypersurface $Y\subset X$. We briefly discuss some conjectures which we hope our theorem will help to prove.

\begin{conj}
\label{ln1conj2}
Let\/ $(\bX,\om_\bX)$ be a $(-1)$-shifted symplectic derived\/ $\C$-scheme with an `orientation'. Then Bussi, Brav, Dupont, Joyce, and Szendr\H oi\/ {\rm\cite[Cor.~6.11]{BBDJS}} construct a natural perverse sheaf\/ $\cP^\bu_{\bX,\om_\bX}$ on $X=t_0(\bX),$ such that if\/ $(\bX,\om_\bX)$ is locally modelled on a critical locus $\bs\Crit(\Phi:U\ra\bA^1),$ then $\cP^\bu_{\bX,\om_\bX}$ is locally modelled on the perverse sheaf of vanishing cycles $\PV_{U,\Phi}^\bu$.

Suppose $\bs f:\bs L\ra\bX$ is a Lagrangian, with an `orientation' relative to that of\/ $\bX,$ and $\bs f$ is proper. Then we can define a natural element\/ $\la_{\bs L}$ in the hypercohomology $\bH^{\vdim\bs L}(\cP^\bu_{\bX,\om_\bX})$. These $\la_{\bs L}$ satisfy certain composition laws for composition of Lagrangian correspondences.
\end{conj}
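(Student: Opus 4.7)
The plan is a local-to-global construction of $\la_{\bs L}$, paralleling the construction of $\cP^\bu_{\bX,\om_\bX}$ itself in \cite{BBDJS}. Combining the Darboux Theorem of \cite{BBJ} with its translation into critical-locus form in \cite{BBDJS} gives an \'etale local equivalence $(\bX,\om_\bX)\simeq\bs\Crit(\Phi:U\ra\bA^1)$ for $U$ a smooth $\C$-scheme, under which $\cP^\bu_{\bX,\om_\bX}$ is locally $\PV_{U,\Phi}^\bu$. Theorem \ref{ln3thm2} with $k=-1$ should likewise yield a normal form for $\bs f:\bs L\ra\bX$, in which locally $\bs f$ is modelled on the inclusion $\bs\Crit(\Phi\vert_V)\hookra\bs\Crit(\Phi)$ for a smooth closed submanifold $V\subseteq U$ of dimension $\ha\dim U$ on which $\Phi$ is locally constant. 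The first technical task, parallel to \cite{BBDJS}, is to translate Theorem \ref{ln3thm2} into this critical-locus picture.

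In such a local model there is a natural local class $\la_{\bs L}^{\rm loc}\in\bH^{\dim V}(\PV_{U,\Phi}^\bu\vert_V)$: the vanishing-cycle functor supplies a canonical morphism $\C_V[\dim V]\ra\phi_\Phi(\C_U[\dim U-1])\vert_V$ whenever $\Phi$ is locally constant on $V$, and one takes its image under adjunction. Properness of $\bs f$ then permits a pushforward to a class of the correct total degree in $\bH^{\vdim\bs L}(\cP^\bu_{\bX,\om_\bX})$. On overlaps of two local models the $\la_{\bs L}^{\rm loc}$ differ by a cocycle valued in the sheaf of isomorphisms of $\PV^\bu$ induced by stabilizations $\Phi\rightsquigarrow\Phi\op q$ (with $q$ a non-degenerate quadratic form) and coordinate changes; the relative orientation on $\bs f:\bs L\ra\bX$---a square root of the relative determinant line---is designed precisely to trivialize this cocycle, extending the role of the orientation on $\bX$ alone in \cite{BBDJS}. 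Composition laws for Lagrangian correspondences reduce, via Theorem \ref{ln3thm2} applied to the $-2$-shifted symplectic fibre product $\bs L\t_\bX\bs M$, to a Thom--Sebastiani-type identity between the external product of local classes and the vanishing-cycle class of $\Phi\boxplus\Psi$.

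The main obstacle is this gluing step: verifying that stabilization and change-of-coordinates discrepancies in $\la_{\bs L}^{\rm loc}$ are cancelled exactly by the trivialization supplied by the relative orientation. This will require a careful analysis of how the canonical morphism $\C_V[\dim V]\ra\phi_\Phi(\C_U[\dim U-1])\vert_V$ transforms under stabilization $\Phi\rightsquigarrow\Phi\op q$ with the corresponding enlargement $V\rightsquigarrow V\t\{0\}\subset U\t\C^n$---a relative refinement of the stabilization results underpinning~\cite{BBDJS}. Once this compatibility is established, well-definedness of $\la_{\bs L}$, independence from the choice of Darboux model, and the composition laws should all follow largely formally.
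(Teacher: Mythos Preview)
This statement is Conjecture~\ref{ln1conj2}, and the paper does not prove it. The only remark the paper makes is the sentence immediately following the conjecture: ``The first author has an outline of a proof of Conjecture~\ref{ln1conj2}.'' So there is no proof in the paper against which to compare your proposal; at best your sketch can be assessed as a plausible strategy for an open problem.

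That said, your sketch rests on an incorrect local model for the Lagrangian. You assert that Theorem~\ref{ln3thm2} with $k=-1$ models $\bs f:\bs L\ra\bX$ locally as the inclusion $\bs\Crit(\Phi\vert_V)\hookra\bs\Crit(\Phi)$ for a smooth closed submanifold $V\subseteq U$ of dimension $\ha\dim U$ on which $\Phi$ is locally constant. This is not what the theorem gives. Example~\ref{ln3ex3}, which spells out the $k=-1$ case, shows the local model is: a smooth scheme $V$ equipped with a \emph{smooth morphism} $\pi:V\ra U$ (so $\dim V\ge\dim U$, not $V\subset U$), a vector bundle $E\ra V$ with nondegenerate quadratic form $Q$, and a section $s\in H^0(E)$ satisfying $Q(s,s)+4\pi^*(\Phi)=0$; then $t_0(\bs L)=s^{-1}(0)\subset V$ and $t_0(\bs f)=\pi\vert_{s^{-1}(0)}$. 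In particular $\pi^*(\Phi)$ is not locally constant on $V$, and by Remark~\ref{ln2rem2}(iv$)'$ the virtual dimension $\vdim\bs L$ can be any integer, not $\ha\dim U$. Your construction of the local class $\la_{\bs L}^{\rm loc}$ via the canonical morphism $\C_V[\dim V]\ra\phi_\Phi(\C_U[\dim U-1])\vert_V$ therefore does not apply as written, and the subsequent gluing and stabilization analysis is built on a false premise. Any approach along these lines must start from the correct $(V,\pi,E,Q,s)$ model of Example~\ref{ln3ex3}, and the local class will have to be extracted from the interplay between $\pi$, the quadratic form $Q$, and the vanishing cycles of $\Phi$---a substantially different picture from the one you describe.
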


The first author has an outline of a proof of Conjecture \ref{ln1conj2}.

As suggested in \cite[Rem.~6.15]{BBDJS}, we would like to define a `Fukaya category' $\cF(S)$ of (derived) complex or algebraic Lagrangians $L\ra S$ in a complex or algebraic symplectic manifold $(S,\om)$ of dimension $2n$, such that if $L,M$ are oriented Lagrangians in $S$ then the morphisms $L\ra M$ in $\cF(S)$ are $\Hom^*(L,M)=\bH^{*-n}(\cP_{L,M}^\bu)$, where $\cP_{L,M}^\bu$ is the perverse sheaf on the $-1$-shifted symplectic $\bX=L\t_SM$ described above. 

As in Ben-Bassat \cite{BeBa}, if $L,M,N$ are (derived) Lagrangians in $(S,\om)$, then $\bs Y=L\t_S M\t_S N\ra (L\t_SM)\t(M\t_SN)\t(N\t_SL)$ is Lagrangian in $-1$-shifted symplectic. The hypercohomology class $\la_{\bs Y}$ associated to this in Conjecture \ref{ln1conj2} is what we need to define composition $\Hom^*(M,N)\t\Hom^*(L,M)\ra \Hom^*(L,N)$ of morphisms in the `Fukaya category' $\cF(S)$. Amorim and Ben-Bassat \cite{AmBB} discuss this proposal and Conjecture \ref{ln1conj2} in detail.

A stacky version of Conjecture \ref{ln1conj2} is what we need to define multiplication in a `Cohomological Hall Algebra' associated to a Calabi--Yau 3-fold in the sense of Kontsevich--Soibelman \cite{KoSo}, defined using the perverse sheaves on Calabi--Yau 3-fold moduli stacks constructed in Ben-Bassat, Bussi, Brav and Joyce~\cite{BBBJ}.

\begin{conj} 
\label{ln1conj3}
Let\/ $U$ be a smooth\/ $\K$-scheme and\/ $\Phi:U\ra\bA^1$ a regular function. Then the derived critical locus $\bs\Crit(\Phi)$ is a $-1$-shifted symplectic derived\/ $\K$-scheme. We can also define the $\Z_2$-graded dg-category of matrix factorizations $\mathop{\rm MF}(U,\Phi),$ as in Preygel\/ {\rm\cite{Prey}} for instance.

Suppose $\bs f:\bs L\ra\bs\Crit(\Phi)$ is a Lagrangian, with\/ $\vdim\bs L-\dim U$ even, equipped with an `orientation' and a `spin structure', and that\/ $\bs f$ is proper. Then we can define an object\/ $\mu_{\bs L}\in\mathop{\rm MF}(U,\Phi)$ associated to $\bs L$. In this way we interpret\/ $\mathop{\rm MF}(U,\Phi)$ as a kind of `Fukaya category' of the $-1$-shifted symplectic derived\/ $\K$-scheme\/~$\bs\Crit(\Phi)$.
\end{conj}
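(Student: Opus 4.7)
The plan is to construct $\mu_{\bs L}$ locally using the $k=-1$ case of our Lagrangian Neighbourhood Theorem (Theorem~\ref{ln3thm2}), and then glue the local models using the orientation and spin structure. First, since $U$ is smooth, Zariski locally on $U$ with coordinates $x^0_1,\ldots,x^0_n$ one has $\bs\Crit(\Phi)\simeq\bSpec A^\bu$ for the Koszul cdga $A^\bu=\O_U[y^{-1}_1,\ldots,y^{-1}_n]$ with $\d y^{-1}_i=\pd\Phi/\pd x^0_i$ and symplectic form $\om^0=\sum_i\dd x^0_i\dd y^{-1}_i$; this is an instance of the `Darboux form' of \cite{BBJ}. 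Theorem \ref{ln3thm2} then provides, Zariski or \'etale locally on $\bs L,$ a cdga $B^\bu$ with coordinates $\ti x^0_j,u^i_j,v^{-2-i}_j,$ a Zariski open $\bs j:\bSpec B^\bu\hookra\bs L,$ and a morphism $\al:A^\bu\ra B^\bu$ with $\ti x^0_j=\al(x^0_j),$ such that $\bs j^*(h_{\bs L})\simeq(h^0,0,0,\ldots)$ with $h^0=\sum_{i,j}\dd u^i_j\dd v^{-2-i}_j$.

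Second, in each such chart I would construct a local matrix factorization $\mu^{\rm loc}_{\bs L}$ of $\Phi$ on an open $U'\subseteq U$ by a Koszul-type construction on the coordinate data: take the $\Z_2$-folded underlying graded $\O_{U'}$-module of a finite-rank replacement of $\bs f_*\O_{\bSpec B^\bu},$ with differential built from the cdga differential of $B^\bu$ together with contraction against a primitive for $\bs f^*(\om^0)$ supplied by the Lagrangian structure $h_{\bs L}$. A direct calculation in the coordinates $\ti x^0_j,u^i_j,v^{-2-i}_j$ should yield $(d^{\rm loc})^2=\Phi\cdot\id$, giving a matrix factorization; the parity hypothesis $\vdim\bs L-\dim U$ even makes the $\Z_2$-folding unambiguous, and properness of $\bs f$ ensures the pushforward is coherent.

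Third, one globalizes. I would take the `spin structure' on $\bs L$ to be a trivialization of the square of the determinant line $\det(\bL_{\bs L/\bs\Crit(\Phi)}),$ together with orientation data compatible with the natural orientation of $\bs\Crit(\Phi)$ arising from $K_U$. These trivializations cancel the line-bundle ambiguities inherent in the local Koszul construction (which intrinsically involves a square root of the determinant of the relative conormal data), and make the local factorizations $\mu^{\rm loc}_{\bs L}$ transform coherently under changes of the Theorem \ref{ln3thm2} local model.

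The main obstacle will be step three: showing that the $\mu^{\rm loc}_{\bs L}$ glue, up to canonical quasi-isomorphism, into a well-defined object of the $\Z_2$-graded dg-category $\mathop{\rm MF}(U,\Phi)$. This requires a detailed analysis of how the Theorem \ref{ln3thm2} local models transform under changes of Darboux coordinates on $\bs\Crit(\Phi)$, in a Lagrangian analogue of the `standard form' symplectomorphism analysis of \cite{BBDJS}, and then verifying that the orientation and spin structure exactly cancel the resulting transition discrepancies together with their higher coherences. This is closely parallel to, and presumably refines, the perverse-sheaf gluing that yields $\cP^\bu_{\bX,\om_\bX}$ in \cite{BBDJS}, and will constitute the technical heart of the proof.
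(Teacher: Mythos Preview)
The statement you are attempting to prove is labelled a \emph{Conjecture} in the paper, and the paper does not prove it. After stating Conjectures~\ref{ln1conj1}--\ref{ln1conj3}, the authors say explicitly: ``For each of Conjectures \ref{ln1conj1}--\ref{ln1conj3}, using our Lagrangian Neighbourhood Theorem we can write down local models on $\bs L$ for the coisotropic structure, and for $\la_{\bs L}$ and $\mu_{\bs L}$. The problem is to glue these local models together globally.'' There is no further argument in the paper towards this conjecture; it is presented as a motivating open problem for which Theorem~\ref{ln3thm2} supplies the local ingredients.

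Your proposal is therefore not to be compared against a proof in the paper, because there is none. That said, your outline is aligned with what the authors envisage: construct $\mu_{\bs L}$ locally from the $k=-1$ Lagrangian Darboux form of Example~\ref{ln3ex3} and Theorem~\ref{ln3thm2}, and then attempt to glue. You correctly identify the gluing step as the crux, and your remark that it should parallel (and refine) the perverse-sheaf gluing of \cite{BBDJS} is apt. But you should be aware that this step is genuinely open: the paper offers no mechanism for comparing local matrix factorizations across changes of Lagrangian Darboux chart, no precise definition of the `orientation' or `spin structure' in this context, and no argument that such data suffice to kill the transition discrepancies and their higher coherences. Your proposal is a plausible programme, not a proof, and the paper treats it as such.
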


This is connected to the programme of Kapustin and Rozansky \cite{KaRo} for associating a  2-category to a complex symplectic manifold, locally described using matrix factorization categories.

For each of the Conjectures \ref{ln1conj2}--\ref{ln1conj3}, using our Lagrangian Neighbourhood Theorem we can write down local models on $\bs L$ for the coisotropic structure, and for $\la_{\bs L}$ and $\mu_{\bs L}$. The problem is to glue these local models together globally.

We begin in \S\ref{ln2} with background material on Derived Algebraic Geometry and Pantev--To\"en--Vaqui\'e--Vezzosi's shifted symplectic geometry. Section \ref{ln3} gives our main results. Theorem \ref{ln3thm1} in \S\ref{ln31} shows that a morphism $\bs f:\bX\ra\bY$ of derived $\K$-schemes is locally modelled on $\bSpec\al:\bSpec A^\bu\ra \bSpec B^\bu$, where $A^\bu,B^\bu$ are cdgas and $\al:B^\bu\ra A^\bu$ a morphism, all in a particularly nice form. 

Theorem \ref{ln3thm2} in \S\ref{ln33} is our `Lagrangian Neighbourhood Theorem', showing that Lagrangians $\bs f:\bs L\ra\bX$ in $k$-shifted symplectic derived $\K$-schemes $(\bX,\om_\bX)$ for $k<0$ are locally modelled on explicit `Lagrangian Darboux form' examples given in Examples \ref{ln3ex1} and \ref{ln3ex2} in \S\ref{ln32}. Theorem \ref{ln3thm3} in \S\ref{ln34} also gives a partial result for $k=0$. Section \ref{ln4} proves Theorems \ref{ln3thm1} and~\ref{ln3thm2}.
\medskip

\noindent{\bf Conventions.} Throughout $\K$ will be an algebraically closed field with characteristic zero. All classical $\K$-schemes are assumed locally of finite type, and all derived $\K$-schemes $\bX$ are assumed to be locally finitely presented. Our sign conventions for cdgas, exterior forms, etc., follow Bussi, Brav and Joyce~\cite{BBJ}.

\medskip

\noindent{\bf Acknowledgements.} We would like to thank Dennis Borisov, Chris Brav, and Ian Grojnowski for helpful conversations. This research was supported by EPSRC Programme Grant EP/I033343/1.

\section{Background material}
\label{ln2}

We begin with some background material and notation needed later. Some references are To\"en and Vezzosi \cite{Toen1,Toen2,Toen3,ToVe1,ToVe2} for \S\ref{ln21}--\S\ref{ln22}, and Pantev, To\"en, Vezzosi and Vaqui\'e \cite{PTVV} for \S\ref{ln23}--\S\ref{ln24}, and Brav, Bussi and Joyce \cite{BBJ} for \S\ref{ln25}. Throughout the paper, $\K$ will be an algebraically closed field of characteristic zero.

\subsection{Commutative differential graded algebras}
\label{ln21}

\begin{dfn} 
\label{ln2def1}
Write $\cdga_\K$ for the category of commutative differential graded $\K$-algebras in nonpositive degrees, and $\cdga_\K^{\bf op}$ for its opposite category. 
Objects of $\cdga_\K$ are of the form $\cdots \ra A^{-2}\,{\buildrel\d\over\longra}\,A^{-1}\,{\buildrel\d\over\longra}\,A^0$. Here $A^k$ for $k=0,-1,-2,\ldots$ is the $\K$-vector space of degree $k$ elements of $A^\bu$, and we have a $\K$-bilinear, associative, supercommutative multiplication $\cdot:A^k\t A^l\ra A^{k+l}$ for $k,l\le 0$, an identity $1\in A^0$, and differentials $\d:A^k\ra A^{k+1}$ for $k<0$ satisfying
\begin{equation*}
\d(a\cdot b)=(\d a)\cdot b+(-1)^ka\cdot(\d b)
\end{equation*}
for all $a\in A^k$, $b\in A^l$. We write such objects as $A^\bu$ or $(A^*,\d)$. 

Here and throughout we will use the superscript `$\,{}^*\,$' to denote {\it graded\/} objects (e.g.\ graded algebras or vector spaces), where $*$ stands for an index in $\Z$, so that $A^*$ means $(A^k$, $k\in\Z)$. We will use the superscript `$\,{}^\bu\,$' to denote {\it differential graded\/} objects (e.g.\ differential graded algebras or complexes), so that $A^\bu$ means $(A^*,\d)$, the graded object $A^*$ together with the differential $\d$. 

{\it Morphisms\/} $\al:A^\bu\ra B^\bu$ in $\cdga_\K$ are $\K$-linear maps $\al^k:A^k\ra B^k$ for all $k\le 0$ commuting with all the structures on $A^\bu,B^\bu$. A morphism $\al:A^\bu\ra B^\bu$ is a {\it quasi-isomorphism\/} if $H^k(\al):H^k(A^\bu)\ra H^k(B^\bu)$ is an isomorphism on cohomology groups for all~$k\le 0$.
\end{dfn}

\begin{rem}
\label{ln2rem1}
A fundamental principle of derived algebraic geometry is that $\cdga_\K$ is not really the right category to work in, but instead one wants to define a new category (or better, $\iy$-category) by inverting (localizing) quasi-isomorphisms in $\cdga_\K$.

In fact $\cdga_\K$ has the additional structure of a simplicial model category, with weak equivalences quasi-isomorphisms, in which all objects are fibrant, and in which cdgas $A^\bu$ with $A^*$ free as a commutative graded $\K$-algebra are cofibrant. The $n$-simplices of the mapping space between two cdgas $A^\bu$ and $B^\bu$ are given by morphisms $A^\bu\ra B^\bu\ot \Om^\bu(\De^n)$, where $\Om^\bu(\De^n)$ is the cdga generated by elements $s_i$ of degree $0$ and $t_i$ of degree $1$ for $i = 0,\ldots,n$ with the relations $\sum s_i = 1$ and $\sum t_i = 0$ and the differential $\d s_i = t_i$. Note that $\Om^\bu(\De^n)$ are concentrated in positive degrees, and are not elements of $\cdga_\K$.

We will write $\cdga^\iy_\K$ for the associated $\iy$-category, so that the homotopy category $\Ho(\cdga^\iy_\K)$ is the localized category $\cdga_\K[{\mathcal Q}^{-1}]$ with quasi-isomorphisms inverted, an ordinary category. We will not go into any detail about model categories and $\iy$-categories below, but here is some basic orientation on one issue relevant to this paper, for readers unfamiliar with these ideas. The objects of $\cdga_\K,\cdga^\iy_\K,\Ho(\cdga^\iy_\K)$ are the same. If $A^\bu,B^\bu$ are objects, a morphism $\phi:A^\bu\ra B^\bu$ in $\cdga_\K$ is also a morphism in $\cdga^\iy_\K$ and $\Ho(\cdga^\iy_\K)$. However, a morphism $\phi^\iy:A^\bu\ra B^\bu$ in $\cdga^\iy_\K$ (or equivalently, in $\Ho(\cdga^\iy_\K)$) need not correspond to any morphism $\phi:A^\bu\ra B^\bu$ in $\cdga_\K$, unless $A^\bu$ is cofibrant. If $A^\bu$ is cofibrant, the mapping space in $\cdga^\iy_\K$ is given by the mapping space in $\cdga_\K$.

Standard model cdgas $A^\bu$ are `nearly cofibrant'. They have the property that if $\phi^\iy:A^\bu\ra B^\bu$ is a morphism in $\cdga_\K^\iy$ with $A^\bu$ standard model, such that $H^0(\phi^\iy):H^0(A^\bu)\ra H^0(B^\bu)$ can be lifted to a $\K$-algebra morphism $\phi^0:A^0\ra B^0$, then $\phi^\iy$ can be lifted to $\phi:A^\bu\ra B^\bu$ in~$\cdga_\K$.

All this will be important because if $\bX\simeq\bSpec A^\bu$ and $\bY\simeq\bSpec B^\bu$ are affine derived $\K$-schemes and $\bs f:\bY\ra\bX$ is a morphism, then $\bs f\simeq\bSpec\phi^\iy$ for some morphism $\phi^\iy:A^\bu\ra B^\bu$ in $\cdga_\K^\iy$. For our Lagrangian Neighbourhood Theorem in \S\ref{ln33}, we want to lift $\phi^\iy$ to $\phi:A^\bu\ra B^\bu$ in~$\cdga_\K$.
\end{rem}

\begin{dfn} 
\label{ln2def2}
Let $A^\bu\in\cdga_\K$, and write $D(\mathop{\rm mod}A)$ for the derived category of dg-modules over $A^\bu$. Define a {\it derivation of degree $k$} from $A^\bu$ to an $A^\bu$-module $M^\bu$ to be a $\K$-linear map $\de:A^\bu\ra M^\bu$ that is homogeneous of degree $k$ with
\begin{equation*}
\de(fg)=\de(f)g+(-1)^{k\deg f}f\de(g).
\end{equation*}
Just as for ordinary commutative algebras, there is a universal
derivation into an $A^\bu$-module of {\it K\"ahler differentials\/}
$\Om^1_{A^\bu}$, which can be constructed as $I/I^2$ for $I=\Ker(m:A^\bu\ot A^\bu
\ra A^\bu)$. The universal derivation $\de:A^\bu\ra \Om^1_{A^\bu}$ is $\de(a)=a\ot 1-1\ot a \in I/I^2$. One checks that $\de$ is a universal degree $0$ derivation, so that ${}\ci
\de:\Hom^\bu_{A^\bu}(\Om^1_{A^\bu},M^\bu) \ra \mathop{\rm Der}^\bu(A,M^\bu)$ is an
isomorphism of dg-modules. 

Note that $\Om^1_{A^\bu}=\bigl((\Om^1_{A^\bu})^*,\d\bigr)$ is canonical up to strict isomorphism, not just up to quasi-isomorphism of complexes, or up to equivalence in $D(\mathop{\rm mod} A)$. Also, the underlying graded vector space $(\Om^1_{A^\bu})^*$, as a module over the graded algebra $A^*$, depends only on $A^*$ and not on the differential $\d$ in $A^\bu=(A^*,\d)$.

Similarly, given a morphism of cdgas $\Phi:A^\bu\ra B^\bu$, we can define the {\it relative K\"ahler differentials\/} $\Om^1_{B^\bu/A^\bu}$.

The {\it cotangent complex\/} $\bL_{A^\bu}$ of $A^\bu$ is related to the K\"ahler differentials $\Om^1_{A^\bu}$, but is not quite the same. If $\Phi:A^\bu\ra B^\bu$ is a quasi-isomorphism of cdgas over $\K$, then $\Phi_*:(\Om^1_{A^\bu})\ot_{A^\bu}B^\bu\ra \Om^1_{B^\bu}$ may not be a quasi-isomorphism of $B^\bu$-modules. So K\"ahler differentials are not well-behaved under localizing quasi-isomorphisms of cdgas, which is bad for doing derived algebraic geometry. 

The cotangent complex $\bL_{A^\bu}$ is a substitute for $\Om^1_{A^\bu}$ which is well-behaved under localizing quasi-isomorphisms. It is an object in $D(\mathop{\rm mod} A)$, canonical up to equivalence. We can define it by replacing $A^\bu$ by a quasi-isomorphic, cofibrant cdga $B^\bu$, and then setting $\bL_{A^\bu}=(\Om^1_{B^\bu})\ot_{B^\bu}A^\bu$. We will be interested in the $p^{\rm th}$ exterior power $\La^p\bL_{A^\bu}$, and the dual $(\bL_{A^\bu})^\vee$, which is called the {\it tangent complex}, and written $\bT_{A^\bu}=(\bL_{A^\bu})^\vee$. 

There is a {\it de Rham differential\/} $\dd:\La^p\bL_{A^\bu}\ra\La^{p+1}\bL_{A^\bu}$, a morphism of complexes, with $\dd^2=0:\La^p\bL_{A^\bu}\ra\La^{p+2}\bL_{A^\bu}$. Note that each $\La^p\bL_{A^\bu}$ is also a complex with its own internal differential $\d:(\La^p\bL_{A^\bu})^k\ra(\La^p\bL_{A^\bu})^{k+1}$, and $\dd$ being a morphism of complexes means that $\d\ci\dd=\dd\ci\d$.

Similarly, given a morphism of cdgas $\Phi:A^\bu\ra B^\bu$, we can define the {\it relative cotangent complex\/} $\bL_{B^\bu/A^\bu}$.
\end{dfn}

\begin{dfn} 
\label{ln2def3}
Following \cite[Def.~2.9]{BBJ}, we will call $A^\bu\in\cdga_\K$ of {\it standard form\/} if $A^0$ is a smooth finitely generated $\K$-algebra, and the cotangent module $\Om^1_{A^0}$ is a free $A^0$-module of finite rank, and the graded $\K$-algebra $A^*$ is freely generated over $A^0$ by finitely many generators, all in negative degrees.

More explicitly, as $A^0$ is a smooth $\K$-algebra, $U=\Spec A^0$ is a smooth $\K$-scheme. Suppose that $U$ admits \'etale coordinates $(x_1^0,\ldots,x_{m_0}^0):U\ra\bA^{m_0}$. Then $\Om^1_{A^0}\cong A^0\ot_\K\an{\dd x_1^0,\ldots,\dd x_{m_0}^0}_\K$ is a free $A^0$-module of rank $m_0$. Suppose we are given elements $x_1^i,\ldots,x_{m_i}^i$ in $A^i$ for $i=-1,-2,\ldots,k$, such that $A^*=A^0[x^i_j:i=-1,\ldots,k$, $j=1,\ldots,m_i]$ is the graded $\K$-algebra freely generated over $A^0$ by the generators $x_j^i$ in degree $i<0$. Then $A^\bu=(A^*,\d)$ is a standard form cdga. The differential $\d$ on $A^*$ is determined uniquely by the elements $\d x_j^i\in A^{i+1}$ for $i=-1,-2,\ldots,k$ and~$j=1,\ldots,m_i$.

The {\it virtual dimension\/} of $A^\bu$ is $\vdim A^\bu=\sum_{i=0}^d(-1)^im_i\in\Z$.

Then the K\"ahler differentials $\Om^1_{A^\bu}$ are given as an $A^*$-module by
\e
\Om_{A^\bu}^1\cong A^*\ot_\K\an{\dd x_j^i:i=0,-1,\ldots,k,\; j=1,\ldots,m_i}_\K.
\label{ln2eq1}
\e
As in \cite[\S 2.3]{BBJ}, an important property of standard form cdgas $A^\bu$ is that they are sufficiently cofibrant that the K\"ahler differentials $\Om^1_{A^\bu}$ provide a model for the cotangent complex $\bL_{A^\bu}$, so we can take $\Om^1_{A^\bu}=\bL_{A^\bu}$, without having to replace $A^\bu$ by an unknown cdga $B^\bu$. Thus standard form cdgas are convenient for doing explicit computations with cotangent complexes.

We say that a standard form cdga $A^\bu$ is {\it minimal\/} at $p\in\bSpec A^\bu$ if all the differentials in the complex of $\K$-vector spaces $\Om^1_{A^\bu}\vert_p$ are zero. This means that $m_i=\dim H^i\bigl(\bL_{A^\bu}\vert_p\bigr)$ for $i=0,-1,\ldots,d$, and $A^\bu$ is defined using the minimum number of variables $x_j^i$ in each degree $i=0,-1,\ldots,$ compared to all other cdgas locally equivalent to $A^\bu$ near~$p$. 
\end{dfn}

\subsection{Derived algebraic geometry and derived schemes}
\label{ln22}

\begin{dfn} 
\label{ln2def4}
Write $\dSt_\K$ for the $\iy$-category of {\it derived\/ $\K$-stacks} (or $D^-$-{\it stacks\/}) defined by To\"en and Vezzosi \cite[Def.~2.2.2.14]{ToVe1}, \cite[Def.~4.2]{Toen1}. Objects $\bX$ in $\dSt_\K$ are $\iy$-functors
\begin{equation*}
\bX:\{\text{simplicial commutative $\K$-algebras}\}\longra
\{\text{simplicial sets}\}
\end{equation*}
satisfying sheaf-type conditions. There is a {\it spectrum functor\/}
\begin{equation*}
\bSpec:(\cdga^\iy_\K)^{\bf op}\longra\dSt_\K.
\end{equation*}
A derived $\K$-stack $\bX$ is called an {\it affine derived\/ $\K$-scheme\/} if $\bX$ is equivalent in $\dSt_\K$ to $\bSpec A^\bu$ for some cdga $A^\bu$ over $\K$. As in \cite[\S 4.2]{Toen1}, a derived $\K$-stack $\bX$ is called a {\it derived\/ $\K$-scheme\/} if it may be covered by Zariski open $\bY\subseteq\bX$ with $\bY$ an affine derived $\K$-scheme. Write $\dSch_\K$ for the full $\iy$-subcategory of derived $\K$-schemes in $\dSt_\K$, and $\dSch_\K^{\bf aff}\subset\dSch_\K$ for the full $\iy$-subcategory of affine derived $\K$-schemes. Then $\bSpec$ is an equivalence $(\cdga_\K^\iy)^{\bf op}\,{\buildrel\sim\over\longra}\,\dSch_\K^{\bf aff}$.

We shall assume throughout this paper that all derived $\K$-schemes $\bX$ are {\it locally finitely presented\/} in the sense of To\"en and Vezzosi~\cite[Def.~1.3.6.4]{ToVe1}. 

With this assumption, derived schemes have a {\it virtual dimension\/} $\vdim\bX$, which is a locally constant function $\vdim\bX:\bX\ra\Z$. If $\bX=\bSpec A^\bu$ for $A^\bu$ a standard form cdga then $\vdim\bX=\vdim A^\bu$, for $\vdim A^\bu$ as in Definition~\ref{ln2def3}.

There is a {\it classical truncation functor\/} $t_0:\dSch_\K\ra\Sch_\K$ taking a derived $\K$-scheme $\bX$ to the underlying classical $\K$-scheme $X=t_0(\bX)$. On affine derived schemes $\dSch_\K^{\bf aff}$ this maps~$t_0:\bSpec A^\bu\mapsto\Spec H^0(A^\bu)=\Spec (A^0/\d(A^{-1}))$.

To\"en and Vezzosi show that a derived $\K$-scheme $\bX$ has a {\it cotangent complex\/} $\bL_\bX$ \cite[\S 1.4]{ToVe1}, \cite[\S 4.2.4--\S 4.2.5]{Toen1} in a stable $\iy$-category $L_\qcoh(\bX)$ defined in \cite[\S 3.1.7, \S 4.2.4]{Toen1}. We will be interested in the $p^{\rm th}$ exterior power $\La^p\bL_\bX$, and the dual $(\bL_\bX)^\vee$, which is called the {\it tangent complex\/} $\bT_\bX$.

By a {\it point\/} of a derived $\K$-scheme $\bX$, written $x\in\bX$, we will always mean that $x\in X(\K)$ is a $\K$-point of the underlying classical $\K$-scheme $X=t_0(\bX)$.

When $\bX=X$ is a classical scheme, the homotopy category of $L_\qcoh(\bX)$ is the triangulated category $D_{\qcoh}(X)$ of complexes of quasicoherent sheaves. These have the usual properties of (co)tangent complexes. For instance, if $\bs f:\bX\ra\bY$ is a morphism in $\dSch_\K$ there is a distinguished triangle
\begin{equation*}
\smash{\xymatrix@C=30pt{ \bs f^*(\bL_\bY) \ar[r]^(0.55){\bL_{\bs f}} &
\bL_\bX \ar[r] & \bL_{\bX/\bY} \ar[r] & \bs f^*(\bL_\bY)[1], }}
\end{equation*}
where $\bL_{\bX/\bY}$ is the {\it relative cotangent complex\/} of~$\bs f$. 

Now suppose $A^\bu$ is a cdga over $\K$, and $\bX$ a derived $\K$-scheme with $\bX\simeq\bSpec A^\bu$ in $\dSch_\K$. Then we have an equivalence of triangulated categories $\Ho(L_\qcoh(\bX))\simeq \ab D(\mathop{\rm mod}A^\bu)$, which identifies cotangent complexes $\bL_\bX\simeq\bL_{A^\bu}$. If also $A^\bu$ is of standard form then $\bL_{A^\bu}\simeq\Om^1_{A^\bu}$, so~$\bL_\bX\simeq\Om^1_{A^\bu}$.
\end{dfn}

Bussi, Brav and Joyce \cite[Th.~4.1]{BBJ} prove:

\begin{thm} 
\label{ln2thm1}
Suppose $\bX$ is a derived\/ $\K$-scheme (as always, assumed locally finitely presented), and\/ $x\in\bX$. Then there exist a standard form cdga $A^\bu$ over $\K$ which is minimal at\/ $p\in\bSpec A^\bu,$ in the sense of Definition\/ {\rm\ref{ln2def3},} and a Zariski open inclusion $\bs i:\bSpec A^\bu\hookra\bX$ with\/~$\bs i(p)=x$.
\end{thm}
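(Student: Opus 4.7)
The plan is to build the standard form cdga $A^\bu$ degree by degree, starting from a smooth polynomial algebra in degree $0$ and iteratively adjoining free generators in degrees $-1,-2,\ldots$, while simultaneously constructing a cdga morphism from a chosen local affine presentation of $\bX$ into $A^\bu$ that realizes the desired Zariski open inclusion. Since $\bX$ is a derived scheme, the first step is to reduce to the affine case: choose a Zariski open $\bU\subset\bX$ with $x\in\bU$ and $\bU\simeq\bSpec B^\bu$ for some cdga $B^\bu$, and let $\mathfrak q\subset H^0(B^\bu)$ be the prime corresponding to $x$. It suffices to produce a standard form $A^\bu$, minimal at some $p$, together with a cdga morphism whose spectrum is Zariski open in $\bSpec B^\bu$ around $x$.

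Next, construct $A^0$. Set $m_0=\dim H^0(\bL_\bX\vert_x)$, the cotangent dimension of $t_0(\bX)$ at $x$. Choose $x_1^0,\ldots,x_{m_0}^0\in B^0$ whose classes in $\mathfrak q/\mathfrak q^2$ form a basis of the cotangent fibre. Set $A^0:=\K[x_1^0,\ldots,x_{m_0}^0]$ and let $p\in\Spec A^0$ be the point determined by the values of these generators at $\mathfrak q$. After Zariski localizing $B^\bu$, the resulting map $\Spec H^0(B^\bu)\to\Spec A^0$ is \'etale at $t_0(x)$, and we have a $\K$-algebra map $A^0\to B^0$.

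Inductive step: suppose we have built a standard form cdga $A^\bu_{(i-1)}$ with generators in degrees $0,-1,\ldots,-(i-1)$, a cdga morphism $\al_{(i-1)}:A^\bu_{(i-1)}\to B^\bu$ inducing isomorphisms on $H^j$ for $-(i-2)\le j\le 0$ and a surjection on $H^{-(i-1)}$, and with $m_j=\dim H^j(\bL_\bX\vert_x)$ for $j\ge -(i-1)$. Set $m_{-i}=\dim H^{-i}(\bL_\bX\vert_x)$. Choose $m_{-i}$ cocycles in $B^{-i}$ whose classes kill $\Ker H^{-(i-1)}(\al_{(i-1)})$ and correctly adjust the degree-$(-i)$ fibre of the cotangent complex at $p$. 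Adjoin free generators $x_1^{-i},\ldots,x_{m_{-i}}^{-i}$ to $A^\bu_{(i-1)}$ with differentials $\d x_j^{-i}$ chosen in $A^{-(i-1)}_{(i-1)}$ lifting these cocycles (possible by the surjectivity on $H^{-(i-1)}$), and extend $\al_{(i-1)}$ by sending $x_j^{-i}$ to the chosen cocycles. Because $\bX$ is locally finitely presented, $m_{-i}=0$ for $i$ sufficiently large, so the induction terminates. At termination $\al$ is a quasi-isomorphism in a neighbourhood of $p$, hence $\bSpec\al$ is a Zariski open inclusion after shrinking, sending $p$ to $x$. Minimality at $p$ holds by construction since the generator counts equal $\dim H^i(\bL_\bX\vert_x)=\dim H^i(\bL_{A^\bu}\vert_p)$.

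The main obstacle is the lifting problem at each step: I need a strict cdga morphism $\al_{(i)}$, not just an $\iy$-categorical one, and the chosen differentials $\d x_j^{-i}$ must simultaneously (i) be genuine lifts at the chain level, (ii) maintain standard form freeness, and (iii) satisfy minimality $\dd(\d x_j^{-i})\vert_p=0$, which imposes a further condition that the cocycles lie in the square of the maximal ideal at $p$, modifiable by coboundaries. Balancing these three constraints while repeatedly Zariski localizing $B^\bu$ is the technical heart of the argument; it is the cofibrancy-like freeness of standard form cdgas recorded in Definition~\ref{ln2def3}, together with the surjectivity of $H^{-(i-1)}(\al_{(i-1)})$ carried through the induction, that makes the lifting possible.
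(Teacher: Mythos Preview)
This theorem is not proved in the present paper; it is quoted from Brav--Bussi--Joyce \cite[Th.~4.1]{BBJ}. The paper does, however, prove a relative analogue (Theorem~\ref{ln3thm1}) in \S\ref{ln41}, explicitly modelled on \cite{BBJ}, so that section is the correct template to compare against.

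Your overall inductive strategy matches \cite{BBJ} and \S\ref{ln41}. But your degree-$0$ step contains a genuine error. You assert that after choosing $x_1^0,\ldots,x_{m_0}^0\in B^0$ lifting a basis of $\mathfrak q/\mathfrak q^2$ and setting $A^0=\K[x_1^0,\ldots,x_{m_0}^0]$, the map $\Spec H^0(B^\bu)\to\Spec A^0$ is \emph{\'etale} at $t_0(x)$. This is false in general: \'etaleness would force $t_0(\bX)$ to be smooth of dimension $m_0$ at $x$, which is not assumed---indeed, the entire point is to model derived schemes with singular classical truncation. What is actually true, and what the construction requires, is that this map is a \emph{closed immersion} near $x$: since the $x^0_j$ generate the maximal ideal, the ring map $A^0\to H^0(B^\bu)$ is surjective after localization. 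Compare \S\ref{ln41}: one embeds $\Spec H^0(C^\bu)$ into affine space, cuts out a smooth locally closed $V$ of minimal dimension containing it as a closed subscheme, sets the degree-$0$ algebra to be $\O(V)$, and lifts the surjection $\O(V)\twoheadrightarrow H^0(C^\bu)$ to $\O(V)\to C^\bu$ using smoothness and the Postnikov tower.

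Your inductive step is also muddled. You speak of ``cocycles in $B^{-i}$ whose classes kill $\Ker H^{-(i-1)}(\al_{(i-1)})$'', but that kernel lives in $H^{-(i-1)}(A^\bu_{(i-1)})$, not in $B$. What you need are cycles $z\in A_{(i-1)}^{-(i-1)}$ representing kernel classes, together with witnesses $y\in B^{-i}$ satisfying $\d y=\al_{(i-1)}(z)$; then adjoin $x^{-i}$ with $\d x^{-i}=z$ and extend $\al$ by $x^{-i}\mapsto y$. The cleaner bookkeeping device, used in \S\ref{ln41}, is the relative cotangent complex $\bL_{C^\bu/B^\bu(k)}$ at the point: its cohomology in each degree tells you precisely which generators to adjoin, and its eventual vanishing certifies the quasi-isomorphism.
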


They also explain \cite[Th.~4.2]{BBJ} how to compare two such standard form charts $\bSpec A^\bu\hookra\bX$, $\bSpec B^\bu\hookra\bX$ on their overlap in $\bX$, using a third chart.

\subsection{\texorpdfstring{PTVV's shifted symplectic geometry}{PTVV\textquoteright s shifted symplectic geometry}}
\label{ln23}

Next we summarize parts of the theory of shifted symplectic geometry, as developed by Pantev, To\"{e}n, Vaqui\'{e}, and Vezzosi in \cite{PTVV}. We explain them for derived $\K$-schemes $\bX$, although Pantev et al.\ work more generally with derived stacks.

Given a (locally finitely presented) derived $\K$-scheme $\bX$ and $p\ge 0$, $k\in\Z$, Pantev et al.\ \cite{PTVV} define complexes of $k$-{\it shifted\/ $p$-forms\/} $\cA^p_\K(\bX,k)$ and $k$-{\it shifted closed\/ $p$-forms\/} $\cA^{p,\cl}_\K(\bX,k)$. These are defined first for affine derived $\K$-schemes $\bY=\bSpec A^\bu$ for $A^\bu$ a cdga over $\K$, and shown to satisfy \'etale descent. Then for general $\bX$, $k$-shifted (closed) $p$-forms are defined as a mapping stack; basically, a $k$-shifted (closed) $p$-form $\om$ on $\bX$ is the functorial choice for all $\bY,\bs f$ of a $k$-shifted (closed) $p$-form $\bs f^*(\om)$ on $\bY$ whenever $\bY=\bSpec A^\bu$ is affine and $\bs f:\bY\ra\bX$ is a morphism. 

\begin{dfn} 
\label{ln2def5}
Let $\bY\simeq\bSpec A^\bu$ be an affine derived $\K$-scheme, for $A^\bu$ a cdga over $\K$. A $k$-{\it shifted\/ $p$-form\/} on $\bY$ for $k\in\Z$ is an element $\om_{A^\bu}^0\in(\La^p\bL_{A^\bu})^k$ with $\d\om_{A^\bu}^0=0$ in $(\La^p\bL_{A^\bu})^{k+1}$, so that $\om_{A^\bu}^0$ defines a cohomology class $[\om_{A^\bu}^0]\in H^k(\La^p\bL_{A^\bu})$. When $p=2$, we call $\om_{A^\bu}^0$ {\it nondegenerate\/} if the induced morphism $\om_{A^\bu}^0\cdot:\bT_{A^\bu}\ra\bL_{A^\bu}[k]$ is a quasi-isomorphism.

A {\it $k$-shifted closed\/ $p$-form\/} on $\bY$ is a sequence $\om_{A^\bu}=(\om^0_{A^\bu},\om^1_{A^\bu},\om^2_{A^\bu},\ldots)$ such that $\om^i_{A^\bu}\in(\La^{p+i}\bL_{A^\bu})^{k-i}$ for $i\ge 0$, with $\d\om_{A^\bu}^0=0$ and $\d\om_{A^\bu}^{1+i}+\dd\om_{A^\bu}^{i}=0$ in $(\La^{p+i+1}\bL_{A^\bu})^{k-i}$ for all $i\ge 0$. Note that if $\om_{A^\bu}=(\om^0_{A^\bu},\om^1_{A^\bu},\ldots)$ is a $k$-shifted closed $p$-form then $\om^0_{A^\bu}$ is a $k$-shifted $p$-form. 

When $p=2$, we call a $k$-shifted closed 2-form $\om_{A^\bu}$ a $k$-{\it shifted symplectic form\/} if the associated 2-form $\om^0_{A^\bu}$ is nondegenerate.

If $\bX$ is a general derived $\K$-scheme, then Pantev et al. \cite[\S 1.2]{PTVV} define $k$-{\it shifted\/ $2$-forms\/} $\om_\bX^0$, which may be {\it nondegenerate}, and $k$-{\it shifted closed\/ $2$-forms\/} $\om_\bX$, which have an associated $k$-shifted 2-form $\om_\bX^0$, and where $\om_\bX$ is called a $k$-{\it shifted symplectic form\/} if $\om^0_\bX$ is nondegenerate. We will not go into the details of this definition for general~$\bX$. 

The important thing for us is this: if $\bY\subseteq\bX$ is a Zariski open affine derived $\K$-subscheme with $\bY\simeq\bSpec A^\bu$ then a $k$-shifted symplectic form $\om_\bX$ on $\bX$ induces a $k$-shifted symplectic form $\om_{A^\bu}$ on $\bY$ in the sense above, where $\om_{A^\bu}$ is unique up to cohomology in the complex~$(\prod_{i\ge 0}(\La^{2+i}\bL_{A^\bu})^{*-i},\d+\dd)$. 
\end{dfn}

As in \cite[\S 2.1]{PTVV}, in the stacky case, an important source of examples of shifted symplectic derived stacks are Calabi--Yau moduli stacks:

\begin{thm} 
\label{ln2thm2}
Suppose $Y$ is a Calabi--Yau $m$-fold over $\K,$ and\/ $\bs\cM$ the derived moduli stack of complexes of coherent sheaves on $Y$. Then $\bs\cM$ has a natural\/ $(2-m)$-shifted symplectic form $\om_{\bs\cM}$.
\end{thm}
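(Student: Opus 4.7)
The plan is to follow the AKSZ-style construction of Pantev--To\"en--Vaqui\'e--Vezzosi, expressing $\bs\cM$ as a mapping stack and transferring a universal shifted symplectic form from the target along an oriented base.

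First I would identify $\bs\cM$ with the derived mapping stack $\bs{\mathrm{Map}}(Y,\mathrm{Perf}_\K)$, where $\mathrm{Perf}_\K$ is the derived stack of perfect complexes of $\K$-modules. A morphism $T\ra\bs{\mathrm{Map}}(Y,\mathrm{Perf}_\K)$ is a perfect complex on $T\t Y$, which is exactly a $T$-family of perfect complexes on $Y$, so the two stacks are equivalent. This reduces the problem to producing a shifted symplectic form on a mapping stack.

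Second, I would equip $\mathrm{Perf}_\K$ with its canonical $2$-shifted symplectic form. The tangent complex of $\mathrm{Perf}_\K$ at a point $E$ is $\mathrm{REnd}(E)[1]$, and the shifted trace pairing
\[
\mathrm{REnd}(E)[1]\ot\mathrm{REnd}(E)[1]\longra\K[2],\qquad (a,b)\longmapsto\mathrm{tr}(a\ci b),
\]
is a non-degenerate symmetric pairing of degree $2$. PTVV show that this $2$-form lifts canonically to a closed $2$-shifted $2$-form, the Chern character $\mathrm{ch}_2$ of the universal perfect complex on $\mathrm{Perf}_\K$, giving $\mathrm{Perf}_\K$ a $2$-shifted symplectic structure $\om_{\mathrm{Perf}}$.

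Third, I would use the Calabi--Yau structure on $Y$ to produce an $\O$-orientation of dimension $m$. A trivialization $\K_Y\cong\O_Y$ of the canonical bundle, together with Serre duality, yields a quasi-isomorphism $\mathrm{R}\Ga(Y,-)\simeq\mathrm{R}\Ga(Y,-\ot\K_Y[m])^\vee[-m]$, equivalently a non-degenerate integration map $\int_Y:\mathrm{R}\Ga(Y,\O_Y)\ra\K[-m]$ making $Y$ into an $\O$-compact oriented derived stack of dimension $m$ in the sense of \cite[\S 2.1]{PTVV}.

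Finally, I would invoke the PTVV transfer theorem \cite[Th.~2.5]{PTVV}: if $X$ is a derived stack with a $k$-shifted symplectic form and $Y$ is an $\O$-compact oriented stack of dimension $d$, then $\bs{\mathrm{Map}}(Y,X)$ carries a natural $(k-d)$-shifted symplectic form, obtained by pulling back along the universal evaluation $\mathrm{ev}:Y\t\bs{\mathrm{Map}}(Y,X)\ra X$ and integrating over $Y$ using $\int_Y$. Applying this with $(X,\om_X)=(\mathrm{Perf}_\K,\om_{\mathrm{Perf}})$, $k=2$ and $d=m$ produces a $(2-m)$-shifted symplectic form $\om_{\bs\cM}=\int_Y\mathrm{ev}^*(\om_{\mathrm{Perf}})$ on~$\bs\cM$.

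The main obstacle is verifying non-degeneracy of the resulting closed $2$-form, which amounts to the statement that Serre duality on $Y$, combined with the trace pairing on $\mathrm{REnd}(E)$, induces a quasi-isomorphism between the tangent and shifted cotangent complexes of $\bs\cM$ at a point $[E]$. Concretely, at $[E]$ the tangent complex is $\mathrm{R}\Ga(Y,\mathrm{REnd}(E))[1]$, and one must check that the pairing
\[
\mathrm{R}\Ga(Y,\mathrm{REnd}(E))[1]\ot\mathrm{R}\Ga(Y,\mathrm{REnd}(E))[1]\longra\K[2-m]
\]
obtained by composing, taking trace, and integrating using the CY form, is perfect; this follows from Serre duality applied to the perfect complex $\mathrm{REnd}(E)$ on $Y$. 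The remaining work, that the form is closed and behaves functorially, is built into the construction of $\int_Y\mathrm{ev}^*(-)$ as a morphism on complexes of closed shifted forms.
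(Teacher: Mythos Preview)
Your proposal is correct and is precisely the PTVV argument. Note, however, that the paper does not give its own proof of this statement: Theorem~\ref{ln2thm2} is stated as background and attributed to \cite[\S 2.1]{PTVV}, so there is no in-paper proof to compare against. What you have written is a faithful sketch of the original PTVV proof (identify $\bs\cM$ with $\bs{\mathrm{Map}}(Y,\mathrm{Perf}_\K)$, use the canonical $2$-shifted symplectic form on $\mathrm{Perf}_\K$, and apply the integration/transfer theorem \cite[Th.~2.5]{PTVV} along the $\O$-orientation of $Y$), so it is entirely appropriate here.
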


\subsection{Lagrangians in shifted symplectic derived schemes}
\label{ln24}

Following Pantev et al.\ \cite[\S 2.2]{PTVV}, we define:

\begin{dfn} 
\label{ln2def6}
Let $(\bX,\om_\bX)$ be a $k$-shifted symplectic derived $\K$-scheme, and $\bs f:\bs L\ra\bX$ a morphism of derived $\K$-schemes. An {\it isotropic structure\/} on $\bs f$ is a homotopy $h_{\bs L}$ from 0 to $\bs f^*(\om_\bX)$ in the complex $\cA^{2,\cl}_\K(\bs L,k)$, regarded as a simplicial set. Truncating to the first term $\cA^{2,\cl}_\K(\bs L,k)\ra \cA^2_\K(\bs L,k)$ gives a homotopy $h_{\bs L}^0$ from 0 to $\bs f^*(\om_\bX^0)$ in~$\cA^2_\K(\bs L,k)$. 

This induces a 2-commutative diagram in $L_\qcoh(\bs L)$:
\e
\begin{gathered}
\xymatrix@C=60pt@R=15pt{ *+[r]{\bT_{\bs L}} \drrtwocell_{}\omit^{^{h_{\bs L}^0\cdot\,\,\,\,\,\,\,\,\,\,\,\,\,\,\,\,\,}}\omit{} \ar[rr] \ar[d]^{\bT_{\bs f}} && *+[l]{0} \ar[d] \\
 *+[r]{\bs f^*(\bT_\bX)} \ar[r]^{\bs f^*(\om_\bX^0)\cdot}_\simeq & \bs f^*(\bL_\bX[k]) \ar[r]^{\bL_{\bs f}[k]} & *+[l]{\bL_{\bs L}[k].\!} }
\end{gathered}
\label{ln2eq2}
\e
We say that $h_{\bs L}^0$ is {\it nondegenerate\/} if \eq{ln2eq2} is homotopy Cartesian (equivalently, homotopy co-Cartesian), and then we say that $\bs L$ (with its morphism $\bs f:\bs L\ra\bX$ and isotropic structure $h_{\bs L}$) is {\it Lagrangian\/} in $(\bX,\om_\bX)$.

An alternative way to explain the nondegeneracy of $h_{\bs L}^0$ is to note that it induces a natural morphism $\chi:\bT_{\bs L/\bX}\ra\bL_{\bs L}[k-1]$ via the diagram
\e
\begin{gathered}
\xymatrix@C=60pt@R=15pt{ *+[r]{\bT_{\bs L}} \drrtwocell_{}\omit^{^{h_{\bs L}^0\cdot\,\,\,\,\,\,\,\,\,\,\,\,\,\,\,\,\,}}\omit{} \ar[rr] \ar[d]^{\bT_{\bs f}} && *+[l]{0} \ar[d] \\
 *+[r]{\bs f^*(\bT_\bX)} \ar[r]^{\bs f^*(\om_\bX^0)\cdot}_\simeq \ar[d] & \bs f^*(\bL_\bX[k]) \ar[r]^{\bL_{\bs f}[k]} & *+[l]{\bL_{\bs L}[k]} \\
*+[r]{\bT_{\bs L/\bX}[1],} \ar@{.>}[urr]_{\chi[1]}
 }
\end{gathered}
\label{ln2eq3}
\e
and $h_{\bs L}^0$ is nondegenerate if $\chi:\bT_{\bs L/\bX}\ra\bL_{\bs L}[k-1]$ is a quasi-isomorphism.

Now suppose that $\bX\simeq\bSpec A^\bu$ and $\bs L\simeq\bSpec B^\bu$ are affine, and $\bs f$ is induced by a morphism $\al:A^\bu\ra B^\bu$ in $\cdga_\K$, and $\om_\bX$ lifts to $\om_{A^\bu}=(\om^0_{A^\bu},\om^1_{A^\bu},\om^2_{A^\bu},\ldots)$ in $(\prod_{i\ge 0}(\La^{2+i}\bL_{A^\bu}[k])^{*-i},\d+\dd)$ as in Definition \ref{ln2def5}. Then we can write $h_{\bs L}$ as a sequence $(h^0,h^1,h^2,\ldots)$ with $h^i\in(\La^{2+i}\bL_{B^\bu})^{1+k-i}$ for $i=0,1,\ldots,$ where $h_{\bs L}$ an isotropic structure is equivalent to the equations
\e
\al_*(\om_{A^\bu}^0)=\d h^0,\qquad \al_*(\om^i_{A^\bu})=\d h^i+\dd h^{i-1},\qquad i=1,2,\ldots.
\label{ln2eq4}
\e

\end{dfn}

\begin{rem} 
\label{ln2rem2}
Let us discuss virtual dimensions of shifted symplectic derived $\K$-schemes and their Lagrangians. If $(\bX,\om_\bX)$ is a $k$-shifted symplectic derived $\K$-scheme, it is easy to show (e.g. using the `Darboux Theorem' in \S\ref{ln25}) that
\begin{itemize}
\setlength{\itemsep}{0pt}
\setlength{\parsep}{0pt}
\item[(i)] If $k\equiv 0\mod 4$ then $\vdim\bX$ is even in $\Z$.
\item[(ii)] If $k\equiv 1\mod 4$ then $\vdim\bX=0$.
\item[(iii)] If $k\equiv 2\mod 4$ then $\vdim\bX$ can take any value in $\Z$.
\item[(iv)] If $k\equiv 3\mod 4$ then $\vdim\bX=0$.
\end{itemize}

Now suppose $\bs f:\bs L\ra\bX$, $h_{\bs L}$ is Lagrangian in $(\bX,\om_\bX)$. Then we find that:
\begin{itemize}
\setlength{\itemsep}{0pt}
\setlength{\parsep}{0pt}
\item[(i$)'$] If $k\equiv 0\mod 4$ then $\vdim\bs L=\ha\vdim\bX$.
\item[(ii$)'$] If $k\equiv 1\mod 4$ then $\vdim\bs L$ can take any even value in $\Z$.
\item[(iii$)'$] If $k\equiv 2\mod 4$ then $\vdim\bX$ must be even (at least near the image of $\bs L$ in $\bX$), and $\vdim\bs L=\ha\vdim\bX$.
\item[(iv$)'$] If $k\equiv 3\mod 4$ then $\vdim\bs L$ can take any value in $\Z$.
\end{itemize}
So if $k\equiv 2\mod 4$ and $\vdim\bX$ is odd then no Lagrangians exist in $(\bX,\om_\bX)$.
\end{rem}

\begin{ex} Take $\bX=\Spec\K$ to be the point $*$, regarded as a $k$-shifted symplectic derived $\K$-scheme with symplectic form $\om_\bX=0$. Then Lagrangians $\bs L$ in $(*,0)$ are equivalent to $(k-1)$-shifted symplectic derived $\K$-schemes~$(\bs L,\om_{\bs L})$.

\label{ln2ex1}
\end{ex}

Pantev et al.\ \cite[Th.~2.10]{PTVV} prove:

\begin{thm} 
\label{ln2thm3}
Suppose $(\bX,\om_\bX)$ is a $k$-shifted symplectic derived\/ $\K$-scheme, and\/ $\bs f_1:\bs L_1\ra\bX$ and\/ $\bs f_2:\bs L_2\ra\bX$ are Lagrangians in $(\bX,\om_\bX)$. Then the fibre product $\bs L_1\t_{\bs f_1,\bX,\bs f_2}\bs L_2$ in $\dSch_\K$ has a natural\/ $(k-1)$-shifted symplectic structure.
\end{thm}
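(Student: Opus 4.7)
The plan is to construct the shifted 2-form on $\bs M:=\bs L_1\times_\bX\bs L_2$ from the two Lagrangian homotopies, then verify nondegeneracy using the tangent fibre sequence of the homotopy pullback together with the identifications provided by $\om_\bX$ and by the Lagrangian structures themselves.

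First I would set up notation: let $\bs\pi_i:\bs M\ra\bs L_i$ be the projections and $\bs g:=\bs f_1\ci\bs\pi_1\simeq\bs f_2\ci\bs\pi_2:\bs M\ra\bX$ the composite, where the homotopy $\bs f_1\bs\pi_1\simeq\bs f_2\bs\pi_2$ is part of the homotopy-pullback data. Pulling back each Lagrangian homotopy $h_{\bs L_i}$ along $\bs\pi_i$ yields two $1$-simplices in the simplicial set $\cA^{2,\cl}_\K(\bs M,k)$ from $0$ to $\bs g^*(\om_\bX)$. Concatenating $\bs\pi_1^*(h_{\bs L_1})$ with the reverse of $\bs\pi_2^*(h_{\bs L_2})$ produces a loop based at $0$, that is, an element $\om_{\bs M}\in H^{k-1}\cA^{2,\cl}_\K(\bs M,*)$, i.e.\ a $(k-1)$-shifted closed 2-form on $\bs M$. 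Over an affine chart this is concretely the formal difference $\bs\pi_1^*(h_{\bs L_1}^i)-\bs\pi_2^*(h_{\bs L_2}^i)$, and one checks that the isotropy equations \eq{ln2eq4} for the two sides collapse to the closed-form conditions in one lower degree.

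For nondegeneracy I would exploit that $\bs M$ is a homotopy pullback, which gives a distinguished triangle
\e
\bT_{\bs M}\longra\bs\pi_1^*(\bT_{\bs L_1})\oplus\bs\pi_2^*(\bT_{\bs L_2})\longra\bs g^*(\bT_\bX)\longra\bT_{\bs M}[1]
\label{planeq1}
\e
in $L_\qcoh(\bs M)$, and the dual triangle for cotangents. I would assemble a morphism from \eq{planeq1} to the triangle for $\bL_{\bs M}[k-1]$ as follows: on the middle term use the direct sum of the Lagrangian pairings $h_{\bs L_1}^0\cdot\oplus h_{\bs L_2}^0\cdot$, which by \eq{ln2eq3} induce quasi-isomorphisms $\bT_{\bs L_i/\bX}\simeq\bL_{\bs L_i}[k-1]$; on the right-hand term use $\om_\bX^0\cdot:\bs g^*(\bT_\bX)\simeq\bs g^*(\bL_\bX[k])$. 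The $2$-cells from \eq{ln2eq2} pulled back along $\bs\pi_i$, together with the pullback 2-cell $\bs f_1\bs\pi_1\simeq\bs f_2\bs\pi_2$, supply the homotopies needed to make this a morphism of fibre sequences. By construction, the induced map on the left-hand fibre is $\om_{\bs M}^0\cdot:\bT_{\bs M}\ra\bL_{\bs M}[k-1]$, so a five-lemma / long-exact-sequence argument forces it to be a quasi-isomorphism.

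The main obstacle is producing $\om_{\bs M}$ as a genuine $(k-1)$-shifted closed 2-form in PTVV's sense, coherently compatible with the full simplicial structure on $\cA^{2,\cl}_\K(\bs M,-)$, rather than merely as a class in $H^{k-1}$: one must match the two pulled-back homotopies $\bs\pi_i^*(h_{\bs L_i})$ against the pullback homotopy $\bs f_1\bs\pi_1\simeq\bs f_2\bs\pi_2$ in a way that respects all higher coherences. Analogously, the nondegeneracy argument requires choosing homotopies between the three Lagrangian/symplectic pairings that are compatible with the same pullback $2$-cell, which is where PTVV's treatment of the global stack of closed forms, together with \'etale descent for $\cA^{2,\cl}_\K$, is essential.
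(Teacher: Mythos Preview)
The paper does not prove this theorem at all: it is quoted as \cite[Th.~2.10]{PTVV} in the background section \S\ref{ln24}, with no argument given. So there is no ``paper's own proof'' to compare against.

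That said, your sketch is essentially the PTVV argument. The construction of $\om_{\bs M}$ as the concatenation of $\bs\pi_1^*(h_{\bs L_1})$ with the reverse of $\bs\pi_2^*(h_{\bs L_2})$ is exactly how PTVV produce the closed form, and the nondegeneracy via the tangent fibre sequence of the homotopy pullback together with the two Lagrangian identifications $\bT_{\bs L_i/\bX}\simeq\bL_{\bs L_i}[k-1]$ and the symplectic identification $\bT_\bX\simeq\bL_\bX[k]$ is the standard diagram chase. Your ``main obstacle'' paragraph correctly identifies where the work lies: making the loop construction coherent at the level of the simplicial set $\cA^{2,\cl}_\K(\bs M,k)$ rather than just at the level of cohomology classes, and matching up the various $2$-cells when building the morphism of triangles. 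PTVV handle this by working directly with the mapping-space description of closed forms; you have named the right issue, so there is no genuine gap in your plan.
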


In the stacky case, Calaque \cite[\S 3.2]{Cala} extends Theorem~\ref{ln2thm2}:

\begin{thm} 
\label{ln2thm4}
Suppose\/ $X$ is a Fano $(m+1)$-fold over $\K,$ and\/ $Y\subseteq X$ is a smooth anticanonical divisor, so that\/ $Y$ is a Calabi--Yau $m$-fold. Write $\bs\cL,\bs\cM$ for the derived moduli stacks of complexes of coherent sheaves on $X,Y,$ and\/ $\bs f:\bs\cL\ra\bs\cM$ for the morphism of derived restriction from $X$ to $Y$. Theorem\/ {\rm\ref{ln2thm2}} gives a $(2-m)$-shifted symplectic structure $\om_{\bs\cM}$ on $\bs\cM$. Then there is a natural isotropic structure $h_{\bs\cL}$ on $\bs f:\bs\cL\!\ra\!\bs\cM$ making $\bs\cL$ into a Lagrangian in~$(\bs\cM,\om_{\bs\cM})$.

\end{thm}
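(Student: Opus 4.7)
The plan is to interpret this as a global instance of the general principle that a \emph{relative Calabi--Yau structure} on an inclusion $Y\hookrightarrow X$ produces a Lagrangian structure on the associated restriction map of moduli stacks. The relative CY structure here comes from the Poincar\'e residue: since $Y\in\lvert K_X^{-1}\rvert$, the section $s\in H^0(X,K_X^{-1})$ cutting out $Y$ gives a short exact sequence
\e
0\longra K_X\longra\O_X\longra i_*\O_Y\longra 0,
\label{resseq}
\e
where $i:Y\hookra X$. Together with the Calabi--Yau trivialisation $K_Y\simeq\O_Y$ obtained by adjunction from \eq{resseq}, this packages $(X,Y)$ as an $(m+1)$-dimensional orientation relative to the $m$-dimensional orientation on $Y$, and the whole argument consists in transporting this relative orientation datum through the PTVV/AKSZ construction that produces $\om_{\bs\cM}$ in Theorem~\ref{ln2thm2}.

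The concrete steps I would carry out are as follows. First, identify tangent complexes: at $[F]\in\bs\cL$ and $[F\vert_Y]\in\bs\cM$ one has $\bT_{\bs\cL}\vert_F\simeq\bR\Hom_X(F,F)[1]$ and $\bT_{\bs\cM}\vert_{F\vert_Y}\simeq\bR\Hom_Y(F\vert_Y,F\vert_Y)[1]$, and the derivative of $\bs f$ is the pullback map. Tensoring \eq{resseq} with $\bR\Hom_X(F,F)$ and using $\bR\Hom_X(F,i_*i^*F)\simeq\bR\Hom_Y(F\vert_Y,F\vert_Y)$ gives a distinguished triangle
\e
\bR\Hom_X(F,F\ot K_X)\longra\bR\Hom_X(F,F)\longra\bR\Hom_Y(F\vert_Y,F\vert_Y),
\label{tri}
\e
which globalises to $\bs\cL$. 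Second, construct the $2$-form and isotropic homotopy. The symplectic form $\om_{\bs\cM}^0$ is the Serre pairing on $Y$ built from composition, trace and the trivialisation $K_Y\simeq\O_Y$. Its pullback $\bs f^*(\om_{\bs\cM}^0)$ at $F$ is the pairing on $\bR\Hom_X(F,F)$ obtained by composing, taking trace into $\bR\Gamma(X,\O_X)$, and then projecting via $\O_X\to i_*\O_Y$. By \eq{resseq} this projection is canonically null-homotopic through $K_X[1]$, and Serre duality on the Fano $X$ converts that null-homotopy into the desired primitive $h^0_{\bs\cL}$ for the $2$-form part. The higher components $h^i$ are then built from the same data applied to the full de Rham--Hochschild filtration in the PTVV construction of $\om_{\bs\cM}$; this is where I would invoke the AKSZ formalism on mapping stacks from $Y$ and $X$ respectively into the derived stack of perfect complexes, with the boundary $Y\hookra X$ producing a \emph{Lagrangian} rather than a symplectic structure.

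Third, verify nondegeneracy. Under the identifications above, the triangle \eq{tri} is exactly the triangle $\bT_{\bs\cL/\bs\cM}\to\bT_{\bs\cL}\to\bs f^*\bT_{\bs\cM}$ shifted appropriately, and Serre duality on $X$ identifies $\bR\Hom_X(F,F\ot K_X)\simeq\bR\Hom_X(F,F)^\vee[-m-1]\simeq\bL_{\bs\cL}[m-1]\vert_F$. This gives the map $\chi:\bT_{\bs\cL/\bs\cM}\to\bL_{\bs\cL}[k-1]$ of \eq{ln2eq3} with $k=2-m$, and shows it is a quasi-isomorphism, verifying nondegeneracy of $h^0_{\bs\cL}$.

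The main obstacle, as always in this area, is \emph{not} the pointwise/linear-algebra check but the homotopy-coherent upgrade: turning the chain-level null-homotopy coming from \eq{resseq} into a genuine element of the mapping space $\cA^{2,\cl}_\K(\bs\cL,2-m)$ compatible with $\bs f^*(\om_{\bs\cM})$, and doing so functorially in $F$ over the whole derived moduli stack $\bs\cL$. The cleanest way to handle this is to phrase the construction directly at the level of $\infty$-functors $\bs\cL\to\dSt_\K^{\rm or\text{-}rel}$ landing in oriented relative mapping stacks, where the Lagrangian structure is produced formally by an AKSZ transfer from the relative orientation on $(X,Y)$ to the $(2-m)$-shifted symplectic target $\bs\cM$, exactly parallelling PTVV's proof of Theorem~\ref{ln2thm2}.
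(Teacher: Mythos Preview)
The paper does not give its own proof of this theorem: it is stated as background and attributed to Calaque \cite[\S 3.2]{Cala}. So there is no in-paper argument to compare against.

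That said, your outline is essentially the approach Calaque takes. His proof is phrased in the AKSZ/mapping-stack language: the pair $(X,Y)$ with $Y$ a smooth anticanonical divisor carries a relative orientation (a ``boundary structure'' in his terminology), and for any target with an $n$-shifted symplectic structure---here the stack of perfect complexes---the restriction map from the mapping stack out of $X$ to the mapping stack out of $Y$ is Lagrangian. Your residue sequence \eq{resseq} is exactly the datum encoding the relative orientation, and your pointwise identification of $\chi$ via Serre duality on $X$ is the nondegeneracy check. The part you flag as the main obstacle---upgrading the chain-level nullhomotopy to a coherent closed structure over the whole stack---is precisely what the AKSZ formalism packages for free once one has set up mapping stacks with boundary; Calaque does not do this ``by hand'' at the level of $\cA^{2,\cl}$, and neither should you. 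So your proposal is correct in spirit and matches the cited source, though it is more detailed at the linear-algebra level and less explicit about the $\infty$-categorical machinery than Calaque's actual write-up.
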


\subsection{\texorpdfstring{A shifted symplectic `Darboux Theorem'}{A shifted symplectic \textquoteleft Darboux Theorem\textquoteright}}
\label{ln25}

Bussi, Brav and Joyce \cite{BBJ} prove `Darboux Theorems' for $k$-shifted symplectic derived $\K$-schemes $(\bX,\om_\bX)$ for $k<0$, which give explicit Zariski or \'etale local models for $(\bX,\om_\bX)$. We will explain their main result in Theorem \ref{ln2thm5} below. First, in Examples \ref{ln2ex2} and \ref{ln2ex3} we define families of explicit `Darboux form' $k$-shifted symplectic cdgas $A^\bu,\om$ for~$k<0$. 

\begin{ex} 
\label{ln2ex2}
Let $k=-1,-2,\ldots,$ and set $d=[(k+1)/2]$, so that $d=k/2$ if $k$ is even (giving $k=2d$), and $d=(k+1)/2$ if $k$ is odd (giving $k=2d-1$). Following \cite[Examples 5.8 \& 5.9]{BBJ}, we will define a simple class of standard form cdgas $A^\bu=(A^*,\d)$ equipped with explicit $k$-shifted symplectic forms $\om=(\om^0,0,0,\ldots)$, which we will call of {\it Darboux form}.

Fix nonnegative integers $m_0,m_{-1},m_{-2},\ldots,m_d$. Choose a smooth $\K$-algebra $A^0$ of dimension $m_0$. Localizing $A^0$ if necessary, we may assume that there exist $x^0_1,\ldots,x^0_{m_0}\in A^0$ such that $\dd x^0_1,\ldots,\dd x^0_{m_0}$ form a basis of $\Om^1_{A^0}$ over $A^0$. Geometrically, $U=\Spec A^0$ is a smooth $\K$-scheme of dimension
$m_0$, and $(x^0_1,\ldots,x^0_{m_0}):U\ra\bA^{m_0}$ are global \'etale coordinates on~$U$.

Define $A^*$ as a commutative graded $\K$-algebra to be the free graded algebra over $A^0$
generated by variables
\e
\begin{aligned}
&x_1^i,\ldots,x^i_{m_i} &&\text{in degree $i$ for
$i=-1,-2,\ldots,d$, and} \\
&y_1^{k-i},\ldots,y^{k-i}_{m_i} &&\text{in degree $k-i$ for
$i=0,-1,\ldots,d$.}
\end{aligned}
\label{ln2eq5}
\e
So the upper index $i$ in $x^i_j,y^i_j$ always indicates the degree. The variables come in pairs $x^i_j,y_j^{k-i}$, with total degree $k$. We will define the differential $\d$ in the cdga $A^\bu=(A^*,\d)$ later.

As in \S\ref{ln21}, the spaces $(\La^p\Om^1_{A^\bu})^k$ and the de Rham differential $\dd$ upon them depend only on the commutative graded algebra $A^*$, not on the (not yet defined)
differential $\d$. Note that $\Om^1_{A^\bu}$ is the free $A^*$-module with
basis $\dd x^i_j,\dd y^{k-i}_j$ for $i=0,-1,\ldots,d$ and
$j=1,\ldots,m_i$. Define an element
\e
\om^0=\sum_{i=0}^d\sum_{j=1}^{m_i}\dd x^i_j\,\dd y^{k-i}_j
\qquad \text{in $(\La^2\Om^1_{A^\bu})^k$.}
\label{ln2eq6}
\e
Clearly $\dd\om^0=0$ in $(\La^3\Om^1_{A^\bu})^k$.

Now choose a superpotential $\Phi$ in $A^{k+1}$, called the {\it Hamiltonian}, which we require to satisfy the {\it classical master equation\/}
\e
\sum_{i=-1}^d\sum_{j=1}^{m_i}\frac{\pd\Phi}{\pd x^i_j}\,
\frac{\pd\Phi}{\pd y^{k-i}_j}=0\qquad\text{in $A^{k+2}$.}
\label{ln2eq7}
\e
Define the differential $\d$ on $A^*$ by $\d=0$ on $A^0$, and
\e
\d x^i_j =(-1)^{(i+1)(k+1)}\,\frac{\pd\Phi}{\pd y^{k-i}_j}, \quad \d
y^{k-i}_j=\,\frac{\pd\Phi}{\pd x^i_j},\quad\begin{subarray}{l}\ts
i=0,\ldots,d,\\[6pt] \ts j=1,\ldots,m_i.\end{subarray}
\label{ln2eq8}
\e
Equation \eq{ln2eq7} implies that $\d\ci\d=0$.

Then $A^\bu=(A^*,\d)$ is a standard form cdga, as in Definition \ref{ln2def3}, with
\begin{equation*}
\vdim A^\bu=\begin{cases} 2\sum_{i=0}^d(-1)^im_i, & \text{$k$ even,} \\ 
0, & \text{$k$ odd,}\end{cases}
\end{equation*}
so that $\vdim A^\bu$ is always even (compare Remark \ref{ln2rem2}). Also $\d\om^0=\dd\om^0=0$, and $\om:=(\om^0,0,0,\ldots)$ is a $k$-shifted symplectic structure on $\bX=\bSpec A^\bu$, as in \cite[\S 5.3]{BBJ}. Define $\phi\in(\Om^1_{A^\bu})^k$ by 
\e
\phi=\sum_{i=0}^d\sum_{j=1}^{m_i}\bigl[i\,x^i_j\,\dd y^{k-i}_j+(-1)^{(i+1)(k+1)}(k-i)y^{k-i}_j\,\dd x^i_j\bigr].
\label{ln2eq9}
\e
Then we have
\e
\d\Phi=0\in A^{k+2},\;\> \dd\Phi+\d\phi=0\in (\Om^1_{A^\bu})^{k+1},\;\>
\dd\phi=k\om^0\in(\La^2\Om^1_{A^\bu})^k.
\label{ln2eq10}
\e
We say that $A^\bu,\om$ are in {\it Darboux form}.

In the first case $k=-1$, as in \cite[Prop.~5.7(b)]{BBJ} we impose an additional condition on $\Phi$. In this case $\Phi:U\ra\bA^1$ is a regular function, and $\bX=\bs\Crit(\Phi)$ is the derived critical locus of $\Phi$, so $X=t_0(\bX)=\Crit(\Phi)$ is the classical critical locus of $\Phi$. The restriction $\Phi\vert_{X^\red}:X^\red\ra\bA^1$ of $\Phi$ to the reduced $\K$-subscheme $X^\red$ of $X$ is locally constant. By adding a constant to $\Phi$ (which does not change $\bX$) and localizing, we may assume that~$\Phi\vert_{X^\red}=0$.
\end{ex}

\begin{rem} 
\label{ln2rem3}
Continue in the situation of Example \ref{ln2ex2}. The following notation was not defined in \cite{BBJ}, but will be important in \S\ref{ln3}--\S\ref{ln4}. Define $A^\bu_+$ to be the sub-cdga of $A^\bu$ generated (either as a cdga or equivalently as a graded algebra) by $A^0$ and the variables $x^i_j$ for $i=-1,-2,\ldots,d$ and $j=1,\ldots,m_i$. Then $A^\bu_+$ is of standard form with $\vdim A^\bu_+=\sum_{i=0}^d(-1)^im_i$. Write $\io:A^\bu_+\hookra A^\bu$ for the inclusion morphism, which is a submersion. Then we have a fibre sequence
\begin{equation*}
\xymatrix@C=40pt{ \bL_{A^\bu_+}\ot_{A^\bu_+}A^\bu \ar[r]^(0.6){\bL_\io} & \bL_{A^\bu} \ar[r] &  \bL_{A^\bu/A^\bu_+}. }
\end{equation*}
Taking $\bL_{A^\bu}=\Om^1_{A^\bu}$, $\bL_{A^\bu_+}=\Om^1_{A^\bu_+}$, and $\bL_{A^\bu/A^\bu_+}=\Om^1_{A^\bu/A^\bu_+}$ as $A^\bu,A^\bu_+$ are of standard form and $\io$ is a submersion, as in \eq{ln2eq1} we have 
\begin{align*}
\bL_{A^\bu}&\cong A^*\ot_\K\an{\dd x_j^i,\dd y_j^{k-i}:i\!=\!0,-1,\ldots,d,\; j\!=\!1,\ldots,m_i}_\K,\!\!\\
\bL_{A^\bu_+}\!\ot_{A^\bu_+}\!A^\bu&\cong A^*\ot_\K\an{\dd x_j^i:i=0,-1,\ldots,d,\; j=1,\ldots,m_i}_\K,\\
\bL_{A^\bu/A^\bu_+}&\cong A^*\ot_\K\an{\dd y_j^{k-i}:i=0,-1,\ldots,d,\; j=1,\ldots,m_i}_\K.
\end{align*}

We will also find it helpful to decompose $\Phi\in A^{k+1}$ into components. Observe that as $\deg(y_j^{k-i})\le k-d$ and $2(k-d)<k+1$, for degree reasons $\Phi$ can be at most linear in the variables $y_j^{k-i}$, so we may write
\e
\Phi=\Phi_++\sum_{i=-1}^d\sum_{j=1}^{m_i}\Phi_j^{i+1}y_j^{k-i},
\label{ln2eq11}
\e
where $\Phi_+\in A_+^{k+1}$ and $\Phi_j^{i+1}\in A_+^{i+1}$ for all $i,j$ do not involve the variables $y^i_j$. Then equation \eq{ln2eq7} is equivalent to the equations
\ea
\sum_{i=-1}^d\sum_{j=1}^{m_i}(-1)^{i+1}\Phi_j^{i+1}\,\frac{\pd\Phi_+}{\pd x^i_j}=0\qquad\text{in $A_+^{k+2}$,}
\label{ln2eq12}\\
\sum_{i=-1}^{i'+1}\sum_{j=1}^{m_i}(-1)^{i+1}\Phi_j^{i+1}\,\frac{\pd\Phi_{j'}^{i'+1}}{\pd x^i_j}\,=0\quad\text{in $A_+^{i'+2}$,}\;\>
\begin{subarray}{l}\ts
i'=-1,\ldots,d,\\[6pt] \ts j'=1,\ldots,m_{i'},\end{subarray}
\label{ln2eq13}
\ea
and \eq{ln2eq8} is equivalent to the equations for $i=0,\ldots,d$ and $j=1,\ldots,m_i$:
\e
\d x^i_j=(-1)^{i+1}\Phi_j^{i+1}, \;\> \d
y^{k-i}_j=\frac{\pd\Phi_+}{\pd x^i_j}+\sum_{i'=i-1}^d\sum_{j'=1}^{m_{i'}}
\frac{\pd\Phi_{j'}^{i'+1}}{\pd x^i_j}\,y_{j'}^{k-i'}.
\label{ln2eq14}
\e

Define
\e
\phi_+=-\sum_{i=0}^d\sum_{j=1}^{m_i}(-1)^{(i+1)(k+1)}y^{k-i}_j\,\dd x^i_j\qquad\text{in $(\Om^1_{A^\bu})^k$.}
\label{ln2eq15}
\e
Then as for \eq{ln2eq10}, calculation shows that
\e
\d\Phi_+=0,\quad \dd\Phi_++\d\phi_+=0,\quad\text{and}\quad
\dd\phi_+=-\om^0.
\label{ln2eq16}
\e

A nice interpretation of $\io:A^\bu_+\ra A^\bu$, which we will not actually use, is that $\bSpec\io:\bSpec A^\bu\ra\bSpec A^\bu_+$ is a {\it Lagrangian fibration\/} of~$(\bSpec A^\bu,\om)$. 

We can also use this example to explain the relation between the `Darboux Theorems' of Bussi, Brav and Joyce \cite{BBJ}, and Bouaziz and Grojnowski \cite{BoGr}. Bouaziz and Grojnowski show that any $k$-shifted symplectic derived $\K$-scheme $(\bX,\om_\bX)$ for $k<0$ with $k\not\equiv 2\mod 4$ is \'etale locally equivalent to a {\it twisted\/ $k$-shifted cotangent bundle\/} $T^*_t[k]\bY$, where $\bY$ is an affine derived $\K$-scheme, and $t\in\O_\bY^{k+1}$ with $\d t=0$ is used to `twist' the $k$-shifted cotangent bundle~$T^*[k]\bY$.

To make the two pictures correspond, we should identify $\bY$ with $\bSpec A^\bu_+$, and $\Phi_+\in A_+^{k+1}$ with $t\in \O_\bY^{k+1}$. The data $\Phi_j^{i+1}$ in $\Phi$ in \eq{ln2eq11} is used to define the differential $\d$ in $A^\bu_+=(A^*_+,\d)$, via $\d x^i_j=(-1)^{i+1}\Phi_j^{i+1}$ in \eq{ln2eq14}. The classical master equation \eq{ln2eq7} reduces to \eq{ln2eq12}--\eq{ln2eq13}, where \eq{ln2eq12} means that $\d\Phi_+=0$, and \eq{ln2eq13} means that $\d\ci\d=0$ in $A^\bu_+=(A^*_+,\d)$, necessary for $A_+^\bu$ to be a cdga and $\bY$ a derived scheme.

Remark \ref{ln3rem1} will explain how our `Lagrangian Neighbourhood Theorem' relates to Bouaziz and Grojnowski's `twisted cotangent bundle' picture.
\end{rem}

Bussi, Brav and Joyce \cite[Examples 5.10 \& 5.12]{BBJ} also give two variations on Example \ref{ln2ex2} when~$k\equiv 2\mod 4$:

\begin{ex} 
\label{ln2ex3}
Let $k=-2,-6,-10,\ldots$ be negative with $k\equiv 2\mod 4$, and set $d=k/2$, so that $d$ is negative and odd. Fix nonnegative integers $m_0,m_{-1},m_{-2},\ldots,m_d$. Choose $A^0,x^0_1,\ldots,x^0_{m_0}$ and $U$ as in Example~\ref{ln2ex2}.

Modifying \eq{ln2eq5}, define $A^*$ as a commutative graded $\K$-algebra to be the free graded algebra over $A^0$ generated by variables
\begin{align*}
&x_1^i,\ldots,x^i_{m_i} &&\text{in degree $i$ for
$i=-1,-2,\ldots,d+1$, and} \\
&z_1^d,\ldots,z^d_{m_d} &&\text{in degree $d$, and} \\
&y_1^{k-i},\ldots,y^{k-i}_{m_i} &&\text{in degree $k-i$ for
$i=0,-1,\ldots,d+1$.}
\end{align*}
Let $q_1,\ldots,q_{m_d}$ be invertible elements of $A^0$, and
generalizing \eq{ln2eq6} define
\e
\om^0=\sum_{i=0}^{d+1}\sum_{j=1}^{m_i}\dd
x^i_j\,\dd y^{k-i}_j+\sum_{j=1}^{m_d}\dd\bigl(q_j z^d_j\bigr)\,\dd
z^d_j\qquad \text{in $(\La^2\Om^1_{A^\bu})^k$.}
\label{ln2eq17}
\e

Choose a Hamiltonian $\Phi$ in $A^{k+1}$, which as in \eq{ln2eq7} we require to satisfy the {\it classical master equation\/}
\e
\sum_{i=-1}^{d+1}\sum_{j=1}^{m_i}\frac{\pd\Phi}{\pd x^i_j}\,
\frac{\pd\Phi}{\pd y^{k-i}_j}+\frac{1}{4}\sum_{j=1}^{m_d}
\frac{1}{q_j}\,\biggl(\frac{\pd\Phi}{\pd
z^d_j}\biggr)^2=0\quad\text{in $A^{k+2}$.}
\label{ln2eq18}
\e
As for \eq{ln2eq8}, define the differential $\d$ on $A^\bu$ by $\d=0$
on $A^0$, and
\begin{gather}
\d x^0_j=0,\quad \d y^k_j=\frac{\pd\Phi}{\pd
x^0_j}-\sum_{j'=1}^{m_d} \frac{z_{j'}^d}{2q_{j'}}\,\frac{\pd
q_{j'}}{\pd x^0_j}\,\frac{\pd\Phi}{\pd z^d_{j'}}, \quad
j=1,\ldots,m_0,
\nonumber\\
\d x^i_j =(-1)^{i+1}\frac{\pd\Phi}{\pd y^{k-i}_j}, \quad \d
y^{k-i}_j=\frac{\pd\Phi}{\pd x^i_j},\quad\begin{subarray}{l}\ts
i=-1,\ldots,d+1,\\[6pt] \ts j=1,\ldots,m_i,\end{subarray}
\nonumber\\
\text{and}\qquad\d z^d_j=\frac{1}{2q_j}\,\frac{\pd\Phi}{\pd
z^d_j}, \qquad j=1,\ldots,m_d.
\label{ln2eq19}
\end{gather}

Then $A^\bu=(A^*,\d)$ is of standard form, with $\vdim A^\bu=2\sum_{i=0}^{d+1}(-1)^im_i-m_d$. Also $\d\om^0=\dd\om^0=0$, and $\om:=(\om^0,0,0,\ldots)$ is a $k$-shifted symplectic structure on $\bX=\bSpec A^\bu$, as in \cite[\S 5.3]{BBJ}. Defining $\phi\in (\Om^1_{A^\bu})^k$ by
\e
\phi\!=\!\sum_{i=0}^{d+1}\sum_{j=1}^{m_i}\bigl[i\,x^i_j\,\dd y^{k-i}_j\!+\!(-1)^{i+1}(k\!-\!i)y^{k-i}_j\,\dd x^i_j\bigr]\!+\!k\sum_{j=1}^{m_d}q_j\,z^d_j\,\dd z^d_j,
\label{ln2eq20}
\e
as in \eq{ln2eq9}, then \eq{ln2eq10} holds. We say that $A^\bu,\om$ are in {\it weak Darboux form}.

If all the above holds with $q_j=1$ for $j=1,\ldots,m_d$, we say that $A^\bu,\om$ are in {\it strong Darboux form}.
\end{ex}

\begin{rem} 
\label{ln2rem4}
Actually, when $k\equiv 2\mod 4$, Bussi, Brav and Joyce \cite[\S 5.3]{BBJ} did not define `Darboux form' $A^\bu,\om$ as in Example \ref{ln2ex2} involving only variables $x^i_j,y^{k-i}_j$, but instead only defined `weak Darboux form' and `strong Darboux form' as in Example \ref{ln2ex3}, involving variables $x^i_j,y^{k-i}_j,z^d_j$.

We can relate Example \ref{ln2ex3} to Example \ref{ln2ex2} with $k\equiv 2\mod 4$. Let $A^\bu,\om$ be in strong Darboux form as in Example \ref{ln2ex3}, so that we have variables $x^i_j,y^{k-i}_j,z^d_j$ and $q_1=\cdots=q_{m_d}=1$, and suppose $m_d$ is even (equivalently, suppose $\vdim A^\bu$ is even). Then we may change variables from $z_1^d,\ldots,z_{m_d}^d$ to $x_1^d,\ldots,x_{m_d/2}^d,y_1^d,\ldots,y_{m_d/2}^d$, where
\begin{equation*}
x_j^d=z_{2j-1}^d+\sqrt{-1}z_{2j}^d,\quad y_j^d=z_{2j-1}^d-\sqrt{-1}z_{2j}^d,\quad j=1,\ldots,m_d/2,
\end{equation*}
and replace $m_d$ by $m_d/2$, and then the `strong Darboux form' of Example \ref{ln2ex3} is equivalent to the `Darboux form' of Example \ref{ln2ex2}. Here $\sqrt{-1}\in\K$ as $\K$ is algebraically closed.

As in Remark \ref{ln2rem2}, if $(\bX,\om_\bX^\bu)$ is $k$-shifted symplectic with $k\equiv 2\mod 4$ and $\vdim\bX$ is odd, then no Lagrangians exist in $(\bX,\om_\bX)$. Because of this, in this paper we are happy to use the local form of Example \ref{ln2ex2} when $k\equiv 2\mod 4$, which only works when $\vdim\bX$ is even, rather than that of Example \ref{ln2ex3}, which works for all~$\vdim\bX$.
\end{rem}

Here is the main result of Bussi, Brav and Joyce \cite[Th.~5.18]{BBJ}. They state only (i)--(iii), part (iv) is deduced from (iii) as in Remark \ref{ln2rem4}. The reason we need $\bs i$ to be \'etale in (iii) (and hence (iv)) is that to reduce from weak Darboux form to strong Darboux form in Example \ref{ln2ex3}, it is necessary to take square roots of the functions $q_1,\ldots,q_{m_d}$, and this is only possible \'etale locally rather than Zariski locally.

\begin{thm} 
\label{ln2thm5}
Let\/ $(\bX,\om_\bX)$ be a $k$-shifted symplectic derived\/ $\K$-scheme for $k<0,$ and\/ $x\in\bX$. Then there exists a standard form cdga $A^\bu$ over $\K$ which is minimal at\/ $p\in\bSpec A^\bu,$ a $k$-shifted symplectic form\/ $\om$ on $\bSpec A^\bu,$ and a morphism $\bs i:\bSpec A^\bu\ra\bX$ with\/ $\bs i(p)=x$ and\/ $\bs i^*(\om_\bX)\sim\om,$ such that:
\begin{itemize}
\setlength{\itemsep}{0pt}
\setlength{\parsep}{0pt}
\item[{\bf(i)}] If\/ $k\equiv 0,1$ or\/ $3\mod 4,$ then $\bs i$ is
a Zariski open inclusion, and\/ $A^\bu,\om$ are in Darboux form, as
in Example\/ {\rm\ref{ln2ex2}}.
\item[{\bf(ii)}] If\/ $k\equiv 2\mod 4,$ then $\bs i$ is a Zariski open inclusion, and\/ $A^\bu,\om$ are in weak Darboux form, as in Example\/~{\rm\ref{ln2ex3}}.
\item[{\bf(iii)}] Alternatively, if\/ $k\equiv 2\mod 4,$ we may
instead take $\bs i$ to be \'etale, and\/ $A^\bu,\om$ to be in strong Darboux form, as in Example\/~{\rm\ref{ln2ex3}}.
\item[{\bf(iv)}] Alternatively, if\/ $k\equiv 2\mod 4,$ and\/ $\vdim\bX$ is even near $x,$ we may take $\bs i$ to be \'etale, and\/ $A^\bu,\om$ to be in Darboux form, as in Example\/~{\rm\ref{ln2ex2}}.
\end{itemize}

\end{thm}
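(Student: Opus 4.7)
The plan is to apply Theorem~\ref{ln2thm1} to get a minimal standard form chart near $x$, pull back $\om_\bX$, and then normalize the resulting symplectic form by an iterative change-of-variables argument that exploits the nondegeneracy of $\om^0$ together with the closedness of $\om$. Concretely, first I would choose a standard form cdga $A^\bu$ minimal at $p$ with $\bs i:\bSpec A^\bu\hookra\bX$ and $\bs i(p)=x$. Since $\bs i$ is a Zariski open inclusion, $\bs i^*(\om_\bX)$ gives a $k$-shifted symplectic form $\om=(\om^0,\om^1,\ldots)$ on $\bSpec A^\bu$, determined up to coboundaries in the total complex $(\prod_{i\ge 0}(\La^{2+i}\Om^1_{A^\bu})^{*-i},\d+\dd)$.

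Next I would normalize $\om^0\in(\La^2\Om^1_{A^\bu})^k$. By minimality of $A^\bu$ at $p$, the generators $x_j^i$ of $A^\bu$ correspond to bases of $H^i(\Om^1_{A^\bu}|_p)$. Nondegeneracy of $\om^0$ at $p$ gives a quasi-isomorphism $\bT_{A^\bu}\simeq\bL_{A^\bu}[k]$, and computing this at the fibre at $p$ produces a perfect pairing between degree $i$ and degree $k-i$ generators. After a linear change of variables (plus, when $k\equiv 2\mod 4$, diagonalizing the symmetric pairing in middle degree $d=k/2$, which needs square roots and thus only works étale, producing the $q_j$ and the $z_j^d z_j^d$ terms of Example~\ref{ln2ex3}), I can arrange the pointwise normal form. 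Then I would lift this pointwise normalization to a Zariski (resp.\ étale) neighbourhood by induction on degree: at each stage, use the $\d$-closedness of $\om^0$ and acyclicity-like properties of $\Om^1_{A^\bu}$ (coming from smoothness of $A^0$ and freeness of the higher generators) to absorb lower-order terms into the definition of new variables $\tilde x_j^i=x_j^i+(\text{corrections}),\,\tilde y_j^{k-i}=y_j^{k-i}+(\text{corrections})$, so that $\om^0$ takes the Darboux shape \eq{ln2eq6}. This procedure may shrink the open neighbourhood of $p$, but preserves standard form and minimality.

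Having fixed $\om^0$ on the nose, I would use closedness to kill the higher components $\om^1,\om^2,\ldots$. Since $\dd\om^0=0$ strictly, the equations $\d\om^{i+1}+\dd\om^i=0$ become a linear system whose obstructions live in cohomology groups $H^*(\La^{2+i}\Om^1_{A^\bu})$ that vanish in the relevant ranges because $\Om^1_{A^\bu}$ is free over $A^*$. This allows one to solve recursively for primitives $\eta^i$ with $(\d+\dd)\eta=\om-(\om^0,0,0,\ldots)$, so that $\om\sim(\om^0,0,0,\ldots)$ in the $(\d+\dd)$-cohomology. The Hamiltonian $\Phi\in A^{k+1}$ then emerges as follows: the relation $\dd\phi=k\om^0$ from \eq{ln2eq10} can be solved for $\phi$ directly in the form~\eq{ln2eq9}, and since $\dd(\d\phi+\dd\Phi)=0$ for any $\Phi$ with $\d\phi=-\dd\Phi$, the desired $\Phi$ is produced by integrating the equation $\d\phi+\dd\Phi=0$ along the Euler vector field; the classical master equation \eq{ln2eq7} then becomes equivalent to $\d\ci\d=0$ on the generators via \eq{ln2eq8}. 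The special normalization $\Phi|_{X^\red}=0$ in the case $k=-1$ is achieved by subtracting a locally constant function.

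The main obstacle is the interplay between the two normalizations: to kill the higher $\om^i$ I may need to adjust the differential $\d$ of $A^\bu$, but adjusting $\d$ changes $\om^0$ too, and I must arrange things so that the iteration converges (in fact terminates, by degree reasons, since $A^\bu$ has generators only in finitely many degrees between $0$ and $d=\lfloor(k+1)/2\rfloor$). Organizing this via a filtration by degree of generators, running the change-of-variables and the primitive-producing simultaneously, and checking at each stage that the hypotheses of the next stage are preserved, is the technical heart of the argument. The case division (i)--(iv) then follows: (i) and (ii) are the Zariski results in the respective mod $4$ cases, (iii) is the étale refinement in the $k\equiv 2\mod 4$ case that diagonalizes the middle-degree symmetric form using square roots of $q_j$, and (iv) combines (iii) with the change of variables of Remark~\ref{ln2rem4} that converts strong Darboux form with even $m_d$ back to ordinary Darboux form.
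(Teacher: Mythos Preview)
This theorem is not proved in the present paper: parts (i)--(iii) are quoted from \cite[Th.~5.18]{BBJ}, and part (iv) is deduced from (iii) by the elementary change of variables in Remark~\ref{ln2rem4}. So the only in-paper argument to compare against is that last reduction, which you identify correctly.

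Your outline is broadly in the spirit of \cite{BBJ}, but two points deserve correction. First, the vanishing you invoke to kill the higher $\om^i$ is misattributed: freeness of $\Om^1_{A^\bu}$ as an $A^*$-module says nothing about $\d$-cohomology of $\La^{2+i}\Om^1_{A^\bu}$, and in general those groups do not vanish. The actual mechanism (see \cite[\S 5.2]{BBJ}, and compare Proposition~\ref{ln4prop1} here for the parallel Lagrangian step) is that the relevant obstruction lives in the cohomology of the total complex $(\prod_{i\ge 0}(\La^i\bL_{A^\bu})[i],\d+\dd)$, which is identified with negative cyclic homology and thence with algebraic de Rham cohomology ${\rm H}^*_{\rm inf}(H^0(A^\bu))$ in degree $k+1$; this vanishes for $k<-1$, and for $k=-1$ equals the locally constant functions, which is precisely why the normalization $\Phi\vert_{X^\red}=0$ enters in that case. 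Second, the order of operations in \cite{BBJ} is the reverse of yours: one first uses this cyclic-homology vanishing to produce $\Phi,\phi$ satisfying \eq{ln2eq10} and to replace $\om$ by $(\om^0,0,0,\ldots)$ with $\om^0=\frac{1}{k}\dd\phi$, and only afterwards chooses coordinates to put $\om^0$ into the shape \eq{ln2eq6} or \eq{ln2eq17}. Doing it this way dissolves the feedback worry you raise in your last paragraph, since once $\Phi,\phi$ are fixed the coordinate normalization is a finite inductive procedure that does not disturb the higher $\om^i$ (they are already zero).
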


Following \cite[Examples 5.15 \& 5.16]{BBJ}, we explain Examples \ref{ln2ex2} and \ref{ln2ex3} in more detail in the first cases $k=-1$ and~$k=-2$.

\begin{ex} 
\label{ln2ex4}
Choose a smooth $\K$-algebra $A^0$ of dimension $m_0$
and elements $x^0_1,\ldots,x^0_{m_0}\in A^0$ such that $\dd
x^0_1,\ldots,\dd x^0_{m_0}$ form a basis of $\Om^1_{A^0}$ over
$A^0$. Choose an arbitrary Hamiltonian $\Phi\in A^0$.

Example \ref{ln2ex2} with $k=-1$ defines $A^\bu=A^0[y^{-1}_1,\ldots,
y^{-1}_{m_0}]$, where $y^{-1}_1,\ab\ldots,\ab y^{-1}_{m_0}$ are
variables of degree $-1$, with differential
\begin{equation*}
\d x^0_j=0,\quad \d y^{-1}_j=\frac{\pd\Phi}{\pd x^0_j}, \quad
j=1,\ldots,m_0,
\end{equation*}
and $-1$-shifted 2-form
\begin{equation*}
\om^0=\dd x^0_1\,\dd y^{-1}_1+\cdots+\dd x^0_{m_0}\,\dd y^{-1}_{m_0}.
\end{equation*}
Then $\om=(\om^0,0,0,\ldots)$ is a $-1$-shifted symplectic structure
on $\bX=\bSpec A^\bu$. We have $H^0(A^\bu)=A^0/(\frac{\pd\Phi}{\pd
x^0_1},\ldots,\frac{\pd\Phi}{\pd x^0_{m_0}})=A^0/(\dd\Phi)$.

Geometrically, $U=\Spec A^0$ is a smooth classical $\K$-scheme with \'etale coordinates $(x^0_1,\ldots,x^0_{m_0}):U\ra\bA^{m_0}$, and $\Phi:U\ra\bA^1$ is regular, and $\bX$ is the derived critical locus of $\Phi$, with $X=t_0(\bX)$ the classical critical locus of~$\Phi$.

Thus, {\it the important geometric data in writing a $-1$-shifted symplectic derived\/ $\K$-scheme $(\bX,\om)$ in Darboux form, is a smooth affine\/ $\K$-scheme $U$ and a regular function $\Phi:U\ra\bA^1,$ such that\/} $\bX\simeq\bs\Crit(\Phi)$. The remaining data is a choice of \'etale coordinates $(x^0_1,\ldots,x^0_{m_0}):U\ra \bA^{m_0}$, but this is not very
interesting geometrically.
\end{ex}

\begin{ex} 
\label{ln2ex5}
Choose a smooth $\K$-algebra $A^0$ of dimension $m_0$ and elements $x^0_1,\ldots,x^0_{m_0}$ in $A^0$ such that $\dd x^0_1,\ldots,\dd x^0_{m_0}$ form a basis of $\Om^1_{A^0}$ over $A^0$. Fix $m_{-1}\ge 0$, and as a graded algebra set
$A^*=A^0[y^{-2}_1,\ab\ldots,\ab y^{-2}_{m_0},\ab z^{-1}_1,\ab\ldots,\ab z^{-1}_{m_{-1}}]$, where $y^{-2}_j$ has degree $-2$ and $z^{-1}_j$ degree $-1$, as in Example \ref{ln2ex3} with~$k=-2$.

Choose invertible functions $q_1,\ldots,q_{m_{-1}}$ in $A^0$. Define
\begin{align*}
\om^0&=\dd x^0_1\,\dd y^{-2}_1+\cdots+\dd x^0_{m_0}\,\dd y^{-2}_{m_0} \\
&\qquad+\dd\bigl(q_1z^{-1}_1\bigr)\,\dd z^{-1}_1+\cdots+
\dd\bigl(q_{m_{-1}}z^{-1}_{m_{-1}}\bigr)\,\dd z^{-1}_{m_{-1}}
\end{align*}
in $(\La^2\Om^1_{A^\bu})^{-2}$, as in \eq{ln2eq17}. A general element $\Phi$
in $A^{-1}$ may be written
\begin{equation*}
\Phi=z^{-1}_1s_1+\cdots+z^{-1}_{m_{-1}}s_{m_{-1}},
\end{equation*}
for $s_1,\ldots,s_{m_{-1}}\in A^0$. Then the classical master equation
\eq{ln2eq18} reduces to
\e
\frac{(s_1)^2}{q_1}+\cdots+\frac{(s_{m_{-1}})^2}{q_{m_{-1}}}=0\quad\text{in
$A^0$.}
\label{ln2eq21}
\e
By \eq{ln2eq19}, the differential $\d$ on $A^\bu$ is given by
\begin{equation*}
\d x_i^0=0,\quad \d z_j^{-1}=\frac{s_j}{2q_j}, \quad \d y^{-2}_i=
\sum_{j=1}^{m_{-1}}z_j^{-1}\biggl(\frac{\pd s_j}{\pd x^0_i}
-\frac{s_j}{2q_j}\,\frac{\pd q_j}{\pd x^0_i}\biggr),
\end{equation*}
and $\d\ci\d y^{-2}_i=0$ follows from applying
$\frac{1}{4}\frac{\pd}{\pd x^0_i}$ to \eq{ln2eq21}. We have
\begin{equation*}
H^0(A^\bu)=A^0/\bigl(s_1/2q_1,\ldots,s_{m_{-1}}/2q_{m_{-1}}\bigr)=
A^0/(s_1,\ldots,s_{m_{-1}}),
\end{equation*}
as $q_1,\ldots,q_{m_{-1}}$ are invertible.

Geometrically, we have a smooth classical $\K$-scheme $U=\Spec A^0$
with \'etale coordinates $(x^0_1,\ldots,x^0_{m_0}):U\ra \bA^{m_0}$,
a trivial vector bundle $E\ra U$ with fibre $\K^{m_{-1}}$, a
nondegenerate quadratic form $Q$ on $E$ given by
$Q(e_1,\ldots,e_{m_{-1}})=\frac{1}{q_1}e_1^2+\cdots+\frac{1}{q_{m_{-1}}}e_{m_{-1}}^2$ for all regular functions $e_1,\ldots,e_{m_{-1}}:U\ra\bA^1$, and a section $s=(s_1,\ldots,s_{m_{-1}})$ in $H^0(E)$ with $Q(s,s)=0$ by \eq{ln2eq21}. The underlying classical $\K$-scheme $X=t_0(\bX)=\Spec H^0(A^\bu)$ is the $\K$-subscheme $s^{-1}(0)$ in~$U$.

Thus, {\it the important geometric data in writing a $-2$-shifted
symplectic derived\/ $\K$-scheme $(\bX,\om)$ in weak Darboux form, is a
smooth affine\/ $\K$-scheme $U,$ a vector bundle $E\ra U,$ a
nondegenerate quadratic form $Q$ on $E,$ and a section $s\in H^0(E)$
with\/ $Q(s,s)=0,$ such that\/} $X=t_0(\bX)\cong s^{-1}(0)\subseteq
U$. The remaining data is a choice of \'etale coordinates
$(x^0_1,\ldots,x^0_{m_0}):U\ra \bA^{m_0}$ and a trivialization
$E\cong U\t\bA^{m_{-1}}$, but these are not very interesting
geometrically.
\end{ex}

\section{The main results}
\label{ln3}

\subsection{A local standard form for derived scheme morphisms}
\label{ln31}

As in Theorem \ref{ln2thm1}, our favourite local model for a derived scheme $\bX$ near a point $x\in\bX$ is $\bSpec A^\bu\hookra\bX$ for $A^\bu$ a standard form cdga, and we can take $\bSpec A^\bu$ minimal at $x$. We used this in the Darboux Theorem, Theorem \ref{ln2thm5}.

We will need a favourite local model for a morphism $\bs f:\bY\ra\bX$ in $\dSch_\K$ near $y\in\bY$ with $\bs f(y)=x\in\bX$. For this we will use a homotopy commutative diagram \eq{ln3eq1} below, where $\al:A^\bu\ra B^\bu$ is a {\it submersion\/} in $\cdga_\K$,  following Borisov and Joyce \cite[\S 2.1]{BoJo}, which we take to be {\it minimal\/} at~$\bs j^{-1}(y)\in\bSpec B^\bu$.

\begin{dfn} 
\label{ln3def1}
A morphism $\al:A^\bu\ra B^\bu$ of standard form cdgas will be called a {\it submersion\/} if the corresponding morphism $\al_*:(\Om^1_{A^\bu})\ot_{A^\bu}B^\bu\ra\Om^1_{B^\bu}$ is injective in every degree. (By analogy, a smooth map of manifolds $f:X\ra Y$ is a submersion if $(\d f)^*:f^*(T^*Y)\ra T^*X$ is injective.)

If $\al:A^\bu\ra B^\bu$ is a submersion of standard form cdgas then the relative K\"ahler differentials $\Om^1_{B^\bu/A^\bu}$ are a model for the relative cotangent complex $\bL_{B^\bu/A^\bu}$, so we can take $\Om^1_{B^\bu/A^\bu}=\bL_{B^\bu/A^\bu}$. Thus submersions are a convenient class of morphisms for doing explicit computations with cotangent complexes.

In a similar way to Definition \ref{ln2def3}, we say that a submersion $\al:A^\bu\ra B^\bu$ is  {\it minimal\/} at $q\in\bSpec B^\bu$ if all the differentials in the complex of $\K$-vector spaces $\Om^1_{B^\bu/A^\bu}\vert_q$ are zero. This means that regarding $A^\bu$ as fixed, $B^\bu$ is defined using the minimum number of variables in each degree $i=0,-1,\ldots,$ compared to all other cdgas locally equivalent to $B^\bu$ near $q$ with submersions to $A^\bu$. 
\end{dfn}

Here is a relative analogue of Theorem \ref{ln2thm1}, which will be proved in~\S\ref{ln41}.

\begin{thm} 
\label{ln3thm1}
Let\/ $\bs f:\bY\ra\bX$ be a morphism in $\dSch_\K,$ and\/ $y\in\bY$ with\/ $\bs f(y)=x\in\bX$. Suppose $A^\bu$ is a standard form cdga over $\K,$ and\/ $p\in\bSpec A^\bu,$ and\/ $\bs i:\bSpec A^\bu\hookra\bX$ is a Zariski open inclusion with\/ $\bs f(p)=x$. This is possible by Theorem\/ {\rm\ref{ln2thm2}}. We do not assume $A^\bu$ is minimal at\/~$p$.

Then there exists a standard form cdga $B^\bu$ over $\K,$ a point\/ $q\in\bSpec A^\bu,$ a submersion $\al:A^\bu\ra B^\bu$ minimal at\/ $q$ with\/ $\bSpec\al(q)\!=\!p,$ and a Zariski open inclusion $\bs j:\bSpec B^\bu\hookra\bY$ with\/ $\bs j(q)=y$ in a homotopy commutative diagram
\e
\begin{gathered}
\xymatrix@C=90pt@R=15pt{ *+[r]{\bSpec B^\bu\,} \ar[d]^{\bSpec\al} \ar@{^{(}->}[r]_{\bs j} & *+[l]{\bY} \ar[d]_{\bs f}  \\ *+[r]{\bSpec A^\bu\,} \ar@{^{(}->}[r]^{\bs i}  & *+[l]{\bX.\!\!} }
\end{gathered}
\label{ln3eq1}
\e

If instead\/ $\bs i$ is \'etale rather than a Zariski open inclusion, then $\bs j$ is \'etale.
\end{thm}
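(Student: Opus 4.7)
The plan is to first obtain a standard form chart for $\bY$ near $y$, factor the composite morphism to $\bX$ through $\bs i$, strictify the resulting morphism of cdgas, and finally modify the target to produce a submersion minimal at $q$. Apply Theorem \ref{ln2thm1} to $\bY$ at $y,$ producing a standard form cdga $\tilde B^\bu,$ a point $\tilde q\in\bSpec \tilde B^\bu,$ and a Zariski open inclusion $\bs j_0:\bSpec \tilde B^\bu\hookra\bY$ with $\bs j_0(\tilde q)=y.$ The composite $\bs f\ci\bs j_0$ sends $\tilde q$ into the open $\bs i(\bSpec A^\bu)\subseteq\bX,$ so after shrinking $\bSpec \tilde B^\bu$ to a smaller Zariski open neighbourhood of $\tilde q$ we obtain a factorization $\bs f\ci\bs j_0\simeq\bs i\ci\tilde{\bs g}$ corresponding via $\bSpec:(\cdga_\K^\iy)^{\bf op}\ra\dSch_\K^{\bf aff}$ to a morphism $\tilde\al^\iy:A^\bu\ra\tilde B^\bu$ in $\cdga_\K^\iy.$ By Remark \ref{ln2rem1}, since $A^\bu$ is standard form and the induced map $H^0(\tilde\al^\iy):H^0(A^\bu)\ra H^0(\tilde B^\bu)$ lifts to a $\K$-algebra morphism $A^0\ra\tilde B^0$ (by smoothness of $\Spec A^0$ and surjectivity of $\tilde B^0\twoheadrightarrow H^0(\tilde B^\bu)$), the morphism $\tilde\al^\iy$ is represented by a strict cdga morphism $\tilde\al:A^\bu\ra\tilde B^\bu.$

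Now replace $\tilde B^\bu$ by a quasi-isomorphic standard form cdga $B^\bu$ carrying a submersion $\al:A^\bu\ra B^\bu$ lifting $\tilde\al.$ Freely adjoin to $\tilde B^\bu,$ for each generator $x_j^i$ of $A^\bu,$ an acyclic pair: a degree $i$ variable $u_j^i$ together with a companion $v_j^{i-1}$ of degree $i-1$ satisfying $\d v_j^{i-1}=u_j^i$ when $i\le-1,$ and $u_j^0$ of degree $0$ together with $v_j^{-1}$ of degree $-1$ satisfying $\d v_j^{-1}=u_j^0$ when $i=0.$ Define $\al(x_j^i)=\tilde\al(x_j^i)+u_j^i$ and extend inductively on increasing degree to a strict cdga morphism by choosing lower-degree corrections so that $\al\ci\d_A=\d_B\ci\al$ holds on the nose. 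Each adjoined $(u,v)$ pair is an acyclic direct summand of $B^\bu$ as a module over $\tilde B^\bu,$ so the projection $B^\bu\twoheadrightarrow\tilde B^\bu$ sending the new generators to zero is a quasi-isomorphism, whence $\bSpec B^\bu\simeq\bSpec \tilde B^\bu$ and we obtain $\bs j:\bSpec B^\bu\hookra\bY$ sending a distinguished point $q\in\bSpec B^\bu$ to $y.$ The classes $\dd u_j^i$ furnish free summands in $\Om^1_{B^\bu}$ into which $\al_*(\dd x_j^i)$ maps injectively modulo lower-order terms, so $\al_*:\Om^1_{A^\bu}\ot_{A^\bu}B^\bu\ra\Om^1_{B^\bu}$ is injective in every degree and $\al$ is a submersion. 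To achieve minimality at $q,$ iteratively eliminate pairs of generators of $B^\bu$ over $A^\bu$ whose classes in $\Om^1_{B^\bu/A^\bu}\vert_q$ pair nontrivially under $\d,$ by the Nakayama-type elimination argument in the proof of Theorem \ref{ln2thm1} in \cite[\S 4]{BBJ}. The \'etale case is parallel, with \'etale neighbourhoods replacing Zariski open shrinkings whenever $\bs i$ is only \'etale.

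The main obstacle is carrying out the inductive extension of $\tilde\al$ to a strict cdga morphism $\al:A^\bu\ra B^\bu$ commuting with differentials on the nose, while keeping $B^\bu$ in standard form and keeping the projection $B^\bu\twoheadrightarrow\tilde B^\bu$ a quasi-isomorphism preserving the distinguished points $q\mapsto\tilde q,$ together with verifying that the subsequent minimality elimination does not destroy the submersion property.
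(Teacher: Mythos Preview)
Your approach differs substantially from the paper's. The paper does not start from a standard form chart on $\bY$ and then inflate it; instead it passes to the fibre product $\bZ=\bY\t_\bX\bSpec A^\bu$, chooses an arbitrary affine model $C^\bu$ for $\bZ$ near the relevant point, and builds $B^\bu$ degree by degree together with a quasi-isomorphism $\be:B^\bu\to C^\bu$. In degree $0$ it takes $B^0$ to be the coordinate ring of a smooth locally closed subscheme $V\subset\bA^N\times\Spec A^0$ containing the image of $\Spec H^0(C^\bu)$, with $V\to\Spec A^0$ smooth of minimal dimension; this simultaneously produces the smooth map $\al^0:A^0\to B^0$ and the first step of $\be$. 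Then, inductively in decreasing degree $k-1$, it freely adjoins variables $\ti x_j^{k-1}$ as the images $\al(x_j^{k-1})$ of the generators of $A^\bu$ (this is what forces $\al$ to be a submersion, automatically), together with a minimal further set $\ti y_j^{k-1}$ chosen so that these classes span $H^{k-1}(\bL_{C^\bu/B^\bu(k)}\vert_r)$. Submersion and minimality are thus obtained in a single pass, with no after-the-fact correction or elimination.

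Your proposal has two genuine gaps beyond the one you flag. First, in degree $0$: the $x_j^0$ are only \'etale coordinates on $\Spec A^0$, not polynomial generators, so the prescription $\al^0(x_j^0)=\ti\al^0(x_j^0)+u_j^0$ does not by itself define a ring map $A^0\to\ti B^0[u_j^0]$. This can be repaired by observing that the map $\Spec\ti B^0\times\bA^{m_0}\to\bA^{m_0}$, $(\ti v,u)\mapsto(\ti\al^0(x_j^0)(\ti v)+u_j)$, lifts Zariski-locally through the \'etale cover $\Spec A^0\to\bA^{m_0}$, but you do not say this, and it is exactly the kind of step the paper's choice of $V\subset\bA^N\times\Spec A^0$ handles directly. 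Second, the inductive extension you flag is indeed delicate: the discrepancy $\al(\d_A x_j^i)-\ti\al(\d_A x_j^i)$ does lie in the acyclic ideal $I=\ker(B^\bu\to\ti B^\bu)$ and is closed, hence exact, so some correction $c_j^i\in I^i$ exists; but you must also arrange that the corrections do not spoil injectivity of $\al_*$ on $\Om^1$ (the coefficient of $\dd u_j^i$ in $\dd c_j^i$ can be a nonzero element of $B^0$), and your final minimization step requires a \emph{relative} elimination lemma for $\Om^1_{B^\bu/A^\bu}$, not the absolute one in \cite[\S4]{BBJ}. The paper's construction sidesteps all of this by never having to correct: since the $\ti x_j^i$ are declared to be free generators of $B^*$ from the outset, $\al_*(\dd x_j^i)=\dd\ti x_j^i$ is manifestly part of a basis of $\Om^1_{B^\bu}$.
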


\subsection[\texorpdfstring{$k$-shifted `Lagrangian Darboux form' local models for $k<0$}{\textit{k}-shifted \textquoteleft Lagrangian Darboux form\textquoteright\ local models for \textit{k}<0}]{$k$-shifted `Lagrangian Darboux form' local models}
\label{ln32}

In Examples \ref{ln3ex1} and \ref{ln3ex2} we will explain our local models for Lagrangians in $k$-shifted symplectic derived $\K$-schemes, which we call `Lagrangian Darboux form', for the cases $k<0$ with $k\not\equiv 3\mod 4$, and $k<0$ with $k\equiv 3\mod 4$, respectively. They are analogues of the `Darboux form' Examples \ref{ln2ex2} and \ref{ln2ex3} in \S\ref{ln25}, and work over a target in `Darboux form'. Theorem \ref{ln3thm2} in \S\ref{ln33} will show that Lagrangians $\bs f:\bs L\ra\bX$ in a $k$-shifted symplectic $\K$-scheme $(\bX,\om_\bX)$ for $k<0$ are (Zariski or \'etale) locally modelled on one of Examples \ref{ln3ex1} and~\ref{ln3ex2}.

The next example is rather long.

\begin{ex} 
\label{ln3ex1}
Let $k<0$ with $k\not\equiv 3\mod 4$, suppose $A^\bu,\om$ are in $k$-shifted Darboux form, as in Example \ref{ln2ex2}, and use the notation of Remark \ref{ln2rem3}. These define a standard form cdga $A^\bu$ over $\K$, a sub-cdga $A_+^\bu\subseteq A^\bu$, coordinates $x^i_j$ in $A_+^i\subseteq A^i$ and $y^{k-i}_j$ in $A^{k-i}$ for $i=0,-1,\ldots,d=[(k+1)/2]$ and $j=1,\ldots,m_i$, and a $k$-shifted 2-form $\om^0=\sum_{i=0}^d\sum_{j=1}^{m_i}\dd x^i_j\,\dd y^{k-i}_j$. They also define $\Phi\in A^{k+1}$ satisfying \eq{ln2eq7}, which determines the differential $\d$ in $A^\bu$ by \eq{ln2eq8}, and $\phi\in(\Om^1_{A^\bu})^k$ satisfying $\d\Phi=0$, $\dd\Phi+\d\phi=0$ and $\dd\phi=k\om^0$. As in \eq{ln2eq11} we write $\Phi=\Phi_++\sum_{i=-1}^d\sum_{j=1}^{m_i}\Phi_j^{i+1}y_j^{k-i}$, where $\Phi_+\in A_+^{k+1}$ and $\Phi_j^{i+1}\in A_+^{i+1}$ for all $i,j$ do not involve the $y^i_j$, and we define $\phi_+\in(\Om^1_{A^\bu})^k$ by~\eq{ln2eq15}.

Write $e=[k/2]$, so that if $k$ is even then $e=d$ and $k=2e=2d$, and if $k$ is odd then $e=d-1$ and $k=2e+1=2d-1$. Choose nonnegative integers $n_0,n_{-1},\ldots,n_e$. Choose a smooth $\K$-algebra $B^0$ of dimension $m_0+n_0$, and a smooth morphism $\al^0:A^0\ra B^0$. Localizing $B^0$ if necessary, we may assume there exist $u^0_1,\ldots,u^0_{n_0}\in B^0$ such that $\dd\ti x^0_1,\ldots,\dd\ti x^0_{m_0},\dd u^0_1,\ldots,\dd u^0_{n_0}$ form a basis of $\Om^1_{B^0}$ over $B^0$, where we write $\ti x^0_j=\al^0(x^0_j)\in B^0$.

Define $B^*$ as a commutative graded $\K$-algebra to be the free graded algebra over $B^0$
generated by variables
\e
\begin{aligned}
&\ti x_1^i,\ldots,\ti x^i_{m_i} &&\text{in degree $i$ for
$i=-1,-2,\ldots,d$, and} \\
&u_1^i,\ldots,u^i_{n_i} &&\text{in degree $i$ for
$i=-1,-2,\ldots,e$, and} \\
&v_1^{k-1-i},\ldots,v^{k-1-i}_{n_i} &&\text{in degree $k-1-i$ for
$i=0,-1,\ldots,e$.}
\end{aligned}
\label{ln3eq2}
\e
So the upper index $i$ in $\ti x^i_j,u^i_j,v^i_j$ always indicates the degree. 

Define a morphism $\al_+:A_+^*\ra B^*$ of commutative graded $\K$-algebras by $\al_+^0=\al^0$ in degree 0 and
\e
\al_+(x^i_j)=\ti x^i_j,\quad i=-1,-2,\ldots,d,\;\> j=1,\ldots,m_i. 
\label{ln3eq3}
\e
This is well-defined as $A_+^*$ is freely generated over $A^0$ by the~$x^i_j$.

Now choose a superpotential $\Psi$ in $B^k$, which we require to satisfy 
\e
\sum_{i=-1}^e\sum_{j=1}^{n_i}\frac{\pd\Psi}{\pd u^i_j}\,
\frac{\pd\Psi}{\pd v^{k-1-i}_j}+\al_+(\Phi_+)+\sum_{i=-1}^d\sum_{j=1}^{m_i}(-1)^{i+1}\al_+(\Phi_j^{i+1})\frac{\pd\Psi}{\pd\ti x^i_j}=0
\label{ln3eq4}
\e
in $B^{k+1}$. Extend $\al_+$ to a morphism $\al:A^*\ra B^*$ by $\al\vert_{A^*_+}=\al_+$ and 
\e
\al(y^{k-i}_j)=(-1)^{i+1}\frac{\pd\Psi}{\pd\ti x^i_j},\quad i=0,-1,\ldots,d,\;\> j=1,\ldots,m_i.
\label{ln3eq5}
\e
This is well-defined as $A^*$ is freely generated over $A^*_+$ by the $y^{k-i}_j$. Then from \eq{ln2eq11} and \eq{ln3eq5} we see that \eq{ln3eq4} may be rewritten
\begin{equation*}
\sum_{i=-1}^e\sum_{j=1}^{n_i}\frac{\pd\Psi}{\pd u^i_j}\,
\frac{\pd\Psi}{\pd v^{k-1-i}_j}+\al(\Phi)=0.
\end{equation*}

Define the differential $\d$ in the cdga $B^\bu=(B^*,\d)$ by $\d=0$ on $B^0$, and
\e
\begin{gathered}
\d\ti x^i_j =(-1)^{i+1}\al_+(\Phi_j^{i+1}),\quad
i=-1,-2,\ldots,d,\;\> j=1,\ldots,m_i, \\
\d u^i_j =(-1)^{(i+1)k}\frac{\pd\Psi}{\pd v^{k-1-i}_j}, \;\> \d
v^{k-1-i}_j=\frac{\pd\Psi}{\pd u^i_j},\;\>\begin{subarray}{l}\ts
i=0,-1,\ldots,e,\\[6pt] \ts j=1,\ldots,n_i.\end{subarray}
\end{gathered}
\label{ln3eq6}
\e
To prove that $\d\ci\d=0$, note that for $i'=-1,\ldots,d$ and $j'=1,\ldots,m_{i'}$ we have
\e
\begin{split}
\d\ci\d\ti x^{i'}_{j'}&=(-1)^{i'+1}\sum_{i=-1}^d\sum_{j=1}^{m_i}\d \ti x_j^i\cdot\frac{\pd}{\pd\ti x_j^i}\bigl[\al_+(\Phi_{j'}^{i'+1})\bigr]\\
&=(-1)^{i'+1}\al_+\Biggl[\,\sum_{i=-1}^d\sum_{j=1}^{m_i}(-1)^{i+1}\,\Phi_j^{i+1}\cdot\frac{\pd\Phi_{j'}^{i'+1}}{\pd x_j^i}\Biggr]=0,\\
\end{split}
\label{ln3eq7}
\e
where in the first step we use that $\pd/\pd u_{j'}^{i'},\pd/\pd v_{j'}^{k-1-i'}$ are zero on $\al_+(\Phi_j^{i+1})$ as this is a function of the $\ti x^i_j$ only, in the second \eq{ln3eq3} and the first line of \eq{ln3eq6}, and in the third the second line of~\eq{ln2eq12}. 

For $i'=-1,\ldots,e$ and $j'=1,\ldots,n_{i'}$ we have
\e
\begin{split}
&\d\ci\d u^{i'}_{j'}=(-1)^{(i'+1)k}\Biggl[\,\sum_{i=-1}^d\sum_{j=1}^{m_i}\d \ti x_j^i\cdot\frac{\pd^2\Psi}{\pd\ti x_j^i\pd v^{k-1-i'}_{j'}}\\
&\;\>+\sum_{i=-1}^e\sum_{j=1}^{n_i}\d  u_j^i\cdot\frac{\pd^2\Psi}{\pd u_j^i\pd v^{k-1-i'}_{j'}}+\sum_{i=-1}^e\sum_{j=1}^{n_i}\d  v_j^{k-1-i}\cdot\frac{\pd^2\Psi}{\pd v_j^{k-1-i}\pd v^{k-1-i'}_{j'}}\Biggr]\\
&=(-1)^{(i'+1)k}\Biggl[\,\sum_{i=-1}^d\sum_{j=1}^{m_i}
(-1)^{i+1}\al_+(\Phi_j^{i+1})\cdot\frac{\pd^2\Psi}{\pd\ti x_j^i\pd v^{k-1-i'}_{j'}}\\
&\qquad+\sum_{i=-1}^e\sum_{j=1}^{n_i}(-1)^{(i+1)k}\frac{\pd\Psi}{\pd v^{k-1-i}_j}\cdot\frac{\pd^2\Psi}{\pd u_j^i\pd v^{k-1-i'}_{j'}}\\
&\qquad+\sum_{i=-1}^e\sum_{j=1}^{n_i}\frac{\pd\Psi}{\pd u^i_j}\cdot\frac{\pd^2\Psi}{\pd v_j^{k-1-i}\pd v^{k-1-i'}_{j'}}\Biggr]=0,
\end{split}
\label{ln3eq8}
\e
where in the first and second steps we use \eq{ln3eq6}, and in the third we apply $\pd/\pd v^{k-1-i'}_{j'}$ to \eq{ln3eq4}, noting that $\pd/\pd v^{k-1-i'}_{j'}$ is zero on $\al_+(\Phi_+),\al_+(\Phi_j^{i+1})$ as these are functions of the $\ti x^i_j$ only, and dealing with signs appropriately. Similarly, for $i'=0,\ldots,e$ and $j'=1,\ldots,n_{i'}$ we have
\ea
&\d\ci\d v^{k-1-i'}_{j'}=\sum_{i=-1}^d\sum_{j=1}^{m_i}\d \ti x_j^i\cdot\frac{\pd^2\Psi}{\pd\ti x_j^i\pd u^{i'}_{j'}}
\nonumber\\
\begin{split}
&\;\>+\sum_{i=-1}^e\sum_{j=1}^{n_i}\d  u_j^i\cdot\frac{\pd^2\Psi}{\pd u_j^i\pd u^{i'}_{j'}}+\sum_{i=-1}^e\sum_{j=1}^{n_i}\d  v_j^{k-1-i}\cdot\frac{\pd^2\Psi}{\pd v_j^{k-1-i}\pd u^{i'}_{j'}}\\
&=\sum_{i=-1}^d\sum_{j=1}^{m_i}
(-1)^{i+1}\al_+(\Phi_j^{i+1})\cdot\frac{\pd^2\Psi}{\pd\ti x_j^i\pd u^{i'}_{j'}}
\end{split}
\label{ln3eq9}\\
&\;\>+\sum_{i=-1}^e\sum_{j=1}^{n_i}(-1)^{(i+1)k}\frac{\pd\Psi}{\pd v^{k-1-i}_j}\cdot\frac{\pd^2\Psi}{\pd u_j^i\pd u^{i'}_{j'}}\!+\!\sum_{i=-1}^e\sum_{j=1}^{n_i}\frac{\pd\Psi}{\pd u^i_j}\cdot\frac{\pd^2\Psi}{\pd v_j^{k-1-i}\pd u^{i'}_{j'}}\!=\!0,
\nonumber
\ea
where in the last step we apply $\pd/\pd u^{i'}_{j'}$ to \eq{ln3eq4}.

This proves that $\d\ci\d=0$, so $B^\bu$ is a standard form cdga over $\K$. Also
\e
\d\ci\al(x^{i'}_{j'})=\d\ti x^{i'}_{j'}=(-1)^{i'+1}\al_+(\Phi_{j'}^{i'+1})=\al[(-1)^{i'+1}\Phi_{j'}^{i'+1}]=\al\ci\d x^{i'}_{j'},
\label{ln3eq10}
\e
using equations \eq{ln2eq14}, \eq{ln3eq3} and \eq{ln3eq6}, and
\ea
\d&\ci\al(y^{k-i'}_{j'})=(-1)^{i'+1}\d\frac{\pd\Psi}{\pd\ti x^{i'}_{j'}}
=(-1)^{i'+1}\Biggl[\,\sum_{i=-1}^d\sum_{j=1}^{m_i}\d \ti x_j^i\cdot\frac{\pd^2\Psi}{\pd\ti x_j^i\pd\ti x^{i'}_{j'}}
\nonumber\\
&\;\>+\sum_{i=-1}^e\sum_{j=1}^{n_i}\d  u_j^i\cdot\frac{\pd^2\Psi}{\pd u_j^i\pd\ti x^{i'}_{j'}}+\sum_{i=-1}^e\sum_{j=1}^{n_i}\d  v_j^{k-1-i}\cdot\frac{\pd^2\Psi}{\pd v_j^{k-1-i}\pd\ti x^{i'}_{j'}}\Biggr]
\nonumber\\
&=(-1)^{i'+1}\Biggl[\,\sum_{i=-1}^d\sum_{j=1}^{m_i}(-1)^{i+1}\al_+(\Phi_j^{i+1})\cdot\frac{\pd^2\Psi}{\pd\ti x_j^i\pd\ti x^{i'}_{j'}}
\label{ln3eq11}\\
&\;\>+\sum_{i=-1}^e\sum_{j=1}^{n_i}(-1)^{(i+1)k}\frac{\pd\Psi}{\pd v^{k-1-i}_j}\cdot\frac{\pd^2\Psi}{\pd u_j^i\pd\ti x^{i'}_{j'}}+\sum_{i=-1}^e\sum_{j=1}^{n_i}\frac{\pd\Psi}{\pd u^i_j}\cdot\frac{\pd^2\Psi}{\pd v_j^{k-1-i}\pd\ti x^{i'}_{j'}}\Biggr]
\nonumber\\
&=\frac{\pd}{\pd\ti x^{i'}_{j'}}\al_+(\Phi_+)\!+\!\sum_{i=-1}^d\sum_{j=1}^{m_i}\frac{\pd}{\pd\ti x^{i'}_{j'}}\bigl[\al_+(\Phi_j^{i+1})\bigr]\cdot(-1)^{i+1}\frac{\pd\Psi}{\pd\ti x^i_j}\!=\!\al\!\ci\!\d (y^{k-i'}_{j'}),
\nonumber
\ea
where in the first step we use \eq{ln3eq5}, in the third \eq{ln3eq6}, in the fourth we apply $\pd/\pd\ti x^{i'}_{j'}$ to \eq{ln3eq4} and deal with signs, and in the fifth we use the second equation of \eq{ln2eq14} and \eq{ln3eq5}. Equations \eq{ln3eq10}--\eq{ln3eq11} imply that $\d\ci\al=\al\ci\d:A^*\ra B^*$, so $\al:A^\bu\ra B^\bu$ and hence $\al_+:A^\bu_+\ra B^\bu$ are morphisms in $\cdga_\K$. Note that $\al_+$ is a submersion, in the sense of Definition~\ref{ln3def1}.

Following \eq{ln2eq6}, define $h^0\in (\La^2\Om^1_{B^\bu})^{k-1}$ by 
\e
h^0=\sum_{i=0}^e\sum_{j=1}^{n_i}\dd u^i_j\,\dd v^{k-1-i}_j.
\label{ln3eq12}
\e
Then $\dd h^0=0$, and 
\ea
&\d h^0=\sum_{i=0}^e\sum_{j=1}^{n_i}\bigl[(\d\ci\dd u^i_j)\,\dd v^{k-1-i}_j\!+\!(-1)^{(i+1)k}(\d\ci\dd v^{k-1-i}_j)\,\dd u^i_j\bigr]
\nonumber\\
&=-\sum_{i=0}^e\sum_{j=1}^{n_i}\bigl[(\dd\ci\d u^i_j)\,\dd v^{k-1-i}_j+(-1)^{(i+1)k}(\dd\ci\d v^{k-1-i}_j)\,\dd u^i_j\bigr]
\nonumber\\
&=-\sum_{i=0}^e\sum_{j=1}^{n_i}(-1)^{(i+1)k}\biggl[
\dd\biggl(\frac{\pd\Psi}{\pd v^{k-1-i}_j}\biggr)\,\dd v^{k-1-i}_j
+\dd\biggl(\frac{\pd\Psi}{\pd u^i_j}\biggr)\,\dd u^i_j\biggr]
\nonumber\\
&=\dd\biggl[-\sum_{i=0}^e\sum_{j=1}^{n_i}\biggl[\dd v^{k-1-i}_j\,\frac{\pd\Psi}{\pd v^{k-1-i}_j}+\dd u^i_j\,\frac{\pd\Psi}{\pd u^i_j}\biggr]\biggr]
\label{ln3eq13}\\
&=\dd\biggl[-\dd\Psi+\sum_{i=0}^d\sum_{j=1}^{m_i}\dd\ti x^i_j\,\frac{\pd\Psi}{\pd\ti x^i_j}\biggr]
\nonumber\\
&=\sum_{i=0}^d\sum_{j=1}^{m_i}\dd\Bigl(\dd\bigl(\al(x^i_j)\bigr)\cdot(-1)^{i+1}\al(y_j^{k-i})\Bigr) 
\nonumber\\
&=\sum_{i=0}^d\sum_{j=1}^{m_i}\dd\bigl(\al(x^i_j)\bigr)\dd\bigl(\al(y_j^{k-i})\bigr)=\al_*\biggl[\sum_{i=0}^d\sum_{j=1}^{m_i}\dd x^i_j\,\dd y_j^{k-i}\biggr]=\al_*(\om^0),
\nonumber
\ea
using \eq{ln3eq12} in the first step, $\d\ci\dd+\dd\ci\d=0$ in the second, \eq{ln3eq6} in the third, $\dd\ci\dd=0$ in the fifth, \eq{ln3eq3} and \eq{ln3eq5} in the sixth, $\al_*\ci\dd=\dd\ci\al_*$ in the eighth, and \eq{ln2eq6} in the ninth.

Definition \ref{ln2def6} and equation \eq{ln3eq13} now imply that $h:=(h^0,0,0,\ldots)$ is an isotropic structure for $\bSpec\al:\bSpec B^\bu\ra\bSpec A^\bu$ and the $k$-shifted symplectic structure $\om=(\om^0,0,0,\ldots)$ on $\bSpec A^\bu$. We will now prove that this isotropic structure is nondegenerate, so that $\bSpec B^\bu$ is Lagrangian in $(\bSpec A^\bu,\om)$. To do this, we have to show that the morphism $\chi:\bT_{B^\bu/A^\bu}\ra\bL_{A^\bu}[k-1]$ of $B^\bu$-modules defined in \eq{ln2eq3} is a quasi-isomorphism. 

It is enough to apply $-\ot_{B^\bu}H^0(B^\bu)$, and show the corresponding morphism of complexes of $H^0(B^\bu)$-modules is an isomorphism. The analogue of \eq{ln2eq3} is
\ea
\xymatrix@C=60pt@R=15pt{ *+[r]{(\Om^1_{B^\bu})^\vee\ot_{B^\bu}H^0(B^\bu)} \drrtwocell_{}\omit^{^{h_{\bs L}^0\cdot\,\,\,\,\,\,\,\,\,\,\,\,\,\,\,\,\,}}\omit{} \ar[rr] \ar[d]^{(\Om^1_\al)^\vee} && *+[l]{0} \ar[d] \\
 *+[r]{(\Om^1_{A^\bu})^\vee\ot_{A^\bu}H^0(B^\bu)} \ar[r]^(0.61){\om^0\cdot} \ar[d] & \Om^1_{A^\bu}[k]\ot_{A^\bu}H^0(B^\bu) \ar[r]^(0.37){\Om^1_\al[k]} & *+[l]{\Om^1_{B^\bu}[k]\ot_{B^\bu}H^0(B^\bu)} \\
*+[r]{\bT_{B^\bu/A^\bu}[1]\ot_{B^\bu}H^0(B^\bu).} \ar@{.>}[urr]_{\chi[1]}
 }
\nonumber\\[-20pt]
\label{ln3eq14}
\ea
Here we have used $\Om^1_{A^\bu},\Om^1_{B^\bu},(\Om^1_{A^\bu})^\vee,(\Om^1_{B^\bu})^\vee$ as models for $\bL_{A^\bu},\bL_{B^\bu},\bT_{A^\bu},\bT_{B^\bu}$, since $A^\bu,B^\bu$ are standard form cdgas. As a model for $\bT_{B^\bu/A^\bu}\ot_{B^\bu}H^0(B^\bu)$ we will use the cone of $(\Om^1_\al)^\vee$ in \eq{ln3eq14}, so that
\begin{align*}
&\bigl((\bT_{B^\bu/A^\bu}\ot_{B^\bu}H^0(B^\bu))^*,\d\bigr)=\\
&\left(((\Om^1_{B^\bu})^\vee\!\ot_{B^\bu}\!H^0(B^\bu))^*\!\op\!((\Om^1_{A^\bu})^\vee\!\ot_{A^\bu}\!H^0(B^\bu)^{*-1},\begin{pmatrix} \d_{B^\bu}\!\! & 0 \\ (\Om^1_\al)^\vee\!\! & \d_{A^\bu} \end{pmatrix} \right).
\end{align*}

As $H^0(B^\bu)$-modules, the $i^{\rm th}$ graded pieces of $\Om^1_{A^\bu}\ot_{A^\bu}H^0(B^\bu)$, $\Om^1_{B^\bu}\ot_{B^\bu}H^0(B^\bu)$, $(\Om^1_{A^\bu})^\vee\ot_{A^\bu}H^0(B^\bu)$, and $(\Om^1_{B^\bu})^\vee\ot_{B^\bu}H^0(B^\bu)$ are
\ea
\begin{split}
\bigl(\Om^1_{A^\bu}\ot_{A^\bu}H^0(B^\bu)\bigr){}^i=\bigl\langle &\dd x^i_j,\; j=1,\ldots,m_i,\\
&\dd y^i_j,\; j=1,\ldots,m_{k-i}\bigr\rangle{}_{H^0(B^\bu)},
\end{split}
\label{ln3eq15}\\
\begin{split}
\bigl(\Om^1_{B^\bu}\ot_{B^\bu}H^0(B^\bu)\bigr){}^i=\bigl\langle &\dd\ti x^i_j,\; j=1,\ldots,m_i,\\
\dd u^i_j,\; j=1,\ldots,n_i,\; &\dd v^i_j,\; j=1,\ldots,n_{k-1-i}\bigr\rangle{}_{H^0(B^\bu)},
\end{split}
\label{ln3eq16}\\
\begin{split}
\bigl((\Om^1_{A^\bu})^\vee\ot_{A^\bu}H^0(B^\bu)\bigr){}^i=\bigl\langle &\ts\frac{\pd}{\pd x^{-i}_j},\; j=1,\ldots,m_{-i},\\
&\ts\frac{\pd}{\pd y^{-i}_j},\; j=1,\ldots,m_{i-k}\bigr\rangle{}_{H^0(B^\bu)},
\end{split}
\label{ln3eq17}\\
\begin{split}
\bigl((\Om^1_{B^\bu})^\vee\ot_{B^\bu}H^0(B^\bu)\bigr){}^i=\bigl\langle &\ts\frac{\pd}{\pd\ti x^{-i}_j},\; j=1,\ldots,m_{-i},\\
\ts\frac{\pd}{\pd u^{-i}_j},\; j=1,\ldots,n_{-i},\; &\ts\frac{\pd}{\pd v^{-i}_j},\; j=1,\ldots,n_{i+1-k}\bigr\rangle{}_{H^0(B^\bu)},
\end{split}
\label{ln3eq18}
\ea
where $\an{\cdots}_{H^0(B^\bu)}$ denotes the free $H^0(B^\bu)$-module with basis `$\cdots$'.

The next diagram shows $\chi:\bT_{B^\bu/A^\bu}\ot_{B^\bu}H^0(B^\bu)\ra \Om^1_{B^\bu}[k-1]\ot_{B^\bu}H^0(B^\bu)$ in degrees $i,i+1$, together with $\d$ in both complexes.
\ea
\nonumber\\[-77pt]
\begin{gathered}
\xymatrix@C=90pt@R=55pt{
*+[r]{\raisebox{-100pt}{$\ts\begin{subarray}{l} \ts (\bT_{B^\bu/A^\bu}\ot_{B^\bu}H^0(B^\bu))^i\\
\ts =\bigl\langle \frac{\pd}{\pd\ti x^{-i}_j},\;\forall j\bigr\rangle{}_{H^0(B^\bu)}\op
\\
\ts\bigl\langle\frac{\pd}{\pd u^{-i}_j},\frac{\pd}{\pd v^{-i}_j},\;\forall j\bigr\rangle{}_{H^0(B^\bu)}\\
\ts \op\bigl\langle \frac{\pd}{\pd x^{1-i}_j},\;\forall j\bigr\rangle{}_{H^0(B^\bu)}\\
\ts \op\bigl\langle\frac{\pd}{\pd y^{1-i}_j},\;\forall j\bigr\rangle{}_{H^0(B^\bu)}
\end{subarray}$}}
\ar@<-30pt>[r]_(0.4){\chi^i=\begin{pmatrix} \st 0 & \st 0 & \st * & \st \om^0\cdot \\
\st 0 & \st h^0\cdot & \st * & \st 0  \end{pmatrix}} 
\ar[d]^{\d=\begin{pmatrix} \st * & \st 0 & \st 0 & \st 0 \\
\st * & \st * & \st 0 & \st 0 \\
\st \al^* & \st 0 & \st * & \st 0 \\
\st * & \st  * & \st * & \st * \end{pmatrix}}
 & *+[l]{\raisebox{-40pt}{$\ts\begin{subarray}{l} \ts (\Om^1_{B^\bu}[k\!-\!1]\ot_{B^\bu}H^0(B^\bu))^i\\
\ts =\bigl\langle \dd\ti x^{k-1+i}_j,\;\forall j\bigr\rangle{}_{H^0(B^\bu)}\op
\\
\ts\bigl\langle\dd u^{k-1+i}_j,\dd v^{k-1+i}_j,\;\forall j\bigr\rangle{}_{H^0(B^\bu)}
\end{subarray}$}} 
\ar@<-40pt>[d]_{\d=\begin{pmatrix} \st * & \st 0  \\
\st * & \st *  \end{pmatrix}} \\
*+[r]{\raisebox{90pt}{$\ts\begin{subarray}{l} \ts (\bT_{B^\bu/A^\bu}\ot_{B^\bu}H^0(B^\bu))^{i+1}\\
\ts =\bigl\langle \frac{\pd}{\pd\ti x^{-i-1}_j},\;\forall j\bigr\rangle{}_{H^0(B^\bu)}\op
\\
\ts\bigl\langle\frac{\pd}{\pd u^{-i-1}_j},\frac{\pd}{\pd v^{-i-1}_j},\;\forall j\bigr\rangle{}_{H^0(B^\bu)}\\
\ts \op\bigl\langle \frac{\pd}{\pd x^{-i}_j},\;\forall j\bigr\rangle{}_{H^0(B^\bu)}\\
\ts \op\bigl\langle\frac{\pd}{\pd y^{-i}_j},\;\forall j\bigr\rangle{}_{H^0(B^\bu)}
\end{subarray}$}} 
\ar@<25pt>[r]^(0.45){\chi^{i+1}=\begin{pmatrix} \st 0 & \st 0 & \st * & \st \om^0\cdot \\
\st 0 & \st h^0\cdot & \st * & \st 0  \end{pmatrix}}  & *+[l]{\raisebox{30pt}{$\ts\begin{subarray}{l} \ts (\Om^1_{B^\bu}[k\!-\!1]\ot_{B^\bu}H^0(B^\bu))^{i+1}\\
\ts =\bigl\langle \dd\ti x^{k+i}_j,\;\forall j\bigr\rangle{}_{H^0(B^\bu)}\op
\\
\ts\bigl\langle\dd u^{k+i}_j,\dd v^{k+i}_j,\;\forall j\bigr\rangle{}_{H^0(B^\bu)}.
\end{subarray}$}} }\!\!\!\!\!\!\!\!\!\!\!\!\!\!\!\!\!\!\!\!\!\!\!{}
\end{gathered}
\label{ln3eq19}\\[-70pt]
\nonumber
\ea
We have divided $\bT_{B^\bu/A^\bu}\ot_{B^\bu}H^0(B^\bu)$ into the direct sum of four pieces, and $\Om^1_{B^\bu}[k-1]\ot_{B^\bu}H^0(B^\bu)$ into two. The morphisms $\d,\chi^i,\chi^{i+1}$ are written in matrix form, where `$*$' denotes some morphism. In the left hand $\d$, the `$\al^*$' maps $\frac{\pd}{\pd\ti x^{-i}_j}\mapsto \frac{\pd}{\pd x^{-i}_j}$, up to sign. In $\chi^i$, $\om^0\cdot$ maps $\frac{\pd}{\pd y^{1-i}_j}\mapsto \dd\ti x^{k-1+i}_j$, and $h^0\cdot$ maps $\frac{\pd}{\pd u^{-i}_j}\mapsto \dd v^{k-1+i}_j$ and $\frac{\pd}{\pd v^{-i}_j}\mapsto \dd u^{k-1+i}_j$, all up to sign, and similarly for $\chi^{i+1}$. The important thing is that these $\al^*,\om^0\cdot,h^0\cdot$ in $\d,\chi^i,\chi^{i+1}$ are all isomorphisms of $H^0(B^\bu)$-modules.

Now consider the graded $H^0(B^\bu)$-submodule 
\begin{align*}
C^*:&=\ts \{0\}\op \bigl\langle\frac{\pd}{\pd u^{-*}_j},\frac{\pd}{\pd v^{-*}_j},\;\forall j\bigr\rangle{}_{H^0(B^\bu)}\op \{0\} \op \bigl\langle\frac{\pd}{\pd y^{1-*}_j},\;\forall j\bigr\rangle{}_{H^0(B^\bu)}\\
&\subseteq (\bT_{B^\bu/A^\bu}\ot_{B^\bu}H^0(B^\bu))^*.
\end{align*}
The form of the left hand `$\d$' in \eq{ln3eq19} implies that $C^*$ is closed under $\d$, so $C^\bu=(C^*,\d)$ is a subcomplex of $\bT_{B^\bu/A^\bu}\ot_{B^\bu}H^0(B^\bu)$. The isomorphism `$\al^*$' plus two other zeroes in the left hand `$\d$' in \eq{ln3eq19} imply that the inclusion ${\rm inc}:C^\bu\hookra \bT_{B^\bu/A^\bu}\ot_{B^\bu}H^0(B^\bu)$ is a quasi-isomorphism. And the isomorphisms `$h^0\cdot$', `$\om^0\cdot$' plus two other zeroes in $\chi^i,\chi^{i+1}$ in \eq{ln3eq19} imply that $\chi\vert_{C^\bu}$ is a strict isomorphism of complexes. Thus we have a commutative diagram
\e
\begin{gathered}
\xymatrix@C=130pt@R=15pt{
*+[r]{ C^\bu} \ar[d]^{\rm inc}_\simeq \ar[dr]^(0.3){\chi\vert_{C^\bu}}_(0.3)\cong \\
*+[r]{ \bT_{B^\bu/A^\bu}\ot_{B^\bu}H^0(B^\bu)} \ar[r]^(0.45)\chi & *+[l]{\Om^1_{B^\bu}[k-1]\ot_{B^\bu}H^0(B^\bu),} }
\end{gathered}
\label{ln3eq20}
\e
so $\chi$ is a quasi-isomorphism. Therefore the isotropic structure $h$ is nondegenerate, and $\bSpec B^\bu$ is Lagrangian in $(\bSpec A^\bu,\om)$. We say that $A^\bu,\om,B^\bu,\al,h$ are in {\it Lagrangian Darboux form}.

Following \eq{ln2eq9}, define $\psi\in(\Om^1_{B^\bu})^{k-1}$ by 
\e
\psi=\sum_{i=0}^e\sum_{j=1}^{n_i}\bigl[i\,u^i_j\,\dd v^{k-1-i}_j +(-1)^{(i+1)k}(k-1-i)v^{k-1-i}_j\,\dd u^i_j \bigr].
\label{ln3eq21}
\e
As a relative version of \eq{ln2eq10}, we will prove that
\ea
\d\Psi&=-\al(\Phi+\Phi_+)&&\text{in $B^{k+1}$,}
\label{ln3eq22}\\
\dd\Psi+\d\psi&=-\al_*(\phi+\phi_+)&&\text{in $(\Om^1_{B^\bu})^k$, and}
\label{ln3eq23}\\
\dd\psi&=(k-1)h^0 &&\text{in $(\La^2\Om^1_{B^\bu})^{k-1}$.}
\label{ln3eq24}
\ea
Note that $\d h^0=\al_*(\om^0)$ in \eq{ln3eq13} also follows from
\begin{align*}
(k-1)\d h^0&=\d\ci\dd\psi=-\dd\ci\d\psi=-\dd\big[\dd\Psi+\d\psi\bigr]=\dd\ci\al_*(\phi+\phi_+)\\
&=\al_*(\dd\phi+\dd\phi_+)=\al_*(k\om^0-\om^0)=(k-1)\al_*(\om^0),
\end{align*}
using equations \eq{ln2eq10}, \eq{ln2eq16}, \eq{ln3eq23}, and \eq{ln3eq24}.

For equation \eq{ln3eq22}, we have
\ea
&\d\Psi=\sum_{i=-1}^d\sum_{j=1}^{m_i}\d \ti x_j^i\,\frac{\pd\Psi}{\pd\ti x_j^i}
+\sum_{i=-1}^e\sum_{j=1}^{n_i}\d u_j^i\,\frac{\pd\Psi}{\pd u_j^i}+\sum_{i=-1}^e\sum_{j=1}^{n_i}\d v_j^{k-1-i}\,\frac{\pd\Psi}{\pd v_j^{k-1-i}}
\nonumber\\
&=\sum_{i=-1}^d\sum_{j=1}^{m_i}(-1)^{i+1}\al_+(\Phi_j^{i+1})\frac{\pd\Psi}{\pd\ti x_j^i}
+\sum_{i=-1}^e\sum_{j=1}^{n_i}(-1)^{(i+1)k}\frac{\pd\Psi}{\pd v^{k-1-i}_j}\frac{\pd\Psi}{\pd u_j^i}
\nonumber\\
&\qquad+\sum_{i=-1}^e\sum_{j=1}^{n_i}\frac{\pd\Psi}{\pd u^i_j}\frac{\pd\Psi}{\pd v_j^{k-1-i}}
\label{ln3eq25}\\
&=\sum_{i=-1}^d\sum_{j=1}^{m_i}(-1)^{i+1}\al_+(\Phi_j^{i+1})\frac{\pd\Psi}{\pd\ti x_j^i}
+2\sum_{i=-1}^e\sum_{j=1}^{n_i}\frac{\pd\Psi}{\pd u^i_j}\frac{\pd\Psi}{\pd v_j^{k-1-i}}
\nonumber\\
&=\sum_{i=-1}^d\sum_{j=1}^{m_i}(-1)^{i+1}\al_+(\Phi_j^{i+1})\frac{\pd\Psi}{\pd\ti x_j^i}\!-\!2\biggl[\al_+(\Phi_+)\!+\!\sum_{i=-1}^d\sum_{j=1}^{m_i}(-1)^{i+1}\al_+(\Phi_j^{i+1})\frac{\pd\Psi}{\pd\ti x^i_j}\biggr]
\nonumber\\
&=-\al(\Phi+\Phi_+),
\nonumber
\ea
using \eq{ln3eq6} in the second step, \eq{ln3eq4} in the fourth, and \eq{ln2eq11} and \eq{ln3eq5} in the fifth. For equation \eq{ln3eq23}, we have\begin{small}
\ea
&\dd\Psi+\d\psi=\sum_{i=0}^d\sum_{j=1}^{m_i}\dd \ti x_j^i\,\frac{\pd\Psi}{\pd\ti x_j^i}
+\sum_{i=0}^e\sum_{j=1}^{n_i}\biggl[\dd u_j^i\,\frac{\pd\Psi}{\pd u_j^i}+\dd v_j^{k-1-i}\,\frac{\pd\Psi}{\pd v_j^{k-1-i}}\biggr]
\nonumber\\
&+\sum_{i=0}^e\sum_{j=1}^{n_i}\bigl[i\,\d u^i_j\,\dd v^{k-1-i}_j +(-1)^{(i+1)k}(k-1-i)\d v^{k-1-i}_j\,\dd u^i_j \bigr]
\nonumber\\
&+\sum_{i'=0}^e\sum_{j'=1}^{n_{i'}}\bigl[(-1)^{i'}i'u^{i'}_{j'}\,\d\ci\dd v^{k-1-i'}_{j'} -(-1)^{i'(k+1)}(k-1-i')v^{k-1-i'}_{j'}\,\d\ci\dd u^{i'}_{j'} \bigr]
\allowdisplaybreaks
\nonumber\\
&=\sum_{i=0}^d\sum_{j=1}^{m_i}(-1)^{(i+1)k}\frac{\pd\Psi}{\pd\ti x_j^i}\,\dd \ti x_j^i+
\nonumber\\
&+\sum_{i=0}^e\sum_{j=1}^{n_i}(-1)^{(i+1)k}\biggl[(i+1)\frac{\pd\Psi}{\pd v^{k-1-i}_j}\dd v^{k-1-i}_j+(k-i)\frac{\pd\Psi}{\pd u^i_j}\,\dd u^i_j \biggr]
\nonumber\\
&-\sum_{i'=0}^e\sum_{j'=1}^{n_{i'}}\biggl[(-1)^{i'}i'u^{i'}_{j'}\,\dd\biggl[\frac{\pd\Psi}{\pd u^{i'}_{j'}}\biggr] -(-1)^{i'+k}(k-1-i')v^{k-1-i'}_{j'}\,\dd\biggl[\frac{\pd\Psi}{\pd v^{k-1-i'}_{j'}}\biggr] \biggr]
\allowdisplaybreaks
\nonumber\\
&=\sum_{i=0}^d\sum_{j=1}^{m_i}\biggl[(-1)^{(i+1)k}\frac{\pd\Psi}{\pd\ti x_j^i}
-\sum_{i'=0}^e\sum_{j'=1}^{n_{i'}}(-1)^{(i+1)k}i'u^{i'}_{j'}\frac{\pd^2\Psi}{\pd u^{i'}_{j'}\pd\ti x_j^i}
\nonumber\\
&-\sum_{i'=0}^e\sum_{j'=1}^{n_{i'}}(-1)^{(i+1)k}(k-1-i')v^{k-1-i'}_{j'}\frac{\pd^2\Psi}{\pd v^{k-1-i'}_{j'}\pd\ti x_j^i}\biggr]\dd \ti x_j^i
\nonumber\\
&+\sum_{i=0}^e\sum_{j=1}^{n_i}\biggl[(-1)^{(i+1)k}(k-i)\frac{\pd\Psi}{\pd u^i_j}-\sum_{i'=0}^e\sum_{j'=1}^{n_{i'}}(-1)^{(i+1)k}i'u^{i'}_{j'}\frac{\pd^2\Psi}{\pd u^{i'}_{j'}\pd u_j^i}
\nonumber\\
&-\sum_{i'=0}^e\sum_{j'=1}^{n_{i'}}(-1)^{(i+1)k}(k-1-i')v^{k-1-i'}_{j'}\frac{\pd^2\Psi}{\pd v^{k-1-i'}_{j'}\pd u_j^i}  \biggr]\dd u_j^i
\nonumber\\
&+\sum_{i=0}^e\sum_{j=1}^{n_i}\biggl[(-1)^{(i+1)k}(i+1)\frac{\pd\Psi}{\pd v^{k-1-i}_j} -\sum_{i'=0}^e\sum_{j'=1}^{n_{i'}}(-1)^{(i+1)k}i'u^{i'}_{j'}\frac{\pd^2\Psi}{\pd u^{i'}_{j'}\pd v_j^{k-1-i}}
\nonumber\\
&-\sum_{i'=0}^e\sum_{j'=1}^{n_{i'}}(-1)^{(i+1)k}(k-1-i')v^{k-1-i'}_{j'}\frac{\pd^2\Psi}{\pd v^{k-1-i'}_{j'}\pd v^{k-1-i}_j}\biggr]\dd v_j^{k-1-i}
\allowdisplaybreaks
\nonumber\\
&=-\sum_{i=0}^d\sum_{j=1}^{m_i}\biggl[(-1)^{(i+1)k}(k-1-i)\frac{\pd\Psi}{\pd\ti x_j^i}
-\sum_{i'=0}^d\sum_{j'=1}^{m_{i'}}(-1)^{(i+1)k}i'\ti x^{i'}_{j'}\frac{\pd^2\Psi}{\pd\ti x^{i'}_{j'}\pd\ti x_j^i}\biggr]\dd \ti x_j^i
\nonumber\\
&+\sum_{i=0}^e\sum_{j=1}^{n_i}\sum_{i'=0}^d\sum_{j'=1}^{m_{i'}}(-1)^{(i+1)k}i'\ti x^{i'}_{j'}\biggl[\frac{\pd^2\Psi}{\pd\ti x^{i'}_{j'}\pd u_j^i}\dd u_j^i+
\frac{\pd^2\Psi}{\pd\ti x^{i'}_{j'}\pd v_j^{k-1-i}}\dd v_j^{k-1-i}\biggr]
\allowdisplaybreaks
\nonumber\\
&=-\sum_{i=0}^d\sum_{j=1}^{m_i}(-1)^{(i+1)(k+1)}(k-1-i)\al(y^{k-1-i}_j)\dd[\al(x^i_j)]
\nonumber\\
&-\sum_{i'=0}^d\sum_{j'=1}^{m_{i'}}i'\al(x^{i'}_{j'})\dd[\al(y^{k-i}_j)]
\allowdisplaybreaks
\nonumber\\
&=-\al_*\biggl[\sum_{i=0}^d\sum_{j=1}^{m_i}\bigl[(-1)^{(i+1)(k+1)}(k-1-i)y^{k-i}_j\,\dd x^i_j+i\,x^i_j\,\dd y^{k-i}_j\bigr]\biggr]
\nonumber\\
&=-\al_*(\phi+\phi_+),
\label{ln3eq26}
\ea\end{small}using \eq{ln3eq21} in the first step, \eq{ln3eq6} and $\d\ci\dd+\dd\ci\d=0$ in the second, in the fourth that
\begin{small}\begin{align*}
\biggl[\sum_{i'=0}^d\sum_{j'=1}^{m_{i'}}i'x^{i'}_{j'}\frac{\pd}{\pd\ti x^{i'}_{j'}}\!+\! \sum_{i'=0}^e\sum_{j'=1}^{n_{i'}}i'u^{i'}_{j'}\frac{\pd}{\pd u^{i'}_{j'}}\!+\!(k\!-\!1\!-\!i')v^{k-1-i'}_{j'}\frac{\pd}{\pd v^{k-1-i'}_{j'}}\biggr]\frac{\pd\Psi}{\pd\ti x^i_j}
\!=\!(k\!-\!i)\frac{\pd\Psi}{\pd\ti x^i_j},
\end{align*}\end{small}since $\frac{\pd\Psi}{\pd\ti x^i_j}$ has degree $k-i$, and similar equations for $\frac{\pd\Psi}{\pd u^i_j},\frac{\pd\Psi}{\pd v^{k-1-i}_j}$, equations \eq{ln3eq3} and \eq{ln3eq5} in the fifth, and \eq{ln2eq9} and \eq{ln2eq15} in the seventh. Equation \eq{ln3eq24} is immediate from \eq{ln3eq12} and~\eq{ln3eq21}. 

Let us summarize our progress:
\begin{itemize}
\setlength{\itemsep}{0pt}
\setlength{\parsep}{0pt}
\item Example \ref{ln2ex2} defined a cdga $A^\bu$ and $\om^0=\sum_{i=0}^d\sum_{j=1}^{m_i}\dd x^i_j\,\dd y^{k-i}_j$ in $(\La^2\Om^1_{A^\bu})^k$ such that $\om=(\om^0,0,\ldots)$ is $k$-shifted symplectic on $\bSpec A^\bu$, and $\Phi\in A^{k+1}$, $\phi\in(\Om^1_{A^\bu})^k$ with $\d\Phi=0$, $\dd\Phi+\d\phi=0$ and $\dd\phi=k\om^0$.
\item Remark \ref{ln2rem3} defined a sub-cdga $A^\bu_+\subseteq A^\bu$ and $\Phi_+\in A^{k+1}_+$, $\phi_+\in(\Om^1_{A^\bu})^k$ satisfying $\d\Phi_+=0$, $\dd\Phi_++\d\phi_+=0$ and $\dd\phi_+=-\om^0$.
\item We define a cdga $B^\bu$ and a morphism $\al:A^\bu\ra B^\bu$ such that $\al_+:=\al\vert_{A_+^\bu}:A^\bu_+\ra B^\bu$ is a submersion, a Lagrangian isotropic structure $h=(h^0,0,0,\ldots)$ with $h^0=\sum_{i=0}^e\sum_{j=1}^{n_i}\dd u^i_j\,\dd v^{k-1-i}_j$  for $\al$, and $\Psi\in B^k$, $\psi\in(\Om^1_{B^\bu})^{k-1}$ with $\d\Psi=-\al(\Phi+\Phi_+)$, $\dd\Psi+\d\psi=-\al_*(\phi+\phi_+)$ and $\dd\psi=(k-1)h^0$. We say $A^\bu,\om,B^\bu,\al,h$ are in {\it Lagrangian Darboux form}.
\end{itemize}
This finally concludes Example~\ref{ln3ex1}.
\end{ex}

\begin{rem} 
\label{ln3rem1}
Bouaziz and Grojnowski \cite{BoGr} proved their own $k$-shifted symplectic Darboux Theorem independently of \cite{BBJ}, showing that any $k$-shifted symplectic derived $\K$-scheme $(\bX,\om_\bX)$ for $k<0$ with $k\not\equiv 2\mod 4$ is \'etale locally equivalent to a {\it twisted\/ $k$-shifted cotangent bundle\/} $T^*_t[k]\bY$, where $\bY$ is an affine derived $\K$-scheme, and $t\in\O_\bY^{k+1}$ with $\d t=0$ is used to `twist' the $k$-shifted cotangent bundle $T^*[k]\bY$. Remark \ref{ln2rem3} related their picture to Theorem~\ref{ln2thm5}.

We can explain our $k$-shifted Lagrangian Neighbourhood Theorem \ref{ln3thm2}(i) below in the style of Bouaziz and Grojnowski \cite{BoGr}, by saying that a $k$-shifted Lagrangian $\bs f:\bs L\ra\bX$ for $k\not\equiv 3\mod 4$ is \'etale locally equivalent to the {\it twisted\/ $(k-1)$-shifted relative cotangent bundle\/} $T^*_{u/t}[k-1](\bZ/\bY)\ra T^*_t[k]\bY$ of a morphism of affine derived $\K$-schemes $\bs g:\bZ\ra\bY$, with `twisting' $u\in\O_\bZ^k$ satisfying $\d u+\bs g^*(t)=0$, or equivalently, to the {\it twisted\/ $k$-shifted conormal bundle\/} $N^*_{u/t}[k](\bZ/\bY)\ra T^*_t[k]\bY$, since~$N^*(\bZ/\bY)=T^*[-1](\bZ/\bY)$.

Explicitly, in the situation of Example \ref{ln3ex1} and following Remark \ref{ln2rem3}, define $B^\bu_+$ to be the sub-cdga of $B^\bu$ generated by $B^0$ and the variables $\ti x^i_j,u^i_j$ for all $i<0$ and $j$. Then $B^*$ is freely generated over $B^*_+$ by the variables $v^{k-1-i}_j$ for all $i,j$. Also $\al_+$ maps $A_+^\bu\ra B_+^\bu\subseteq B^\bu$. For degree reasons $\Psi$ can be at most linear in the $v_j^{k-1-i}$, so as in \eq{ln2eq11} we may write
\e
\Psi=\Psi_++\sum_{i=-1}^e\sum_{j=1}^{n_i}\Psi_j^{i+1}v_j^{k-1-i},
\label{ln3eq27}
\e
where $\Psi_+\in B_+^k$ and $\Psi_j^{i+1}\in B_+^{i+1}$ for all $i,j$ do not involve the variables $v^{k-1-i}_j$. 

Then as in \eq{ln2eq12}--\eq{ln2eq13}, equation \eq{ln3eq4} is equivalent to the equations
\ea
\!\!\sum_{i=-1}^d\!\sum_{j=1}^{m_i}(-\!1)^{i+1}\Psi_j^{i+1}\frac{\pd\Psi_+}{\pd u^i_j}\!+\!\al_+(\Phi_+)\!+\!\!\sum_{i=-1}^d\!\sum_{j=1}^{m_i}(-\!1)^{i+1}\al_+(\Phi_j^{i+1})\frac{\pd\Psi_+}{\pd\ti x^i_j}\!&=0,\!\!
\label{ln3eq28}\\
\!\!\sum_{i=-1}^{i'+1}\sum_{j=1}^{m_i}(-1)^{i+1}\Psi_j^{i+1}\,\frac{\pd\Psi_{j'}^{i'+1}}{\pd u^i_j}+\sum_{i=-1}^d\sum_{j=1}^{m_i}(-1)^{i+1}\al_+(\Phi_j^{i+1})\frac{\pd\Psi_{j'}^{i'+1}}{\pd\ti x^i_j}&=0,\!\!
\label{ln3eq29}
\ea
where \eq{ln3eq28} holds in $B_+^{k+1}$, and \eq{ln3eq29} holds in $B_+^{i'+2}$ for all $i'=-1,\ldots,e$ and $j'=1,\ldots,n_{i'}$. Also equations \eq{ln3eq5} and \eq{ln3eq6} may be rewritten
\ea
\al(y^{k-i}_j)&=(-1)^{i+1}\biggl[\frac{\pd\Psi_+}{\pd\ti x^i_j}
+\sum_{i'=i-1}^e\sum_{j'=1}^{n_{i'}}\frac{\pd\Psi_{j'}^{i'+1}}{\pd\ti x^i_j}\,v_{j'}^{k-1-i'}\biggr],
\label{ln3eq30}\\
\begin{split}
\d\ti x^i_j&=(-1)^{i+1}\al_+(\Phi_j^{i+1}), \qquad
\d u^i_j =(-1)^{i+1}\Psi^{i+1}_j, \\ 
\d v^{k-1-i}_j&=\frac{\pd\Psi_+}{\pd u^i_j}+\sum_{i'=i-1}^e\sum_{j'=1}^{n_{i'}}\frac{\pd\Psi_{j'}^{i'+1}}{\pd u^i_j}\,v_{j'}^{k-1-i'}.
\end{split}
\label{ln3eq31}
\ea
From these we see that \eq{ln3eq28} is equivalent to
\e
\d\Psi_++\al_+(\Phi_+)=0.
\label{ln3eq32}
\e

Now write $\bY=\bSpec A_+^\bu$ and $\bZ=\bSpec B^\bu_+$, as affine derived $\K$-schemes, and $\bs g=\bSpec\al_+:\bZ\ra\bY$. As in Remark \ref{ln2rem3}, we interpret $\bSpec A^\bu$ as a twisted $k$-shifted cotangent bundle $T^*_t[k]\bY$ with projection $\bSpec\io:T^*_t[k]\bY\ra\bY$, where $\io:A^\bu_+\hookra A^\bu$ is the inclusion, and the `twist' $t\in\O_\bY^{k+1}$ is $t=\Phi_+$. Similarly, we interpret $\bSpec B^\bu$ as a twisted $k$-shifted conormal bundle $N^*_{u/t}[k](\bZ/\bY)$ of $\bs g:\bZ\ra\bY$, with projection $\bSpec\jmath:N^*_{u/t}[k](\bZ/\bY)\ra\bZ$, where $\jmath:B^\bu_+\hookra B^\bu$ is the inclusion, and we interpret $\bSpec\al$ as the twisted inclusion morphism~$N^*_{u/t}[k](\bZ/\bY)\ra T^*_t[k]\bY$.

Here $\Psi_+$ is the `twist' $u$ of the $k$-shifted conormal bundle $N^*[k](\bZ/\bY)$, and \eq{ln3eq32} is the compatibility condition $\d u+\bs g^*(t)=0$ with the `twist' $t=\Phi_+$ of $T^*[k]\bY$. The data $\Psi_j^{i+1}$ in $\Psi$ in \eq{ln3eq27} defines the differential $\d$ in $B^\bu_+=(B^*_+,\d)$, via $\d u^i_j =(-1)^{(i+1)k}\Psi^{i+1}_j$ in \eq{ln3eq31}. Equation \eq{ln3eq29} means that $\d\ci\d=0$ in $B^\bu_+=(B^*_+,\d)$, necessary for $B_+^\bu$ to be a cdga and $\bZ$ a derived scheme.
\end{rem}

The next example, defining `weak Lagrangian Darboux form' and `strong Lagrangian Darboux form' when $k\equiv 3\mod 4$, is related to Example \ref{ln3ex1} in the same way that Example \ref{ln2ex3} is related to Example \ref{ln2ex2} in~\S\ref{ln25}.

\begin{ex} 
\label{ln3ex2}
Let $k<0$ with $k\equiv 3\mod 4$, suppose $A^\bu,\om$ are in $k$-shifted Darboux form, as in Example \ref{ln2ex2}, and use the notation of Remark \ref{ln2rem3}. These define a standard form cdga $A^\bu$ over $\K$, a sub-cdga $A_+^\bu\subseteq A^\bu$, coordinates $x^i_j$ in $A_+^i\subseteq A^i$ and $y^{k-i}_j$ in $A^{k-i}$ for $i=0,-1,\ldots,d=[(k+1)/2]$ and $j=1,\ldots,m_i$, and a $k$-shifted 2-form $\om^0=\sum_{i=0}^d\sum_{j=1}^{m_i}\dd x^i_j\,\dd y^{k-i}_j$. They also define $\Phi\in A^{k+1}$ satisfying \eq{ln2eq7}, which determines the differential $\d$ in $A^\bu$ by \eq{ln2eq8}, and $\phi\in(\Om^1_{A^\bu})^k$ satisfying $\d\Phi=0$, $\dd\Phi+\d\phi=0$ and $\dd\phi=k\om^0$. As in \eq{ln2eq11} we write $\Phi=\Phi_++\sum_{i=-1}^d\sum_{j=1}^{m_i}\Phi_j^{i+1}y_j^{k-i}$, where $\Phi_+\in A_+^{k+1}$ and $\Phi_j^{i+1}\in A_+^{i+1}$ for all $i,j$ do not involve the $y^i_j$, and we define $\phi_+\in(\Om^1_{A^\bu})^k$ as in~\eq{ln2eq15}.

Write $e=[k/2]$, so that $e=d-1$ and $k=2e+1=2d-1$. Choose nonnegative integers $n_0,n_{-1},\ldots,n_e$. Choose a smooth $\K$-algebra $B^0$ of dimension $m_0+n_0$, and a smooth morphism $\al^0:A^0\ra B^0$. Localizing $B^0$ if necessary, we assume there exist $u^0_1,\ldots,u^0_{n_0}\in B^0$ such that $\dd\ti x^0_1,\ldots,\dd\ti x^0_{m_0},\dd u^0_1,\ldots,\dd u^0_{n_0}$ form a basis of $\Om^1_{B^0}$ over $B^0$, where we write $\ti x^0_j=\al^0(x^0_j)\in B^0$.

As in \eq{ln3eq2}, define $B^*$ as a commutative graded $\K$-algebra to be the free graded algebra over $B^0$
generated by variables
\begin{align*}
&\ti x_1^i,\ldots,\ti x^i_{m_i} &&\text{in degree $i$ for
$i=-1,-2,\ldots,d$, and} \\
&u_1^i,\ldots,u^i_{n_i} &&\text{in degree $i$ for
$i=-1,-2,\ldots,d$, and} \\
&w_1^e,\ldots,w^e_{n_e} &&\text{in degree $e$, and} \\
&v_1^{k-1-i},\ldots,v^{k-1-i}_{n_i} &&\text{in degree $k-1-i$ for
$i=0,-1,\ldots,d$.}
\end{align*}
So the upper index $i$ in $\ti x^i_j,u^i_j,v^i_j,w^i_j$ always indicates the degree. 

As in \eq{ln3eq3}, define a morphism $\al_+:A_+^*\ra B^*$ of commutative graded $\K$-algebras by $\al_+^0=\al^0$ in degree 0 and
\begin{equation*}
\al_+(x^i_j)=\ti x^i_j,\quad i=-1,-2,\ldots,d,\;\> j=1,\ldots,m_i. 
\end{equation*}
This is well-defined as $A_+^*$ is freely generated over $A^0$ by the~$x^i_j$.

Let $q_1,\ldots,q_{n_e}$ be invertible elements of $B^0$. Choose a superpotential $\Psi$ in $B^k$, which as in \eq{ln2eq18} and \eq{ln3eq4} we require to satisfy 
\e
\begin{split}
\sum_{i=-1}^d&\sum_{j=1}^{n_i}\frac{\pd\Psi}{\pd u^i_j}\,
\frac{\pd\Psi}{\pd v^{k-1-i}_j}+\frac{1}{4}\sum_{j=1}^{n_e}
\frac{1}{q_j}\,\biggl(\frac{\pd\Psi}{\pd w^e_j}\biggr)^2\\
&+\al_+(\Phi_+)+\sum_{i=-1}^d\sum_{j=1}^{m_i}(-1)^{i+1}\al_+(\Phi_j^{i+1})\frac{\pd\Psi}{\pd\ti x^i_j}=0
\end{split}
\label{ln3eq33}
\e
in $B^{k+1}$. As in \eq{ln3eq5}, extend $\al_+$ to $\al:A^*\ra B^*$ by $\al\vert_{A^*_+}=\al_+$ and 
\e
\begin{split}
\al(y^{k-i}_j)&=(-1)^{i+1}\frac{\pd\Psi}{\pd\ti x^i_j},\quad i=-1,\ldots,d,\;\> j=1,\ldots,m_i, \\
\al(y^k_j)&=-\frac{\pd\Psi}{\pd\ti x^0_j}+\sum_{j'=1}^{n_e} \frac{w_{j'}^e}{2q_{j'}}\,\frac{\pd q_{j'}}{\pd\ti x^0_j}\,\frac{\pd\Psi}{\pd w^e_{j'}},\quad j=1,\ldots,m_0.
\end{split}
\label{ln3eq34}
\e
This is well-defined as $A^*$ is freely generated over $A^*_+$ by the $y^{k-i}_j$. Then from \eq{ln2eq11} and \eq{ln3eq34} we see that \eq{ln3eq33} may be rewritten
\begin{equation*}
\sum_{i=-1}^d\sum_{j=1}^{n_i}\frac{\pd\Psi}{\pd u^i_j}\,
\frac{\pd\Psi}{\pd v^{k-1-i}_j}+\frac{1}{4}\sum_{j=1}^{n_e}
\frac{1}{q_j}\,\biggl(\frac{\pd\Psi}{\pd w^e_j}\biggr)^2+\al(\Phi)=0.
\end{equation*}

As in \eq{ln2eq19} and \eq{ln3eq6}, define the differential $\d$ in the cdga $B^\bu=(B^*,\d)$ by $\d=0$ on $B^0$, and
\e
\begin{aligned}
\d\ti x^i_j&=(-1)^{i+1}\al_+(\Phi_j^{i+1}),&&
\begin{subarray}{l}\ts i=-1,-2,\ldots,d,\\[3pt] \ts j=1,\ldots,m_i, \end{subarray} \\[3pt]
\d u^i_j&=(-1)^{i+1}\frac{\pd\Psi}{\pd v^{k-1-i}_j},&&\begin{subarray}{l}\ts i=-1,-2,\ldots,d,\\[3pt] \ts j=1,\ldots,n_i,\end{subarray} \\
\d v^{k-1-i}_j&=\frac{\pd\Psi}{\pd u^i_j},&&\begin{subarray}{l}\ts i=-1,-2,\ldots,d,\\[3pt] \ts j=1,\ldots,n_i,\end{subarray}\\
\d v^{k-1}_j&=\frac{\pd\Psi}{\pd u^0_j}-\sum_{j'=1}^{n_e} \frac{w_{j'}^e}{2q_{j'}}\,\frac{\pd q_{j'}}{\pd u^0_j}\,\frac{\pd\Psi}{\pd w^e_{j'}},&& j=1,\ldots,n_0,\\
\d w^e_j&=\frac{1}{2q_j}\,\frac{\pd\Psi}{\pd
w^e_j}, && j=1,\ldots,n_e.
\end{aligned}
\label{ln3eq35}
\e
We prove that $\d\ci\d=0$ as in \eq{ln3eq7}--\eq{ln3eq9}, applying $\pd/\pd u^{i'}_{j'},\pd/\pd v^{k-1-i'}_{j'},\pd/\pd w^e_{j'}$ to \eq{ln3eq33}. Thus $B^\bu$ is a standard form cdga over~$\K$.

As in \eq{ln3eq10}--\eq{ln3eq11} we can check that $\d\ci\al(x^i_j)=\al\ci\d x^i_j$ and $\d\ci\al(y^{k-i}_j)=\al\ci\d y^{k-i}_j$, so that $\d\ci\al=\al\ci\d$, and $\al:A^\bu\ra B^\bu$ is a cdga morphism.

Following \eq{ln2eq17} and \eq{ln3eq12}, define $h^0\in (\La^2\Om^1_{B^\bu})^{k-1}$ by 
\e
h^0=\sum_{i=0}^d\sum_{j=1}^{n_i}\dd u^i_j\,\dd v^{k-1-i}_j+\sum_{j=1}^{n_e}\dd\bigl(q_j w^e_j\bigr)\,\dd
w^e_j.
\label{ln3eq36}
\e
Then $\dd h^0=0$, and as in \eq{ln3eq13} we can show that $\d h^0=\al_*(\om^0)$. Therefore $h:=(h^0,0,0,\ldots)$ is an isotropic structure for $\bSpec\al:\bSpec B^\bu\ra\bSpec A^\bu$ and the $k$-shifted symplectic structure $\om=(\om^0,0,0,\ldots)$ on $\bSpec A^\bu$. Following the argument of \eq{ln3eq14}--\eq{ln3eq20} we can prove this isotropic structure is nondegenerate, so that $\bSpec B^\bu$ is Lagrangian in~$(\bSpec A^\bu,\om)$. 

Following \eq{ln2eq20} and \eq{ln3eq21}, define $\psi\in(\Om^1_{B^\bu})^{k-1}$ by 
\begin{align*}
\psi=\sum_{i=0}^d&\sum_{j=1}^{n_i}\bigl[i\,u^i_j\,\dd v^{k-1-i}_j +(-1)^{i+1}(k-1-i)v^{k-1-i}_j\,\dd u^i_j\bigr]\\[-6pt]
&+(k-1)\sum_{j=1}^{n_e}q_j\,w^e_j\,\dd w^e_j.
\end{align*}
As in \eq{ln3eq25}--\eq{ln3eq26}, we can show that equations \eq{ln3eq22}--\eq{ln3eq24} hold.

Following the notation of weak and strong Darboux form in Example \ref{ln2ex3}, we say that $A^\bu,\om,B^\bu,\al,h$ are in {\it weak Lagrangian Darboux form}. If all the above holds with $q_j=1$ for $j=1,\ldots,n_e$, we say that $A^\bu,\om,B^\bu,\al,h$ are in {\it strong Lagrangian Darboux form}. This concludes Example~\ref{ln3ex2}. 
\end{ex}

The next example, similar to Example \ref{ln2ex5}, discusses Example \ref{ln3ex2} in more detail when~$k=-1$.

\begin{ex} 
\label{ln3ex3}
Consider `weak Lagrangian Darboux form' in Example \ref{ln3ex2} when $k=-1$. Example \ref{ln2ex2} gives $A^\bu,\om$, where $A^0$ is a smooth $\K$-algebra, elements $x_1^0,\ldots,x_{m_0}^0\in A^0$ such that $\dd x^0_1,\ldots,\dd x^0_{m_0}$ form a basis of $\Om^1_{A^0}$ over $A^0$, and a Hamiltonian $\Phi\in A^0$. The classical master equation \eq{ln2eq7} is trivial in this case, so $\Phi$ is arbitrary. We have $A^\bu=A^0[y^{-1}_1,\ldots,y^{-1}_{m_0}]$, where $y^{-1}_1,\ab\ldots,\ab y^{-1}_{m_0}$ have degree $-1$, with differential
\begin{equation*}
\d x^0_j=0,\quad \d y^{-1}_j=\frac{\pd \Phi}{\pd x^0_j}, \quad
j=1,\ldots,m_0,
\end{equation*}
and $-1$-shifted 2-form
\begin{equation*}
\om^0=\dd x^0_1\,\dd y^{-1}_1+\cdots+\dd
x^0_{m_0}\,\dd y^{-1}_{m_0}.
\end{equation*}
Then $\om=(\om^0,0,0,\ldots)$ is a $-1$-shifted symplectic structure on $\bX=\bSpec A^\bu$. Note that~$H^0(A^\bu)=A^0/(\frac{\pd\Phi}{\pd x^0_1},\ldots,\frac{\pd\Phi}{\pd x^0_{m_0}})=A^0/(\dd \Phi)$.

Geometrically, $U=\Spec A^0$ is a smooth classical $\K$-scheme with \'etale coordinates $(x^0_1,\ldots,x^0_{m_0}):U\ra\bA^{m_0}$, and $\Phi:U\ra\bA^1$ is regular, and $\bX=\bs\Crit(\Phi)$ is the derived critical locus of $\Phi$, with $X=t_0(\bX)$ the classical critical locus $\Crit(\Phi)$. As in Example \ref{ln2ex2} and \cite[Prop.~5.7(b)]{BBJ}, the restriction $\Phi\vert_{X^\red}:X^\red\ra\bA^1$ of $\Phi$ to the reduced $\K$-subscheme $X^\red$ of $X$ is locally constant. By adding a constant to $\Phi$, we suppose that~$\Phi\vert_{X^\red}=0$.

Example \ref{ln3ex2} now chooses a smooth $\K$-algebra $B^0$, a smooth morphism $\al^0:A^0\ra B^0$, and elements $u^0_1,\ldots,u^0_{n_0}$ such that $\dd\ti x^0_1,\ldots,\dd \ti x^0_{m_0},\dd u^0_1,\ldots,\ab\dd u^0_{n_0}$ form a basis of $\Om^1_{B^0}$ over $B^0$, where $\ti x^0_j=\al^0(x^0_j)$. As a graded $\K$-algebra we have $B^*=B^0[v^{-2}_1,\ldots,v^{-2}_{n_0},w^{-1}_1,\ldots,w^{-1}_{n_{-1}}]$ for some $n_{-1}\ge 0$, with $v^{-2}_j$ in degree $-2$ and $w^{-1}_j$ in degree~$-1$.

We choose invertible elements $q_1,\ldots,q_{n_{-1}}$ in $B^0$, and a superpotential $\Psi\in B^{-1}$, which we write in the form
\begin{equation*}
\Psi=s_1w^{-1}_1+\cdots+s_{n_{-1}}w^{-1}_{n_{-1}}
\end{equation*}
for $s_1,\ldots,s_{n_{-1}}\in B^0$. The p.d.e.\ \eq{ln3eq33} which $\Psi$ must satisfy reduces to
\e
\frac{1}{4}\sum_{j=1}^{n_{-1}} \frac{(s_j)^2}{q_j}+\al^0(\Phi)=0.
\label{ln3eq37}
\e

By \eq{ln3eq34}, the morphism $\al:A^\bu\ra B^\bu$ is determined by $\al\vert_{A^0}=\al^0$ and 
\begin{equation*}
\al(y^{-1}_j)=-\sum_{j'=1}^{n_e}\biggl[\frac{\pd s_{j'}}{\pd\ti x^0_j}-\frac{s_{j'}}{2q_{j'}}\,\frac{\pd q_{j'}}{\pd\ti x^0_j}\biggr]w_{j'}^{-1},
\qquad j=1,\ldots,m_0.
\end{equation*}
By \eq{ln3eq35}, the differential $\d$ in $B^\bu=(B^*,\d)$ is given by $\d=0$ on $B^0$ and
\begin{align*}
\d v^{-2}_j&=\sum_{j'=1}^{n_e}\biggl[\frac{\pd s_{j'}}{\pd u^0_j}-\frac{s_{j'}}{2q_{j'}}\,\frac{\pd q_{j'}}{\pd u^0_j}\biggr]w_{j'}^{-1}, && j=1,\ldots,n_0,\\
\d w^{-1}_j&=\frac{s_j}{2q_j}, && j=1,\ldots,n_{-1}.
\end{align*}
Then $\d\ci\d v^{-2}_j=0$ follows by applying $\frac{\pd}{\pd u^0_j}$ to \eq{ln3eq37}. The Lagrangian structure is $h=(h^0,0,0,\ldots)$, where $h^0\in (\La^2\Om^1_{B^\bu})^{-2}$ is given by
\begin{equation*}
h^0=\sum_{j=1}^{n_0}\dd u^0_j\,\dd v^{-2}_j+\sum_{j=1}^{n_{-1}}\dd\bigl(q_j w^{-1}_j\bigr)\,\dd w^{-1}_j.
\end{equation*}

Geometrically, we have a smooth classical $\K$-scheme $V=\Spec B^0$ with \'etale coordinates $(\ti x^0_1,\ldots,\ti x^0_{m_0},u^0_1,\ldots,u^0_{n_0}):V\ra \bA^{m_0+n_0}$, a smooth morphism $\pi=\Spec\al^0:V\ra U$ acting in coordinates by $\pi:(\ti x^0_1,\ldots,\ti x^0_{m_0},u^0_1,\ldots,u^0_{n_0})\mapsto (\ti x^0_1,\ldots,\ti x^0_{m_0})$, a trivial vector bundle $E\ra V$ with fibre $\K^{n_{-1}}$, a nondegenerate quadratic form $Q$ on $E$ given by
\begin{equation*}
Q(e_1,\ldots,e_{n_{-1}})=\frac{(e_1)^2}{q_1}+\cdots+\frac{(e_{n_{-1}})^2}{q_{n_{-1}}}
\end{equation*}
for all regular functions $e_1,\ldots,e_{n_{-1}}:V\ra\bA^1$, and a
section $s=(s_1,\ldots,s_{n_{-1}})$ in $H^0(E)$ which by \eq{ln3eq37} satisfies
\begin{equation*}
Q(s,s)+4\pi^*(\Phi)=0.
\end{equation*}

To summarize:
\begin{itemize}
\setlength{\itemsep}{0pt}
\setlength{\parsep}{0pt}
\item The important geometric data in writing a $-1$-shifted symplectic derived $\K$-scheme $(\bX,\om)$ in `Darboux form' is a smooth $\K$-scheme $U$ and a regular function $\Phi:U\ra\bA^1$ with $\Phi\vert_{\Crit(\Phi)^\red}=0$, and then $\bX$ is the derived critical locus~$\bs\Crit(\Phi)$.
\item The important geometric data in writing a Lagrangian $\bs f:\bs L\ra\bX$ in $(\bX,\om)$ in `weak Lagrangian Darboux form' is a smooth $\K$-scheme $V$, a smooth morphism $\pi:V\ra U$, a vector bundle $E\ra V$, a nondegenerate quadratic form $Q$ on $E$, and a section $s\in H^0(E)$ with $Q(s,s)+4\pi^*(\Phi)=0$. Then $t_0(\bs L)$ is the $\K$-subscheme $s^{-1}(0)$ in $V$, and $t_0(\bs f)$ is~$\pi\vert_{s^{-1}(0)}$.
\end{itemize}
The remaining data is choices of \'etale coordinates $(x^0_1,\ldots,x^0_{m_0})$ on $U$ and $(\ti x^0_1,\ldots,\ti x^0_{m_0},u^0_1,\ldots,u^0_{n_0})$ on $V$, and a trivialization $E\cong V\t\bA^{n_{-1}}$, but these are not very interesting geometrically.
\end{ex}

\subsection[\texorpdfstring{A `$k$-shifted Lagrangian Neighbourhood Theorem' for $k<0$}{A \textquoteleft k-shifted Lagrangian Neighbourhood Theorem\textquoteright\ for k<0}]{A `$k$-shifted Lagrangian Neighbourhood Theorem'}
\label{ln33}

Here is the main result of this paper, proved in~\S\ref{ln42}--\S\ref{ln47}.

\begin{thm} 
\label{ln3thm2}
Let\/ $(\bX,\om_\bX)$ be a $k$-shifted symplectic derived\/ $\K$-scheme for $k<0,$ and\/ $\bs f:\bs L\ra\bX$ be a Lagrangian derived\/ $\K$-scheme in $(\bX,\om_\bX),$ with isotropic structure $h_{\bs L}:0\,{\buildrel\sim\over\longra}\,\bs f^*(\om_\bX)$. Let\/ $y\in\bs L$ with\/~$\bs f(y)=x\in\bX$.

Suppose we are given a standard form cdga $A^\bu$ over $\K,$ a $k$-shifted symplectic form\/ $\om$ on $\bSpec A^\bu$ with\/ $A^\bu,\om$ in Darboux form (as in Example\/ {\rm\ref{ln2ex2}} and Remark\/ {\rm\ref{ln2rem3},} which also define a sub-cdga $A^\bu_+\subseteq A^\bu$), a point\/ $p\in\bSpec A^\bu,$ and a morphism $\bs i:\bSpec A^\bu\ra\bX$ which is either a Zariski open inclusion or \'etale, with\/ $\bs i(p)=x$ and\/ $\om\sim\bs i^*(\om_\bX)$. We do not assume $A^\bu$ is minimal at\/~$p$.

(As an aside, we note that Theorem\/ {\rm\ref{ln2thm5}(i),(iv)} guarantee such\/ $A^\bu,\om,p,\bs i$ exist, where we may take $A^\bu$ to be minimal at\/ $p,$ and\/ $\bs i$ to be a Zariski open inclusion for $k\not\equiv 2\mod 4,$ and \'etale for $k\equiv 2\mod 4,$ since if\/ $k\equiv 2\mod 4$ then $\vdim\bX=2\vdim\bs L$ is even near\/~$x$.)

Then there exist a standard form cdga $B^\bu$ over $\K,$ a point\/ $q\in\bSpec B^\bu,$ a morphism $\al:A^\bu\ra B^\bu$ in $\cdga_\K$ with\/ $\bSpec\al(q)=p$ such that\/ $\al_+:=\al\vert_{A^\bu_+}:A^\bu_+\ra B^\bu$ is a submersion minimal at\/ $q$ in the sense of Definition\/ {\rm\ref{ln3def1},} a morphism $\bs j:\bSpec B^\bu\hookra\bs L$ which is either a Zariski open inclusion or \'etale, with\/ $\bs j(q)=y,$ in a homotopy commutative diagram
\e
\begin{gathered}
\xymatrix@C=90pt@R=15pt{ *+[r]{\bSpec B^\bu\,} \ar[d]^{\bSpec\al} \ar[r]_{\bs j} & *+[l]{\bs L} \ar[d]_{\bs f}  \\ *+[r]{\bSpec A^\bu\,} \ar[r]^{\bs i}  & *+[l]{\bX,\!} }
\end{gathered}
\label{ln3eq38}
\e
and a Lagrangian structure $h:0\,{\buildrel\sim\over\longra}\,\al_*(\om)$ on\/ $\bSpec B^\bu$ which is compatible with\/ $h_{\bs L}$ in the sense that the following diagram homotopy commutes
\e
\begin{gathered}
\xymatrix@C=110pt@R=15pt{ *+[r]{0=\bs j^*(0)} \ar[d]^h \ar[r]_(0.53){\bs j^*(h_{\bs L})} & *+[l]{\bs j^*\ci\bs f^*(\om_\bX)} \ar[d]_\sim  \\ *+[r]{(\bSpec\al)^*(\om)=\al_*(\om)} \ar[r]^(0.53)\sim  & *+[l]{(\bSpec\al)^*\ci\bs i^*(\om_\bX),\!} }
\end{gathered}
\label{ln3eq39}
\e
where the bottom equivalence comes from the homotopy $\om\sim\bs i^*(\om_\bX),$ and the right equivalence from the homotopy across \eq{ln3eq38}. Furthermore:
\begin{itemize}
\setlength{\itemsep}{0pt}
\setlength{\parsep}{0pt}
\item[{\bf(i)}] If\/ $k\not\equiv 3\mod 4$ and\/ $\bs i$ is a Zariski open inclusion, then we may take $\bs j$ to be a Zariski open inclusion, and\/ $A^\bu,\om,B^\bu,\al,h$ to be in Lagrangian Darboux form, as in Example\/ {\rm\ref{ln3ex1}}. 

If instead\/ $\bs i$ is \'etale, the same holds with $\bs j$ \'etale.
\item[{\bf(ii)}] If\/ $k\equiv 3\mod 4$ and\/ $\bs i$ is a Zariski open inclusion, then we may take $\bs j$ to be a Zariski open inclusion, and\/ $A^\bu,\om,B^\bu,\al,h$ to be in weak Lagrangian Darboux form, as in Example\/~{\rm\ref{ln3ex2}}.
\item[{\bf(iii)}] If\/ $k\equiv 3\mod 4$ then we may take $\bs j$ to be \'etale, and\/ $A^\bu,\om,B^\bu,\al,h$ to be in strong Lagrangian Darboux form, as in Example\/~{\rm\ref{ln3ex2}}.
\end{itemize}
\end{thm}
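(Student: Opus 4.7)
The plan is to first pick a compatible local model on $\bs L$ using (a refinement of) Theorem \ref{ln3thm1}, and then adjust the generators and the isotropic datum degree by degree to bring everything into Lagrangian Darboux form. The special case $\bX=\Spec\K$ recovers the Darboux Theorem \ref{ln2thm5}, so the overall architecture parallels \cite[\S 5.6]{BBJ}, with the symplectic form $\om$ replaced by the isotropic homotopy $h_{\bs L}$ and the data $A^\bu,\om,\Phi,\Phi_+$ carried along as constants relative to which we construct the new data $B^\bu,\al,h,\Psi$.

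First, I would apply Theorem \ref{ln3thm1} to $\bs f:\bs L\ra\bX$ and the given chart $\bs i$ to produce a standard form cdga $B^\bu$, a point $q$, a Zariski open (or \'etale) inclusion $\bs j:\bSpec B^\bu\ra\bs L$, and a submersion $\al:A^\bu\ra B^\bu$ filling in \eq{ln3eq38} up to homotopy. Using the cofibrancy property of standard-form cdgas recorded in Remark \ref{ln2rem1}, the homotopy-level morphism lifts to a strict cdga morphism $\al:A^\bu\ra B^\bu$. The more delicate next step is to shrink $B^\bu$ and rearrange its free generators so that $\al_+:=\al|_{A^\bu_+}:A^\bu_+\ra B^\bu$ is itself a submersion minimal at $q$. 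This is where the Lagrangian hypothesis enters: the nondegeneracy quasi-isomorphism $\chi:\bT_{\bs L/\bX}\ra\bL_{\bs L}[k-1]$ of \eq{ln2eq3} forces a duality between the `new' generators of $B^\bu$ over $A^\bu_+$ in degree $i$ and those in degree $k-1-i$. One then realises this duality at the level of strict $B^*$-bases: the generators split as pullbacks $\ti x^i_j=\al(x^i_j)$ of the $x^i_j\in A^\bu_+$, together with paired new generators $u^i_j,v^{k-1-i}_j$ (and, when $k\equiv 3\mod 4$, middle-degree generators $w^e_j$).

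Having fixed the shape of $B^\bu$ and $\al$, we turn to the isotropic structure $h_{\bs L}=(h^0,h^1,h^2,\ldots)$, which by \eq{ln2eq4} satisfies $\al_*(\om^0_{A^\bu})=\d h^0$ together with the sequence of equations coupling $h^i$ to $h^{i-1}$ via the total differential $\d+\dd$. Following \cite[\S 5.6]{BBJ}, one proceeds by induction on the Hodge level, using a Poincar\'e-type contracting homotopy on the relevant subcomplex of $\bigl(\prod_{p\ge 0}(\La^{2+p}\bL_{B^\bu})^{*-p},\d+\dd\bigr)$ to replace $h$ by $h+(\d+\dd)\eta$ for a suitable $\eta$, thereby killing successively $h^1,h^2,\ldots$ while preserving $h^0$. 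Once $h=(h^0,0,0,\ldots)$, a further degree-by-degree modification of the generators $u^i_j,v^{k-1-i}_j$ brings $h^0$ into the explicit Darboux form of \eq{ln3eq12}, and a relative Poincar\'e argument then produces a superpotential $\Psi\in B^k$ and auxiliary $\psi$ satisfying the relative master equations \eq{ln3eq22}--\eq{ln3eq24} (or their $w$-decorated versions when $k\equiv 3\mod 4$). The compatibility \eq{ln3eq39} then follows automatically from $\d h^0=\al_*(\om^0)$ and the construction.

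The main obstacle is the rigidification step in the second paragraph: turning the nondegeneracy quasi-isomorphism $\chi$, which lives in $L_\qcoh$, into a strict choice of paired generators of $B^\bu$ compatible with the differential $\d$. When $k$ is even the pairing is always skew-symmetric between distinct degrees and the paired generators can be split off freely; when $k$ is odd, the middle degree $e=(k-1)/2$ carries a self-pairing which is symmetric, and for $k\equiv 1\mod 4$ this can be diagonalised into hyperbolic pairs over $\K$, yielding the ordinary Darboux form of (i), whereas for $k\equiv 3\mod 4$ the symmetric self-pairing is genuine and is encoded by the nondegenerate quadratic form with coefficients $q_j\in B^0$ and auxiliary generators $w^e_j$ of Example \ref{ln3ex2}, yielding the weak form (ii). Passage from weak (ii) to strong (iii) requires taking square roots of the $q_j$, which is only possible \'etale locally, and this is the sole reason $\bs j$ must be weakened from Zariski open to \'etale in (iii), exactly paralleling the passage from Theorem \ref{ln2thm5}(ii) to (iii).
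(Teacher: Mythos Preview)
Your overall architecture is right, and the case distinctions in your last paragraph are on the right track, but there are three points where your sketch diverges from the paper's proof in ways that matter.

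First, the setup: the paper does \emph{not} apply Theorem~\ref{ln3thm1} to $\bs f:\bs L\ra\bX$ with the chart $\bs i:\bSpec A^\bu\ra\bX$. Instead it forms the fibre product $\bZ=\bs L\t_\bX\bSpec A^\bu$ and applies Theorem~\ref{ln3thm1} to the composite $\bZ\ra\bSpec A^\bu\ra\bSpec A^\bu_+$, directly producing $\al_+:A^\bu_+\ra B^\bu$ as a submersion minimal at $q$; the full $\al:A^\bu\ra B^\bu$ is then obtained by extension, using that $A^*$ is free over $A^*_+$ in the variables $y^{k-i}_j$. Your route (first get a submersion from $A^\bu$, then ``shrink and rearrange'' to make $\al_+$ minimal) is less direct and you would have to argue carefully why it terminates in the right shape; the paper's order of operations avoids this entirely.

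Second, the killing of $h^1,h^2,\ldots$ is not done by ``induction on the Hodge level'' in the paper. Rather (Proposition~\ref{ln4prop1}) one assembles the single closed element
\[
\ga=\bigl(-\al(\Phi+\Phi_+),\,-\al_*(\phi+\phi_+),\,(k-1)h^0,\,(k-1)h^1,\ldots\bigr)
\]
of total degree $k+1$ in the mixed complex, and shows it is exact via the identification of the relevant cohomology with ${\rm H}^{k+1}_{\rm inf}(H^0(B^\bu))$, which vanishes for $k<-1$ (and for $k=-1$ because of the normalisation $\Phi\vert_{X^\red}=0$). This simultaneously produces $\Xi,\psi$ with $\tilde h^0=\frac{1}{k-1}\dd\psi$ and the higher $h^i$ nullhomotopic; the superpotential $\Psi$ is then \emph{defined} from $\Xi$ by $\Psi=\Xi-\sum_{i,j}a^{k-1-i}_j\,\d\ti x^i_j$, not by a separate ``relative Poincar\'e argument'' after the fact.

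Third, and this is the genuine gap: after choosing coordinates one finds $h^0=\sum_{i,j}\dd a^{k-1-i}_j\,\dd\ti x^i_j+\sum_{i,j}\dd u^i_j\,\dd v^{k-1-i}_j$, and the cross-terms $\dd a^{k-1-i}_j\,\dd\ti x^i_j$ cannot be removed by changing the $u,v$ generators alone, since $\ti x^i_j=\al_+(x^i_j)$ are fixed. The paper's solution (Proposition~\ref{ln4prop2}) is to replace $\al$ by a \emph{homotopic} cdga morphism $\hat\al$ with $\hat\al\vert_{A^\bu_+}=\al_+$ but $\hat\al(y^{k-i}_j)=(-1)^{i+1}\pd\Psi/\pd\ti x^i_j$, and to exhibit an explicit homotopy $H:A^\bu\ra B^\bu\ot_\K\K[s,t]$ carrying $(h^0,0,\ldots)$ to $(\hat h^0,0,\ldots)$. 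Your proposal does not mention modifying $\al$, and without this step you cannot reach the clean form \eq{ln3eq12}.

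Finally, a small correction to your parity analysis: the middle-degree self-pairing in degree $e=(k-1)/2$ is governed by the parity of $e$, not of $k$. For $k\equiv 1\bmod 4$ one has $e$ even, and the matrix $(c^0_{j'j})$ is \emph{antisymmetric}, hence Zariski-locally standardisable (this is why case~(i) works Zariski-locally). For $k\equiv 3\bmod 4$ one has $e$ odd and the matrix is \emph{symmetric}, diagonalisable Zariski-locally (weak form, case~(ii)) but requiring \'etale square roots of the diagonal entries $q_j$ to reach the strong form (case~(iii)). You have the two cases distinguished correctly but the symmetry types swapped.
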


\begin{rem} 
\label{ln3rem2}
Let $(\bX,\om_\bX)$ be a $k$-shifted symplectic derived $\K$-scheme for $k<0$, and $\bs f:\bs L\ra\bX$, $h_{\bs L}$ a Lagrangian in $(\bX,\om_\bX)$, and $y\in\bs L$ with $\bs f(y)=x\in\bX$. For clarity, we spell out what Theorems \ref{ln2thm5} and \ref{ln3thm2} together tell us about joint local models for $\bX,\bs L$ near $x,y$, for various $k=-1,-2,\ldots:$ 
\begin{itemize}
\setlength{\itemsep}{0pt}
\setlength{\parsep}{0pt}
\item[(a)] If $k<0$ with $k\equiv 0$ or $1\mod 4$, then Theorem \ref{ln2thm5} gives a `Darboux form' Zariski local model $A^\bu,\om$ for $(\bX,\om_\bX)$, and Theorem \ref{ln3thm2} gives a `Lagrangian Darboux form' Zariski local model $B^\bu,\al,h$ for $\bs f:\bs L\ra\bX$.
\item[(b)] If $k<0$ with $k\equiv 2\mod 4$, then Theorem \ref{ln2thm5} gives a `Darboux form' local model $A^\bu,\om$ for $(\bX,\om_\bX)$ only in the \'etale topology, so Theorem \ref{ln3thm2} gives a corresponding `Lagrangian Darboux form' local model $B^\bu,\al,h$ for $\bs f:\bs L\ra\bX$ only in the \'etale topology.
\item[(c)] If $k<0$ with $k\equiv 3\mod 4$, Theorem \ref{ln2thm5} gives a `Darboux form' Zariski local model $A^\bu,\om$ for $(\bX,\om_\bX)$. Then Theorem \ref{ln3thm2} gives a `weak Lagrangian Darboux form' Zariski local model $B^\bu,\al,h$, and also a `strong Lagrangian Darboux form' \'etale local model $B^\bu,\al,h$, for $\bs f:\bs L\ra\bX$.
\end{itemize}

Our theorems thus do not provide local models in the Zariski topology for Lagrangians in general $k$-shifted symplectic derived schemes when $k\equiv 2\mod 4$. This is due to the laziness of the authors. One should find a `weak Lagrangian Darboux form' adapted to the `weak Darboux form' of Example~\ref{ln2ex3}.
\end{rem}

\subsection{\texorpdfstring{The case $k=0$}{The case k=0}}
\label{ln34}

When $k=0$, a 0-shifted symplectic derived $\K$-scheme $(\bX,\om_\bX)$ is simply a smooth classical $\K$-scheme $X$ with a classical symplectic form $\om_X$. However, Lagrangians $\bs f:\bs L\ra\bX$ in them, in the sense of \S\ref{ln24}, need {\it not\/} be smooth classical Lagrangians; they can be truly derived objects, singular at the classical level, as Example \ref{ln2ex1} shows for Lagrangians in the point. So a `$k$-shifted derived Lagrangian Neighbourhood Theorem' is still of interest when $k=0$, and is more closely related to classical symplectic geometry than the $k<0$ case.

If we try to extend Theorem \ref{ln3thm2} to the case $k=0$, two things go wrong:
\begin{itemize}
\setlength{\itemsep}{0pt}
\setlength{\parsep}{0pt}
\item[(a)] As is well known, although the classical Darboux Theorem holds for real $C^\iy$ and complex symplectic manifolds, it is {\it false\/} for algebraic symplectic manifolds (symplectic schemes). So given a 0-shifted derived Lagrangian $\bs f:\bs L\ra X$, $h$ in a general classical symplectic scheme $(X,\om_X)$, we do not have `Darboux form' local models $A^\bu,\om$ for $(X,\om_X)$ near $x\in X$.
\item[(b)] Even if we assume that $(X,\om_X)$ has a very nice local model near $x$ (e.g.\ if $X=\bA^{2n}$ and $\om_X=\sum_{j=1}^n\dd x_j\dd y_j$), in the proof of Theorem \ref{ln3thm2} in \S\ref{ln42}--\S\ref{ln47}, Proposition \ref{ln4prop1} fails when $k=0$, as there is an obstruction in ${\rm H}^1_{\rm inf}(H^0(B^\bu))$ which need not vanish Zariski or \'etale locally.
\end{itemize}

The next two examples define notions of `Darboux form' for 0-shifted symplectic schemes, and `Lagrangian Darboux form' for Lagrangians in them. Because of (a),(b) they are not local models for general symplectic schemes and their Lagrangians, but at least they are local models for especially nice symplectic schemes and nice 0-shifted Lagrangians.

\begin{ex} 
\label{ln3ex4}
Suppose $A^0_+$ is a smooth $\K$-algebra of dimension $m_0$, and $x^0_1,\ab\ldots,\ab x^0_{m_0}\in A^0_+$ such that $\dd x^0_1,\ldots,\dd x^0_{m_0}$ form a basis of $\Om^1_{A^0_+}$ over $A^0_+$. Let $A^0=A^0_+[y^0_1,\ldots,y^0_{m_0}]$ be the $\K$-algebra freely generated over $A^0_+$ by variables $y^0_1,\ldots,y^0_{m_0}$ in degree 0, and write $\io:A^0_+\hookra A^+$ for the inclusion. Regard $A_+^0,A^0$ as cdgas $A_+^\bu,A^\bu$ concentrated in degree 0. Define
\begin{equation*}
\om^0=\ts\sum_{j=1}^{m_0}\dd x_j^0\dd y_j^0\qquad\text{in $\La^2\Om^1_{A^0}$.}
\end{equation*}
Then $\om^0\in (\La^2\Om^1_{A^\bu})^0$ with $\d\om^0=\dd\om^0=0$, and $\om:=(\om^0,0,0,\ldots)$ is a 0-shifted symplectic structure on $\bSpec A^\bu$. We say that $A^\bu,\om$ (and $A^\bu_+\subseteq A^\bu$) are in {\it Darboux form}. Following \eq{ln2eq15} we also define 
\begin{equation*}
\phi_+=\ts\sum_{j=1}^{m_0}y_j^0\dd x_j^0\qquad\text{in $\La^1\Om^1_{A^0}$,}
\end{equation*}
and then $\dd\phi_+=-\om^0$ and $\d\phi_+=0$.

Geometrically, $U=\Spec A^0_+$ is a smooth $\K$-scheme of dimension $m_0$, and $(x^0_1,\ldots,x^0_{m_0}):U\ra\bA^{m_0}$ are \'etale coordinates on $U$, and $\Spec A^0=T^*U$ is its cotangent bundle with projection $\pi=\Spec\io:T^*U\ra U$, and $\om^0$ is the canonical symplectic form on $T^*U$.
\end{ex}

The next example, similar to Example \ref{ln2ex4}, is basically Example \ref{ln3ex1} for~$k\!=\!0$.

\begin{ex} 
\label{ln3ex5}
Use the notation of Example \ref{ln3ex4}. Choose a smooth $\K$-algebra $B^0$ of dimension $m_0+n_0$, and a smooth morphism $\al^0_+:A^0_+\ra B^0$. Localizing $B^0$ if necessary, we may assume there exist $u^0_1,\ldots,u^0_{n_0}\in B^0$ such that $\dd\ti x^0_1,\ldots,\dd\ti x^0_{m_0},\dd u^0_1,\ldots,\dd u^0_{n_0}$ form a basis of $\Om^1_{B^0}$ over $B^0$, where we write $\ti x^0_j=\al^0_+(x^0_j)\in B^0$. As in \eq{ln3eq2}, define $B^*=B^0[v_1^{-1},\ldots,v_{n_0}^{-1}]$ to be the free graded algebra over $B^0$ generated by variables $v_1^{-1},\ldots,v_{n_0}^{-1}$ in degree $-1$. Choose a superpotential $\Psi$ in $B^0$. The p.d.e.\ \eq{ln3eq4} is trivial in this case, so $\Psi$ is arbitrary. As in \eq{ln3eq5}, extend $\al_+$ to $\al:A^\bu\ra B^\bu$ by $\al\vert_{A^0_+}=\al_+$ and 
\begin{equation*}
\al(y^0_j)=-\frac{\pd\Psi}{\pd\ti x^0_j},\qquad j=1,\ldots,m_0.
\end{equation*}

Define the differential $\d$ in the cdga $B^\bu=(B^*,\d)$ by $\d=0$ on $B^0$, and
\begin{equation*}
\d v^{-1}_j=\frac{\pd\Psi}{\pd u^i_j},\qquad j=1,\ldots,n_0,\
\end{equation*}
as in \eq{ln3eq6}. Then $\d\ci\d=0$ trivially. As in \eq{ln3eq12}, define $h^0\in (\La^2\Om^1_{B^\bu})^{-1}$ by 
\begin{equation*}
h^0=\ts\sum_{j=1}^{n_0}\dd u^0_j\,\dd v^{-1}_j.
\end{equation*}
Then $\dd h^0=0$, and \eq{ln3eq13} implies that $\d h^0=\al_*(\om^0)$. Hence $h:=(h^0,0,0,\ldots)$ is an isotropic structure for $\bSpec\al:\bSpec B^\bu\ra\bSpec A^\bu$ and the 0-shifted symplectic structure $\om=(\om^0,0,0,\ldots)$ on $\bSpec A^\bu$. Following \eq{ln3eq14}--\eq{ln3eq20} we prove that $h$ is nondegenerate, so that $\bSpec B^\bu$ is Lagrangian in $(\bSpec A^\bu,\om)$. We say that $A^\bu,\om,B^\bu,\al,h$ are in {\it Lagrangian Darboux form}.

Following \eq{ln3eq21}, define $\psi\in(\Om^1_{B^\bu})^{-1}$ by 
\begin{equation*}
\psi=\ts-\sum_{j=1}^{n_0}v^{-1}_j\,\dd u^0_j.
\end{equation*}
As for \eq{ln3eq23}--\eq{ln3eq24} we have
\begin{align*}
\dd\Psi+\d\psi&=-\al_*(\phi_+)&&\text{in $(\Om^1_{B^\bu})^0$, and}\\
\dd\psi&=-h^0 &&\text{in $(\La^2\Om^1_{B^\bu})^{-1}$.}
\end{align*}

Geometrically, $V=\Spec B^0$ is a smooth $\K$-scheme, $\pi:=\Spec\al^0_+:V\ra U=\Spec A^0_+$ is a smooth morphism of $\K$-schemes, and $\Psi:V\ra\bA^1$ is a regular function. We should interpret $\bs L:=\bSpec B^\bu$ as the {\it derived relative critical locus\/} $\bs\Crit(\Psi/U)$ of $\Psi:V\ra\bA^1$ relative to $\pi:V\ra U$. Heuristically, $\bs L$ is the total space of a family of derived critical loci over the base $U$:
\begin{equation*}
\bs L=\bs\Crit(\Psi/U)\approx\ts\coprod_{u\in U}\bs\Crit\bigl(\Psi\vert_{V_u}:V_u\ra\bA^1\bigr),
\end{equation*}
where $V_u=\pi^{-1}(u)$ is the (smooth) fibre of $\pi:V\ra U$ over $u\in U$.

The morphism $\bSpec\al:\bs L\ra T^*U$ can now be understood as follows. We have a commutative diagram of vector bundles on $\bs L\subseteq V$:
\begin{equation*}
\xymatrix@C=30pt@R=15pt{ && \O_{\bs L} \ar@/_.4pc/@{..>}[dl]_(0.6)\la \ar[d]^(0.55){\dd\Psi\vert_{\bs L}} \ar@/^.4pc/[dr]^(0.6)0
\\
0 \ar[r] & \pi^*(T^*U)\vert_{\bs L} \ar[r] & T^*V\vert_{\bs L} \ar[r] & T^*(V/U)\vert_{\bs L} \ar[r] & 0, }
\end{equation*}
with the bottom row exact. The section $\dd\Psi\vert_{\bs L}$ of $T^*V\vert_{\bs L}$ projects to 0 in $T^*(V/U)\vert_{\bs L}$, since $\bs L$ is the derived zero locus of $\dd\Psi$ in $T^*(V/U)$. Hence by exactness $\dd\Psi\vert_{\bs L}$ lifts to a section $\la$ of $\pi^*(T^*U)\vert_{\bs L}$, and $\bSpec\al:\bs L\ra T^*U$ can be interpreted as the graph of~$-\la$.
\end{ex}

We can now prove the following somewhat weak and unsatisfactory `0-shifted Lagrangian Neighbourhood Theorem':

\begin{thm} 
\label{ln3thm3}
Let\/ $(\bX,\om_\bX)$ be a $0$-shifted symplectic derived\/ $\K$-scheme, and\/ $\bs f:\bs L\ra\bX$ be a Lagrangian derived\/ $\K$-scheme in $(\bX,\om_\bX),$ with isotropic structure $h_{\bs L}:0\,{\buildrel\sim\over\longra}\,\bs f^*(\om_\bX)$. Let\/ $y\in\bs L$ with\/~$\bs f(y)=x\in\bX$.

Suppose we are given a standard form cdga $A^\bu$ over\/ $\K,$ a\/ $0$-shifted symplectic form\/ $\om$ on $\bSpec A^\bu$ with\/ $A^\bu,\om$ in Darboux form as in Example\/ {\rm\ref{ln3ex4},} which also defines $A^\bu_+\subseteq A^\bu,$ a point\/ $p\in\bSpec A^\bu,$ and a Zariski open inclusion $\bs i:\bSpec A^\bu\ra\bX$ with\/ $\bs i(p)=x$ and\/~$\om\sim\bs i^*(\om_\bX)$. 

Then we can define an \begin{bfseries}obstruction class\end{bfseries} $[\ga]$ in
\e
{\rm H}^1_{\rm inf}(t_0(\bs L))_y=\underrightarrow{\lim}\,_{y\in U\subseteq t_0(\bs L)}{\rm H}^1_{\rm inf}(U),
\label{ln3eq40}
\e
where the direct limit is over Zariski open neighbourhoods\/ $U$ of\/ $y$ in the classical\/ $\K$-scheme $t_0(\bs L),$ and\/ ${\rm H}^1_{\rm inf}(-)$ is algebraic de Rham cohomology.

If this obstruction class $[\ga]$ is zero then there exist a standard form cdga $B^\bu$ over $\K,$ a point\/ $q\in\bSpec B^\bu,$ a morphism\/ $\al:A^\bu\ra B^\bu$ in\/ $\cdga_\K$ with\/ $\bSpec\al(q)=p$ such that\/ $\al_+:=\al\vert_{A^\bu_+}:A^\bu_+\ra B^\bu$ is a submersion minimal at\/ $q$ in the sense of Definition\/ {\rm\ref{ln3def1},} a Zariski open inclusion $\bs j:\bSpec B^\bu\hookra\bs L$ with\/ $\bs j(q)=y$ in a homotopy commutative diagram {\rm\eq{ln3eq38},} and a Lagrangian structure $h:0\,{\buildrel\sim\over\longra}\,\al_*(\om)$ on $\bSpec B^\bu$ for which\/ \eq{ln3eq39} homotopy commutes, such that\/ $A^\bu,\om,B^\bu,\al,h$ are in Lagrangian Darboux form, as in Example\/~{\rm\ref{ln3ex5}}. 

If instead\/ $\bs i$ is \'etale, the same holds with $\bs j$ \'etale.
\end{thm}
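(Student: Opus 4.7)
The plan is to follow the structure of the proof of Theorem \ref{ln3thm2}, identifying the one step that fails when $k=0$ as the source of the obstruction class $[\ga]$. First I would apply Theorem \ref{ln3thm1} to produce a standard form cdga $B^\bu$, a morphism $\al:A^\bu\ra B^\bu$ in $\cdga_\K$ with $\al_+:=\al|_{A^\bu_+}$ a submersion minimal at $q$, and a Zariski (respectively \'etale) open inclusion $\bs j:\bSpec B^\bu\hookra\bs L$ making \eq{ln3eq38} homotopy commute. Since $A^\bu$ is concentrated in degree $0$ and $A^0$ is freely generated over $A^0_+$ by $y^0_1,\ldots,y^0_{m_0}$, $\al$ is determined by $\al_+$ together with scalars $a_j:=\al(y^0_j)\in B^0$. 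I would then transfer $\bs j^*(h_{\bs L})$ to an isotropic sequence $(h^0,h^1,\ldots)$ for $\bSpec\al$, and, by the cohomological normalization arguments inherited from the proof of Theorem \ref{ln3thm2} (parallel to \cite{BBJ}), arrange, after further localization of $B^\bu$ around $q$ and a suitable choice of degree $-1$ generators $v^{-1}_1,\ldots,v^{-1}_{n_0}$, that $h^i=0$ for all $i\ge 1$ and $h^0=\sum_j\dd u^0_j\,\dd v^{-1}_j$.

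With these normalizations in place, the isotropy equation $\d h^0=\al^*(\om^0)$ becomes, up to the sign conventions of \cite{BBJ}, the closedness in $\Om^\bu_{B^0}$ of a degree-$0$ $1$-form $\eta\in\Om^1_{B^0}$ built linearly from $a_1,\ldots,a_{m_0}$ and from $\d v^{-1}_1,\ldots,\d v^{-1}_{n_0}\in B^0$; and being in Lagrangian Darboux form as in Example \ref{ln3ex5} is equivalent to finding $\Psi\in B^0$ with $\eta=\dd\Psi$. Via the Grothendieck--Hartshorne identification of infinitesimal cohomology with de Rham cohomology of the formal neighbourhood of a closed embedding into a smooth ambient scheme, applied to $\Spec H^0(B^\bu)\hookra\Spec B^0$, the closed form $\eta$ determines a class in $H^1_{\rm inf}(\Spec H^0(B^\bu))$. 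Pushing this class along $\bs j$ and passing to the direct limit over Zariski open neighbourhoods of $y$ in $t_0(\bs L)$ produces the obstruction $[\ga]\in{\rm H}^1_{\rm inf}(t_0(\bs L))_y$ of \eq{ln3eq40}. If $[\ga]=0$, then after further Zariski (respectively \'etale) localization of $B^\bu$ around $q$ we may write $\eta=\dd\Psi$ for some $\Psi\in B^0$, and the required Lagrangian Darboux presentation results.

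The main obstacle will be showing that $[\ga]$ is genuinely intrinsic to $(\bs f,h_{\bs L})$ at $y$, i.e.\ independent of the preliminary chart $B^\bu$, of $\al$, and of the normalization used to bring $h^0$ into the shape $\sum_j\dd u^0_j\,\dd v^{-1}_j$. This requires a comparison theorem for relative Lagrangian standard-form charts in the spirit of \cite[Th.~4.2]{BBJ} but carrying along the Lagrangian data, showing that any two such choices yield $1$-forms whose difference becomes exact after pulling back to a common refinement. A secondary but conceptually essential point is that $\eta$ lives on the smooth ambient $\Spec B^0$, whose Zariski stalk of $H^1_{\rm dR}$ at a point need not vanish (even \'etale-locally), whereas the invariant obstruction only lives on the generally singular classical truncation $t_0(\bs L)$; infinitesimal cohomology is precisely the right descent tool. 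Finally, the fact that the analogue of this obstruction vanishes in the $k<0$ proof of Theorem \ref{ln3thm2} (the content of the Proposition \ref{ln4prop1} referred to in \S\ref{ln4}) is a consequence of the degree shift forcing the relevant obstruction group into a vanishing range, and it is exactly this cushion that disappears at $k=0$, leaving $[\ga]$ as the sole obstruction to the theorem.
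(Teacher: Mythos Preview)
Your approach has the obstruction misplaced. In the paper's proof, the obstruction arises at Proposition~\ref{ln4prop1}: one forms
\[
\ga=\bigl(-\al_*(\phi_+),\,-h^0,\,-h^1,\,\ldots\bigr),
\]
built from the \emph{raw} isotropic sequence $h=(h^0,h^1,\ldots)$ before any normalization; this is a degree-$0$ closed element of the full mixed complex $\bigl(\prod_{i\ge 0}(\La^{i+1}\bL_{B^\bu})[i],\,\dd+\d\bigr)$, and its class is carried to $H^1_{\rm inf}(H^0(B^\bu))$ via the chain $\HN\cong\HP\cong H_{\rm inf}$. Exactness of $\ga$ simultaneously produces the primitive $\Xi$ (the proto-$\Psi$) \emph{and} the $\psi$ with $\tilde h^0=\frac{1}{k-1}\dd\psi$ that replaces $h$ by $(\tilde h^0,0,0,\ldots)$. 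The subsequent coordinate choices of \S\ref{ln43} (the new generators $v^{k-1-i}_j$ are read off from the coefficients of $\psi$) and the modification of $\al$ in \S\ref{ln45} both rest on this $\psi$. Your plan instead assumes that the ``cohomological normalization arguments inherited from Theorem~\ref{ln3thm2}'' already put $h$ into the shape $h^i=0$ for $i\ge1$ and $h^0=\sum_j\dd u^0_j\,\dd v^{-1}_j$, and only afterwards introduces an obstruction $1$-form $\eta\in\Om^1_{B^0}$. But those normalization arguments \emph{are} Proposition~\ref{ln4prop1} together with \S\ref{ln43}--\S\ref{ln45}; they cannot be run without first producing $\psi$, i.e.\ without already knowing $[\ga]=0$. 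In particular, the very form $h^0=\sum_j\dd u^0_j\,\dd v^{-1}_j$ is $\dd$-exact, and nothing in the unmodified setup guarantees $\dd$-exactness of $h^0$ once $k=0$.

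Concretely, your $\eta$ is (up to sign) only the first component of $\ga$ after one has killed $h^1,h^2,\ldots$; killing these higher terms is part of what exactness of $\ga$ provides, not a free preliminary move. The correct obstruction lives in $H^0$ of the full mixed complex and is then identified with $H^1_{\rm inf}$ via cyclic homology, not directly as the de~Rham class of a single $1$-form on $\Spec B^0$. Your concern about the intrinsic nature of $[\ga]$ is reasonable, but note that the paper does not establish chart-independence either: it defines $[\ga]$ for one choice of $B^\bu,\al$, and the passage to the direct limit over Zariski neighbourhoods of $y$ is what makes the vanishing condition well-posed for the existence statement.
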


\begin{proof} We follow the proof of Theorem \ref{ln3thm2} for $k<0$ even in \S\ref{ln42}--\S\ref{ln45}, setting $k=0$ and $\Phi_+=\Phi=\phi=0$. The only place where taking $k=0$ causes problems is in the proof of Proposition \ref{ln4prop1}. Then in place of \eq{ln4eq9} we have that
\begin{equation*}
\ga=\bigl(-\al_*(\phi_+),-h^0,-h^1,\ldots\bigr)
\end{equation*}
is a 0-shifted closed 1-form on $\bSpec B^\bu$, that is, a closed element of degree 0 in the complex $\bigl(\prod_{i\ge 0}(\La^{i+1}\bL_{B^\bu})[i],\dd+\d\bigr)$. The analogue of \eq{ln4eq10} is
\begin{align*}
H^0\bigl(\ts\prod_{i\ge 0}&(\La^{i+1}\bL_{B^\bu})[i],\dd+\d\bigr)\cong\HN^0(B^\bu)(1)\cong \HP^0(B^\bu)(1)\\
&\cong \HP^0(H^0(B^\bu))(1)\cong {\rm H}^1_{\rm inf}(H^0(B^\bu))\cong {\rm H}^1_{\rm inf}(U),
\end{align*}
where $U=t_0(\bs j)(\Spec H^0(B^\bu))$ is a Zariski open neighbourhood of $y$ in $t_0(\bs L)$.

The vanishing theorems used in Proposition \ref{ln4prop1} fail when $k=0$, so we may have ${\rm H}^1_{\rm inf}(U)\ne 0$. If the cohomology class $[\ga]$ of $\ga$ in ${\rm H}^1_{\rm inf}(U)$ is nonzero then $\Xi,\psi$ in Proposition \ref{ln4prop1} do not exist, so we cannot continue the proof.

If there is some Zariski open neighbourhood $V$ of $q$ in $\Spec H^0(B^\bu)$ such that $[\ga]$ becomes zero when restricted to ${\rm H}^1_{\rm inf}(V)$, then by localizing $B^\bu$ we can make $[\ga]=0$, so $\Xi,\psi$ in Proposition \ref{ln4prop1} do exist, and the rest of \S\ref{ln42}--\S\ref{ln45} works without a hitch. The condition for there to exist some such $V$ is that the image of $[\ga]$ should be zero in the direct limit \eq{ln3eq40}. This completes the proof.
\end{proof}

\begin{rem} 
\label{ln3rem3}
It seems likely that there exists a good theory of {\it Derived Complex Analytic Geometry}, a complex analytic version of Derived Algebraic Geometry, including {\it derived complex analytic spaces}, built using complex manifolds and holomorphic functions. Within this there should exist good notions of $k$-{\it shifted symplectic derived complex analytic space}, and {\it Lagrangians\/} in these. So far as the authors know, neither theory is yet available in the literature.

If such theories were constructed, the authors expect that the obvious complex analytic generalizations of the $k$-shifted symplectic Darboux Theorem \ref{ln2thm5} and Lagrangian Neighbourhood Theorem \ref{ln3thm2} will hold.

Observe that the two problems (a),(b) in the case $k=0$ above will not occur in the complex analytic case. For (a) the classical Darboux Theorem holds for complex symplectic manifolds, and for (b), the complex analytic analogue of \eq{ln3eq40} is zero, since $H^i$ cohomology classes for $i>0$ on a complex analytic space are zero locally in the complex analytic topology. So the complex analytic versions of Theorems \ref{ln2thm5} and \ref{ln3thm2} should also work when $k=0$. The authors hope in future work to use these ideas to define a notion of `derived Lagrangian' in complex symplectic manifolds, generalizing the `d-critical loci' of Joyce~\cite{Joyc}.
\end{rem}

\subsection{\texorpdfstring{$k$-shifted Poisson structures and coisotropics}{k-shifted Poisson structures and coisotropics}}
\label{ln35}

Recently, Calaque, Pantev, To\"en, Vaqui\'e and Vezzosi \cite{CPTVV} defined $k$-{\it shifted Poisson structures\/} $\pi_\bX$ on a derived scheme or stack $\bX$, for $k\in\Z$, and {\it coisotropics\/} $\bs f:\bs C\ra\bX$ in $(\bX,\pi_\bX)$. They prove \cite[Th.~3.2.4]{CPTVV} that the spaces of $k$-shifted symplectic structures $\om_\bX$ and nondegenerate $k$-shifted Poisson structures $\pi_\bX$ on $\bX$ are equivalent, and for fixed equivalent $\om_\bX,\pi_\bX$ they conjecture \cite[Conj.~3.4.5]{CPTVV} that the spaces of Lagrangian structures on $\bs f:\bs L\ra\bX$ in $(\bX,\om_\bX)$ and nondegenerate coisotropic structures on $\bs f:\bs L\ra\bX$ in $(\bX,\pi_\bX)$ are equivalent. Recently this conjecture has been proved in \cite[Th.~4.22]{MS2}.

The purpose of this section is to observe that for our `Darboux form' local models for $k$-shifted symplectic derived schemes in \S\ref{ln25}, we can write down simple, explicit (strict) $k$-shifted Poisson structures, and for our `Lagrangian Darboux form' local models for Lagrangians in $k$-shifted symplectic derived schemes in \S\ref{ln32}, we can write down simple, explicit (strict) coisotropic structures.

\begin{dfn} 
\label{ln3def2}
A $\bP_{k+1}$-{\it algebra\/} is a cdga $A^\bu$ equipped with the data of a Lie bracket $\{,\}: A^\bu\ot A^\bu\ra A^\bu[-k]$ satisfying the following equations:
\begin{itemize}
\setlength{\itemsep}{0pt}
\setlength{\parsep}{0pt}
\item[(i)] $\{f, g\} = -(-1)^{(|f|+k)(|g|+k)}\{g, f\}$,
\item[(ii)] $\{f, \{g, h\}\} = \{\{f, g\}, h\} + (-1)^{(|f|+k)(|g|+k)}\{g,\{f,h\}\}$,
\item[(iii)] $\d\{f, g\} = \{\d f, g\} + (-1)^{|f|+k}\{f, \d g\}$,
\item[(iv)] $\{f, gh\} = \{f, g\}h + (-1)^{|g|(|f|+k)}g\{f, h\}$,
\end{itemize}
for all elements $f,g,h\in A^\bu$.
\end{dfn}

Note that if $A^\bu$ is a $\bP_{k+1}$-algebra, then, forgetting the multiplication, $A^\bu[k]$ is a dg Lie algebra.

\begin{dfn}
Let $A^\bu$ be a cdga. A {\it $k$-shifted Poisson structure} on $A^\bu$ is a $\bP_{k+1}$-algebra $\widetilde{A}^\bu$ equipped with a quasi-isomorphism of cdgas $\widetilde{A}^\bu\rightarrow A^\bu$.
\end{dfn}

We say that a $k$-shifted Poisson structure on $A^\bu$ is \emph{strict} if $\widetilde{A}^\bu\rightarrow A^\bu$ is an isomorphism; that is, $A^\bu$ itself is a $\bP_{k+1}$-algebra.

Let $\bX=\bSpec A^\bu$ be an affine derived scheme and assume that $A$ is a cofibrant cdga. Then one can define the complex of $k$-shifted polyvector fields to be
\begin{equation*}
\Pol(\bX, k) = \Hom^\bu_{A^\bu}(\Sym(\Omega^1_{A^\bu}[k+1]), A^\bu).
\end{equation*}

This is a graded $\bP_{k+2}$-algebra, where the grading comes from the symmetric algebra, and the Lie bracket is the Schouten bracket of polyvector fields that we denote by $[\,,\,]$. We write $\widehat{\Pol}(\bX,k)$ for its completion with respect to the grading, and $\widehat{\Pol}{}^{\geq 2}(\bX,k)$ for the part in degrees at least~2.

If $A^\bu$ is a $\bP_{k+1}$-algebra, we get a bivector $\pi^2_{A^\bu}\in \Pol(\bX, k)$ by the formula
\begin{equation*}
\{f, g\}=(-1)^{|f| + k + 1}\iota_{\pi_{A^\bu}^2}(\dd f\, \dd g).
\end{equation*}
satisfying the equations
\[\d \pi^2_{A^\bu}=0,\qquad [\pi^2_{A^\bu}, \pi^2_{A^\bu}] = 0,\]
i.e. $\pi^2_{A^\bu}\in\widehat{\Pol}{}^{\geq 2}(\bX, k)$ is a Maurer--Cartan element. More generally, Melani \cite[Th.~3.2]{Mela} proves:

\begin{thm}[Melani] 
\label{ln3thm4}
Let\/ $\bX=\bSpec A^\bu$ be an affine derived scheme. Then the space of\/ $k$-shifted Poisson structures on $\bX$ is equivalent to the space of Maurer--Cartan elements in $\widehat{\Pol}{}^{\geq 2}(\bX,k)$.
\end{thm}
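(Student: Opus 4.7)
The plan is to realize both sides as Maurer--Cartan spaces of explicit dg Lie algebras and to produce a quasi-isomorphism between them; the theorem then follows from the homotopy invariance of $\mathrm{MC}$ for complete filtered dg Lie algebras. On the Poisson side, To\"en's definition recalled above already exhibits the space of $k$-shifted Poisson structures on $\bX=\bSpec A^\bu$ as $\mathrm{MC}(\mathfrak{g}_{\Pol})$, where $\mathfrak{g}_{\Pol}:=\widehat{\Pol}{}^{\ge 2}(\bX,k)[1]$ carries the Schouten bracket and has underlying complex $\prod_{n\ge 2}\Sym^n(\bT_{A^\bu}[-k-1])$.

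For the $\bP_{k+1}$ side, I would use that the operad $\bP_{k+1}$ is Koszul, so a homotopy $\bP_{k+1}$-algebra structure on the underlying complex of $A^\bu$ is by definition an algebra over the minimal cofibrant resolution $\Omega\bP_{k+1}^{!}\twoheadrightarrow\bP_{k+1}$, where $\bP_{k+1}^{!}$ denotes the Koszul dual cooperad. Such structures form the Maurer--Cartan set of the convolution dg Lie algebra $\mathfrak{g}_{\mathrm{conv}}:=\Hom_\Sigma(\bP_{k+1}^{!},\mathrm{End}_{A^\bu})$. The given cdga structure on $A^\bu$ singles out an MC element $\mu\in\mathfrak{g}_{\mathrm{conv}}$ via the inclusion $\mathrm{Comm}\hookrightarrow\bP_{k+1}$, and twisting by $\mu$ produces a dg Lie algebra $\mathfrak{g}_{\mathrm{conv}}^{\mu}$ whose Maurer--Cartan elements are precisely the homotopy $\bP_{k+1}$-structures extending the fixed cdga structure on $A^\bu$.

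The heart of the proof is to identify $\mathfrak{g}_{\mathrm{conv}}^{\mu}$ with $\mathfrak{g}_{\Pol}$ as dg Lie algebras, up to quasi-isomorphism. The distributive law defining $\bP_{k+1}$ induces a decomposition of $\bP_{k+1}^{!}$, and after quotienting the cooperations that reproduce the fixed $\mu$, the remaining arity-$n$ cooperations are, up to a shift by $n(k+1)$, the linear dual of the Lie cooperad. Applying $\Hom_{\Sigma_n}(-,\mathrm{End}_{A^\bu})$ and using the classical identification of Lie-cooperad coinvariants with free symmetric powers, one recovers $\Sym^n(\bT_{A^\bu}[-k-1])$ in weight $n$. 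The convolution bracket, coming from the infinitesimal composition of cooperations in $\bP_{k+1}^{!}$, then has to be matched with the Schouten bracket on polyvectors, while the $\mu$-twisted differential is matched with the sum of the internal cdga differential and the polyvector differential.

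The main obstacle is the bookkeeping of signs and suspensions required to compute $\bP_{k+1}^{!}$ explicitly (essentially $\bP_{k+1}$ itself up to an operadic suspension) and to verify that the convolution bracket equals the Schouten bracket: this is a classical but nontrivial operadic calculation, going back to Tamarkin's work on deformation quantization and adapted to the shifted setting by Melani. Once this quasi-isomorphism $\mathfrak{g}_{\mathrm{conv}}^{\mu}\simeq\mathfrak{g}_{\Pol}$ is in place, the equivalence of Maurer--Cartan spaces is automatic, since both dg Lie algebras are complete for the weight filtration (by construction of $\widehat{\Pol}$ on one side and by the arity filtration on $\bP_{k+1}^{!}$ on the other), and $\mathrm{MC}$ is a homotopy invariant of complete filtered dg Lie algebras.
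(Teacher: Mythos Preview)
The paper does not prove this theorem at all: it is merely quoted as a result of Melani \cite[Th.~3.2]{Mela}, with no argument given. So there is nothing in the paper's own proof to compare your proposal against.

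That said, your sketch is broadly the strategy Melani follows: identify both sides as Maurer--Cartan spaces of filtered dg Lie algebras and produce a filtered quasi-isomorphism between the convolution Lie algebra controlling homotopy $\bP_{k+1}$-structures (twisted by the given cdga structure) and the completed shifted polyvector Lie algebra. The identification of the weight-$n$ piece with $\Sym^n(\bT_{A^\bu}[-k-1])$ and of the convolution bracket with the Schouten bracket is indeed the technical core, and it relies on the Koszul self-duality of $\bP_{k+1}$ up to operadic suspension. One point you might make more precise is why the comparison respects the weight filtrations compatibly enough to invoke Goldman--Millson/Hinich-type invariance of $\mathrm{MC}$; Melani works this out carefully, but in a sketch it is easy to gloss over the fact that the map must be a \emph{filtered} quasi-isomorphism, not just a quasi-isomorphism of underlying complexes.
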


A $k$-shifted Poisson structure $\pi_\bX$ on $\bX$ defines a map $\pi_\bX^2\cdot:\bL_{\bX}\ra \bT_{\bX}[-k]$. We say that $\pi_\bX$ is {\it nondegenerate\/} if this map is a quasi-isomorphism. Calaque et al.\ \cite[Th.~3.2.4]{CPTVV} prove the following theorem (see also Costello--Rozenblyum, to appear, and Pridham \cite{Prid}, for related results).

\begin{thm}[Calaque--Pantev--To\"en--Vaqui\'e--Vezzosi] 
\label{ln3thm5}
The space of nondegenerate $k$-shifted Poisson structures on a derived stack $\bX$ is equivalent to the space of\/ $k$-shifted symplectic structures on\/~$\bX$.
\end{thm}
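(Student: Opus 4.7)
The plan is to reduce to the affine case via descent and then produce an explicit comparison on standard-form cdgas, leaning on the Darboux Theorem \ref{ln2thm5}. Both sides admit a sheafy description: the space of $k$-shifted symplectic structures on $\bX$ is assembled from affine charts from the negative cyclic complex of weight $\ge 2$, while, by Theorem \ref{ln3thm4}, the space of $k$-shifted Poisson structures on $\bSpec A^\bu$ is modelled on Maurer--Cartan elements of $\widehat{\Pol}{}^{\ge 2}(\bSpec A^\bu,k)$, i.e.\ homotopy $\bP_{k+1}$-structures on $A^\bu$. Provided both functors satisfy \'etale descent (a separate input, established in \cite{CPTVV} and \cite{Prid}), it suffices to produce the equivalence on affine pieces naturally in the cdga.

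Next I would construct the comparison map in the easier direction. Given a nondegenerate Poisson structure on $A^\bu$, the leading bivector $\pi^2 \in \Sym^2(\bT_{A^\bu}[-k-1])$ provides a quasi-isomorphism $\pi^2\cdot : \bL_{A^\bu} \to \bT_{A^\bu}[-k]$; inverting yields a candidate underlying 2-form $\omega^0 \in (\La^2\bL_{A^\bu})^k$. The higher closed-form correction terms $\omega^1,\omega^2,\ldots$ should be produced inductively from the higher brackets $\pi^3,\pi^4,\ldots$, with the closedness relations $\d\omega^{i+1}+\dd\omega^i=0$ forced by the Maurer--Cartan equations $\d\pi^n+\tfrac{1}{2}\sum_{l+m=n+1}[\pi^l,\pi^m]=0$ after transport through the inverse of $\pi^2\cdot$. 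The reverse map proceeds analogously: a nondegenerate closed 2-form gives an inverse bivector, and closedness of the whole tower $\om_\bX$ produces, via the same inversion, the higher polyvectors $\pi^3,\pi^4,\ldots$ solving the Jacobi/Maurer--Cartan hierarchy.

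To show that these assignments are mutually inverse equivalences of spaces (not just bijections on $\pi_0$), I would reduce further using Theorem \ref{ln2thm5}: on a Darboux chart $A^\bu=A^0[x_j^i,y_j^{k-i}]$ with $\om^0=\sum\dd x_j^i\,\dd y_j^{k-i}$ and vanishing higher components, the matching nondegenerate Poisson structure is the manifestly strict $\bP_{k+1}$-structure with bivector $\pi^2=\sum \pd/\pd x_j^i\wedge\pd/\pd y_j^{k-i}$ and all $\pi^n=0$ for $n\ge 3$. On such charts the comparison is a direct finite calculation in free generators. Naturality in the cdga under the standard moves comparing two Darboux charts (cf.\ \cite[Th.~4.2]{BBJ}) then globalizes the equivalence.

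The main obstacle is not the pointwise inversion, but the coherent matching of the two towers. Establishing that the hierarchy of higher $\omega^i$ on the symplectic side corresponds functorially and up to all higher homotopies with the higher polyvectors $\pi^n$ on the Poisson side is essentially a formality statement for the $\bP_{k+1}$-operad (in the shifted, derived setting), and it is where the real work of \cite{CPTVV}---and of the independent proofs announced by Costello--Rozenblyum and by Pridham \cite{Prid}---is concentrated. Compared to this, the descent and the reduction to Darboux charts are mostly bookkeeping; the Darboux Theorem only makes plausible a result whose rigorous justification requires the operadic/formality input.
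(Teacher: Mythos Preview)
The paper does not prove this theorem: it is quoted as \cite[Th.~3.2.4]{CPTVV}, with parallel results attributed to Costello--Rozenblyum and Pridham \cite{Prid}. So there is no in-paper proof to compare against; the theorem is an imported black box used to motivate and interpret \S\ref{ln35}.

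As to your sketch, it is a reasonable heuristic outline of how one might hope the comparison goes, but it does not constitute a proof, and you yourself say as much in the final paragraph. Two concrete gaps are worth naming. First, your reduction via Theorem~\ref{ln2thm5} only covers $k<0$ and derived \emph{schemes}; the theorem as stated is for all $k\in\Z$ and for derived stacks, so Darboux charts cannot carry the full statement (and even for schemes with $k<0$, covering $\bX$ by Darboux charts does not by itself give a natural equivalence of \emph{spaces}: the gluing data on overlaps are not themselves in Darboux form, so the ``direct finite calculation in free generators'' does not propagate through the descent diagram without further input). Second, the step you flag as ``the real work''---matching the tower $(\om^0,\om^1,\ldots)$ with $(\pi^2,\pi^3,\ldots)$ coherently to all higher homotopies---is precisely the content of the cited references, and your sketch gives no mechanism for it beyond invoking those references. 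In effect your argument reduces the theorem to itself. The actual proofs in \cite{CPTVV,Prid} do not proceed via Darboux charts; they work with filtered/graded mixed complexes and weight filtrations on $\widehat{\Pol}$ and on the de Rham/negative cyclic side, constructing a zig-zag of equivalences controlled by obstruction theory in the graded pieces, with nondegeneracy used to invert the leading term. That machinery is what makes the comparison functorial and coherent, and it is not replaceable by a chart-by-chart calculation.
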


It is difficult in general to explicitly invert a $k$-shifted symplectic structure to obtain a $k$-shifted Poisson structure, but this can be easily done in the `Darboux form' models of Bussi, Brav and Joyce \cite{BBJ} from~\S\ref{ln25} where, in fact, we obtain \textit{strict} $k$-shifted Poisson structures.

\begin{ex} 
\label{ln3ex6}
Let $k<0$ and suppose $A^\bu,\om$ are in $k$-shifted Darboux form, as in Example \ref{ln2ex2}. The differential of vector fields is given by
\begin{align*}
\d\frac{\pd}{\pd x^i_j} &= \sum_{i'=0}^d \sum_{j'=1}^{m_{i'}}(-1)^{i+i'}\frac{\pd \Phi_{j'}^{i'+1}}{\pd x^i_j}\frac{\pd}{\pd x_{j'}^{i'}} + \sum_{i'=0}^d \sum_{j'=1}^{m_{i'}}(-1)^{i+1} \frac{\pd^2 \Phi_+}{\pd x^i_j\pd x^{i'}_{j'}}\frac{\pd}{\pd y^{k-i'}_{j'}}\\
&\qquad + \sum_{i',i''=0}^d\sum_{j'=1}^{m_{i'}}\sum_{j''=1}^{m_{i''}}
(-1)^{i+1}\frac{\pd^2\Phi_{j''}^{i''+1}}{\pd x^i_j\pd x^{i'}_{j'}}\,y_{j''}^{k-i''}\frac{\pd}{\pd y^{k-i'}_{j'}}, \\
\d\frac{\pd}{\pd y^{k-i}_j} &= \sum_{i'=0}^d\sum_{j'=1}^{m_{i'}} (-1)^{(k+i)(i+i')+1}\frac{\pd \Phi^{i+1}_j}{\pd x^{i'}_{j'}}\frac{\pd}{\pd y^{k-i'}_{j'}}.
\end{align*}

The morphism $\om^0\cdot:\bT_{A^\bu}\ra \bL_{A^\bu}[k]$ is a strict isomorphism given by
\begin{equation*}
\frac{\pd}{\pd x^i_j}\longmapsto \dd y^{k-i}_j,\qquad \frac{\pd}{\pd y^{k-i}_j}\longmapsto (-1)^{(i+1)(k+1)} \dd x^i_j.\end{equation*}
Its inverse $\bL_{A^\bu}\ra \bT_{A^\bu}[-k]$ is given by
\begin{equation*}
\dd y^{k-i}_j\longmapsto (-1)^k \frac{\pd}{\pd x^i_j},\qquad \dd x^i_j\longmapsto (-1)^{ik+i+1} \frac{\pd}{\pd y^{k-i}_j}.
\end{equation*}
This gives a degree $-k$ bivector
\e
\pi^2_{A^\bu} = \sum_{i=0}^d\sum_{j=1}^{m_i} \frac{\pd}{\pd x^i_j}\,\frac{\pd}{\pd y^{k-i}_j}.
\label{ln3eq41}
\e
Its differential is
\begin{align*}
\d \pi^2_{A^\bu} &=\sum_{i,i'=0}^d \sum_{j=1}^{m_i}\sum_{j'=1}^{m_{i'}}(-1)^{i+i'}\biggl[\frac{\pd \Phi_{j'}^{i'+1}}{\pd x^i_j}\frac{\pd}{\pd x_{j'}^{i'}}\frac{\pd}{\pd y^{k-i}_j}-\frac{\pd \Phi_j^{i+1}}{\pd x^{i'}_{j'}}\frac{\pd}{\pd x_j^i}\frac{\pd}{\pd y^{k-i'}_{j'}}\biggr] \\
&\quad+ \sum_{i,i'=0}^d \sum_{j=1}^{m_i}\sum_{j'=1}^{m_{i'}} (-1)^{i+1}\frac{\pd^2 \Phi_+}{\pd x^i_j\pd x^{i'}_{j'}}\frac{\pd}{\pd y^{k-i'}_{j'}}\frac{\pd}{\pd y^{k-i}_j} \\
&\quad+\sum_{i,i',i''=0}^d\sum_{j=1}^{m_i}\sum_{j'=1}^{m_{i'}}\sum_{j''=1}^{m_{i''}} (-1)^{i+1}\frac{\pd^2\Phi_{j''}^{i''+1}}{\pd x^i_j\pd x^{i'}_{j'}}\,y_{j''}^{k-i''}\frac{\pd}{\pd y^{k-i'}_{j'}}\frac{\pd}{\pd y^{k-i}_j}= 0,
\end{align*}
where each line vanishes separately after exchanging the indices $i$ and~$i'$.

Clearly, $[\pi^2_{A^\bu},\pi^2_{A^\bu}]=0$ as the bivector has constant coefficients. Therefore, it defines a strict $k$-shifted Poisson structure on~$\bX=\bSpec A^\bu$.

The same formulae also work trivially when $k=0$, defining $A^\bu,\om$ in 0-shifted Darboux form as in Example \ref{ln3ex4}, with coordinates $x^0_j,y^0_j$, with $\Phi_+=\Phi^{i+1}_j=0$ and $\d\frac{\pd}{\pd x^i_j}=\d\frac{\pd}{\pd y^{k-i}_j}=\d \pi^2_{A^\bu}=0$ for degree reasons, and  $\pi^2_{A^\bu}$ is a classical Poisson structure on~$A^\bu=A^0$.
\end{ex}

\begin{ex}
\label{ln3ex7}
Let $k<0$ with $k\equiv 2\mod 4$ and suppose $A^\bu,\om$ are in $k$-shifted weak Darboux form, as in Example \ref{ln2ex3}. Example \ref{ln3ex6} generalizes to this case, where instead of \eq{ln3eq41} we have
\e
\pi^2_{A^\bu}\!=\!\sum_{i=0}^{d+1} \sum_{j=1}^{m_i} \frac{\pd}{\pd x^i_j}\,\frac{\pd}{\pd y^{k-i}_j}\!+\!\sum_{j=1}^{m_d} \frac{1}{4q_j} \frac{\pd}{\pd z_j^d} \frac{\pd}{\pd z_j^d} \!-\!\sum_{j=1}^{m_d} \sum_{j'=1}^{m_0} \frac{\pd q_j}{\pd x^0_{j'}}\frac{z_j^d}{2q_j} \frac{\pd}{\pd z_j^d}\frac{\pd}{\pd y_{j'}^k}.
\label{ln3eq42}
\e
Then $\d\pi_{A^\bu}^2=0$ and $[\pi^2_{A^\bu},\pi^2_{A^\bu}]=0$, so $\pi^2_{A^\bu}$ defines a strict $k$-shifted Poisson structure on~$\bX=\bSpec A^\bu$.
\end{ex}

Combining Theorems \ref{ln2thm5} and \ref{ln3thm5} gives a $k$-shifted Poisson version of the $k$-shifted Darboux Theorem~\ref{ln3thm2}:

\begin{thm}
\label{ln3thm6}
Let\/ $(\bX,\pi_\bX)$ be a nondegenerate $k$-shifted Poisson derived\/ $\K$-scheme for $k<0,$ and\/ $x\in\bX$. Then $(\bX,\pi_\bX)$ is Zariski locally modelled near $x$ up to equivalence on a strict\/ $k$-shifted Poisson affine derived\/ $\K$-scheme $(\bSpec A^\bu,\pi_{A^\bu}^2)$ in Example\/ {\rm\ref{ln3ex6}} if\/ $k\not\equiv 2\mod 4,$ and in Example\/ {\rm\ref{ln3ex7}} if\/ $k\equiv 2\mod 4$. Also, when $k\equiv 2\mod 4,$ by instead taking $(\bX,\pi_\bX)$ to be \'etale locally modelled on $(\bSpec A^\bu,\pi_{A^\bu}^2)$ we may set\/ $q_1=\cdots=q_{m_d}=1$ in Example\/~{\rm\ref{ln3ex7}}.
\end{thm}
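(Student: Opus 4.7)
The plan is to prove Theorem \ref{ln3thm6} by combining two existing results: the shifted symplectic Darboux Theorem \ref{ln2thm5} of Bussi--Brav--Joyce, and the Calaque--Pantev--To\"en--Vaqui\'e--Vezzosi equivalence (Theorem \ref{ln3thm5}) between nondegenerate $k$-shifted Poisson and $k$-shifted symplectic structures. Concretely, I would first apply Theorem \ref{ln3thm5} to the given $(\bX,\pi_\bX)$ to obtain an equivalent nondegenerate $k$-shifted symplectic structure $\om_\bX$ on $\bX$. Theorem \ref{ln2thm5} then furnishes a standard form cdga $A^\bu$ with a point $p\in\bSpec A^\bu$ mapping to $x$, a morphism $\bs i:\bSpec A^\bu\to\bX$ that is Zariski open for $k\not\equiv 2\mod 4$ and \'etale for $k\equiv 2\mod 4$, together with a Darboux form (respectively weak/strong Darboux form) $k$-shifted symplectic structure $\om$ on $\bSpec A^\bu$ satisfying $\bs i^*(\om_\bX)\sim\om$. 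The étale reduction from weak to strong Darboux form when $k\equiv 2\mod 4$ (taking square roots of the $q_j$) is handled exactly as in Theorem \ref{ln2thm5}(iii).

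The remaining task is to identify the pullback $\bs i^*(\pi_\bX)$, as a $k$-shifted Poisson structure on $\bSpec A^\bu$, with the explicit strict bivector $\pi^2_{A^\bu}$ of Example \ref{ln3ex6} (or Example \ref{ln3ex7} in the weak Darboux case). By naturality of Theorem \ref{ln3thm5} under the morphism $\bs i$, the pullback $\bs i^*(\pi_\bX)$ is the nondegenerate $k$-shifted Poisson structure on $\bSpec A^\bu$ corresponding under the equivalence to $\bs i^*(\om_\bX)\sim\om$. Examples \ref{ln3ex6}--\ref{ln3ex7} already contain the required calculations on the affine model side: they construct $\pi^2_{A^\bu}$ by inverting the strict isomorphism $\om^0\cdot:\bT_{A^\bu}\to\bL_{A^\bu}[k]$ componentwise, verify $\d\pi^2_{A^\bu}=0$ by differentiating the master equation, and verify $[\pi^2_{A^\bu},\pi^2_{A^\bu}]=0$ using that the bivector has constant coefficients in the Darboux coordinates. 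Hence $(\pi^2_{A^\bu},0,0,\ldots)$ is a strict $k$-shifted Poisson structure whose underlying bivector is a strict quasi-inverse to $\om^0$, so it is nondegenerate; consequently both $\pi^2_{A^\bu}$ and $\bs i^*(\pi_\bX)$ represent, up to equivalence, the unique nondegenerate Poisson structure on $\bSpec A^\bu$ corresponding to $\om$.

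The main obstacle is making this last identification rigorous, since Theorem \ref{ln3thm5} provides only an $\iy$-equivalence between the mapping spaces of nondegenerate Poisson and symplectic structures, and one must pin down which equivalence class the strict representative $\pi^2_{A^\bu}$ occupies. I would approach this via Melani's Theorem \ref{ln3thm4}, which identifies the space of $k$-shifted Poisson structures on $\bSpec A^\bu$ with the space of homotopy $\bP_{k+1}$-structures on $A^\bu$; the strict $\bP_{k+1}$-bracket $\{f,g\}=(-1)^{|f|+k+1}\iota_{\pi^2_{A^\bu}}(\dd f\,\dd g)$ is a distinguished representative of its homotopy class. Combined with the naturality of the CPTVV equivalence with respect to the Zariski open or \'etale map $\bs i$, and the fact that $\om^0$ and $\pi^2_{A^\bu}$ are strict mutual inverses on the nose, the uniqueness part of the equivalence forces $\bs i^*(\pi_\bX)\sim\pi^2_{A^\bu}$ in the space of $k$-shifted Poisson structures on $\bSpec A^\bu$, completing the proof.
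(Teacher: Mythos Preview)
Your proposal is correct and follows exactly the paper's approach: the paper simply states that Theorem~\ref{ln3thm6} follows by ``combining Theorems~\ref{ln2thm5} and~\ref{ln3thm5}'', i.e., translating the nondegenerate Poisson structure to a symplectic one via the CPTVV equivalence and then invoking the Darboux Theorem. If anything, you are more careful than the paper, which treats the identification of $\bs i^*(\pi_\bX)$ with the explicit strict bivector $\pi^2_{A^\bu}$ of Examples~\ref{ln3ex6}--\ref{ln3ex7} as immediate and does not spell out the Melani-type argument you sketch.
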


Now let us turn to $k$-shifted Lagrangians and coisotropics. 

\begin{dfn}
A $\bP_{[k+1, k]}$-{\it algebra\/} is a triple of a $\bP_{k+1}$-algebra $A^\bu$, a $\bP_k$-algebra $B^\bu$ and a morphism of $\bP_{k+1}$-algebras
\[A^\bu\longra \bigl(\Hom^\bu_{B^\bu}(\Sym(\Omega^1_{B^\bu}[k]), B),\d+[\pi^2_{B^\bu},-]\bigr).\]
\end{dfn}

Note that if $(A^\bu, B^\bu)$ is a $\bP_{[k+1, k]}$-algebra, the composite
\[A^\bu \longra \bigl(\Hom^\bu_{B^\bu}(\Sym(\Omega^1_{B^\bu}[k]), B),\d+[\pi^2_{B^\bu},-]\bigr) \longra B^\bu,\]
where the latter morphism is given by projection to the weight zero part, is a morphism of cdgas.

\begin{dfn} 
\label{ln3def3}
Let $\al\colon A^\bu\rightarrow B^\bu$ be a morphism of cdgas. A {\it $k$-shifted coisotropic structure\/} on $\al$ is a $\bP_{[k+1, k]}$-algebra $(\widetilde{A}^\bu, \widetilde{B}^\bu)$ together with quasi-isomorphisms of cdgas $\widetilde{A}^\bu\rightarrow A^\bu$ and $\widetilde{B}^\bu\rightarrow B^\bu$ making the diagram of cdgas
\[
\xymatrix{
\widetilde{A}^\bu \ar[r] \ar[d] & \widetilde{B}^\bu \ar[d] \\
A^\bu \ar^{\al}[r] & B^\bu
}
\]
commutative.
\end{dfn}

This definition in fact is equivalent to $k$-shifted coisotropic structures of \cite{CPTVV} as shown in \cite{Saf}. Moreover, one can define a nondegeneracy condition on a $k$-shifted coisotropic structure on $A^\bu\rightarrow B^\bu$ which in particular implies that the $k$-shifted Poisson strucutre on $A^\bu$ is nondegenerate. The following is \cite[Th.~4.22]{MS2}.

\begin{thm}
The space of nondegenerate $k$-shifted Poisson structures on a morphism of derived stacks $\bs L\rightarrow \bX$ is equivalent to the space of pairs of a $k$-shifted symplectic structure on $\bX$ and a Lagrangian structure on $\bs L\rightarrow \bX$.
\label{ln3thm7}
\end{thm}

As before, we say a $k$-shifted coisotorpic structure is \emph{strict} if the maps $\widetilde{A}^\bu\rightarrow A^\bu$ and $\widetilde{B}^\bu\rightarrow B^\bu$ are isomorphisms, so that $(A^\bu, B^\bu)$ is a $\bP_{[k+1, k]}$-algebra. In general, it is difficult to construct a $k$-shifted coisotropic structure corresponding to a given $k$-shifted Lagrangian structure, but we will now explain how to perform this construction for our local models for Lagrangians which will moreover give strict coisotropic structures.

\begin{ex} 
\label{ln3ex8}
Let $k<0$ with $k\not\equiv 3\mod 4$, and consider data $A^\bu,\om,B^\bu,\al,h$ in Lagrangian Darboux form as in Example \ref{ln3ex1}, so that $A^\bu,\om$ is in Darboux form as in Example \ref{ln2ex2}. Then Example \ref{ln3ex6} defines a strict $k$-shifted Poisson structure $\pi^2_{A^\bu}$ on $A^\bu$. We will define a strict $k$-shifted coisotropic structure on $\al:A^\bu\ra B^\bu$. We need to construct a $(k-1)$-shifted Poisson structure $\pi_{B^\bu}$ on $B^\bu$ and provide a $\bP_{k+1}$-morphism $\ti\al: A^\bu\ra \bigl(\widehat{\Sym}(\bT_{B^\bu}[-k]), \d + [\pi^2_{B^\bu}, -]\bigr)$.

The morphism $\chi: \bT_{B^\bu/A^\bu}\ra \bL_{B^\bu}[k-1]$ is given by
\begin{align*}
&\frac{\pd}{\pd\ti x^i_j}\longmapsto 0,\qquad \frac{\pd}{\pd u^i_j}\longmapsto \dd v^{k-1-i}_j,\qquad \frac{\pd}{\pd v^{k-1-i}_j}\longmapsto (-1)^{k(i+1)}\dd u^i_j,\\
&\frac{\pd}{\pd x^i_j}\longmapsto (-1)^i \dd \frac{\pd \Psi}{\pd \ti x^i_j},\qquad \frac{\pd}{\pd y^{k-i}_j}\longmapsto (-1)^{ik+k+i} \dd\ti x^i_j.
\end{align*}
We can find its one-sided inverse $\chi^{-1}$ so that the composite 
\begin{equation*}
\xymatrix@C=30pt{ \bL_{B^\bu} \ar[r]^(0.4){\chi^{-1}} & \bT_{B^\bu/A^\bu}[1-k] \ar[r] & \bT_{B^\bu}[1-k] }
\end{equation*}
is given by
\begin{equation*}
\dd v^{k-1-i}_j\longmapsto (-1)^{k+1} \frac{\pd}{\pd u^i_j},\quad \dd u^i_j\longmapsto (-1)^{ik+1}\frac{\pd}{\pd v^{k-1-i}_j},\quad \dd \ti x^i_j\longmapsto 0.
\end{equation*}
This gives a degree $1-k$ bivector
\e
\pi^2_{B^\bu} = \sum_{i=0}^e \sum_{j=1}^{n_i} \frac{\pd}{\pd u^i_j}\,\frac{\pd}{\pd v^{k-1-i}_j}.
\label{ln3eq43}
\e
Clearly, $[\pi^2_{B^\bu},\pi^2_{B^\bu}]=0$ as the bivector has constant coefficients.

As a graded commutative algebra, $A^*$ is freely generated over $A_+^*$ by the variables $y^{k-i}_j$ for $i=0,-1,\ldots,d$ and $j=1,\ldots,m_i$. We define the morphism $\ti\al$ to be $\al$ on $A_+^*$ and
\e
\ti\al(y^{k-i}_j) = (-1)^{i+1}\frac{\pd \Psi}{\pd\ti x^i_j} + (-1)^{i+1}\frac{\pd}{\pd \ti x^i_j}.
\label{ln3eq44}
\e

Let us check that this $\ti\al$ is compatible with the differential and the brackets:
\begin{equation*}
\d\circ \ti\al(y^{k-i}_j) = (-1)^{i+1}\biggl(\d\frac{\pd \Psi}{\pd\ti x^i_j} + \d \frac{\pd}{\pd \ti x^i_j} + \sum_{i'=0}^e \sum_{j'=1}^{n_{i'}} \biggl[\frac{\pd}{\pd u^{i'}_{j'}}\,\frac{\pd}{\pd v^{k-1-i'}_{j'}}, \frac{\pd \Psi}{\pd\ti x^i_j}\biggr]\biggr).
\end{equation*}
Equation \eq{ln3eq11} implies that
\begin{equation*}
(-1)^{i+1}\d\frac{\pd \Psi}{\pd\ti x^i_j} = \al(\d y^{k-i}_j) = \ti\al(\d y^{k-i}_j) - \sum_{i'=0}^d\sum_{j'=1}^{m_{i'}}
(-1)^{i'+1}\al_+\left(\frac{\pd \Phi_{j'}^{i'+1}}{\pd x^i_j}\right)\,\frac{\pd}{\pd \ti x_{j'}^{i'}}.
\end{equation*}
Therefore,
\ea
\d& \circ\ti\al(y^{k-i}_j) = \ti\al(\d y^{k-i}_j) - \sum_{i'=0}^d\sum_{j'=1}^{m_{i'}}
(-1)^{i'+1}\al_+\left(\frac{\pd \Phi_{j'}^{i'+1}}{\pd x^i_j}\right)\,\frac{\pd}{\pd \ti x_{j'}^{k-i'}}
\nonumber\\
&+ \sum_{i'=0}^d \sum_{j'=1}^{m_{i'}} (-1)^{i'+1}\al_+\left(\frac{\pd \Phi^{i'+1}_{j'}}{\pd x^i_j}\right) \frac{\pd}{\pd \ti x^{i'}_{j'}} + \sum_{i'=0}^e \sum_{j'=1}^{n_{i'}} (-1)^{k(i'+1)} \frac{\pd^2\Psi}{\pd \ti x^i_j \pd v^{k-1-i'}_{j'}} \frac{\pd}{\pd u^{i'}_{j'}}
\nonumber\\
&+ \sum_{i'=0}^e \sum_{j'=1}^{n_{i'}} \frac{\pd^2\Psi}{\pd \ti x^i_j \pd u^{i'}_{j'}} \frac{\pd}{\pd v^{k-1-i'}_{j'}} + \sum_{i'=0}^e \sum_{j'=1}^{n_{i'}} (-1)^{(k+i)(k+i')+i+1} \frac{\pd^2 \Psi}{\pd v^{k-1-i'}_{j'} \pd \ti x^i_j} \frac{\pd}{\pd u^{i'}_{j'}} 
\nonumber\\
&+ \sum_{i'=0}^e \sum_{j'=1}^{n_{i'}} (-1)^{ii' + 1}\frac{\pd^2\Psi}{\pd u^{i'}_{j'} \pd \ti x^i_j} \frac{\pd}{\pd v^{k-1-i'}_{j'}}= \ti\al(\d y^{k-i}_j),
\label{ln3eq45}
\ea
which shows that $\ti\al$ commutes with the differential $\d$.

The Poisson bracket on $A^\bu$ is given by
\e
\bigl\{x^i_j, x^{i'}_{j'}\bigr\} = 0,\quad \bigl\{y^i_j, y^{i'}_{j'}\bigr\} = 0,\quad 
\bigl\{y^i_j, x^{i'}_{j'}\bigr\} = (-1)^{i+1}\de_{jj'}\de^{ii'}.
\label{ln3eq46}
\e
Under $\ti\al$ these are sent to
\ea
\bigl[\ti\al(x^i_j),\ti\al(x^{i'}_{j'})\bigr] &= \bigl[\ti x^i_j, \ti x^{i'}_{j'}\bigr] = 0,
\nonumber\\
\begin{split}
\bigl[\ti\al(y^i_j),\ti\al(y^{i'}_{j'})\bigr] &= (-1)^{i+i'}\biggl[\frac{\pd \Psi}{\pd \ti x^i_j} + \frac{\pd}{\pd \ti x^i_j}, \frac{\pd \Psi}{\pd \ti x^{i'}_{j'}} + \frac{\pd}{\pd \ti x^{i'}_{j'}}\biggr] \\
&= (-1)^{i+i'}\frac{\pd^2\Psi}{\pd\ti x^i_j \pd\ti x^{i'}_{j'}} + (-1)^{(i+1)(i'+1)} \frac{\pd^2\Psi}{\pd\ti x^{i'}_{j'} \pd\ti x^i_j} = 0,
\end{split}
\label{ln3eq47}\\
\bigl[\ti\al(y^i_j), \ti\al(x^{i'}_{j'})\bigr] &= (-1)^{i+1}\biggl[\frac{\pd \Psi}{\pd \ti x^i_j} + \frac{\pd}{\pd \ti x^i_j}, \ti x^{i'}_{j'}\biggr]= (-1)^{i+1}\de_{jj'}\de^{ii'} = \ti\al\bigl(\{y^i_j, x^{i'}_{j'}\}\bigr).
\nonumber
\ea
Therefore, $\ti\al$ is a morphism of $\bP_{k+1}$-algebras, and so \eq{ln3eq43} and \eq{ln3eq44} define a (strict) coisotropic structure on $\al: A^\bu\ra B^\bu$.

The same formulae also work when $k=0$, defining $A^\bu,\om$ in 0-shifted Darboux form as in Example \ref{ln3ex4}, with $\Phi^{i+1}_j=0$,  and $A^\bu,\om,B^\bu,\al,h$ in Lagrangian Darboux form as in Example~\ref{ln3ex5}.
\end{ex}

\begin{ex} 
\label{ln3ex9}
Now let $k<0$ with $k\equiv 3\mod 4$, and consider data $A^\bu,\ab\om,\ab B^\bu,\ab\al,\ab h$ in weak Lagrangian Darboux form as in Example \ref{ln3ex2}, so that $A^\bu,\om$ is in Darboux form as in Example \ref{ln2ex2}, and we have coordinates $\ti x^i_j,u^i_j,v^{k-1-i}_j,w^e_j$ in $B^\bu$, with the $w^e_j$ associated to invertible $q_1,\ldots,q_{n_e}\in B^0$, where $A^\bu,\om,B^\bu,\al,h$ are in strong Lagrangian Darboux form if~$q_1=\cdots=q_{n_e}=1$.

All of Example \ref{ln3ex8} generalizes to this case, so we just give the definitions, leaving most verifications to the reader. As in \eq{ln3eq42}--\eq{ln3eq43}, the bivector $\pi^2_{B^\bu}$ is
\begin{equation*}
\pi^2_{B^\bu} = \sum_{i=0}^d \sum_{j=1}^{n_i} \frac{\pd}{\pd u^i_j}\,\frac{\pd}{\pd v^{k-1-i}_j} + \sum_{j=1}^{n_e} \frac{1}{4q_j} \frac{\pd}{\pd w_j^e} \frac{\pd}{\pd w_j^e} - \sum_{j=1}^{n_e} \sum_{j'=1}^{n_0} \frac{\pd q_j}{\pd u^0_{j'}}\frac{w_j^e}{2q_j} \frac{\pd}{\pd w_j^e} \frac{\pd}{\pd v_{j'}^{k-1}}.
\end{equation*}
As this does not have constant coefficients, we check that $[\pi^2_{B^\bu},\pi^2_{B^\bu}]=0$ by
\begin{align*}
[\pi^2_{B^\bu}, \pi^2_{B^\bu}] &= \sum_{j=1}^{n_0}\sum_{j'=1}^{n_e}\biggl[\frac{\pd q_{j'}}{\pd u_j^0}\frac{1}{4q_{j'}^2}\frac{\pd}{\pd v_j^{k-1}} \frac{\pd}{\pd w_{j'}^e} \frac{\pd}{\pd w_{j'}^e} - \frac{\pd q_{j'}}{\pd u_j^0}\frac{1}{4q_{j'}^2}\frac{\pd}{\pd v_j^{k-1}} \frac{\pd}{\pd w_{j'}^e} \frac{\pd}{\pd w_{j'}^e}\biggr] \\
&\qquad- \sum_{j,j'=1}^{n_0}\sum_{j''=1}^{n_e} \frac{\pd^2 q_{j''}}{\pd u_j^0 \pd u_{j'}^0}\frac{w_{j''}^e}{2q_{j''}}\frac{\pd}{\pd w_{j''}^e} \frac{\pd}{\pd v_j^{k-1}} \frac{\pd}{\pd v_{j'}^{k-1}} = 0,
\end{align*}
which vanishes as the $\smash{\frac{\pd}{\pd w_j^e}}$ are symmetric under multiplication, as $e$ is odd, and the $\frac{\pd}{\pd v_j^{k-1}}$ are antisymmetric, as $k-1$ is even, and $\frac{\pd^2 q_{j''}}{\pd u_j^0\pd u_{j'}^0}$ is symmetric in~$j,j'$.

The $\bP_{k+1}$-morphism $\ti\al: A^\bu\ra \bigl(\widehat{\Sym}(\bT_{B^\bu}[-k]),\d+[\pi^2_{B^\bu},-]\bigr)$ is given by $\ti\al\vert_{A_+^*}=\al\vert_{A_+^*}$ and, generalizing \eq{ln4eq43},
\begin{align*}
\ti\al(y^{k-i}_j)&=\al(y^{k-i}_j) + (-1)^{i+1}\frac{\pd}{\pd \ti x^i_j},\quad i=-1,\ldots,d,\;\> j=1,\ldots,m_i, \\
\ti\al(y^k_j)&=\al(y^k_j)-\frac{\pd}{\pd \ti x^0_j} + \sum_{j'=1}^{n_e} \frac{1}{2q_{j'}} \frac{\pd q_{j'}}{\pd \ti x^0_j} w_{j'}^e \frac{\pd}{\pd w_{j'}^e},\quad j=1,\ldots,m_0.
\end{align*}
We can show that $\d\ci\ti\al=\ti\al\ci\d$ as in \eq{ln3eq45}, and that $\ti\al$ preserves $\{\,,\,\},[\,,\,]$ as in \eq{ln3eq46}--\eq{ln3eq47}. Thus $\pi^2_{B^\bu},\ti\al$ define a (strict) coisotropic structure on~$\al:A^\bu\!\ra\! B^\bu$.

\end{ex}

Examples \ref{ln3ex8} and \ref{ln3ex9} show that all of our (ordinary/weak/strong) `Lagrangian Darboux form' local models in $k$-shifted symplectic derived $\K$-schemes for $k\le 0$ can be promoted to explicit (strict) coisotropic structures in $k$-shifted Poisson derived $\K$-schemes.

Theorems \ref{ln3thm2} and \ref{ln3thm7} imply a `Coisotropic Neighbourhood Theorem', saying that a nondegenerate coisotropic $\bs f:\bs C\ra\bX$ in a nondegenerate $k$-shifted Poisson derived $\K$-scheme $\bX$ for $k<0$ is Zariski or \'etale locally modelled on $\bSpec\al:\bSpec B^\bu\ra \bSpec A^\bu$ in $\bSpec A^\bu$ in Examples \ref{ln3ex8} or \ref{ln3ex9}, in a similar way to Theorem~\ref{ln3thm6}.

\section{Proofs of the main results}
\label{ln4}

Sections \ref{ln41} and \ref{ln42}--\ref{ln47} will prove Theorems \ref{ln3thm1} and~\ref{ln3thm2}, respectively.

\subsection{Proof of Theorem \ref{ln3thm1}}
\label{ln41}

The proof is modelled on that of Theorem \ref{ln2thm1} in \cite[\S 4.1]{BBJ}. As in Theorem \ref{ln3thm1}, let $\bs f:\bY\ra\bX$ be a morphism of derived $\K$-schemes, $A^\bu$ be a standard form cdga over $\K$, $\bs i:\bSpec A^\bu\hookra\bX$ be a Zariski open inclusion (or \'etale morphism), and $y\in\bY$, $p\in\Spec H^0(A^\bu)$ with $\bs f(y)=\bs i(p)=x\in\bX$.

First, consider a homotopy pullback diagram in $\dSch_\K$:
\begin{equation*}
\xymatrix@C=90pt@R=15pt{
*+[r]{\bZ} \ar[r]_{\bs h} \ar[d]^{\bs g} & *+[l]{\bSpec A^\bu} \ar[d]_{\bs i} \\
*+[r]{\bY} \ar[r]^{\bs f} & *+[l]{\bX,\!} }
\end{equation*}
where $\bZ=\bY\t_\bX\bSpec A^\bu$. The map $\bs g:\bZ\ra\bY$ is a Zariski open immersion (or \'etale if $\bs i$ is \'etale). Also there is a unique point $z\in\bZ$ with $\bs g(z)=y$ and $\bs h(z)=p$. Let $\bs k:\bSpec C^\bu\hookra\bZ$ be an affine Zariski neighbourhood of $z$ for some finitely presented cdga $C^\bu$, so that $r\in\bSpec C^\bu$ with~$\bs k(r)=z$.

Recall the distinction between the ordinary category $\cdga_\K$ and the $\iy$-category $\cdga_\K^\iy$ of cdgas over $\K$, discussed in Remark \ref{ln2rem1}. The morphism $\bs h\ci\bs k:\bSpec C^\bu\ra\bSpec A^\bu$ is equivalent to $\bSpec\ga^\iy$ for some morphism $\ga^\iy:A^\bu\ra C^\bu$ in $\cdga_\K^\iy$, unique up to equivalence. Later (after modifying $C^\bu$) we will show that $\ga^\iy$ descends to a morphism $\ga$ in~$\cdga_\K$.

Possibly after localizing $C^\bu$, we will inductively construct a standard form cdga $B^\bu$ with a quasi-isomorphism $\be:B^\bu\,{\buildrel\sim\over\longra}\,C^\bu$ in $\cdga_\K$, such that $\bs h\ci\bs k\ci(\bSpec\be)^{-1}:\bSpec B^\bu\ra\bSpec A^\bu$ is equivalent to $\bSpec\al$, for $\al:A^\bu\ra B^\bu$ in $\cdga_\K$ a submersion of cdgas minimal at $q=\bSpec\be(r)$, with $(\bSpec\be)^{-1}$ a quasi-inverse for $\bSpec\be$. Then setting $\bs j=\bs g\ci\bs k\ci(\bSpec\be)^{-1}$, so that $\bSpec\al(q)=p$, $\bs j(q)=y$, we have a homotopy commutative diagram
\e
\begin{gathered}
\xymatrix@C=33pt@R=15pt{
*+[r]{\bSpec B^\bu} \ar@<1ex>[drr]^(0.75){(\bSpec\be)^{-1}} \ar@/_1.6pc/[ddrrr]_(0.4){\bs j} \ar@/^.8pc/[drrrrr]^(0.6){\bSpec\al} \\
&& \bSpec C^\bu \ar@/^1.7ex/[rrr]^(0.25){\bSpec\ga} \ar@<0ex>[ull]^{\bSpec\be} \ar[r]_(0.55){\bs k} & \bZ \ar[rr]_(0.4){\bs h} \ar[d]^{\bs g} && *+[l]{\bSpec A^\bu} \ar[d]_{\bs i} \\
&&& \bY \ar[rr]^(0.4){\bs f} && *+[l]{\bX,\!} }
\end{gathered}
\label{ln4eq1}
\e
which gives \eq{ln3eq1}. Also $\bs j$ is a Zariski open inclusion (or \'etale if $\bs i$ is \'etale), as $\bs g,\bs k,(\bSpec\be)^{-1}$ are, so this will prove Theorem~\ref{ln3thm1}.

As $\bSpec C^\bu$ is affine, we can choose an embedding $\Spec H^0(C^\bu)\hookra\bA^N$ for $N\gg 0$. We also have a composition of morphisms 
\begin{equation*}
\xymatrix@C=30pt{ \Spec H^0(C^\bu) \ar[r]^(0.6){t_0(\bs k)} & t_0(\bZ) \ar[r]^(0.4){t_0(\bs h)}  & \Spec H^0(A^\bu)\,\, \ar@{^{(}->}[r] & \Spec A^0=:U, }
\end{equation*}
so the direct product is an embedding $e:\Spec H^0(C^\bu)\hookra\bA^N\t U$. Choose a smooth, affine, locally closed $\K$-subscheme $V$ in $\bA^N\t U$, such that $V$ contains an open neighbourhood of $e(r)$ in $e[\Spec H^0(C^\bu)]$ as a closed $\K$-subscheme, and the projection $V\ra U$ is smooth, and $\dim V$ is minimal under these conditions. Localizing $V$ if necessary, we can assume $T^*V$ is a trivial vector bundle.

As $V$ is affine we have $V=\Spec B^0$ for a smooth $\K$-algebra $B^0$, with $\Om^1_{B^0}$ a free $B^0$-module. The (smooth) projection $V\ra U$ is $\Spec\al^0$ for $\al^0:B^0\ra A^0$ a smooth morphism of $\K$-algebras. Since $V$ contains an open neighbourhood of $e(p)$ in $e[\Spec H^0(C^\bu)]$, localizing $C^\bu$ if necessary we can suppose $e[\Spec H^0(C^\bu)]$ is a closed $\K$-subscheme in $V$. Then the closed embedding $\Spec H^0(C^\bu)\hookra V$ is $\Spec\be'$ for some~$\be':B^0\ra H^0(C^\bu)$. 

Since $C^\bu$ is the homotopy limit of its Postnikov tower $\cdots\ra\tau_{\geq -1}C^\bu\ra \tau_{\geq 0}C^\bu\simeq H^0(C^\bu)$ in which each map is a square-zero extension of cdgas \cite[Prop.~7.1.3.19]{Luri}, and as $B^0$ is smooth and hence maps out of it can be lifted along square-zero extensions, after replacing $C^\bu$ by an equivalent cdga we can lift $\be':B^0\ra H^0(C^\bu)$ along the canonical map $C^\bu\ra H^0(C^\bu)$ to obtain a map~$\be^0:B^0\ra C^0\subseteq C^\bu$.

Set $\ga^0=\be^0\ci\al^0:A^0\ra C^\bu$, as a morphism in $\cdga_\K$. Then we have a homotopy commutative diagram in $\cdga_\K^\iy$, with $\al^0,\be^0,\ga^0$ morphisms in $\cdga_\K$:
\e
\begin{gathered}
\xymatrix@C=90pt@R=15pt{
*+[r]{A^0_{\phantom{k}}} \ar[r]_{\al^0} \ar@/^.8pc/[rr]^{\ga^0} \ar@{^{(}->}[d] & B^0_{\phantom{k}} \ar[r]_{\be^0}  & *+[l]{C^0_{\phantom{k}}} \ar@{^{(}->}[d] \\
*+[r]{A^\bu} \ar[rr]^{\ga^\iy} && *+[l]{C^\bu.\!} }
\end{gathered}
\label{ln4eq2}
\e
Since $A^\bu$ is cofibrant over $A^0$, and $A^0\ra C^\bu$ is represented by a morphism $\ga^0$ in $\cdga_\K$, up to equivalence $\ga^\iy$ descends to a morphism $\ga:A^\bu\ra C^\bu$ in~$\cdga_\K$.

As $A^\bu$ is a standard form cdga, it is freely generated over $A^0$ by finitely many generators $x^i_1,\ldots,x_{m_i}^i$ in degree $i$ for $i=-1,-2,\ldots,$ where $m_i=0$ for $i\ll 0$. Write $A^\bu(k)$ for $k=0,-1,\ldots$ for the sub-cdga of $A^\bu$ generated over $A^0$ by the generators $x^i_j$ in degrees $i=-1,-2,\ldots,k$ only, so that $A^\bu(0)\subseteq A^\bu(-1)\subseteq A^\bu(-2)\subseteq\cdots,$ and $A^\bu(0)=A^0$, and $A^\bu(k)=A^\bu$ for $k\ll 0$.

Next we inductively construct a sequence of standard form cdgas $B^\bu(0)\subseteq B^\bu(-1)\subseteq B^\bu(-2)\subseteq\cdots,$ and submersions $\al(k):A^\bu(k)\ra B^\bu(k)$ in $\cdga_\K$, and morphisms $\be(k):B^\bu(k)\ra C^\bu$ in $\cdga_\K$, such that $\al(k-1)\vert_{A^\bu(k)}=\al(k)$, $\be(k-1)\vert_{B^\bu(k)}=\be(k)$, and the following diagram commutes in $\cdga_\K$:
\e
\begin{gathered}
\xymatrix@C=90pt@R=15pt{
*+[r]{A^\bu(k)\,\,} \ar@{^{(}->}[r] \ar[d]^{\al(k)} & *+[r]{A^\bu(k-1)\,\,} \ar@{^{(}->}[r] \ar[d]^{\al(k-1)} & *+[l]{A^\bu} \ar[d]_{\ga} \\
*+[r]{B^\bu(k)\,\,} \ar@{^{(}->}[r] \ar@/_2ex/[rr]_{\be(k)} & *+[r]{B^\bu(k-1)} \ar[r]^{\be(k-1)} & *+[l]{C^\bu,\!} }
\end{gathered}
\label{ln4eq3}
\e
and $\bL_{C^\bu/B^\bu(k)}$ is concentrated in degrees $(-\iy,k-1]$, and $B^*(k-1)$ is freely generated over $B^*(k)$ by finitely many generators in degree $k-1$, where this number of generators is minimal such that the previous conditions hold near~$r$.

For the first step, set $A^\bu(0)=A^0$ and $B^\bu(0)=B^0$, regarded as cdgas concentrated in degree 0, and $\al(0)=\al^0:A^\bu(0)\ra B^\bu(0)$, which is a submersion, and $\be(0)=\be^0:B^\bu(0)\ra C^\bu$. Then \eq{ln4eq2} implies that the outer rectangle of \eq{ln4eq3} commutes for~$k=0$.

For the inductive step, suppose that for some $k\le 0$ we have chosen $B^\bu(0),\ab B^\bu(-1),\ab\ldots,\ab B^\bu(k)$ and $\al(0),\al(-1),\ldots,\al(k)$ and $\be(0),\be(-1),\ldots,\be(k)$ with the desired properties. Now $H^{k-1}(\bL_{C^\bu/B^\bu(k)}\vert_r)$ is spanned by elements $(\dd y,\dd z)$ for $y\in C^{k-1}$ and $z\in B^k(k)$ with $\d y=\be(k)(z)\in C^k$ and $\d z=0\in B^{k+1}(k)$. We have generators $x_j^{k-1}$ of $A^\bu$ in degree $k-1$ with $\d x_j^{k-1}\in A^\bu(k)\subseteq A^\bu$, so that $y=\ga(x_j^{k-1})$, $z=\al(k)(\d x_j^{k-1})$ satisfy $\d y=\be(k)(z)$ and $\d z=0$, and $(\dd\ga(x_j^{k-1}),\dd\al(k)(\d x_j^{k-1}))$ gives an element of $H^{k-1}(\bL_{C^\bu/B^\bu(k)}\vert_r)$. Choose a minimal number of additional pairs $(y_1^{k-1},z_1^k),\ldots,(y_{n_{k-1}}^{k-1},z_{n_{k-1}}^k)$ in $C^{k-1}\t B^k(k)$ with $\d y^{k-1}_j=\be(k)(z_j^k)$ and $\d z_j^k=0$, such that $H^{k-1}(\bL_{C^\bu/B^\bu(k)}\vert_r)$ is spanned by $(\dd\ga(x_j^{k-1}),\dd\al(k)(\d x_j^{k-1}))$ for $j=1,\ldots,m_{k-1}$ and $(\dd y_j^{k-1},\ab\dd z_j^k)$ for~$j=1,\ldots,n_{k-1}$.

Define $B^*(k-1)$ to be the commutative graded algebra freely generated over $B^*(k)$ by generators $\ti x_1^{k-1},\ab\ldots,\ab\ti x_{m_{k-1}}^{k-1},\ti y_1^{k-1},\ab\ldots,\ti y_{n_{k-1}}^{k-1}$ in degree $k-1$. Define the differential $\d$ in $B^\bu(k-1)=(B^*(k-1),\d)$ by $\d\vert_{B^*(k)}=\d_{B^\bu(k)}$, and $\d(\ti x_j^{k-1})=\al(k)(\d x_j^{k-1})$, $\d\ti y_j^{k-1}=z_j^k$ for all $j$. Then $\d\ci\d=0$ as $\d\ci\al(k)(\d x_j^{k-1})=0$ and $\d z_j^k=0$ in $B^\bu(k)$. Define $\al(k-1):A^*(k-1)\ra B^*(k-1)$ to be the unique graded algebra morphism with $\al(k-1)\vert_{A^*(k-1)}=\al(k)$ and $\al(k-1)(x_j^{k-1})=\ti x_j^{k-1}$ for $j=1,\ldots,m_{k-1}$. Then $\al(k-1)$ is a cdga morphism as $\d\al(k-1)(x_j^{k-1})=\d\ti x_j^{k-1}=\al(k)(\d x_j^{k-1})=\al(k-1)(\d x_j^{k-1})$.
 
Define $\be(k-1):B^*(k-1)\ra C^*$ to be the unique graded algebra morphism with $\be(k-1)\vert_{B^*(k)}=\be(k)$ and $\be(k-1)(\ti x_j^{k-1})=\ga(x_j^{k-1})$ and $\be(k-1)(\ti y_j^{k-1})=y_j^{k-1}$ for all $j$. Then $\be(k-1)$ is a cdga morphism $B^\bu(k-1)\ra C^\bu$ as
\begin{align*}
\d\be(k-1)(\ti x_j^{k-1})&=\d\ga(x_j^{k-1})=\ga(\d x_j^{k-1})=\be(k)\ci\al(k)(\d x_j^{k-1})\\
&=\be(k-1)\ci\d(\ti x_j^{k-1}),\\
\d\be(k-1)(\ti y_j^{k-1})&=\d y_j^{k-1}=\be(k)(z_j^k)=\be(k-1)\ci\d(\ti y_j^{k-1}).
\end{align*}
Also \eq{ln4eq3} commutes as~$\be(k-1)\ci\al(k-1)(x_j^{k-1})=\ga(x_j^{k-1})$.

As $\bL_{C^\bu/B^\bu(k)}$ is concentrated in degrees $(-\iy,k-1]$, and the new generators of $B^\bu(k-1)$ span $H^{k-1}(\bL_{C^\bu/B^\bu(k)}\vert_r)$, we see that $\bL_{C^\bu/B^\bu(k-1)}$ is concentrated in degrees $(-\iy,k-2]$ near $r\in\bSpec C^\bu$. Thus, localizing $C^\bu$ and $B^\bu(0),\ldots,B^\bu(k-1)$, we can suppose that $\bL_{C^\bu/B^\bu(k-1)}$ is concentrated in degrees $(-\iy,k-2]$. This completes the inductive step. Hence, by induction we have defined standard form cdgas $B^\bu(0)\subseteq B^\bu(-1)\subseteq B^\bu(-2)\subseteq\cdots,$ submersions $\al(k):A^\bu(k)\ra B^\bu(k)$, and morphisms $\be(k):B^\bu(k)\ra C^\bu$ in $\cdga_\K$ for~$k=0,-1,-2,\ldots.$ 

Since $A^\bu$ is of standard form and $C^\bu$ is finitely presented, for $k\ll 0$ $A^\bu$ has no generators in degrees $<k$, and $\bL_{C^\bu}$ is concentrated in degrees $[k,0]$. Then we add no further generators in degrees $<k$, so that $A^\bu(k)=A^\bu(k-1)=\cdots=A^\bu$ and $B^\bu(k)=B^\bu(k-1)=\cdots.$ Set $B^\bu=B^\bu(k)$, $\al=\al(k)$ and $\be=\be(k)$ for such $k$. Then $\be$ is a quasi-isomorphism, since $\be(i)$ is a quasi-isomorphism in degrees $>i$ and $\be=\be(k)=\be(k-1)=\cdots$, and $\ga=\be\ci\al$, so \eq{ln4eq1} commutes up to homotopy. Also $\al:A^\bu\ra B^\bu$ is a submersion, as $\al(k):A^\bu(k)\ra B^\bu(k)$ is, and the minimality conditions we imposed on $\dim V$ and the number of $(y_j^{k-1},z_j^k)$ imply that $\al$ is minimal at $q$. This completes the proof of Theorem~\ref{ln3thm1}.

\subsection{\texorpdfstring{Beginning the proof of Theorem \ref{ln3thm2} for $k<0$}{Beginning the proof of Theorem \ref{ln3thm2} for k<0}}
\label{ln42}

Sections \ref{ln42}--\ref{ln47} will prove our main result, Theorem \ref{ln3thm2}. This section begins by showing that when $k<0$ we can choose $B^\bu,q,\al,\bs i,\bs j$ and $h=(h^0,0,0,\ldots)$ as in the first part of Theorem \ref{ln3thm2}, and $\Xi\in B^k$, $\psi\in(\Om^1_{B^\bu})^{k-1}$ with $\d\Xi=-\al(\Phi+\Phi_+)$, $\dd\Xi+\d\psi=-\al_*(\phi+\phi_+)$ and~$h^0=\frac{1}{k-1}\dd\psi$. 

Sections \ref{ln43}--\ref{ln45} continue the proof for even $k<0$. In \S\ref{ln43} we choose coordinates $\ti x^i_j,u^i_j,v^{k-1-i}_j$ on $B^\bu$ with $h^0=\sum_{i,j}\dd a^{k-1-i}_j\,\dd\ti x^i_j+\dd u^i_j\,\dd v^{k-1-i}_j$ for functions $a^{k-1-i}_j\in B^{k-1-i}$. Section \ref{ln44} defines $\Psi=\Xi-\sum_{i,j}ia_j^{k-1-i}\d\ti x^i_j$, and computes the p.d.e.\ satisfied by $\Psi$, and expressions for $\d$ in $B^\bu=(B^*,\d)$, and for~$\al:A^\bu\ra B^\bu$. 

Section \ref{ln45} explains how to replace $\al,h$ by an equivalent morphism $\hat\al:A^\bu\ra B^\bu$ and Lagrangian structure $\hat h=(\hat h^0,0,0,\ldots)$, such that for $\hat\al,\hat h$ the functions $a^{k-1-i}_j$ are zero, giving $\hat h^0=\dd u^i_j\,\dd v^{k-1-i}_j$. This will complete the proof of Theorem \ref{ln3thm2} for even $k<0$. Sections \ref{ln46}--\ref{ln47} discuss how to modify \S\ref{ln43}--\S\ref{ln45} to prove Theorem \ref{ln3thm2} for the cases $k<0$ with $k\equiv 1\mod 4$, and $k<0$ with $k\equiv 3\mod 4$, respectively.

Let $(\bX,\om_\bX)$ be a $k$-shifted symplectic derived $\K$-scheme for $k<0$, and $\bs f:\bs L\ra\bX$ be a Lagrangian derived $\K$-scheme in $(\bX,\om_\bX),$ with isotropic structure $h_{\bs L}:0\,{\buildrel\sim\over\longra}\,\bs f^*(\om_\bX)$. Let $y\in\bs L$ with $\bs f(y)=x\in\bX$. Suppose $A^\bu$ is a standard form cdga over $\K$, $\om$ is a $k$-shifted symplectic form on $\bSpec A^\bu$ with $A^\bu,\om$ in Darboux form, $p\in\Spec H^0(A^\bu)$, and $\bs i:\bSpec A^\bu\ra\bX$ is either a Zariski open inclusion or \'etale, with $\bs i(p)=x$ and~$\om\sim\bs i^*(\om_\bX)$. 

As in Example \ref{ln2ex2}, we have coordinates $x^i_j,y^{k-i}_j$ in $A^\bu$ for $i=0,-1,\ldots,d$ and $j=1,\ldots,m_d$, and $\om=(\om^0,0,0,\ldots)$ with $\om^0=\sum_{i=0}^d\sum_{j=1}^{m_i}\dd x^i_j\,\dd y^{k-i}_j$ in $(\La^2\Om^1_{A^\bu})^k$, and $\Phi\in A^{k+1}$, $\phi\in(\Om^1_{A^\bu})^k$ with $\d\Phi=0$, $\dd\Phi+\d\phi=0$ and $\dd\phi=k\om^0$. We also use the notation of Remark \ref{ln2rem3}, which defined a sub-cdga $\io:A^\bu_+\hookra A^\bu$ containing the $x^i_j$ but not the $y^{k-i}_j$, and $\Phi_+\in A^{k+1}_+$, $\phi_+\in(\Om^1_{A^\bu})^k$ satisfying $\d\Phi_+=0$, $\dd\Phi_++\d\phi_+=0$ and~$\dd\phi_+=-\om^0$.

Form a homotopy commutative diagram
\e
\begin{gathered}
\xymatrix@C=90pt@R=15pt{ *+[r]{\bZ} \ar[d]^{\bs g} \ar[r]_{\bs h} &  \bSpec A^\bu\, \ar[d]_{\bs i} \ar[r]_{\bSpec\io} & *+[l]{\bSpec A_+^\bu} \\ *+[r]{\bs L} \ar[r]^{\bs f} & \bX, }
\end{gathered}
\label{ln4eq4}
\e
with $\bZ=\bs L\t_{\bs f,\bX,\bs i}\bSpec A^\bu$. Then $\bs i$ a Zariski open inclusion, or \'etale, implies that $\bs g$ is a Zariski open inclusion, or \'etale, respectively. There is a unique point $z\in\bZ$ with $\bs g(z)=y\in\bs L$ and~$\bs h(z)=p\in\bSpec A^\bu$. 

Apply Theorem \ref{ln3thm1} with $\bSpec\io\ci\bs h:\bZ\ra\bSpec A_+^\bu$, $\bs\id:\bSpec A_+^\bu\ra \bSpec A_+^\bu$, $z\in\bZ$, $p\in\bSpec A_+^\bu$ in place of $\bs f:\bY\ra\bX$, $\bs i:\bSpec A^\bu\ra\bX$, $y\in\bY$, $p\in\bSpec A^\bu$. This gives a standard form cdga $B^\bu$, a point $q\in\bSpec B^\bu$, a submersion $\al_+:A_+^\bu\ra B^\bu$ minimal at $q$ with $\bSpec\al_+(q)=p$, and a Zariski open inclusion $\bs e:\bSpec B^\bu\ra\bZ$ with $\bs e(q)=z$ such that the following diagram is homotopy commutative
\e
\begin{gathered}
\xymatrix@C=100pt@R=15pt{ *+[r]{\bSpec B^\bu} \ar[d]^{\bs e} \ar[r]_{\bSpec\al_+} &  *+[l]{\bSpec A_+^\bu} \ar[d]_{\bs\id}  \\ *+[r]{\bZ} \ar[r]^{\bSpec\io\ci\bs h} & *+[l]{\bSpec A_+^\bu.\!} }
\end{gathered}
\label{ln4eq5}
\e

The morphism $\bs h\ci\bs e:\bSpec B^\bu\ra\bSpec A^\bu$ is equivalent to $\bSpec\al^\iy$ for some morphism $\al^\iy:A^\bu\ra B^\bu$ in $\cdga_\K^\iy$, where $\al^\iy\ci\io\simeq\al_+$. Since $A^\bu$ is freely generated over $A^\bu_+$ and so cofibrant over $A_+^\bu$, up to equivalence $\al^\iy$ descends to a morphism $\al:A^\bu\ra B^\bu$ in $\cdga_\K$ with $\al\ci\io=\al_+$. Thus, combining \eq{ln4eq4}--\eq{ln4eq5} gives a homotopy commutative diagram
\e
\begin{gathered}
\xymatrix@C=90pt@R=15pt{ *+[r]{\bSpec B^\bu} \ar[d]^{\bs e} \ar@/^.8pc/[drr]^(0.7){\bSpec \al_+ } \ar@/^.5pc/[dr]^(0.7){\bSpec\al } \ar@<-.1pc>@/_.5pc/[dd]_{\bs j}
\\
*+[r]{\bZ} \ar[d]^{\bs g} \ar[r]_{\bs h} &  \bSpec A^\bu\, \ar[d]_{\bs i} \ar[r]_{\bSpec\io} & *+[l]{\bSpec A_+^\bu} \\ *+[r]{\bs L} \ar[r]^{\bs f} & \bX, }
\end{gathered}
\label{ln4eq6}
\e
where we write $\bs j=\bs g\ci\bs e$, so that $\bs j$ is a Zariski open inclusion, or \'etale, if $\bs i$ is a Zariski open inclusion, or \'etale, respectively. This proves the first part of Theorem \ref{ln3thm2}: we have constructed a standard form cdga $B^\bu$, a point $q\in\bSpec B^\bu$, a morphism $\al:A^\bu\ra B^\bu$ in $\cdga_\K$ with $\bSpec\al(q)=p$ such that $\al_+=\al\ci\io:A^\bu_+\ra B^\bu$ is a submersion minimal at $q$, and a morphism $\bs j:\bSpec B^\bu\hookra\bs L$ which is either a Zariski open inclusion or \'etale with $\bs j(q)=y$, such that \eq{ln3eq38} homotopy commutes by~\eq{ln4eq6}.

Next we discuss the isotropic structure. As we have the homotopy commutative diagram \eq{ln3eq38} with $\bs i,\bs j$ \'etale, and $\om\sim\bs i^*(\om_\bX)$, Definition \ref{ln2def6} implies that the Lagrangian structure $h_{\bs L}$ for $\bs f:\bs L\ra(\bX,\om_\bX)$ lifts to a Lagrangian structure $h$ for $\bSpec\al:\bSpec B^\bu\ra(\bSpec A^\bu,\om)$, where $h=(h^0,h^1,h^2,\ldots)$ with $h^i\in(\La^{2+i}\bL_{B^\bu})^{k-1-i}$, which by \eq{ln2eq4} satisfy
\e
\d h^0=\al_*(\om^0)\quad\text{and}\quad \dd h^i+\d h^{i+1}=0\quad\text{for $i=0,1,\ldots.$}
\label{ln4eq7}
\e

First, we need a vanishing result for the isotropic structure. Note that `Lagrangian Darboux form' in Example \ref{ln3ex1} involves $\Psi\in B^k$ and $\psi\in(\Om^1_{B^\bu})^{k-1}$, and \eq{ln4eq8} is \eq{ln3eq22}--\eq{ln3eq23} with $\Xi$ in place of $\Psi$, and $\ti h^0=\frac{1}{k-1}\dd\psi$ is~\eq{ln3eq24}.

\begin{prop} 
\label{ln4prop1}
There exist exist\/ $\Xi\in B^k$ and\/ $\psi\in(\Om^1_{B^\bu})^{k-1}$ satisfying
\e
\d\Xi=-\al(\Phi+\Phi_+)\quad\text{and}\quad \dd\Xi+\d\psi=-\al_*(\phi+\phi_+),
\label{ln4eq8}
\e
such that the isotropic structure $h=(h^0,h^1,\ldots)$ is homotopic to $(\ti h^0,0,0,\ldots)$ where $\ti h^0=\frac{1}{k-1}\dd\psi$.
\end{prop}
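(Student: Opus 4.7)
The strategy parallels the proof of the Darboux Theorem \ref{ln2thm5} in \cite[\S 5.6]{BBJ}, working inside the full polynomial de Rham / negative cyclic complex
\[
\widetilde{C}^\bu(B^\bu)=\Bigl(\ts\prod_{p\ge 0}(\La^p\Om^1_{B^\bu})[-p],\;\d+\dd\Bigr),
\]
rather than merely its Hodge-filtered piece $F^2\widetilde{C}^\bu$ in which isotropic structures naturally live. The key algebraic input is that, by the relations $\d(\Phi+\Phi_+)=0$, $\dd(\Phi+\Phi_+)+\d(\phi+\phi_+)=0$ and $\dd(\phi+\phi_+)=(k-1)\om^0$ from \eq{ln2eq10} and \eq{ln2eq16},
\[
(\d+\dd)\bigl(-\Phi-\Phi_+,\,-\phi-\phi_+,\,0,\,0,\ldots\bigr)=\bigl(0,\,0,\,-(k-1)\om^0,\,0,\ldots\bigr)=-(k-1)\om
\]
in $\widetilde{C}^\bu(A^\bu)$. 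Pulling back by $\al$ and combining with $(\d+\dd)h=\al_*(\om)$ from \eq{ln4eq7}, the element
\[
\ga:=\bigl(-\al(\Phi+\Phi_+),\,-\al_*(\phi+\phi_+),\,(k-1)h^0,\,(k-1)h^1,\ldots\bigr)
\]
is $(\d+\dd)$-closed in $\widetilde{C}^\bu(B^\bu)$.

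The plan is to show $\ga$ is $(\d+\dd)$-exact, say $\ga=(\d+\dd)(\Xi,\psi,\la^0,\la^1,\ldots)$. Reading off components gives exactly the two equations \eq{ln4eq8}, together with $(k-1)h^0=\dd\psi+\d\la^0$ and $(k-1)h^{i+1}=\dd\la^i+\d\la^{i+1}$ for $i\ge 0$. Setting $\ti h^0:=\tfrac{1}{k-1}\dd\psi$, the remaining relations display $\tfrac{1}{k-1}(\la^0,\la^1,\ldots)$ as an explicit homotopy inside $F^2\widetilde{C}^\bu(B^\bu)$ from $(\ti h^0,0,0,\ldots)$ to $h$, which by Definition \ref{ln2def6} is precisely the desired equivalence of isotropic structures.

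The whole proposition thus reduces to cohomological vanishing of the class $[\ga]$ in the relevant degree of $\widetilde{C}^\bu(B^\bu)$. For $B^\bu$ standard form with $B^0$ smooth over $\K$, a Hochschild--Kostant--Rosenberg type argument identifies this cohomology with the negative cyclic (equivalently, algebraic de Rham) cohomology of $\Spec H^0(B^\bu)$, with appropriate shift. The total degree of $\ga$ is $k-1<0$, which forces the vanishing after a possible Zariski shrinking of $\bSpec B^\bu$ around $q$; this is the direct analogue of the vanishing lemmas used in \cite[\S 5.6]{BBJ} for the Darboux theorem, and it is exactly here that the hypothesis $k<0$ enters crucially (compare the failure described in Theorem \ref{ln3thm3} for $k=0$).

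The main obstacle is this last step: pinning down the precise cohomology group containing $[\ga]$, justifying its vanishing after a suitable localization, and ensuring that such a localization is compatible with the chart $\bs j:\bSpec B^\bu\hookra\bs L$ and the submersion and minimality properties of $\al_+:A^\bu_+\ra B^\bu$ already arranged. One must also keep track of the Hodge filtration throughout, so that the homotopy $\tfrac{1}{k-1}(\la^0,\la^1,\ldots)$ really lives in $F^2\widetilde{C}^\bu(B^\bu)$ and thus defines a path of closed $2$-forms, not merely a path in the larger cyclic complex.
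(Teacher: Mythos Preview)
Your overall strategy is exactly that of the paper: assemble $\ga=\bigl(-\al(\Phi+\Phi_+),-\al_*(\phi+\phi_+),(k-1)h^0,(k-1)h^1,\ldots\bigr)$, observe it is $(\d+\dd)$-closed, argue its class vanishes, and read off $\Xi,\psi$ and the homotopy from a primitive. The structural part of your argument is fine, and your concern about the Hodge filtration in the last paragraph is unnecessary: the components $\la^i$ of the primitive for $i\ge 0$ lie in $\La^{2+i}\Om^1_{B^\bu}$ automatically, so the homotopy lives in $F^2$ by construction.

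There is, however, a genuine error in the vanishing step. The total degree of $\ga$ is $k+1$, not $k-1$: the leading term $-\al(\Phi+\Phi_+)$ sits in $B^{k+1}=(\La^0\Om^1_{B^\bu})^{k+1}$, and all other components are consistent with this. Consequently the relevant cohomology group is $H^{k+1}\bigl(\prod_{i\ge 0}(\La^i\bL_{B^\bu})[i],\d+\dd\bigr)\cong{\rm H}^{k+1}_{\rm inf}\bigl(H^0(B^\bu)\bigr)$, not something in degree $k-1$. For $k\le -2$ this is ${\rm H}^{<0}_{\rm inf}$, which vanishes outright without any Zariski shrinking. But for $k=-1$ it is ${\rm H}^0_{\rm inf}\bigl(H^0(B^\bu)\bigr)$, the space of locally constant functions on $(\Spec H^0(B^\bu))^{\rm red}$, and this is \emph{not} zero; shrinking the chart does not help. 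The paper closes this gap by observing that for $k=-1$ the class $[\ga]$ is identified with $-\al_*\bigl(\Phi\vert_{(\Spec H^0(A^\bu))^{\rm red}}\bigr)$, which vanishes precisely because of the normalization $\Phi\vert_{X^{\rm red}}=0$ imposed in Example~\ref{ln2ex2}. Your write-up misses this case entirely, and the miscomputed degree is what hides it.
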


\begin{proof} Combining equations \eq{ln2eq10}, \eq{ln2eq16} and \eq{ln4eq7} gives the equations
\begin{align*}
\d\bigl[-\al(\Phi)-\al(\Phi_+)\bigr]&=0\quad\text{in $B^{k+2}$,}\\
\dd\bigl[-\al(\Phi)\!-\!\al(\Phi_+)\bigr]\!+\!\d\bigl[-\al_*(\phi)\!-\!\al_*(\phi_+)\bigr]&=0\quad\text{in $(\Om^1_{B^\bu})^{k+1}$,}\\
\dd\bigl[-\al_*(\phi)-\al_*(\phi_+)\bigr]+\d\bigl[(k-1)h^0\bigr]&=0\quad\text{in $(\La^2\Om^1_{B^\bu})^k$,}\\
\dd\bigl[(k-1)h^i\bigr]+\d\bigl[(k-1)h^{i+1}\bigr]&=0\quad\text{in $(\La^{3+i}\Om^1_{B^\bu})^{k-1-i}$, $i\ge 0.$}
\end{align*}
Therefore
\e
\ga=\bigl(-\al(\Phi)-\al(\Phi_+),-\al_*(\phi)-\al_*(\phi_+),(k-1)h^0,(k-1)h^1,\ldots\bigr)
\label{ln4eq9}
\e
is a $(k+1)$-shifted closed 0-form on $\bSpec B^\bu$, that is, $\ga$ is a closed element of degree $k+1$ in the complex $\bigl(\prod_{i\ge 0}(\La^i\bL_{B^\bu})[i],\dd+\d\bigr)$.

As in Bussi, Brav and Joyce \cite[\S 5.2]{BBJ} we have isomorphisms
\e
\begin{split}
H^{k+1}\bigl(\ts\prod_{i\ge 0}(\La^i\bL_{B^\bu})[i]&,\dd+\d\bigr)\cong\HN^{k+1}(B^\bu)(0)\cong \HP^{k+1}(B^\bu)(0)\\
&\cong \HP^{k+1}(H^0(B^\bu))(0)\cong {\rm H}^{k+1}_{\rm inf}(H^0(B^\bu)),
\end{split}
\label{ln4eq10}
\e
where $\HN^*(-)$ is negative cyclic homology, $\HP^*(-)$ is periodic cyclic homology, and ${\rm H}^*_{\rm inf}(-)$ is algebraic de Rham cohomology. 

If $k<-1$ then ${\rm H}^{k+1}_{\rm inf}(H^0(B^\bu))=0$. If $k=-1$ then ${\rm H}^0_{\rm inf}(H^0(B^\bu))$ is isomorphic to the locally constant functions $(\Spec H^0(B^\bu))^\red\ra\bA^1$. This identifies the cohomology class of $\ga$ with $-\al_*(\Phi\vert_{(\Spec H^0(A^\bu))^\red})$. But as in Example \ref{ln2ex2}, when $k=-1$ we impose the additional condition that $\Phi\vert_{(\Spec H^0(A^\bu))^\red}=0$. Hence $\ga$ is exact in $\bigl(\ts\prod_{i\ge 0}(\La^i\bL_{B^\bu})[i],\dd+\d\bigr)$ for all $k<0$. So we may write
\e
\ga=(\dd+\d)\bigl(\Xi,\psi,(k-1)\de^0,(k-1)\de^1,(k-1)\de^2,\ldots\bigr),
\label{ln4eq11}
\e
where $\Xi\in B^k$, $\psi\in(\Om^1_{B^\bu})^{k-1}$ and $\de^i\in(\La^{2+i}\Om^1_{B^\bu})^{k-2-i}$ for $i=0,1,\ldots.$ Set $\ti h^0=\frac{1}{k-1}\dd\psi$. Then combining \eq{ln4eq9}--\eq{ln4eq11} proves \eq{ln4eq8} and the equations
\begin{equation*}
\d\de^0=h^0-\ti h^0,\quad \dd\de^i+\d\de^{i+1}=h^{i+1},\quad i=0,1,\ldots.
\end{equation*}
Thus $(\de^0,\de^1,\ldots)$ is a homotopy from $(\ti h^0,0,0,\ldots)$ to $h=(h^0,h^1,\ldots)$ in the complex $\bigl(\ts\prod_{i\ge 0}(\La^i\bL_{B^\bu})[i],\dd+\d\bigr)$. This completes the proof. 
\end{proof}

We replace $h=(h^0,h^1,\ldots)$ by $(\ti h^0,0,0,\ldots)$, so from now on we suppose that $h=(h^0,0,0,\ldots)$ with $h^0=\frac{1}{k-1}\dd\psi$. We continue the proof in the cases $k$ even, $k\equiv 1\mod 4$, and $k\equiv 3\mod 4$, in \S\ref{ln43}, \S\ref{ln46} and \S\ref{ln47}, respectively.

\subsection{\texorpdfstring{Choosing coordinates $\ti x^i_j,u^i_j,v^{k-1-i}_j$ on $B^\bu$ for even $k<0$}{Choosing coordinates x,u,v on B for even k<0}}
\label{ln43}

We carry on from \S\ref{ln42}, but now also assume that $k$ is even. In the notation of Example \ref{ln3ex1} we have $d=[(k+1)/2]$, $e=[k/2]$, so that $e=d$ and $k=2d=2e$. Supposing $k$ is even simplifies the proof as there is no `middle degree' $(k-1)/2$ in $B^\bu$, which would require special treatment. 

Sections \ref{ln43}--\ref{ln45} will complete the proof of Theorem \ref{ln3thm2} for even $k<0$. To save work in \S\ref{ln46}--\S\ref{ln47}, in \S\ref{ln43}--\S\ref{ln45} we give the correct signs in formulae for general $k$, so we include factors such as $(-1)^{(i+1)k}$ in \eq{ln4eq27} although $k$ is even.

From \S\ref{ln42}, we have a submersion of standard form cdgas $\al_+:A_+^\bu\ra B^\bu$ minimal at $q\in\bSpec B^\bu$, and coordinates $x^i_j\in A_+^i$ for $i=0,-1,\ldots,d$ and $j=1,\ldots,m_i$, where $(x^0_1,\ldots,x^0_{m_0})$ are \'etale coordinates on $U=\Spec A^0$, and $A^*_+$ is freely generated over $A^0$ by the $x^i_j$ in degree $i$ for $i<0$. As $\al_+^0:A^0\ra B^0$ is smooth, localizing $B^\bu$ if necessary, we may assume there exist $u^0_1,\ldots,u^0_{n_0}\in B^0$ such that $\dd\ti x^0_1,\ldots,\dd\ti x^0_{m_0},\dd u^0_1,\ldots,\dd u^0_{n_0}$ form a basis of $\Om^1_{B^0}$ over $B^0$, where we write $\ti x^0_j=\al_+^0(x^0_j)\in B^0$. Geometrically, $(x_1^0,\ldots,x_{m_0}^0)$ are \'etale coordinates on $U:=\Spec A^0$, and $(\ti x_1^0,\ldots,\ti x_{m_0}^0,u_1^0,\ldots,u_{n_0}^0)$ are \'etale coordinates on~$V:=\Spec B^0$.

Since $\al_+:A_+^\bu\ra B^\bu$ is a submersion of standard form cdgas and $A_+^*$ is freely generated over $A^0$ by the $x^i_j$ for $i=-1,-2,\ldots,d$ and $j=1,\ldots,m_i$, we see that $B^*$ is freely generated over $B^0$ by the $\ti x^i_j=\al_+(x^i_j)$ for $i=-1,-2,\ldots,d$ and $j=1,\ldots,m_i$ plus some additional variables, which lie in degrees $-1,-2,\ldots,k-1$ since $\bL_{B^\bu}$ is concentrated in degrees $[k-1,0]$ and $\al_+$ is minimal at $q$. Thus, as in \eq{ln3eq2}, we take $B^*$ to be freely generated over $B^0$ by
\e
\begin{aligned}
&\ti x_1^i,\ldots,\ti x^i_{m_i} &&\text{in degree $i$ for
$i=-1,-2,\ldots,d$, and} \\
&u_1^i,\ldots,u^i_{n_i} &&\text{in degree $i$ for
$i=-1,-2,\ldots,e$, and} \\
&v_1^{k-1-i},\ldots,v^{k-1-i}_{n_i'} &&\text{in degree $k-1-i$ for
$i=0,-1,\ldots,e$,}
\end{aligned}
\label{ln4eq12}
\e
where $\ti x^i_j=\al_+(x^i_j)$, and later we will show that $n_i'=n_i$. Note that by \eq{ln2eq8} and \eq{ln2eq14} this implies that in $B^\bu=(B^*,\d)$ we have
\e
\d\ti x^i_j=\al_+(\d x^i_j)=(-1)^{(i+1)(k+1)}\al\biggl(\frac{\pd\Phi}{\pd y^{k-i}_j}\biggr)=(-1)^{i+1}\al_+\bigl(\Phi_j^{i+1}\bigr).
\label{ln4eq13}
\e

Since $\psi\in(\bL_{B^\bu})^{k-1}$, we may write
\e
\begin{split}
\frac{1}{k-1}\psi=\ts\sum_{i=0}^d\sum_{j=1}^{m_i}a^{k-1-i}_j\,\dd\ti x^i_j&\ts+\sum_{i=0}^e\sum_{j=1}^{n_i}b^{k-1-i}_j\,\dd u^i_j\\
&\ts+\sum_{i=0}^e\sum_{j=1}^{n_i'}c^i_j\,\dd v^{k-1-i}_j,
\end{split}
\label{ln4eq14}
\e
where $a^l_j,b^l_j,c^l_j\in B^l$. For degree reasons, the $c^i_j$ depend on $B^0$ and the $\ti x^{i'}_{j'},u^{i'}_{j'}$, but do not involve the $v^{k-1-i'}_{j'}$. By leaving $h^0$ unchanged but replacing $\Xi,\psi$ by
\e
\begin{split}
\ti\Xi&=\Xi-(k-1)\d\bigl[\ts\sum_{i=0}^e\sum_{j=1}^{n_i'}(-1)^ic^i_jv^{k-1-i}_j\bigr],\\\ti\psi&=\psi-(k-1)\dd\bigl[\ts\sum_{i=0}^e\sum_{j=1}^{n_i'}(-1)^ic^i_jv^{k-1-i}_j\bigr],
\end{split}
\label{ln4eq15}
\e
we may assume that $c^i_j=0$ for all $i,j$, as $\dd c^i_j$ involves no terms in $\dd v^{k-1-i'}_{j'}$. Thus we have 
\e
\frac{1}{k-1}\psi=\ts\sum_{i=0}^d\sum_{j=1}^{m_i}a^{k-1-i}_j\,\dd\ti x^i_j+\sum_{i=0}^e\sum_{j=1}^{n_i}b^{k-1-i}_j\,\dd u^i_j,
\label{ln4eq16}
\e
so that $h^0=\frac{1}{k-1}\dd\psi$ yields
\e
h^0=\ts\sum_{i=0}^d\sum_{j=1}^{m_i}\dd a^{k-1-i}_j\,\dd\ti x^i_j+\sum_{i=0}^e\sum_{j=1}^{n_i}\dd b^{k-1-i}_j\,\dd u^i_j.
\label{ln4eq17}
\e

The next part of the argument follows \eq{ln3eq14}--\eq{ln3eq20} in Example \ref{ln3ex1}. 
By Definition \ref{ln2def6}, $h$ being a nondegenerate isotropic structure means that a certain morphism $\chi:\bT_{B^\bu/A^\bu}\ra\Om^1_{B^\bu}[k-1]$ is a quasi-isomorphism of $B^\bu$-modules, so $\chi\vert_q:\bT_{B^\bu/A^\bu}\vert_q\ra\Om^1_{B^\bu}[k-1]\vert_q$ is also a quasi-isomorphism of complexes of $\K$-vector spaces. As for \eq{ln3eq15}--\eq{ln3eq18}, as $\K$-vector spaces, the $i^{\rm th}$ graded pieces of $\Om^1_{A^\bu}\vert_p$, $\Om^1_{B^\bu}\vert_q$, $(\Om^1_{A^\bu})^\vee\vert_p$, and $(\Om^1_{A^\bu})^\vee\vert_p$ are
\ea
\begin{split}
\bigl(\Om^1_{A^\bu}\vert_p\bigr){}^i=\bigl\langle &\dd x^i_j,\; j=1,\ldots,m_i,\;\dd y^i_j,\; j=1,\ldots,m_{k-i}\bigr\rangle{}_\K,
\end{split}
\label{ln4eq18}\\
\begin{split}
\bigl(\Om^1_{B^\bu}\vert_q\bigr){}^i=\bigl\langle &\dd\ti x^i_j,\; j=1,\ldots,m_i,\;
\dd u^i_j,\; j=1,\ldots,n_i,\\ 
&\qquad \dd v^i_j,\; j=1,\ldots,n'_{k-1-i}\bigr\rangle{}_\K,
\end{split}
\label{ln4eq19}\\
\begin{split}
\bigl((\Om^1_{A^\bu})^\vee\vert_p\bigr){}^i=\bigl\langle &\ts\frac{\pd}{\pd x^{-i}_j},\; j=1,\ldots,m_{-i},\;\frac{\pd}{\pd y^{-i}_j},\; j=1,\ldots,m_{i-k}\bigr\rangle{}_\K,
\end{split}
\label{ln4eq20}\\
\begin{split}
\bigl((\Om^1_{A^\bu})^\vee\vert_p\bigr){}^i=\bigl\langle &\ts\frac{\pd}{\pd\ti x^{-i}_j},\; j=1,\ldots,m_{-i},\; \frac{\pd}{\pd u^{-i}_j},\; j=1,\ldots,n_{-i},\\ 
&\qquad \ts\frac{\pd}{\pd v^{-i}_j},\; j=1,\ldots,n'_{i+1-k}\bigr\rangle{}_\K.
\end{split}
\label{ln4eq21}
\ea

As for \eq{ln3eq19}, the next diagram shows $\chi\vert_q:\bT_{B^\bu/A^\bu}\vert_q\ra\Om^1_{B^\bu}[k-1]\vert_q$ in degrees $i,i+1$, together with $\d$ in both complexes:
\ea
\nonumber\\[-77pt]
\begin{gathered}
\xymatrix@C=90pt@R=55pt{
*+[r]{\raisebox{-100pt}{$\ts\begin{subarray}{l} \ts (\bT_{B^\bu/A^\bu}\vert_q)^i=\\
\ts \bigl\langle \frac{\pd}{\pd\ti x^{-i}_j},\;\forall j\bigr\rangle{}_\K\op
\\
\ts\bigl\langle\frac{\pd}{\pd u^{-i}_j},\frac{\pd}{\pd v^{-i}_j},\;\forall j\bigr\rangle{}_\K\\
\ts \op\bigl\langle \frac{\pd}{\pd x^{1-i}_j},\;\forall j\bigr\rangle{}_\K\\
\ts \op\bigl\langle\frac{\pd}{\pd y^{1-i}_j},\;\forall j\bigr\rangle{}_\K
\end{subarray}$}}
\ar@<-30pt>[r]_(0.4){\chi\vert_q^i=\begin{pmatrix} \st h^0\cdot & \st h^0\cdot & \st * & \st \om^0\cdot \\
\st h^0\cdot & \st h^0\cdot & \st * & \st 0  \end{pmatrix}} 
\ar[d]^{\d=\begin{pmatrix} \st * & \st 0 & \st 0 & \st 0 \\
\st * & \st 0 & \st 0 & \st 0 \\
\st \al^* & \st 0 & \st * & \st 0 \\
\st * & \st  * & \st * & \st * \end{pmatrix}}
 & *+[l]{\raisebox{-40pt}{$\ts\begin{subarray}{l} \ts (\Om^1_{B^\bu}[k-1]\vert_q)^i=\\
\ts \bigl\langle \dd\ti x^{k-1+i}_j,\;\forall j\bigr\rangle{}_\K\op
\\
\ts\bigl\langle\dd u^{k-1+i}_j,\dd v^{k-1+i}_j,\;\forall j\bigr\rangle{}_\K
\end{subarray}$}} 
\ar@<-40pt>[d]_{\d=\begin{pmatrix} \st * & \st 0  \\
\st * & \st 0  \end{pmatrix}} \\
*+[r]{\raisebox{90pt}{$\ts\begin{subarray}{l} \ts (\bT_{B^\bu/A^\bu}\vert_q)^{i+1}=\\
\ts \bigl\langle \frac{\pd}{\pd\ti x^{-i-1}_j},\;\forall j\bigr\rangle{}_\K\op
\\
\ts\bigl\langle\frac{\pd}{\pd u^{-i-1}_j},\frac{\pd}{\pd v^{-i-1}_j},\;\forall j\bigr\rangle{}_\K\\
\ts \op\bigl\langle \frac{\pd}{\pd x^{-i}_j},\;\forall j\bigr\rangle{}_\K\\
\ts \op\bigl\langle\frac{\pd}{\pd y^{-i}_j},\;\forall j\bigr\rangle{}_\K
\end{subarray}$}} 
\ar@<25pt>[r]^(0.45){\chi\vert_q^{i+1}=\begin{pmatrix} \st h^0\cdot & \st h^0\cdot & \st * & \st \om^0\cdot \\
\st h^0\cdot & \st h^0\cdot & \st * & \st 0  \end{pmatrix}}  & *+[l]{\raisebox{30pt}{$\ts\begin{subarray}{l} \ts (\Om^1_{B^\bu}[k-1]\vert_q)^{i+1}=\\
\ts \bigl\langle \dd\ti x^{k+i}_j,\;\forall j\bigr\rangle{}_\K\op
\\
\ts\bigl\langle\dd u^{k+i}_j,\dd v^{k+i}_j,\;\forall j\bigr\rangle{}_\K.
\end{subarray}$}} }\!\!\!\!\!\!\!\!\!\!\!\!\!\!\!\!\!\!\!\!\!\!\!{}
\end{gathered}
\label{ln4eq22}
\\[-70pt]
\nonumber
\ea

Here in the left hand column of \eq{ln4eq22}, the component of `$\d$' mapping $\bigl\langle\frac{\pd}{\pd u^{-i}_j},\frac{\pd}{\pd v^{-i}_j}\bigr\rangle{}_\K\ra \bigl\langle\frac{\pd}{\pd u^{-i-1}_j},\frac{\pd}{\pd v^{-i-1}_j}\bigr\rangle{}_\K$ is zero. This is because $\al_+:A_+^\bu\ra B^\bu$ is a submersion minimal at $q$, so in the additional variables $u^i_j,v^{k-1-i}_j$ in $B^\bu$ the differential vanishes at $q$. Similarly, in the right hand column, the component of `$\d$' mapping $\bigl\langle\dd u^{k-1+i}_j,\dd v^{k-1+i}_j\bigr\rangle{}_\K\ra \bigl\langle\dd u^{k+i}_j,\dd v^{k+i}_j,\;\forall j\bigr\rangle{}_\K$ is zero.

The argument in Example \ref{ln3ex1} involving the subcomplex $C^\bu$ works again in this case. Thus in \eq{ln4eq22} we may replace the right hand column by the subcomplex $C^\bu$, giving a simpler diagram in which the rows are a quasi-isomorphism:
\ea
\begin{gathered}
\xymatrix@C=135pt@R=30pt{
*+[r]{\begin{subarray}{l} \ts\bigl\langle\frac{\pd}{\pd u^{-i}_j},\frac{\pd}{\pd v^{-i}_j},\;\forall j\bigr\rangle{}_\K\\
\ts \op\bigl\langle\frac{\pd}{\pd y^{1-i}_j},\;\forall j\bigr\rangle{}_\K
\end{subarray}}
\ar[r]_(0.35){\chi\vert_q^i=\begin{pmatrix} \st h^0\cdot & \st \om^0\cdot \\
\st h^0\cdot & \st 0  \end{pmatrix}} 
\ar[d]^{\d=\begin{pmatrix} \st 0 & \st 0 \\ \st  * &  \st * \end{pmatrix}}
 & *+[l]{\begin{subarray}{l} 
\ts \bigl\langle \dd\ti x^{k-1+i}_j,\;\forall j\bigr\rangle{}_\K\op \\
\ts\bigl\langle\dd u^{k-1+i}_j,\dd v^{k-1+i}_j,\;\forall j\bigr\rangle{}_\K
\end{subarray}} 
\ar[d]_{\d=\begin{pmatrix} \st * & \st 0  \\
\st * & \st 0  \end{pmatrix}} \\
*+[r]{\begin{subarray}{l} 
\ts\bigl\langle\frac{\pd}{\pd u^{-i-1}_j},\frac{\pd}{\pd v^{-i-1}_j},\;\forall j\bigr\rangle{}_\K\\
\ts \op\bigl\langle\frac{\pd}{\pd y^{-i}_j},\;\forall j\bigr\rangle{}_\K
\end{subarray}} 
\ar[r]^(0.46){\chi\vert_q^{i+1}=\begin{pmatrix} \st h^0\cdot & \st \om^0\cdot \\
\st h^0\cdot & \st 0  \end{pmatrix}}  & *+[l]{\begin{subarray}{l} 
\ts \bigl\langle \dd\ti x^{k+i}_j,\;\forall j\bigr\rangle{}_\K\op
\\
\ts\bigl\langle\dd u^{k+i}_j,\dd v^{k+i}_j,\;\forall j\bigr\rangle{}_\K.
\end{subarray}} }\!\!\!\!\!\!{}
\end{gathered}
\label{ln4eq23}
\ea
In the top row of \eq{ln4eq23}, the morphism $\om^0\cdot:\bigl\langle\frac{\pd}{\pd y^{1-i}_j}\bigr\rangle{}_\K\ra \bigl\langle \dd\ti x^{k-1+i}_j\bigr\rangle{}_\K$ is an isomorphism. Using this, we see that $\chi\vert_q$ is a quasi-isomorphism if and only if the morphism $h^0\cdot:\bigl\langle\frac{\pd}{\pd u^{-i}_j},\frac{\pd}{\pd v^{-i}_j}\bigr\rangle{}_\K\ra \bigl\langle\dd u^{k-1+i}_j,\dd v^{k-1+i}_j\bigr\rangle{}_\K$ in $\chi\vert_q^i$ is an isomorphism for all $i\in\Z$. 

From \eq{ln4eq17}, $h^0\cdot:\bigl\langle\frac{\pd}{\pd u^{-i}_j},\frac{\pd}{\pd v^{-i}_j}\bigr\rangle{}_\K\ra \bigl\langle\dd u^{k-1+i}_j,\dd v^{k-1+i}_j\bigr\rangle{}_\K$ acts by
\begin{align*}
&h^0\cdot:\frac{\pd}{\pd u^{-i}_j}\longmapsto (-1)^{k(i+1)}\sum_{j'=1}^{n'_{-i}}\frac{\pd b_j^{k-1+i}}{\pd v^{k-1+i}_{j'}}\bigg\vert_q\,\dd v^{k-1+i}_{j'},\\
&h^0\cdot:\frac{\pd}{\pd v^{-i}_j}\longmapsto \sum_{j'=1}^{n_{k-1+i}}\frac{\pd b_{j'}^{-i}}{\pd v^{-i}_j}\bigg\vert_q\,\dd u^{k-1+i}_{j'}.
\end{align*}
Therefore we see that $h$ nondegenerate at $q$ is equivalent to $n_i'=n_i$ and the following being an invertible matrix over $\K$ for all~$i=0,-1,\ldots,e$:
\e
\biggl(\frac{\pd b_j^{k-1-i}}{\pd v^{k-1-i}_{j'}}\bigg\vert_q\biggr)_{j,j'=1}^{n_i}.
\label{ln4eq24}
\e

Set $\hat v_j^{k-1-i}=(-1)^{(i+1)k}b^{k-i-i}_j$ for all $i=0,-1,\ldots,e$ and $j=1,\ldots,n_i$. The invertibility of \eq{ln4eq24} implies that near $q\in V$ we can invert this to write the $v^{k-1-i'}_{j'}$ as linear functions of the $\hat v_j^{k-1-i}$ with coefficients in $B^0[\ti x^{i''}_{j''},u^{i''}_{j''}]$. Localizing $B^\bu$ we can suppose this invertibility holds globally on $V=\Spec B^0$, so that $\ti x^i_j,u^i_j,\hat v^i_j$ is an alternative set of coordinates for $B^\bu$. Thus, replacing the $v^i_j$ by the $\hat v^i_j$, by \eq{ln4eq16}--\eq{ln4eq17} we can take $\psi$ and $h^0$ to be of the form
\ea
\psi&=\ts(k-1)\biggl[\sum\limits_{i=0}^d\sum\limits_{j=1}^{m_i}a^{k-1-i}_j\,\dd\ti x^i_j+\sum\limits_{i=0}^e\sum\limits_{j=1}^{n_i}(-1)^{(i+1)k}v^{k-1-i}_j\,\dd u^i_j\biggr],
\label{ln4eq25}\\
h^0&=\ts\sum\limits_{i=0}^d\sum\limits_{j=1}^{m_i}\dd a^{k-1-i}_j\,\dd\ti x^i_j+\sum\limits_{i=0}^e\sum\limits_{j=1}^{n_i}\dd u^i_j\,\dd v^{k-1-i}_j.
\label{ln4eq26}
\ea

Leaving $h^0$ unchanged, but by replacing $\Xi,\psi$ by
\begin{align*}
\ti\Xi&=\Xi+\d\biggl[\ts\sum\limits_{i=0}^d\sum\limits_{j=1}^{m_i}(-1)^{k-i}ia^{k-1-i}_j\ti x^i_j+\sum\limits_{i=0}^e\sum\limits_{j=1}^{n_i}(-1)^iiu^i_jv^{k-1-i}_j\biggr],\\
\ti\psi&=\psi+\dd\biggl[\ts\sum\limits_{i=0}^d\sum\limits_{j=1}^{m_i}(-1)^{k-i}ia^{k-1-i}_j\ti x^i_j+\sum\limits_{i=0}^e\sum\limits_{j=1}^{n_i}(-1)^iiu^i_jv^{k-1-i}_j\biggr],
\end{align*}
we have 
\e
\begin{split}
\psi&=\sum_{i=0}^d\sum_{j=1}^{m_i}\bigl[(k-1-i)a^{k-1-i}_j\,\dd\ti x^i_j+
(-1)^{(i+1)k}i\ti x^i_j\dd a_j^{k-1-i}\bigr]\\
&+\sum_{i=0}^e\sum_{j=1}^{n_i}\bigl[i\,u^i_j\,\dd v^{k-1-i}_j +(-1)^{(i+1)k}(k-1-i)v^{k-1-i}_j\,\dd u^i_j \bigr].
\end{split}
\label{ln4eq27}
\e

\subsection{\texorpdfstring{Determining the equations for even $k<0$}{Determining the equations for even k<0}}
\label{ln44}

We continue in the situation of \S\ref{ln43}. Using \eq{ln4eq13}, define $\Psi\in B^k$ by
\e
\Psi=\Xi-\sum_{i=0}^d\sum_{j=1}^{m_i}a_j^{k-1-i}\d\ti x^i_j=\Xi-\sum_{i=0}^d\sum_{j=1}^{m_i}(-1)^{i+1}a_j^{k-1-i}\al_+\bigl(\Phi_j^{i+1}\bigr).
\label{ln4eq28}
\e
Writing things in terms of $\Psi$ rather than $\Xi$ will make many of the following formulae simpler. This section will compute the p.d.e.\ satisfied by $\Psi$, and expressions for $\d$ in $B^\bu=(B^*,\d)$, and for~$\al:A^\bu\ra B^\bu$. 

By \eq{ln4eq8} we have $\dd\Xi+\d\psi=-\al_*(\phi+\phi_+)$. Expanding this equation using \eq{ln2eq9}, \eq{ln2eq15}, \eq{ln4eq27}, \eq{ln4eq28}, and $\ti x^i_j=\al(x^i_j)$ yields
\begin{small}
\ea
&\sum_{i=0}^d\sum_{j=1}^{m_i}(-1)^{(i+1)k}\frac{\pd\Psi}{\pd\ti x_j^i}\,\dd\ti x_j^i+\sum_{i=0}^e\sum_{j=1}^{n_i}(-1)^{(i+1)k}\biggl[\frac{\pd\Psi}{\pd u_j^i}\,\dd u_j^i+\frac{\pd\Psi}{\pd v_j^{k-1-i}}\,\dd v_j^{k-1-i}\biggr]
\nonumber\\
&+\sum_{i=0}^d\sum_{j=1}^{m_i}\begin{aligned}[t]\bigl[&(k-1-i)\d a^{k-1-i}_j\,\dd\ti x^i_j+(-1)^{k-i}(k-2-i)a^{k-1-i}_j\dd\ci\d\ti x^i_j\\
&+(-1)^{(i+1)k}(i+1)\d\ti x^i_j\dd a_j^{k-1-i}+(-1)^{(i+1)(k+1)}i\ti x^i_j\dd\ci\d a_j^{k-1-i}\bigr)\bigr]
\end{aligned}
\nonumber\\
&+\sum_{i=0}^e\sum_{j=1}^{n_i}\begin{aligned}[t]\bigl[&i\bigl(\d u^i_j\,\dd v^{k-1-i}_j-(-1)^iu^i_j\,\dd\ci\d v^{k-1-i}_j\bigr)\\
&+(-1)^{(i+1)k}(k-1-i)\bigl(\d v^{k-1-i}_j\,\dd u^i_j-(-1)^{k-1-i}v^{k-1-i}_j\,\dd\ci\d u^i_j\bigr)\bigr]\end{aligned}
\nonumber\\
&=-\sum_{i=0}^d\sum_{j=1}^{m_i}\bigl[i\,\ti x^i_j\,\dd\ci \al(y^{k-i}_j)+(-1)^{(i+1)(k+1)}(k-1-i)\al(y^{k-i}_j)\,\dd\ti x^i_j\bigr].
\label{ln4eq29}
\ea
\end{small}

Rewriting terms of the form $\dd\ci\d(\cdots)$ using equations such as
\begin{align*}
&\dd\ci\d u^i_j=\sum_{i'=0}^d\sum_{j'=1}^{m_{i'}}(-1)^{(i+1)(i'+1)}\frac{\pd(\d u^i_j)}{\pd\ti x_{j'}^{i'}}\,\dd \ti x_{j'}^{i'}\\
&+\!\sum_{i'=0}^e\sum_{j'=1}^{n_{i'}}\biggl[(-1)^{(i+1)(i'+1)}\frac{\pd(\d u^i_j)}{\pd u_{j'}^{i'}}\,\dd  u_{j'}^{i'}\!+\!(-1)^{(i+1)(k-i')}\frac{\pd(\d u^i_j)}{\pd v_{j'}^{k-1-i'}}\,\dd  v_{j'}^{k-1-i'}\biggr],
\end{align*}
and noting that $\frac{\pd}{\pd u^i_j}(\d\ti x^{i'}_{j'})=\frac{\pd}{\pd v^{k-1-i}_j}(\d\ti x^{i'}_{j'})=0$ as $\d\ti x^i_j=\al(\d x^i_j)$ for $\d x^i_j\in A_+^\bu$, we can express \eq{ln4eq29} solely in terms of multiples of $\dd \ti x_j^i,\dd u_j^i$ and $\dd v_j^{k-1-i}$. Then taking coefficients of these, multiplying by $(-1)^{(i+1)k}$, and rearranging, gives three equations:
\ea
&(-1)^i(k-1)\al(y^{k-i}_j)-(-1)^{(i+1)k}(k-1)\d a^{k-1-i}_j
\nonumber\\
&+(k-1)\sum_{i'=0}^d\sum_{j'=1}^{m_{i'}}(-1)^{i(k-1-i')}a_{j'}^{k-1-i'}\frac{\pd(\d\ti x^{i'}_{j'})}{\pd\ti x^i_j}
\nonumber\\
&=\frac{\pd}{\pd\ti x_j^i}\biggl[\Psi+\sum_{i'=0}^d\sum_{j'=1}^{m_{i'}}\begin{aligned}[t]& \bigl[(-1)^{i'}i'\,\ti x^{i'}_{j'}\al(y^{k-i'}_{j'})+(i'+1)a_{j'}^{k-1-i'}\d\ti x^{i'}_{j'}\\
&+(-1)^{k-1-i'}i'\d a_{j'}^{k-1-i'}\ti x^{i'}_{j'}\bigr]
\end{aligned}
\nonumber\\
&-\sum_{i'=0}^e\sum_{j'=1}^{n_{i'}}\bigl(i'u^{i'}_{j'}\d v^{k-1-i'}_{j'}+(-1)^{(i'+1)k}(k-1-i')v^{k-1-i'}_{j'}\d u^{i'}_{j'}\bigr)\biggr],
\label{ln4eq30}
\allowdisplaybreaks\\
&-(k-1)\d v^{k-1-i}_j
\nonumber\\
&=\frac{\pd}{\pd u_j^i}\biggl[\Psi+\sum_{i'=0}^d\sum_{j'=1}^{m_{i'}}\begin{aligned}[t]& \bigl[(-1)^{i'}i'\,\ti x^{i'}_{j'}\al(y^{k-i'}_{j'})+(i'+1)a_{j'}^{k-1-i'}\d\ti x^{i'}_{j'}\\
&+(-1)^{k-1-i'}i'\d a_{j'}^{k-1-i'}\ti x^{i'}_{j'}\bigr]
\end{aligned}
\nonumber\\
&-\sum_{i'=0}^e\sum_{j'=1}^{n_{i'}}\bigl(i'u^{i'}_{j'}\d v^{k-1-i'}_{j'}+(-1)^{(i'+1)k}(k-1-i')v^{k-1-i'}_{j'}\d u^{i'}_{j'}\bigr)\biggr],
\label{ln4eq31}
\allowdisplaybreaks\\
&-(-1)^{(i+1)k}(k-1)\d u^i_j
\nonumber\\
&=\frac{\pd}{\pd v_j^{k-1-i}}\biggl[\Psi+\sum_{i'=0}^d\sum_{j'=1}^{m_{i'}}\begin{aligned}[t]& \bigl[(-1)^{i'}i'\,\ti x^{i'}_{j'}\al(y^{k-i'}_{j'})+(i'+1)a_{j'}^{k-1-i'}\d\ti x^{i'}_{j'}\\
&+(-1)^{k-1-i'}i'\d a_{j'}^{k-1-i'}\ti x^{i'}_{j'}\bigr]
\end{aligned}
\nonumber\\
&-\sum_{i'=0}^e\sum_{j'=1}^{n_{i'}}\bigl(i'u^{i'}_{j'}\d v^{k-1-i'}_{j'}+(-1)^{(i'+1)k}(k-1-i')v^{k-1-i'}_{j'}\d u^{i'}_{j'}\bigr)\biggr],
\label{ln4eq32}
\ea
where \eq{ln4eq30} holds for all $i=0,\ldots,d$ and $j=1,\ldots,m_i$, and \eq{ln4eq31}--\eq{ln4eq32} hold for all $i=0,\ldots,e$ and $j=1,\ldots,n_i$.

Writing $F$ for the function $[\cdots]$ on the r.h.s.\ of \eq{ln4eq30}--\eq{ln4eq32}, we have
\begin{equation*}
kF=\sum_{i=0}^d\sum_{j=1}^{m_i}i\ti x^i_j\frac{\pd F}{\pd\ti x^i_j}+\sum_{i=0}^e\sum_{j=1}^{n_i}\biggl[iu^i_j\frac{\pd F}{\pd u^i_j}+(k-1-i)v^{k-1-i}_j\frac{\pd F}{\pd v^{k-1-i}_j}\biggr],
\end{equation*}
since $F$ has degree $k$. Thus, multiplying \eq{ln4eq30} by $i\ti x^i_j$, and \eq{ln4eq31} by $iu^i_j$, and \eq{ln4eq32} by $(k-1-i)v^{k-1-i}_j$, and summing all three over all $i,j$, yields
\ea
&k\biggl[\Psi+\sum_{i=0}^d\sum_{j=1}^{m_i}\begin{aligned}[t]& \bigl[(-1)^ii\,\ti x^i_j\al(y^{k-i}_j)+(i+1)a_j^{k-1-i}\d\ti x^i_j\\
&+(-1)^{k-1-i}i\d(a_j^{k-1-i})\ti x^i_j\bigr]
\end{aligned}
\nonumber\\
&-\sum_{i=0}^e\sum_{j=1}^{n_i}\bigl(iu^i_j\d v^{k-1-i}_j+(-1)^{(i+1)k}(k-1-i)v^{k-1-i}_j\d u^i_j\bigr)\biggr]
\nonumber\\
&=(k-1)\sum_{i=0}^d\sum_{j=1}^{m_i}\begin{aligned}[t]&\bigl[(-1)^ii\ti x^i_j\al(y^{k-i}_j)+(i+1)a^{k-1-i}_j\d\ti x^i_j\\
&+(-1)^{k-1-i}i\d(a^{k-1-i}_j)\ti x^i_j\bigr]
\end{aligned}
\nonumber\\
&-(k-1)\sum_{i=0}^e\sum_{j=1}^{n_i}\bigl[
iu^i_j\d v^{k-1-i}_j+(-1)^{(i+1)k}(k-1-i)v^{k-1-i}_j\d u^i_j\bigr].
\label{ln4eq33}
\ea
Here the second term on the r.h.s.\ of \eq{ln4eq33} comes from the third term on the l.h.s.\ of \eq{ln4eq30} using $\sum_{i,j}i\ti x^i_j\frac{\pd(\d\ti x^{i'}_{j'})}{\pd\ti x^i_j}=(i'+1)\d\ti x^{i'}_{j'}$, as $\frac{\pd(\d\ti x^{i'}_{j'})}{\pd u^i_j}=\frac{\pd(\d\ti x^{i'}_{j'})}{\pd v^{k-1-i}_j}=0$.

Rearranging \eq{ln4eq33} shows that $F=-(k-1)\Psi$. Substituting this into \eq{ln4eq30}--\eq{ln4eq32}, dividing by $1-k$, rearranging, and using \eq{ln4eq13} in \eq{ln4eq34} yields
\ea
\begin{split}
\al(y^{k-i}_j)
&=(-1)^{i+1}\frac{\pd\Psi}{\pd\ti x_j^i}-(-1)^{(i+1)(k+1)}\d a^{k-1-i}_j\\
&\qquad\qquad -\sum_{i'=0}^d\sum_{j'=1}^{m_{i'}}(-1)^{i+(i'+1)k}\al_+\biggl(\frac{\pd\Phi_{j'}^{i'+1}}{\pd x^i_j}\biggr)a_{j'}^{k-1-i'},
\end{split}
\label{ln4eq34}\\
\d v^{k-1-i}_j&=\frac{\pd\Psi}{\pd u_j^i},
\label{ln4eq35}\\
\d u^i_j&=(-1)^{(i+1)k}\frac{\pd\Psi}{\pd v_j^{k-1-i}}.
\label{ln4eq36}
\ea

Using equations \eq{ln2eq14}, \eq{ln4eq13}, \eq{ln4eq28}, \eq{ln4eq35} and \eq{ln4eq36} we see that
\ea
&\d\Xi\!=\!\!\sum_{i=-1}^d\sum_{j=1}^{m_i}\biggl(\d\ti x_j^i\frac{\pd\Psi}{\pd\ti x_j^i}\!+\!\d a_j^{k-1-i}\d\ti x^i_j\biggr)\!+\!\!\sum_{i=-1}^e\sum_{j=1}^{n_i}\biggl(\d u_j^i\frac{\pd\Psi}{\pd u_j^i}\!+\!\d v_j^{k-1-i}\frac{\pd\Psi}{\pd v_j^{k-1-i}}\biggr)
\nonumber\\
&=\sum_{i=-1}^d\sum_{j=1}^{m_i}\al_+\bigl(\Phi_j^{i+1}\bigr)\biggl((-1)^{i+1}\frac{\pd\Psi}{\pd\ti x_j^i}+(-1)^{(i+1)(k+1)}\d a_j^{k-1-i}\biggr)
\label{ln4eq37}\\
&\qquad +2\sum_{i=-1}^e\sum_{j=1}^{n_i}\frac{\pd\Psi}{\pd u_j^i}\frac{\pd\Psi}{\pd v_j^{k-1-i}}.
\nonumber
\ea
Also we have
\ea
&\al(\Phi+\Phi_+)=2\al_+(\Phi_+)+\sum_{i=-1}^d\sum_{j=1}^{m_i}\al_+\bigl(\Phi_j^{i+1}\bigr)\al\bigl(y_j^{k-i}\bigr)
\label{ln4eq38}\\
&=2\al_+(\Phi_+)+
\sum_{i=-1}^d\sum_{j=1}^{m_i}\al_+\bigl(\Phi_j^{i+1}\bigr)\biggl[(-1)^{i+1}\frac{\pd\Psi}{\pd\ti x_j^i}-(-1)^{(i+1)(k+1)}\d a^{k-1-i}_j\biggr]
\nonumber
\ea
Here in the first step we use \eq{ln2eq11}, and in the second we use \eq{ln4eq34} to substitute for $\al(y^{k-i}_j)$, and note that the terms coming from the second line of \eq{ln4eq34} are zero by \eq{ln2eq13}. Since $\d\Xi=-\al(\Phi+\Phi_+)$ by \eq{ln4eq8}, equations \eq{ln4eq37}--\eq{ln4eq38} give
\e
\sum_{i=-1}^e\sum_{j=1}^{n_i}\frac{\pd\Psi}{\pd u_j^i}\frac{\pd\Psi}{\pd v_j^{k-1-i}}+\al_+(\Phi_+)+\sum_{i=-1}^d\sum_{j=1}^{m_i}(-1)^{i+1}\al_+\bigl(\Phi_j^{i+1}\bigr)\,\frac{\pd\Psi}{\pd\ti x_j^i}=0,
\label{ln4eq39}
\e
which is equation \eq{ln3eq4} in Example~\ref{ln3ex1}.

\subsection[\texorpdfstring{Improving $\al:A^\bu\ra B^\bu$ and completing the proof for even $k<0$}{Improving a : A --> B and completing the proof for even k<0}]{Improving $\al$ and completing the proof for even $k<0$}
\label{ln45}

We continue in the situation of \S\ref{ln43}--\S\ref{ln44}. Observe that the expressions \eq{ln4eq13}, \eq{ln4eq35}, \eq{ln4eq36} for $\d$ in $B^\bu=(B^*,\d)$, and the p.d.e.\ \eq{ln4eq39}, all depend on the coordinates $\ti x^i_j,u^i_j,v^{k-1-i}_j$ and functions $\Psi,\al_+(\Phi_+),\al_+(\Phi_j^{i+1})$ in $B^\bu$, but are independent of the $a^{k-i-1}_j$, although the expressions \eq{ln4eq34} for $\al:A^\bu\ra B^\bu$, and \eq{ln4eq26}, \eq{ln4eq27} for $h^0\in(\La^2\Om^1_{B^\bu})^{k-1}$ and $\psi\in(\Om^1_{B^\bu})^{k-1}$ do depend on the~$a^{k-1-i}_j$.

Now $\al:A^\bu\ra B^\bu$ was chosen in \S\ref{ln42} to make \eq{ln4eq6} homotopy commute, and thus $\al$ is unique only up to homotopy in $\cdga^\iy_\K$, subject to the condition $\al\ci\io=\al_+$. We will now show that keeping $B^\bu,\al_+,\ti x^i_j,u^i_j,v^{k-1-i}_j,\Psi$ fixed, we can replace $\al$ by a homotopic morphism $\hat\al:A^\bu\ra B^\bu$ with $\hat\al\ci\io=\al_+$, and $h^0,\psi$ by homotopic $\hat h^0,\hat\psi$, so as to make the $a^{k-1-i}_j$ zero.

Define a morphism of graded $\K$-algebras $\hat\al:A^*\ra B^*$ by $\hat\al\vert_{A^*_+}=\al_+$ and 
\e
\hat\al(y^{k-i}_j)=(-1)^{i+1}\frac{\pd\Psi}{\pd\ti x_j^i}
\label{ln4eq40}
\e
for $i=0,-1,\ldots,d$ and $j=1,\ldots,m_i$, as in \eq{ln3eq5}, which would be \eq{ln4eq34} if we had $a^{k-1-i}_j=0$ for all $i,j$. This is well-defined as $A^*$ is freely generated over $A^*_+$ by the $y^{k-i}_j$. The proof in equation \eq{ln3eq11} in Example \ref{ln3ex1} shows that $\hat\al:A^\bu\ra B^\bu$ is a cdga morphism. Define $\hat h^0\in(\La^2\Om^1_{B^\bu})^{k-1}$, $\hat\psi\in(\Om^1_{B^\bu})^{k-1}$ by
\ea
\hat h^0&=\ts\sum\limits_{i=0}^e\sum\limits_{j=1}^{n_i}\dd u^i_j\,\dd v^{k-1-i}_j,
\label{ln4eq41}\\
\hat\psi&=\sum_{i=0}^e\sum_{j=1}^{n_i}\bigl[i\,u^i_j\,\dd v^{k-1-i}_j +(-1)^{(i+1)k}(k-1-i)v^{k-1-i}_j\,\dd u^i_j\bigr],
\label{ln4eq42}
\ea
as in \eq{ln3eq12} and \eq{ln3eq21}, which would be $h^0,\psi$ in \eq{ln4eq26}--\eq{ln4eq27} if we had $a^{k-1-i}_j=0$ for all $i,j$. The proofs of \eq{ln3eq22}--\eq{ln3eq24} in Example \ref{ln3ex1} show that
\begin{equation*}
\d\Psi=-\hat\al(\Phi+\Phi_+),\;\> \dd\Psi+\d\hat\psi=-\hat\al_*(\phi+\phi_+),\;\> (k-1)\hat h^0=\dd\hat\psi,
\end{equation*}
as in Proposition \ref{ln4prop1} with $\Psi,\hat\psi,\hat h^0$ in place of $\Xi,\psi,\ti h^0$. Also Example \ref{ln3ex1} shows that $\hat h^0$ is a Lagrangian structure for $\hat\al:A^\bu\ra B^\bu$ and $\om$.

\begin{prop} 
\label{ln4prop2}
In the situation above, $\al,\hat\al:A^\bu\ra B^\bu$ are homotopic cdga morphisms, and under this homotopy, the Lagrangian structures $(h^0,0,\ldots)$ for $\al:A^\bu\ra B^\bu,$ $\om$ and\/ $(\hat h^0,0,\ldots)$ for $\hat\al:A^\bu\ra B^\bu,$ $\om$ are also homotopic.
\end{prop}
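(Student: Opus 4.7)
Since $\al$ and $\hat\al$ coincide on $A^\bu_+$ and $A^*$ is freely generated over $A^*_+$ by the $y^{k-i}_j$, any cdga homotopy between them extending the trivial homotopy on $A^\bu_+$ is determined by its values on the generators. The plan is to construct $H:A^\bu\to B^\bu\ot\Om^\bu(\De^1)$ in $\cdga_\K$, with $\Om^\bu(\De^1)=\K[s,ds]$ ($|s|=0$, $|ds|=1$, $\d s=ds$), so that $H\vert_{A^\bu_+}=\al_+\ot 1$, $H\vert_{s=0}=\al$, $H\vert_{s=1}=\hat\al$, by setting
\begin{equation*}
H(y^{k-i}_j)=(1-s)\,\al(y^{k-i}_j)+s\,\hat\al(y^{k-i}_j)+\sigma_i\,a^{k-1-i}_j\cdot ds
\end{equation*}
on generators for suitable signs $\sigma_i\in\{\pm 1\}$, and extending as a graded algebra morphism.

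To verify $H$ is a cdga morphism it suffices to check $\d H(y^{k-i}_j)=H(\d y^{k-i}_j)$ on each generator. Since $\d y^{k-i}_j$ is \emph{affine} in the $y$-generators by \eq{ln2eq14}, the $B^\bu[s]$-component $H_B$ of $H$ is itself a cdga morphism: on generators $\d_B H_B(y^{k-i}_j)=(1-s)\al(\d y^{k-i}_j)+s\hat\al(\d y^{k-i}_j)=H_B(\d y^{k-i}_j)$, the last equality using precisely affineness of $\d y^{k-i}_j$ in the $y$'s (so linear interpolation of the $y$-values passes through $\d$ without spurious quadratic cross-terms). The remaining $ds$-check equates $(\hat\al-\al)(y^{k-i}_j)+\sigma_i\d a^{k-1-i}_j$ with $\sum_{i',j'}\sigma_{i'}\al_+(\pd\Phi_{j'}^{i'+1}/\pd x^i_j)\,a_{j'}^{k-1-i'}$; substituting the explicit formula \eq{ln4eq34} for $\al(y^{k-i}_j)-\hat\al(y^{k-i}_j)$ reduces this to a combinatorial sign identity that fixes each $\sigma_i$ consistently.

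For the homotopy of Lagrangian structures, I will lift $H$ to a compatible homotopy in the closed 2-form complex. Using the primitives $\Xi,\psi$ from Proposition \ref{ln4prop1} for $\al$ and $\Psi,\hat\psi$ from Example \ref{ln3ex1} for $\hat\al$, the plan is to set
\begin{align*}
\Psi_H&=(1-s)\Xi+s\Psi+\tau\sum\nolimits_{i,j}a^{k-1-i}_j\ti x^i_j\cdot ds,\\
\psi_H&=(1-s)\psi+s\hat\psi+\tau'\sum\nolimits_{i,j}a^{k-1-i}_j\dd\ti x^i_j\cdot ds
\end{align*}
for appropriate signs $\tau,\tau'$, and take $(h_H,0,0,\ldots)$ with $h_H=\tfrac{1}{k-1}\dd\psi_H$ as the candidate homotopy of isotropic structures along $H$. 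The identities $\d\Psi_H=-H(\Phi+\Phi_+)$ and $\dd\Psi_H+\d\psi_H=-H_*(\phi+\phi_+)$ reduce, via \eq{ln4eq28} relating $\Xi$ and $\Psi$ together with the endpoint equations \eq{ln3eq22}--\eq{ln3eq24} and their hatted analogues, to further sign-matching in the $ds$-component parallel to the morphism case. Restriction at $s=0,1$ recovers $(h^0,0,\ldots)$ and $(\hat h^0,0,\ldots)$, and nondegeneracy at both endpoints is given by construction in \S\ref{ln43}--\S\ref{ln45}.

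The main technical obstacle is the coordinated sign bookkeeping: the signs $\sigma_i,\tau,\tau'$ are forced to consistent values by the four compatibility equations (morphism homotopy, the two primitive conditions, and the closed 2-form condition), and verifying that such consistent values exist amounts to a chain of identities drawn from the classical master equations \eq{ln2eq12}--\eq{ln2eq13}, the structural relations \eq{ln3eq22}--\eq{ln3eq24}, Proposition \ref{ln4prop1}, and \eq{ln4eq28}. No new analytic or homological input is required beyond what was established in \S\ref{ln42}--\S\ref{ln45}, and the argument closely parallels the one for the absolute Darboux Theorem in \cite[\S 5.6]{BBJ}.
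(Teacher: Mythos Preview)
Your proposal is correct and follows essentially the same route as the paper: both construct the homotopy by linear interpolation $H(y^{k-i}_j)=(1-s)\,\al(y^{k-i}_j)+s\,\hat\al(y^{k-i}_j)+(\text{sign})\,a^{k-1-i}_j\,t$ (with the paper using $t$ for your $ds$, and with the endpoints reversed), and both verify the isotropic structures are homotopic via an explicit family of $2$-forms restricting to $h^0$ and $\hat h^0$.

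Two remarks on the comparison. First, your observation that $\d y^{k-i}_j$ is \emph{affine} in the $y$-generators (by \eq{ln2eq14}) is a clean conceptual shortcut: it makes the $B^\bu[s]$-component check immediate, whereas the paper carries out the full computation \eq{ln4eq46}--\eq{ln4eq48} and then invokes the classical master equation \eq{ln2eq13} to see the residual terms cancel. Your argument packages the same content more efficiently. Second, for the Lagrangian-structure homotopy the paper takes the more direct route of writing down $\dot h^0=\sum_{i,j}\dd(sa^{k-1-i}_j)\,\dd\ti x^i_j+\sum_{i,j}\dd u^i_j\,\dd v^{k-1-i}_j$ and verifying $\d\dot h^0=H_*(\om^0)$ by hand, rather than going through primitives $\Psi_H,\psi_H$ as you do; your approach is valid (the identity $\d h_H=H_*(\om^0)$ follows from $\dd\Psi_H+\d\psi_H=-H_*(\phi+\phi_+)$ by applying $\dd$ exactly as in the derivation of \eq{ln3eq13}), but introduces one extra layer of bookkeeping that the paper avoids. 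Either way, once the signs $\sigma_i,\tau,\tau'$ are pinned down --- and the paper's explicit formula \eq{ln4eq45} shows they can be --- the verification goes through.
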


\begin{proof} By definition, a homotopy $H$ from $\hat\al$ to $\al$ is a cdga morphism
\begin{equation*}
H:A^\bu\longra B^\bu\ot_\K\K[s,t],
\end{equation*}
where $\K[s,t]$ is the cdga over $\K$ in {\it nonnegative\/} degrees which as a graded $\K$-algebra is freely generated by variables $s$ in degree 0 and $t$ in degree 1 with differential given by $\d s=t$, $\d t=0$, such that the following commutes:
\e
\begin{gathered}
\xymatrix@C=100pt@R=15pt{ & A^\bu \ar@/_.5pc/[dl]_{\hat\al} \ar[d]^(0.4)H \ar@/^.5pc/[dr]^\al \\
*+[r]{B^\bu} & B^\bu\ot_\K\K[s,t] \ar[l]_(0.4){s=0,\;\; t=0} \ar[r]^(0.4){s=1,\;\; t=0} & *+[l]{B^\bu,} }
\end{gathered}
\label{ln4eq43}
\e
where the bottom morphisms are evaluation at $s=t=0$ and at $s=1$, $t=0$. These are 1-simplices in the simplicial model category $\cdga_\K$, as explained in Remark~\ref{ln2rem1}.

Note that as in \S\ref{ln21}, all cdgas $C^\bu$ considered so far have been concentrated in nonpositive degrees, so that $C^i=0$ for $i>0$. Here, however, $\K[s,t]$ lives in degrees $0,1$ (since $t^2=0$), and $B^\bu\ot_\K\K[s,t]$ in degrees $1,0,-1,-2,\ldots.$ So $\bSpec\K[s,t],\bSpec\bigl(B^\bu\ot_\K\K[s,t]\bigr)$ {\it do not exist\/} as derived schemes in the usual sense. Nonetheless, this is a good definition of homotopy of cdga morphisms.

Heuristically, we can pretend ``$\bSpec\K[s,t]\cong[0,1]$'' is an interval, so that ``$\bSpec\bigl(B^\bu\ot_\K\K[s,t]\bigr)\cong(\bSpec B^\bu)\t[0,1]$'', and \eq{ln4eq43} corresponds to a diagram
\e
\begin{gathered}
\xymatrix@C=70pt@R=15pt{ & \bSpec A^\bu   \\
*+[r]{\bSpec B^\bu} \ar[r]^(0.5){``\id\t 0\text{''}} \ar@/^.5pc/[ur]^{\bSpec\hat\al} & ``(\bSpec B^\bu)\t[0,1]\text{''} \ar[u]_(0.5){``\bSpec H\text{''}} & *+[l]{\bSpec B^\bu,}  \ar@/_.5pc/[ul]_{\bSpec\al} \ar[l]_(0.5){``\id\t 1\text{''}} }
\end{gathered}
\label{ln4eq44}
\e
so that ``$\bSpec H$'' is a homotopy from $\bSpec\hat\al$ to $\bSpec\al$ in the usual sense in topology. But we do not claim that \eq{ln4eq44} actually makes sense in~$\dSch_\K$.

Define $H:A^*\longra B^*\ot_\K\K[s,t]$ to be the morphism of graded $\K$-algebras given by $H\vert_{A_+^\bu}=\al_+\ot 1$ and 
\ea
H(y^{k-i}_j)
&=(-1)^{i+1}\frac{\pd\Psi}{\pd\ti x_j^i}-(-1)^{(i+1)(k+1)}s\d a^{k-1-i}_j-(-1)^{(i+1)(k+1)}t a^{k-1-i}_j
\nonumber\\
&\qquad\qquad -\sum_{i'=0}^d\sum_{j'=1}^{m_{i'}}(-1)^{i+(i'+1)k}s\al_+\biggl(\frac{\pd\Phi_{j'}^{i'+1}}{\pd x^i_j}\biggr)a_{j'}^{k-1-i'}
\label{ln4eq45}
\ea
for $i=0,-1,\ldots,d$ and $j=1,\ldots,m_i$. This is well-defined as $A^*$ is freely generated over $A_+^*$ by the $y^{k-i}_j$. To see that $H$ is a cdga morphism, note that $H\vert_{A_+^\bu}:A_+^\bu\ra B^\bu\ot_\K\K[s,t]$ is a cdga morphism, and
\ea
&\d\ci H(y^{k-i}_j)=(-1)^{i+1}\d\biggl[\frac{\pd\Psi}{\pd\ti x_j^i}+\sum_{i'=-1}^d\sum_{j'=1}^{m_{i'}}(-1)^{(i'+1)k}s\al_+\biggl(\frac{\pd\Phi_{j'}^{i'+1}}{\pd x^i_j}\biggr)a_{j'}^{k-1-i'}\biggr]
\nonumber\\
&=(-1)^{i+1}\sum_{i'=-1}^d\sum_{j'=1}^{m_{i'}}\d\ti x^{i'}_{j'}\frac{\pd^2\Psi}{\pd\ti x^{i'}_{j'}\pd\ti x^i_j}
\nonumber\\
&+(-1)^{i+1}\sum_{i'=-1}^e\sum_{j'=1}^{n_{i'}}\biggl[\d u^{i'}_{j'}\frac{\pd^2\Psi}{\pd u^{i'}_{j'}\pd\ti x^i_j}+\d v^{k-1-i'}_{j'}\frac{\pd^2\Psi}{\pd v^{k-1-i'}_{j'}\pd\ti x^i_j}\biggr]
\nonumber\\
&-\sum_{i'=-1}^d\sum_{j'=1}^{m_{i'}}\sum_{i''=-1}^d\sum_{j''=1}^{m_{i''}}(-1)^{i+(i''+1)k}s\d\ti x^{i'}_{j'}\al_+\biggl(\frac{\pd\Phi_{j''}^{i''+1}}{\pd x^{i'}_{j'}\pd x^i_j}\biggr)a_{j''}^{k-1-i''}
\nonumber\\
&-\sum_{i'=-1}^d\sum_{j'=1}^{m_{i'}}(-1)^{(i'+1)(k+1)}s\al_+\biggl(\frac{\pd\Phi_{j'}^{i'+1}}{\pd x^i_j}\biggr)\d a_{j'}^{k-1-i'}
\nonumber\\
&-\sum_{i'=-1}^d\sum_{j'=1}^{m_{i'}}(-1)^{i+(i'+1)k}t\al_+\biggl(\frac{\pd\Phi_{j'}^{i'+1}}{\pd x^i_j}\biggr)a_{j'}^{k-1-i'}
\allowdisplaybreaks\nonumber\\
&=-\frac{\pd}{\pd\ti x^i_j}\biggl[\sum_{i'=-1}^e\sum_{j'=1}^{n_{i'}}\frac{\pd\Psi}{\pd u_{j'}^{i'}}\frac{\pd\Psi}{\pd v_{j'}^{k-1-i'}}
+\sum_{i'=-1}^d\sum_{j'=1}^{m_{i'}}(-1)^{i'+1}\al_+\bigl(\Phi_{j'}^{i'+1}\bigr)\,\frac{\pd\Psi}{\pd\ti x_{j'}^{i'}}\biggr]
\nonumber\\
&+\sum_{i'=-1}^d\sum_{j'=1}^{m_{i'}}(-1)^{i'+1}\al_+\biggl(\frac{\pd\Phi_{j'}^{i'+1}}{\pd x^i_j}\biggr)\,\frac{\pd\Psi}{\pd\ti x_{j'}^{i'}}
\nonumber\\
&+\sum_{i'=-1}^d\sum_{j'=1}^{m_{i'}}\sum_{i''=-1}^d\sum_{j''=1}^{m_{i''}}(-1)^{i+i'+(i''+1)k}s\al_+\bigl(\Phi_{j'}^{i'+1}\bigr)\al_+\biggl(\frac{\pd\Phi_{j''}^{i''+1}}{\pd x^{i'}_{j'}\pd x^i_j}\biggr)a_{j''}^{k-1-i''}
\nonumber\\
&-\sum_{i'=-1}^d\sum_{j'=1}^{m_{i'}}(-1)^{(i'+1)(k+1)}s\al_+\biggl(\frac{\pd\Phi_{j'}^{i'+1}}{\pd x^i_j}\biggr)\d a_{j'}^{k-1-i'}
\nonumber\\
&-\sum_{i'=-1}^d\sum_{j''=1}^{m_{i'}}(-1)^{i+(i'+1)k}t\al_+\biggl(\frac{\pd\Phi_{j'}^{i'+1}}{\pd x^i_j}\biggr)a_{j'}^{k-1-i'},
\label{ln4eq46}
\ea
using \eq{ln4eq45} and $\d\bigl[s\d a^{k-1-i}_j+ta^{k-1-i}_j\bigr]=0$ in the first step, and \eq{ln4eq13}, \eq{ln4eq35} and \eq{ln4eq36} in the third. Also equations \eq{ln2eq14} and \eq{ln4eq45} imply that
\ea
H\ci\d y^{k-i}_j&=\frac{\pd}{\pd\ti x^i_j}\bigl[\al_+\bigl(\Phi_+\bigr)\bigr]+\sum_{i'=i-1}^d\sum_{j'=1}^{m_{i'}}
\al_+\biggl(\frac{\pd\Phi_{j'}^{i'+1}}{\pd x^i_j}\biggr)\,
\biggl[(-1)^{i'+1}\frac{\pd\Psi}{\pd\ti x_{j'}^{i'}}
\nonumber\\
&-(-1)^{(i'+1)(k+1)}s\d a^{k-1-i'}_{j'}-(-1)^{(i'+1)(k+1)}t a^{k-1-i'}_{j'}
\nonumber\\
&-\sum_{i''=0}^d\sum_{j''=1}^{m_{i''}}(-1)^{i'+k(i''+1)}s\al_+\biggl(\frac{\pd\Phi_{j''}^{i''+1}}{\pd\ti x^{i'}_{j'}}\biggr)a_{j''}^{k-1-i''}\biggr].
\label{ln4eq47}
\ea

Combining \eq{ln4eq46}, \eq{ln4eq47} and adding $\frac{\pd}{\pd\ti x^i_j}$ applied to \eq{ln4eq39} yields
\ea
&\d\ci H(y^{k-i}_j)-H\ci\d y^{k-i}_j
\label{ln4eq48}\\
&=\sum_{i'=-1}^d\sum_{j'=1}^{m_{i'}}\sum_{i''=-1}^d\sum_{j''=1}^{m_{i''}}(-1)^{i+i'+(i''+1)k}s\al_+\bigl(\Phi_{j'}^{i'+1}\bigr)\al_+\biggl(\frac{\pd\Phi_{j''}^{i''+1}}{\pd x^{i'}_{j'}\pd x^i_j}\biggr)a_{j''}^{k-1-i''}
\nonumber\\
&+\sum_{i'=i-1}^d\sum_{j'=1}^{m_{i'}}
\sum_{i''=0}^d\sum_{j''=1}^{m_{i''}}(-1)^{i'+k(i''+1)}s\al_+\biggl(\frac{\pd\Phi_{j'}^{i'+1}}{\pd x^i_j}\biggr)\,\al_+\biggl(\frac{\pd\Phi_{j''}^{i''+1}}{\pd\ti x^{i'}_{j'}}\biggr)a_{j''}^{k-1-i''}.
\nonumber
\ea
But by applying $\frac{\pd}{\pd x^i_j}$ to \eq{ln2eq13} for $i'',j''$, we see that the r.h.s.\ of \eq{ln4eq48} is zero. Hence $\d\ci H(y^{k-i}_j)=H\ci\d y^{k-i}_j$, and $H$ is a cdga morphism. From \eq{ln4eq34}, \eq{ln4eq40} and \eq{ln4eq45} we see that restricting $H$ to $s=t=0$ gives $\hat\al$, and restricting $H$ to $s=1$, $t=0$ gives $\al$. Thus \eq{ln4eq43} commutes, and $H$ is a homotopy of cdga morphisms from $\hat\al$ to $\al$, as we have to prove.

Next we must show that the Lagrangian structures $(\hat h^0,0,\ldots)$ for $\hat\al:A^\bu\ra B^\bu,$ $\om$ and $(h^0,0,\ldots)$ for $\al:A^\bu\ra B^\bu,$ $\om$ are homotopic, over the homotopy $H$ from $\hat\al$ to $\al$. It is enough to find $\dot h^0\in(\La^2\Om^1_{B^\bu\ot_\K\K[s,t]})^{k-1}$ satisfying
\e
\begin{aligned}
\dot h^0\vert_{s=0,\; t=0}&=\hat h^0, & \qquad\dot h^0\vert_{s=1,\; t=0}&=h^0, \\
\dd\dot h^0&=0, & \d\dot h^0&=H_*(\om^0),
\end{aligned}
\label{ln4eq49}
\e
since then $(\dot h^0,0,\ldots)$ is a homotopy from $(\hat h^0,0,\ldots)$ to $(h^0,0,\ldots)$ over $H$. Set
\e
\dot h^0=\ts\sum\limits_{i=0}^d\sum\limits_{j=1}^{m_i}\dd\bigl(sa^{k-1-i}_j\bigr)\,\dd\ti x^i_j+\sum\limits_{i=0}^e\sum\limits_{j=1}^{n_i}\dd u^i_j\,\dd v^{k-1-i}_j.
\label{ln4eq50}
\e
The first two equations of \eq{ln4eq49} follow from \eq{ln4eq26} and \eq{ln4eq41}, and the third is also immediate. For the fourth equation, we have
\begin{align*}
&\d\dot h^0=\sum_{i=0}^d\sum_{j=1}^{m_i}\bigl[(\d\ci\dd(sa^{k-1-i}_j))\,\dd\ti x^i_j\!+\!(-1)^{(i+1)k}(\d\ci\dd\ti x^i_j)\,\dd(sa^{k-1-i}_j)\bigr]\\
&+\sum_{i=0}^e\sum_{j=1}^{n_i}\bigl[(\d\ci\dd u^i_j)\,\dd v^{k-1-i}_j\!+\!(-1)^{(i+1)k}(\d\ci\dd v^{k-1-i}_j)\,\dd u^i_j\bigr]
\allowdisplaybreaks\\
&=-\sum_{i=0}^d\sum_{j=1}^{m_i}\bigl[(\dd\ci\d(sa^{k-1-i}_j))\,\dd\ti x^i_j\!+\!(-1)^{(i+1)k}(\dd\ci\d\ti x^i_j)\,\dd(sa^{k-1-i}_j)\bigr]\\
&-\sum_{i=0}^e\sum_{j=1}^{n_i}\bigl[(\dd\ci\d u^i_j)\,\dd v^{k-1-i}_j\!+\!(-1)^{(i+1)k}(\dd\ci\d v^{k-1-i}_j)\,\dd u^i_j\bigr]
\allowdisplaybreaks\\
&=-\sum_{i=0}^d\sum_{j=1}^{m_i}\biggl[(-1)^{(i+1)(k+1)}\dd\ti x^i_j\,(\dd\ci\d(sa^{k-1-i}_j))\\
&+\!(-1)^{(i+1)k}\sum_{i'=0}^d\sum_{j'=1}^{m_{i'}}(-1)^{i+1}\dd\ti x^{i'}_{j'}\,H\biggl(\frac{\pd\Phi_j^{i+1}}{\pd x^{i'}_{j'}}\biggr)\,\dd(sa^{k-1-i}_j)\biggr]\\
&-\sum_{i=0}^e\sum_{j=1}^{n_i}(-1)^{(i+1)k}\biggl[
\dd\biggl(\frac{\pd\Psi}{\pd v^{k-1-i}_j}\biggr)\,\dd v^{k-1-i}_j
+\dd\biggl(\frac{\pd\Psi}{\pd u^i_j}\biggr)\,\dd u^i_j\biggr]
\allowdisplaybreaks\\
&=\dd\biggl[\sum_{i=0}^d\sum_{j=1}^{m_i}\biggl[-(-1)^{(i+1)k}\dd\ti x^i_j\,
\bigl[s\d a^{k-1-i}_j+ta^{k-1-i}_j\bigr]\\
&+\sum_{i'=0}^d\sum_{j'=1}^{m_{i'}}(-1)^{(i'+1)k}\dd\ti x^i_j\,sH\biggl(\frac{\pd\Phi_{j'}^{i'+1}}{\pd\ti x^i_j}\biggr)a_{j'}^{k-1-i'}\biggr]\\
&-\sum_{i=0}^e\sum_{j=1}^{n_i}\biggl[\dd v^{k-1-i}_j\,\frac{\pd\Psi}{\pd v^{k-1-i}_j}+\dd u^i_j\,\frac{\pd\Psi}{\pd u^i_j}\biggr]\biggr]
\allowdisplaybreaks\\
&=\dd\biggl[-\dd\Psi+\sum_{i=0}^d\sum_{j=1}^{m_i}\dd\ti x^i_j\,
\biggl[\frac{\pd\Psi}{\pd x_j^i}-(-1)^{(i+1)k}s\d a^{k-1-i}_j\\
&-(-1)^{(i+1)k}t a^{k-1-i}_j+\sum_{i'=0}^d\sum_{j'=1}^{m_{i'}}(-1)^{(i'+1)k}sH\biggl(\frac{\pd\Phi_{j'}^{i'+1}}{\pd\ti x^i_j}\biggr)a_{j'}^{k-1-i'}\biggr]
\allowdisplaybreaks\\
&=\sum_{i=0}^d\sum_{j=1}^{m_i}\dd\Bigl(\dd\bigl(H(x^i_j)\bigr)\cdot(-1)^{i+1}H(y_j^{k-i})\Bigr) \\
&=\sum_{i=0}^d\sum_{j=1}^{m_i}\dd\bigl(H(x^i_j)\bigr)\dd\bigl(H(y_j^{k-i})\bigr)=H_*\biggl[\sum_{i=0}^d\sum_{j=1}^{m_i}\dd x^i_j\,\dd y_j^{k-i}\biggr]=H_*(\om^0),
\end{align*}
using \eq{ln4eq50} in the first step, equations \eq{ln2eq14}, \eq{ln4eq35} and \eq{ln4eq36} and $H\vert_{A_+^\bu}=\al_+\ot 1$ in the third, $\d s=t$ in the fourth, $H\vert_{A_+^\bu}=\al_+\ot 1$ and \eq{ln4eq45} in the sixth, and \eq{ln2eq6} in the ninth. This completes the proof of Proposition~\ref{ln4prop2}.
\end{proof}

Proposition \ref{ln4prop2} shows that we may replace $\al,h^0$ by $\hat\al,\hat h^0$. We have now proved all the assumptions of Example \ref{ln3ex1} in the case $k<0$ with $k$ even. Equation \eq{ln3eq3} was the definition of $\ti x^i_j$. The classical master equation \eq{ln3eq4} is \eq{ln4eq39}. The definition \eq{ln3eq5} of $\al(y^{k-i}_j)$ is \eq{ln4eq40}, and the definition \eq{ln3eq6} of $\d$ in $B^\bu=(B^*,\d)$ is equations \eq{ln4eq13}, \eq{ln4eq35} and \eq{ln4eq36}. The definitions \eq{ln3eq12} and \eq{ln3eq21} of $h^0$ and $\psi$ are equations \eq{ln4eq41} and \eq{ln4eq42}. Thus, $A^\bu,\om,B^\bu,\al,h$ are in Lagrangian Darboux form. This proves Theorem \ref{ln3thm2}(i) for $k<0$ with $k$ even.

\subsection{Modifications to the proof for \texorpdfstring{$k<0$}{k<0} with \texorpdfstring{$k\equiv 1\mod 4$}{k=1 mod 4}}
\label{ln46}

Next we explain how to modify \S\ref{ln42}--\S\ref{ln45} to prove Theorem \ref{ln3thm2} when $k<0$ with $k\equiv 1\mod 4$. In the notation of Example \ref{ln3ex1} we have $d=[(k+1)/2]$, $e=[k/2]$, so that $e=d-1$ and $k=2d-1=2e+1$, with $d,k$ odd and $e$ even.

First follow \S\ref{ln42} without change. Then follow \S\ref{ln43} with the following modifications. In place of \eq{ln4eq12}, we take $B^*$ to be freely generated over $B^0$ by
\e
\begin{aligned}
&\ti x_1^i,\ldots,\ti x^i_{m_i} &&\text{in degree $i$ for
$i=-1,-2,\ldots,d$, and} \\
&u_1^i,\ldots,u^i_{n_i} &&\text{in degree $i$ for
$i=-1,-2,\ldots,d$, and} \\
&w_1^e,\ldots,w^e_{p_e} &&\text{in degree $e$, and} \\
&v_1^{k-1-i},\ldots,v^{k-1-i}_{n_i'} &&\text{in degree $k-1-i$ for $i=0,-1,\ldots,d$,}
\end{aligned}
\label{ln4eq51}
\e
where $\ti x^i_j=\al_+(x^i_j)$. In place of \eq{ln4eq14}, we write
\e
\begin{split}
&\frac{1}{k-1}\psi=\ts\sum\limits_{i=0}^d\ts\sum\limits_{j=1}^{m_i}a^{k-1-i}_j\,\dd\ti x^i_j+\sum\limits_{i=0}^d\sum\limits_{j=1}^{n_i}b^{k-1-i}_j\,\dd u^i_j\\
&+\ts\sum\limits_{j=1}^{p_e}\sum\limits_{j'=1}^{p_e}c^0_{j'j}\,w^e_{j'}\dd w^e_j+\sum\limits_{j=1}^{p_e}d^e_j\,\dd w^e_j+\sum\limits_{i=0}^d\sum\limits_{j=1}^{n_i}e^i_j\,\dd v^{k-1-i}_j,
\end{split}
\label{ln4eq52}
\e
where $a^i_j,b^i_j,c^i_{jj'},d^i_j,e^i_j\in B^i$, and $d^e_j$ includes no terms in $w^e_{j'}$, as these are written separately in the $c^0_{j'j}$ terms.

As in \eq{ln4eq15}, by leaving $h^0$ unchanged but replacing $\Xi,\psi$ by
\begin{align*}
\ti\Xi&=\Xi-(k-1)\d\Bigl[\ha\ts\sum\limits_{j=1}^{p_e}\sum\limits_{j'=1}^{p_e}c^0_{j'j}\,w^e_{j'}w^e_j+\sum\limits_{j=1}^{p_e}d^e_jw^e_j+
\sum\limits_{i=0}^e\sum\limits_{j=1}^{n_i}(-1)^ie^i_jv^{k-1-i}_j\Bigr],\\
\ti\psi&=\psi-(k-1)\dd\Bigl[\ha\ts\sum\limits_{j=1}^{p_e}\sum\limits_{j'=1}^{p_e}c^0_{j'j}\,w^e_{j'}w^e_j+\sum\limits_{j=1}^{p_e}d^e_jw^e_j+
\sum\limits_{i=0}^e\sum\limits_{j=1}^{n_i}(-1)^ie^i_jv^{k-1-i}_j\Bigr],
\end{align*}
we may assume that $c^0_{j'j}=-c^0_{jj'}$ for all $j,j'$ and $d^e_j=e^i_j=0$ for all $i,j$, as $\dd d^e_j,\dd e^i_j$ involve no terms in $\dd w^e_{j'},\dd v^{k-1-i'}_{j'}$. Here the minus sign in $c^0_{j'j}=-c^0_{jj'}$ occurs as $e$ is even. Thus the analogue of \eq{ln4eq17} is
\e
\begin{split}
h^0=\ts\sum_{i=0}^d&\ts\sum_{j=1}^{m_i}\dd a^{k-1-i}_j\,\dd\ti x^i_j+\sum_{i=0}^d\sum_{j=1}^{n_i}\dd b^{k-1-i}_j\,\dd u^i_j\\
+&\ts\sum_{j=1}^{p_e}\sum_{j'=1}^{p_e}\dd(c^0_{j'j}\,w^e_{j'})\,\dd w^e_j.
\end{split}
\label{ln4eq53}
\e

The argument of \eq{ln4eq18}--\eq{ln4eq24} now shows that $h$ nondegenerate at $q$ is equivalent to $n_i=n'_i$ for $i=0,-1,\ldots,d$, and the following being an invertible matrix over $\K$ for all~$i=0,-1,\ldots,d:$
\e
\biggl(\frac{\pd b_j^{k-1-i}}{\pd v^{k-1-i}_{j'}}\bigg\vert_q\biggr)_{j,j'=1}^{n_i},
\label{ln4eq54}
\e
and for the case $i=e$, the following being an invertible matrix over $\K:$
\e
\bigl(c^0_{j'j}\vert_q\bigr)_{j,j'=1}^{p_e}.
\label{ln4eq55}
\e
As \eq{ln4eq55} is an invertible and antisymmetric, $p_e$ is even, so we write $p_e=2n_e$.

Regard $\bigl(c^0_{j'j}\bigr)_{j,j'=1}^{p_e}$ as an antisymmetric form on the trivial vector bundle $\K^{2n_e}\t V\ra V$ over $V=\Spec B^0$, which is nondegenerate at $q\in V$, and hence near $q$. Since nondegenerate antisymmetric forms can be standardized Zariski locally by a change of basis, after localizing $B^\bu$ we can choose an invertible change of variables of the $w_1^e,\ldots,w_{2n_e}^e$, such that w.r.t.\ the new $w^e_j$ we have
\begin{equation*}
c^0_{j'j}=\begin{cases} \phantom{-}\ha, & j'=1,\ldots,n_e,\;\> j=j'+n_e, \\
-\ha, & j=1,\ldots,n_e,\;\> j'=j+n_e, \\
\phantom{-}0, & \text{otherwise.} \end{cases}
\end{equation*}

As in \S\ref{ln43}, set $\hat v_j^{k-1-i}=(-1)^{i+1}b^{k-i-i}_j$ for all $i=0,-1,\ldots,d$ and $j=1,\ldots,n_i$. Since \eq{ln4eq54} is invertible, localizing $B^\bu$ we can suppose $\ti x^i_j,u^i_j,\hat v^i_j,w^e_j$ is an alternative set of coordinates for $B^\bu$. Also define $u^e_j=w^e_j$ and $v^e_j=w^e_{j+n_e}$ for $j=1,\ldots,n_e$. Then, modifying \eq{ln4eq51}, $B^*$ is freely generated over $B^0$ by
\begin{align*}
&\ti x_1^i,\ldots,\ti x^i_{m_i} &&\text{in degree $i$ for
$i=-1,-2,\ldots,d$, and} \\
&u_1^i,\ldots,u^i_{n_i} &&\text{in degree $i$ for
$i=-1,-2,\ldots,e$, and} \\
&v_1^{k-1-i},\ldots,v^{k-1-i}_{n_i'} &&\text{in degree $k-1-i$ for $i=0,-1,\ldots,e$.}
\end{align*}
Also, from \eq{ln4eq52} we have
\e
\begin{split}
\frac{1}{k-1}\psi=\,&\ts\sum\limits_{i=0}^d\ts\sum\limits_{j=1}^{m_i}a^{k-1-i}_j\,\dd\ti x^i_j+\sum\limits_{i=0}^d\sum\limits_{j=1}^{n_i}(-1)^{i+1}v^{k-1-i}_j\,\dd u^i_j\\
&+\ts\sum\limits_{j=1}^{n_e}\bigl[\ha u^e_j\,\dd v^e_j-\ha v^e_j\,\dd u^e_j\bigr].
\end{split}
\label{ln4eq56}
\e
Leaving $h^0$ unchanged but replacing $\Xi,\psi$ by
\begin{align*}
\ti\Xi=\Xi-(k-1)\d\bigl[\ha\ts\sum_{j=1}^{n_e}u^e_jv^e_j\bigr],\qquad
\ti\psi=\psi-(k-1)\dd\bigl[\ha\ts\sum_{j=1}^{n_e}u^e_jv^e_j\bigr],
\end{align*}
equation \eq{ln4eq56} becomes 
\begin{equation*}
\frac{1}{k-1}\psi=\ts\sum\limits_{i=0}^d\ts\sum\limits_{j=1}^{m_i}a^{k-1-i}_j\,\dd\ti x^i_j+\sum\limits_{i=0}^e\sum\limits_{j=1}^{n_i}(-1)^{i+1}v^{k-1-i}_j\,\dd u^i_j,
\end{equation*}
which agrees with \eq{ln4eq25}, so $h^0=\frac{1}{k-1}\dd\psi$ yields
\begin{equation*}
h^0=\ts\sum\limits_{i=0}^d\sum\limits_{j=1}^{m_i}\dd a^{k-1-i}_j\,\dd\ti x^i_j+\sum\limits_{i=0}^e\sum\limits_{j=1}^{n_i}\dd u^i_j\,\dd v^{k-1-i}_j,
\end{equation*}
as in \eq{ln4eq26}. The rest of the proof, from \eq{ln4eq27} to the end of \S\ref{ln45}, works without further changes. This proves Theorem \ref{ln3thm2}(i) when $k<0$ with $k\equiv 1\mod 4$, and so completes the proof of Theorem~\ref{ln3thm2}(i).

\subsection{Modifications to the proof for \texorpdfstring{$k<0$}{k<0} with \texorpdfstring{$k\equiv 3\mod 4$}{k=3 mod 4}}
\label{ln47}

We now explain how to modify \S\ref{ln42}--\S\ref{ln46} to prove Theorem \ref{ln3thm2} when $k<0$ with $k\equiv 3\mod 4$, that is, to prove Theorem \ref{ln3thm2}(ii),(iii). In the notation of Example \ref{ln3ex1} we have $d=[(k+1)/2]$, $e=[k/2]$, so that $e=d-1$ and $k=2d-1=2e+1$, with $d$ even and $e,k$ odd.

The first part of the proof follows that for $k\equiv 1\mod 4$ in \S\ref{ln46}, as far as equation \eq{ln4eq55}. The only difference is that just before \eq{ln4eq53} we have $c^0_{j'j}=c^0_{jj'}$, since $e$ is odd rather than even. So $\bigl(c^0_{j'j}\bigr)_{j,j'=1}^{p_e}$ is now a symmetric matrix of functions on $V$, rather than an antisymmetric matrix. Write~$n_e=p_e$.

Now in general, nondegenerate quadratic forms cannot be trivialized Zariski locally, but they can at least be diagonalized. That is, in general we cannot find a (Zariski local) change of variables of the $w_1^e,\ldots,w_{n_e}^e$ to make $\bigl(c^0_{j'j}\bigr)_{j,j'=1}^{n_e}$ the identity matrix, but we can change variables to make $\bigl(c^0_{j'j}\bigr)_{j,j'=1}^{n_e}$ a diagonal matrix. Thus, after localizing $B^\bu$ if necessary and changing variables $w_j^e$, we can suppose there are invertible elements $q_1,\ldots,q_{n_e}$ in $B^0$ such that $c^0_{jj}=q_j$ and $c^0_{j'j}=0$ if $j'\ne j$. Also replacing $v_j^{k-1-i}$ by $\hat v_j^{k-1-i}=(-1)^{i+1}b^{k-i-i}_j$ for all $i=0,-1,\ldots,d$ and $j=1,\ldots,n_i$, we now have the analogue of \eq{ln4eq25}:
\begin{align*}
\frac{1}{k-1}\psi=\,&\ts\sum_{i=0}^d\sum_{j=1}^{m_i}a^{k-1-i}_j\,\dd\ti x^i_j+\sum_{i=0}^d\sum_{j=1}^{n_i}(-1)^{i+1}v^{k-1-i}_j\,\dd u^i_j\\
&+\ts\sum_{j=1}^{n_e}q_jw^e_j\,\dd w^e_j,
\end{align*}
so that $h^0=\frac{1}{k-1}\dd\psi$ yields the analogue of \eq{ln4eq26}:
\begin{align*}
h^0=\,&\ts\sum_{i=0}^d\sum_{j=1}^{m_i}\dd a^{k-1-i}_j\,\dd\ti x^i_j+\sum_{i=0}^d\sum_{j=1}^{n_i}\dd u^i_j\,\dd v^{k-1-i}_j\nonumber\\
&+\ts\sum_{j=1}^{n_e}\dd(q_jw^e_j)\,\dd w^e_j.
\end{align*}

Leaving $h^0$ unchanged, but by replacing $\Xi,\psi$ by
\begin{align*}
\ti\Xi&=\Xi+\d\biggl[\ts\sum\limits_{i=0}^d\sum\limits_{j=1}^{m_i}(-1)^{k-i}ia^{k-1-i}_j\ti x^i_j+\sum\limits_{i=0}^d\sum\limits_{j=1}^{n_i}(-1)^iiu^i_jv^{k-1-i}_j\biggr],\\
\ti\psi&=\psi+\dd\biggl[\ts\sum\limits_{i=0}^d\sum\limits_{j=1}^{m_i}(-1)^{k-i}ia^{k-1-i}_j\ti x^i_j+\sum\limits_{i=0}^d\sum\limits_{j=1}^{n_i}(-1)^iiu^i_jv^{k-1-i}_j\biggr],
\end{align*}
we have the analogue of \eq{ln4eq27}:
\begin{align*}
\psi&=\ts\sum_{i=0}^d\sum_{j=1}^{m_i}\bigl[(k-1-i)a^{k-1-i}_j\,\dd\ti x^i_j+
(-1)^{i+1}i\ti x^i_j\dd a_j^{k-1-i}\bigr]\\
&+\ts\sum_{i=0}^d\sum_{j=1}^{n_i}\bigl[i\,u^i_j\,\dd v^{k-1-i}_j +(-1)^{i+1}(k-1-i)v^{k-1-i}_j\,\dd u^i_j \bigr]\\
&+\ts\sum_{j=1}^{n_e}\dd(q_jw^e_j)\,\dd w^e_j.
\end{align*}

We now follow \S\ref{ln44} adding extra terms involving the $q_j,w^e_j$, which we leave as an exercise. The definition \eq{ln4eq28} of $\Psi$ is unchanged. For the final results, equation \eq{ln4eq34} is unchanged, the analogues of \eq{ln4eq35}--\eq{ln4eq36} are 
\begin{align*}
\d u^i_j&=(-1)^{i+1}\frac{\pd\Psi}{\pd v^{k-1-i}_j},&&\begin{subarray}{l}\ts i=0,-1,\ldots,d,\\[3pt] \ts j=1,\ldots,n_i,\end{subarray} \\
\d v^{k-1-i}_j&=\frac{\pd\Psi}{\pd u^i_j},&&\begin{subarray}{l}\ts i=-1,-2,\ldots,d,\\[3pt] \ts j=1,\ldots,n_i,\end{subarray}\\
\d v^{k-1}_j&=\frac{\pd\Psi}{\pd u^0_j}-\sum_{j'=1}^{n_e} \frac{w_{j'}^e}{2q_{j'}}\,\frac{\pd q_{j'}}{\pd u^0_j}\,\frac{\pd\Psi}{\pd w^e_{j'}},&& j=1,\ldots,n_0,\\
\d w^e_j&=\frac{1}{2q_j}\,\frac{\pd\Psi}{\pd
w^e_j}, && j=1,\ldots,n_e,
\end{align*}
as in \eq{ln3eq35}, and as in \eq{ln3eq33}, equation \eq{ln4eq39} is replaced by
\begin{align*}
\sum_{i=-1}^d&\sum_{j=1}^{n_i}\frac{\pd\Psi}{\pd u^i_j}\,
\frac{\pd\Psi}{\pd v^{k-1-i}_j}+\frac{1}{4}\sum_{j=1}^{n_e}
\frac{1}{q_j}\,\biggl(\frac{\pd\Psi}{\pd w^e_j}\biggr)^2\\
&+\al_+(\Phi_+)+\sum_{i=-1}^d\sum_{j=1}^{m_i}(-1)^{i+1}\al_+(\Phi_j^{i+1})\frac{\pd\Psi}{\pd\ti x^i_j}=0.
\end{align*}

Next we follow \S\ref{ln45} adding extra terms involving the $q_j,w^e_j$, to show that we can define an alternative cdga morphism $\hat\al:A^\bu\ra B^\bu$ by $\hat\al\vert_{A^\bu_+}=\al_+$ and $\hat\al(y^{k-i}_j)$ as in \eq{ln3eq34}, and a Lagrangian structure $(\smash{\hat h^0},0,\ldots)$ for $\hat\al:A^\bu\ra B^\bu,$ $\om$ with $\smash{\hat h^0}$ as in \eq{ln3eq36}, such that $\al,\hat\al:A^\bu\ra B^\bu$ are homotopic cdga morphisms, and under this homotopy, the Lagrangian structures $(h^0,0,\ldots)$ for $\al,\om$ and $(\hat h^0,0,\ldots)$ for $\hat\al,\om$ are also homotopic. Replacing $\al,h^0$ by $\hat\al,\hat h^0$, we can set $a_j^{k-1-i}=0$ for all $i,j$. Then $A^\bu,\om,B^\bu,\al,h$ are in weak Lagrangian Darboux form, in the sense of Example \ref{ln3ex2}. This completes the proof of Theorem~\ref{ln3thm2}(ii).

All the proof so far has worked with $B^\bu,\al,\bs i,\bs j$ in the homotopy commutative diagram \eq{ln4eq6} with $\bZ\simeq\bs L\t_{\bs f,\bX,\bs i}\bSpec A^\bu$ and $\bs e:\bSpec B^\bu\hookra\bZ$ a Zariski open inclusion, so that $\bs j$ is a Zariski open inclusion (or \'etale) if $\bs i$ is a Zariski open inclusion (or \'etale), as required by Theorem \ref{ln3thm2}(i),(ii). For Theorem \ref{ln3thm2}(iii) we allow $\bs j$ to be \'etale even if $\bs i$ is a Zariski open inclusion, so $\bs e:\bSpec B^\bu\hookra\bZ$ in \eq{ln4eq6} can be \'etale, and we can use \'etale local operations to construct~$B^\bu,\al,\bs j$.

In the proof above we have invertible elements $q_1,\ldots,q_{n_e}$ in $B^0$. Write $\check B^\bu=B^\bu[q_1^{1/2},\ldots,q_{n_e}^{1/2}]$ for the cdga obtained by adjoining square roots of $q_1,\ldots,q_{n_e}$ to $B^\bu$. The inclusion $\jmath:B^\bu\hookra\check B^\bu$ is an \'etale cover of degree $2^{n_e}$. Let $\check q\in\bSpec\check B^\bu$ be one of the $2^{n_e}$ preimages of $q\in\bSpec B^\bu$. Write $\check\al_+=\jmath\ci\al_+:A_+^\bu\ra\check B^\bu$, $\check\al=\jmath\ci\al:A^\bu\ra\check B^\bu$, $\bs{\check e}=\bs e\ci\bSpec\jmath:\bSpec\check B^\bu\ra\bZ$, and $\bs{\check\imath}=\bs i\ci\bSpec\jmath:\bSpec\check B^\bu\ra\bs L$. In the proof in \S\ref{ln42}, replace $B^\bu,q,\al_+,\al,\bs e,\bs i$ by $\check B^\bu,\check q,\check\al_+,\check\al,\bs{\check e},\bs{\check\imath}$, respectively. 

The new features are that $\bs e:\bSpec B^\bu\hookra\bZ$ in \eq{ln4eq6} is now \'etale rather than a Zariski open inclusion, and the invertible functions $q_1,\ldots,q_{n_e}\in B^0$ above now have square roots $q_j^{1/2}$ in $B^0$. Thus, in the first part of \S\ref{ln47}, we may change variables from $w_1^e,\ldots,w_{n_e}^e$ to $\check w_1^e,\ldots,\check w_{n_e}^e$, where $\check w_j^e=q_j^{1/2}w_j^e$, while leaving the other variables $\ti x^i_j,u^i_j,v^{k-1-i}_j$ unchanged. This has the effect of setting $q_j=1$ for all $j=1,\ldots,n_e$, so at the end of the argument above, $A^\bu,\om,B^\bu,\al,h$ are in strong Lagrangian Darboux form, in the sense of Example \ref{ln3ex2}, at the cost of working \'etale locally rather than Zariski locally. This proves Theorem \ref{ln3thm2}(iii), and finally completes the proof of Theorem~\ref{ln3thm2}.

% todo references

\medskip

\noindent{\small\sc 

\noindent The Mathematical Institute, Radcliffe Observatory Quarter, Woodstock Road, Oxford, OX2 6GG, U.K.

\noindent E-mails: {\tt joyce@maths.ox.ac.uk}, {\tt safronov@maths.ox.ac.uk}.}

\end{document}